\font\teneufm=eufm10 \font\seveneufm=eufm7 \font\fiveeufm=eufm5
\newcommand{\symdiffsmall}{\mathbin{\vcenter{\hbox{$\scriptstyle\triangle$}}}}
\newcommand{\C}{\mathbb{C}}
\newcommand{\Z}{\mathbb{Z}}
\newcommand{\A}{\mathcal A}
\newcommand{\h}{\mathfrak{h}}
\DeclareMathOperator{\Supp}{Supp}
\DeclareMathOperator{\Span}{Span} 
\DeclareMathOperator{\Hom}{Hom}
 \DeclareMathOperator{\Aut}{Aut}
\DeclareMathOperator{\diag}{diag}
\DeclareMathOperator{\Mat}{Mat}
\DeclareMathOperator{\GL}{GL}
\DeclareMathOperator{\Res}{Res}
\DeclareMathOperator{\I}{I}
\DeclareMathOperator{\End}{End}
\DeclareMathOperator{\comp}{comp}
\DeclareMathOperator{\card}{card}
\DeclareMathOperator{\lcm}{lcm}
\renewcommand{\Re}{\text{Re\,}}
\newcommand{\np}{\medskip\noindent}
\numberwithin{equation}{section}
\newtheorem{definition}{Definition}[section]
\newtheorem{example}[definition]{Example}
\theoremstyle{remark}
\newtheorem{remark}[definition]{Remark} 
\theoremstyle{plain} 
\newtheorem{theorem}[definition]{Theorem}
\newtheorem{lemma}[definition]{Lemma}
\newtheorem{corollary}[definition]{Corollary}
\newtheorem{proposition}[definition]{Proposition}
\def\Z{\mathbb Z}
\def\C{\mathbb C}
\begin{document}

\title{Generalized permutation matrices and non-weight modules over $\mathfrak{sl}(m|1)$}
\author[I. Dimitrov]{Ivan Dimitrov}
	\address{I. Dimitrov: Department of Mathematics and Statistics, Queen's University, Kingston, ON K7L 3N6, Canada}
	\email{dimitrov@queensu.ca}	
\author[K. Nguyen]{Khoa Nguyen}
	\address{K. Nguyen: Department of Mathematics and Statistics, Queen's University, Kingston, ON K7L 3N6, Canada}
	\email{k.nguyen@queensu.ca}	
\author[C. Paquette]{Charles Paquette}
\address{C. Paquette : Department of Mathematics and Computer Science, Royal Military College of Canada}
        \email{charles.paquette.math@gmail.com}
\author[D. Wehlau]{David Wehlau}
\address{D. Wehlau: Department of Mathematics and Computer Science, Royal Military College of Canada}
        \email{wehlau@rmc.ca}

\begin{abstract}
We study the category $\mathcal{M}_{\mathfrak{sl}(m|1)}(k|k)$ of $\mathcal U(\mathfrak h)\text{-free}$ $\mathcal U(\mathfrak{sl}(m|1))$-modules of rank $k$ in each parity (rank $(k|k)$), where $k\in\Z_{\geq1}$. We construct an explicit family of such modules, provide an isomorphism theorem, and establish an indecomposability criterion.
\end{abstract}
\subjclass[2020]{17A70, 17B10}
	
	\keywords{$U(\mathfrak{h})$-free modules, Weyl superalgebra, weight modules, $\mathfrak{sl} (m|n)$-modules}

\maketitle

\section{Introduction}
\np
Modules over Lie algebras and superalgebras are essential objects with numerous applications in mathematics and physics. The study of these modules is typically divided into various categories. A classical focus is the category of \emph{weight modules}, whose objects decompose into direct sums of weight spaces relative to a Cartan subalgebra $\mathfrak h$; see \cite{DMP,G2,GPS,G} and references therein for the superalgebra setting. By contrast, \emph{$\mathcal U(\mathfrak h)$-free modules} form an important category of non-weight modules, which recently gained a lot of attention. Upon restriction to $\mathfrak h$, they are free $\mathcal U(\mathfrak h)$-modules of finite rank. This class was introduced independently by J.~Nilsson \cite{Nil1} and by H.~Tan and K.~Zhao \cite{TZ2}. Nilsson classified all rank-$1$ $\mathcal U(\mathfrak h)$-free modules for $\mathfrak{sl}(n{+}1)$ and $\mathfrak{sp}(2n)$ \cite{Nil1,Nil2}, while Tan and Zhao classified the rank-$1$ $\mathcal U(\mathfrak h)$-free modules for the Witt algebras $W_n^+$ and $W_n$ \cite{TZ1}. Subsequent families of finite-rank $\mathcal U(\mathfrak h)$-free modules were constructed for $\mathfrak{sl}(2)$ \cite{MP} and $\mathfrak{sl}(n{+}1)$ \cite{GN}.

\np
The categories of $\mathcal{U}(\mathfrak{h})$-free modules of rank 1 over basic Lie superalgebras were first studied by Y. Cai and K. Zhao in \cite{CZ}. In particular,  they showed that these categories are empty for all basic Lie superalgebras except for the case of  $\mathfrak{osp}(1|2n)$, and they classified all  $\mathcal{U}(\mathfrak{h})$-free modules of rank 1 over $\mathfrak{osp}(1|2n)$. Since then, further families of $\mathcal{U}(\mathfrak{h})$-free modules have been constructed and studied over other Lie superalgebras beyond the basic ones, including super-Virasoro algebras (in \cite{YYX1}), the untwisted $N = 2$ superconformal algebras (\cite{YYX2}), the twisted $N=2$ superconformal algebra (\cite{CDL}), the topological $N = 2$ super-$BMS_3$ algebra (\cite{LSZ}). The categories of $\mathcal{U}(\mathfrak{h})$-free modules of rank 2 as well as rank $(1|1)$ over $\mathfrak{sl}(m|1)$ was studied and classified in \cite{DN} . Building on the framework of \cite{DN}, in this paper, we construct a family of indecomposable $\mathcal{U}(\mathfrak{h})$-free modules of rank $(k|k)$ over $\mathfrak{sl}(m|1)$.

\np
 The paper is organized as follows. Section~2 collects preliminaries: the Weyl superalgebra $\mathcal{D}(m|n)$; weight modules for $\mathfrak{sl}(m|1)$ and for $\mathcal{D}(m|1)$; and the category $\mathcal{M}_{\mathfrak{sl}(m|1)}(k|k)$ of $\mathcal{U}(\mathfrak{sl}(m|1))$-supermodules, which are $\mathcal U(\mathfrak h)\text{-free}$ of rank $k|k$ (rank $k$ in each parity). Section~3 introduces the notion of generalized permutation matrices over a ring $\mathcal R$ and constructs a family of $\mathcal U(\mathfrak{sl}(m|1))$-modules of rank $k|k$, parametrized by an $m$-tuple of $k\times k$ generalized permutation matrices (Definition \ref{M(A_i)}). These serve as tools for studying $\mathcal U(\mathfrak h)\text{-free}$ modules of rank $(k|k)$, which occur as a special case of the exponential modules defined next. In Section~4, we give the definition of exponential modules for $\mathfrak{sl}(m|1)$, depending on a polynomial $g(x_1,\dots,x_m)\in\C[x_1,\dots,x_m]$ and a subset $\mathcal S\subseteq\{1,\dots,m\}$.  If
\begin{equation}\label{exponential module condition}
    g(x_1,\dots,x_m)=\sum_{i=1}^m a_i x_i^{k_i}\quad(a_i\in\C^\times,\ k_i\in\Z_{\geq1}),
\end{equation}
then these exponential modules are $\mathcal U(\mathfrak h)\text{-free}$ modules of rank $(K|K)$ with $K:=\prod_{i=1}^m k_i$. We show that these modules are indecomposable if and only if $\gcd(k_1,\dots,k_m)=1$ (Theorem \ref{maintheorem_indec}), and give an explicit isomorphism criterion for them (Theorem \ref{main-isom}). 

\np
In particular, we present the moduli space of exponential modules satisfying \eqref{exponential module condition} for a fixed $K:=\prod_{i=1}^m k_i$
as a finite union of weighted projective spaces (Corollary~\ref{isomclass}).

\section{Preliminaries and Notation}
\np
Throughout this paper, we denote the sets of integers, complex numbers, and nonzero complex numbers by $\Z$, $\C$, and $\C^\times$, respectively. For any integer $k$, we write $\Z_{\geq k}$ for the set $\{i\in \Z: i\geq k\}$. Given a Lie (super)algebra $\mathfrak{g}$, we denote its universal enveloping algebra by $\mathcal{U}(\mathfrak{g})$, and we fix a Cartan subalgebra $\mathfrak{h} \subseteq \mathfrak{g}$. For a ring $R$, we write $R^{\times}$  for the group of units in $R$, and $\Mat_N(R)$ for the ring of $N \times N$ matrices with entries in $R$. For a set $\mathcal S$, we denote its cardinality by $\card(\mathcal S)$. All the vector spaces and algebras are assumed to be defined over $\C$. Let $m\in \Z_{\geq 1}$. For ${\bf b}=(b_1,\dots,b_m)\in\C^m$, $l\in\Z$, and $1\le i\le m$, set
$$
\mathbf b + l\varepsilon_i := (b_1,\dots,b_i+l,\dots,b_m),\quad\mathbf b_{[i,m]} := (b_i,\dots,b_m),\quad
\mathbf x^{\mathbf b} := \prod_{t=1}^m x_t^{b_t},
\quad
\mathbf x^{\mathbf b + l\varepsilon_i} := x_i^{\,l}\,\mathbf x^{\mathbf b}.
$$

\np
For any sets $A$ and $B$, we denote their symmetric difference  by $A \symdiffsmall B$, i.e., $A \symdiffsmall B:= \left(A \setminus B \right) \;\cup\; \left(B \setminus A \right). $

\subsection{Basis of  $\mathfrak{sl} (m|1)$}
Throughout, let $\mathbf m:=\{1,\dots,m\}$. For $i, j \in \mathbf{m} \cup \{\bar{1}\}$, we denote by $e_{ij}$ the $(m+1) \times (m+1)$ matrix with a $1$ in position $(i, j)$ and zeros elsewhere. For all $i \in {\bf{m}}$, set $h_{i}  := e_{ii} + e_{\bar{1}\bar{1}}$. Then
 $$
\mathfrak{sl}(m|1)\;=\;
\Span\!\left\{\,h_i,\;e_{\ell j} \bigm|
      i\in \mathbf{m},\;
      \ell,j\in\mathbf{m}\cup\{\bar 1\},\;
      \ell\neq j\right\},\quad\mathfrak{h} = \Span\left\{ h_{i}  \bigm| i\in {\bf{m}}\right\}.$$
Note that $\mathcal{U}(\h) \simeq {\mathbb C} [{\bf h}]$, where ${\mathbb C} [{\bf h}]:= \C[h_{1},\dots,h_{m}]$.

\subsection{Automorphisms of ${\mathbb C} [{\bf h}]$ }
For $ i \in {\bf{m}} $, let $\sigma_i \in \Aut( {\mathbb C} [\bf h] ) $ be defined by 
$$
\sigma_i\; \Big(f(h_1,\dots,h_m)\Big)\;=\;f(h_1,\dots,h_i-1,\dots,h_m),
\quad\text{for}\quad f(h_1,\dots,h_m)\in \C[\mathbf h].
$$
Set
$$
\Delta \;:=\; \prod_{i=1}^m \sigma_i \;=\; \sigma_1\dots\sigma_m,
\qquad
\Delta^{-1} \;:=\; \prod_{i=1}^m \sigma_i^{-1} \;=\; \sigma_1^{-1}\dots\sigma_m^{-1}.
$$
For each $i\in \mathbf m$, set
$$
\Delta_i:=\prod_{j\in \mathbf m\setminus\{i\}}\sigma_j
=\prod_{\substack{j=1\\ j\neq i}}^{m}\sigma_j
=\Delta\,\sigma_i^{-1},\qquad
\Delta_i^{-1}:=\prod_{j\in \mathbf m\setminus\{i\}}\sigma_j^{-1}
=\Delta^{-1}\sigma_i.
$$
Explicitly, for $f(h_1,\dots,h_m)\in\C[\mathbf h]$,

$$\Delta^{\pm1} \; \Big(f(h_1,\dots,h_m)\Big) 
  \;=\; f(h_1\mp1,\dots,h_m\mp1),$$
$$\Delta_i^{\pm1}\; \Big(f(h_1,\dots,h_m)\Big) 
  \;=\; f(h_1\mp1,\dots,h_{i-1}\mp1,\;h_i,\;h_{i+1}\mp1,\dots,h_m\mp1).$$

\begin{remark}
For $i,j \in \mathbf m$, 
    $$\Delta_i^{-1}\big(\Delta_j(f({\bf{h}})\big) = \left(\prod_{\ell\in{\bf{m}}\setminus \{i\}} \sigma^{-1}_\ell \right)\!\!\!\left(\prod_{p\in{\bf{m}}\setminus \{j\}} \sigma_p \right)(f({\bf{h}})) = \sigma_i \sigma^{-1}_j(f({\bf{h}})).$$
\end{remark}

\subsection{Weyl superalgebras}
Let $m,n \in \Z_{\geq 0}$. The \emph{Weyl superalgebra} ${\mathcal D} (m|n)$ is the associative superalgebra with generators $\left\{x_i,\partial_{x_i}\bigm| i=1,\dots,m;-1,\dots,-n\right\}$ of parity
$$\overline{x_i} = \overline{\partial_{x_i}} = \begin{cases} 0~\text{if}~ i>0\\ 1 ~\text{if}~ i<0\end{cases},$$
satisfying the relations
$$[x_i,x_j]=\left[\partial_{x_i},\partial_{x_j}\right]=0, \quad \left[\partial_{x_i}, x_j\right]= \delta_{ij}, $$
where $[u,v]:=uv - (-1)^{\bar{u}\bar{v}}vu$.

\begin{remark}
For $n=1$, $\mathcal D(m|1)=\left\langle x_i,\partial_{x_i},\xi,\partial_\xi\bigm|i=1,\dots,m\right\rangle$ with
$\overline{x_i}=\overline{\partial_{x_i}}=0$ and $\overline{\xi}=\overline{\partial_\xi}=1$.
\end{remark}

\subsection{Weight modules over $\mathfrak{sl}(m|1)$ and $\mathcal{D}(m|1)$ }
\begin{definition}
    We will say that an $\mathfrak{sl}(m|1)$-module $M$ is a \emph{weight module} if 
	$$
	M = \bigoplus_{\lambda \in \h^*} M^{\lambda},\quad \text{where}\quad
	M^{\lambda} = \{m \in M \bigm| hm = \lambda(h)m, \mbox{ for every }h \in \h \}.$$
\end{definition}
\np
To introduce the notion of a weight module over ${\mathcal D} (m|1)$, we first denote 
 $$\mathfrak{h}_{{\mathcal D} (m|1)} := \Span \left\{x_i\partial_{x_i},\xi\partial_{\xi} \bigm| i\in {\bf{m}}  \right\}$$
 and define the following subset
 $$\mathfrak{h}^{\vee}_{{\mathcal D} (m|n)} := \Span \left\{ \mu \in \mathfrak{h}^*_{{\mathcal D} (m|1)}  \bigm| \mu(\xi\partial_{\xi}) 
 \in \{0,1\} \right\}. $$

 \begin{definition} \label{D(m|1)-weightmodule}
    A ${\mathcal D} (m|1)$-module $M$ is a weight module if $M$ decomposes as 
    $$M = \bigoplus_{ \mu \in \mathfrak{h}^{\vee}_{{\mathcal D} (m|1)}} M^{\mu},\quad M^{\mu}:= \left\{m\in M \bigm| u\cdot m = \mu(u)m, \;\text{for all}\;\; u \in \mathfrak{h}_{{\mathcal D} (m|1)}\right\}. $$
\end{definition}

 \subsection{The category $\mathcal{M}_{\mathfrak{sl}(m|1)}\bigl(k|k\bigr)$ over $\mathcal{U}(\mathfrak{sl}(m|1))$}
Let $k\in\Z_{\geq 1}$. Define the full subcategory $\mathcal{M}_{\mathfrak{sl}(m|1)}(k|k)\subset \mathcal{U}(\mathfrak{sl}(m|1))\text{-}\mathrm{mod}$,
consisting of $\Z_2$–graded modules $M=M_{\bar0}\oplus M_{\bar1}$ such that, upon restriction to $\mathcal U(\mathfrak{h})$,
$$
\Res^{\mathcal{U}(\mathfrak{sl}(m|1))}_{\mathcal{U}(\mathfrak{h})} M \;\simeq\;
M_{\bar0}\oplus M_{\bar1},\qquad
M_{\bar i}\simeq \mathcal{U}(\mathfrak{h})^{\oplus k}\; (i=0,1),
$$
as $\mathcal{U}(\mathfrak{h})$–modules. Morphisms in the category $\mathcal{M}_{\mathfrak{sl}(m|1)}(k|k)$ are $\Z_2$-graded $\mathcal{U}(\mathfrak{sl}(m|1))$-homomorphisms.
\np
\begin{remark}
For $k\in \Z_{\geq 1}$, we call the objects of $\mathcal{M}_{\mathfrak{sl}(m|1)}(k|k)$ as \emph{$\mathcal{U}(\mathfrak{h})$-free modules of rank $(k|k)$}.
\end{remark}

\begin{remark}
Throughout the paper, we will often identify the underlying $\Z_2$-graded vector space of any module $M\in\mathcal{M}_{\mathfrak{sl}(m|1)}(k|k)$ with
$$
M=M_{\bar0}\,\oplus\, M_{\bar1}\;=\; \C[{\bf h}]^{\oplus k}\,\oplus\,\C[{\bf h}]^{\oplus k}.
$$
\np
We write elements of $M$ as column vectors $\bigl[f_1({\bf h})\;\;\dots\;\;f_{2k}({\bf h})\bigr]^{\mathsf T}$ with $f_j({\bf h})\in\C[{\bf h}]$. Throughout, $h_i$ acts on $M$ by componentwise multiplication. Namely, for $i\in{\bf m}$,
$$h_i \; \cdot \; \bigl[f_1({\bf h})\;\;\dots\;\;f_{2k}({\bf h})\bigr]^{\mathsf T} \;=\; \bigl[h_if_1({\bf h})\;\;\dots\;\;h_if_{2k}({\bf h})\bigr]^{\mathsf T}. $$
\end{remark}

\begin{lemma}\label{structuresl(m|1)}
For $M \in \mathcal{M}_{\mathfrak{sl}(m|1)}\bigl(k|k\bigr)$, fix an identification $M\;=\;\C[{\bf h}]^{\oplus k} \oplus \C[{\bf h}]^{\oplus k}$ and write $\mathbf f({\bf h}) := \bigl[f_1({\bf h})\;\;\dots\;\;f_{2k}({\bf h})\bigr]^{\mathsf T}$, where $f_r({\bf h}) \in \C[\mathbf h]$. Then, for $i,j\in\mathbf m$ with $i\neq j$,
$$e_{i\bar 1}\cdot \mathbf f({\bf h})
  = E_{i\bar 1}({\bf h})\,\Delta_i^{-1}\bigl(\mathbf f({\bf h})\bigr),\;\; e_{\bar 1 i}\cdot \mathbf f({\bf h})
  = E_{\bar 1 i}({\bf h})\,\Delta_i\bigl(\mathbf f({\bf h})\bigr),\;\; e_{ij}\cdot \mathbf f({\bf h})
  = E_{ij}({\bf h})\,\sigma_i\sigma_j^{-1}\bigl(\mathbf f({\bf h})\bigr).$$
\np  
Moreover,
$$
E_{i\bar 1}({\bf h})=
\begin{bmatrix}
0 & A_{i\bar 1}({\bf h})\\
B_{i\bar 1}({\bf h}) & 0
\end{bmatrix},
\quad
E_{\bar 1 i}({\bf h})=
\begin{bmatrix}
0 & A_{\bar 1 i}({\bf h})\\
B_{\bar 1 i}({\bf h}) & 0
\end{bmatrix},
\quad 
E_{ij}({\bf h})=
\begin{bmatrix}
A_{ij}({\bf h}) & 0\\
0 & B_{ij}({\bf h})
\end{bmatrix},
$$
where $A_{IJ}({\bf h}), B_{IJ}({\bf h}) \in \Mat_k\bigl(\C[{\bf h}]\bigr)$ for all $I,J \in {\bf m}\cup\{\bar 1\}$.
\end{lemma}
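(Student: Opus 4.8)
The plan is to combine the commutation relations between $\mathfrak h$ and the listed root vectors with the fact that, on $M\simeq\C[{\bf h}]^{\oplus 2k}$, each $h_i$ acts by componentwise multiplication. First I would compute, for each root vector $g\in\{e_{i\bar 1},e_{\bar 1 i},e_{ij}\}$, its $\mathfrak h$-weight: writing $h_\ell=e_{\ell\ell}+e_{\bar 1\bar 1}$ and using $[e_{\ell\ell},e_{ab}]=(\delta_{\ell a}-\delta_{\ell b})e_{ab}$ together with the brackets with $e_{\bar 1\bar 1}$ (which are ordinary commutators, since $e_{\bar 1\bar 1}$ is even) gives
$$[h_\ell,e_{i\bar 1}]=(\delta_{\ell i}-1)\,e_{i\bar 1},\qquad
[h_\ell,e_{\bar 1 i}]=(1-\delta_{\ell i})\,e_{\bar 1 i},\qquad
[h_\ell,e_{ij}]=(\delta_{\ell i}-\delta_{\ell j})\,e_{ij}\ \ (i\neq j).$$
I will write $\alpha_g(h_\ell)$ for the scalar appearing on the right.

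Next, let $P_g$ be the $\C$-linear operator on $\C[{\bf h}]^{\oplus 2k}$ by which $g$ acts, and let $M_\ell$ denote multiplication by $h_\ell$. Since $h_\ell$ is even, the module axiom yields $h_\ell\cdot(g\cdot v)=g\cdot(h_\ell\cdot v)+\alpha_g(h_\ell)\,g\cdot v$, i.e. $P_g\bigl(h_\ell\,\mathbf f({\bf h})\bigr)=\bigl(h_\ell-\alpha_g(h_\ell)\bigr)P_g\bigl(\mathbf f({\bf h})\bigr)$. Let $\phi_g\in\Aut(\C[{\bf h}])$ be the automorphism determined by $h_\ell\mapsto h_\ell+\alpha_g(h_\ell)$; comparing with the explicit actions of $\sigma_i$ and $\Delta_i^{\pm1}$ one reads off $\phi_{e_{i\bar 1}}=\Delta_i$, $\phi_{e_{\bar 1 i}}=\Delta_i^{-1}$ and $\phi_{e_{ij}}=\sigma_i^{-1}\sigma_j$. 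Applying $\phi_g$ componentwise, the operator $Q_g:=\phi_g\circ P_g$ then satisfies $Q_g\bigl(h_\ell\,\mathbf f\bigr)=h_\ell\,Q_g(\mathbf f)$ for all $\ell$, so $Q_g$ is $\C[{\bf h}]$-linear, hence equals left multiplication by a matrix $F_g\in\Mat_{2k}(\C[{\bf h}])$. Then $P_g=\phi_g^{-1}\circ Q_g$, and the semilinearity identity $\phi_g^{-1}(F_g\cdot\mathbf f)=\bigl(\phi_g^{-1}F_g\bigr)\cdot\phi_g^{-1}(\mathbf f)$ gives $g\cdot\mathbf f({\bf h})=E_g({\bf h})\,\phi_g^{-1}\bigl(\mathbf f({\bf h})\bigr)$ with $E_g({\bf h}):=\phi_g^{-1}(F_g)\in\Mat_{2k}(\C[{\bf h}])$. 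Since $\phi_{e_{i\bar 1}}^{-1}=\Delta_i^{-1}$, $\phi_{e_{\bar 1 i}}^{-1}=\Delta_i$ and $\phi_{e_{ij}}^{-1}=\sigma_i\sigma_j^{-1}$, this is exactly the first displayed set of formulas.

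Finally, for the block shapes I would use the $\Z_2$-grading $M=M_{\bar 0}\oplus M_{\bar 1}$ with $M_{\bar j}=\C[{\bf h}]^{\oplus k}$: every $\phi_g$ acts within each graded component, so the parity behaviour of $g$ is carried entirely by $E_g({\bf h})$. The even element $e_{ij}$ preserves the grading, which forces $E_{ij}({\bf h})$ to be block diagonal; the odd elements $e_{i\bar 1}$ and $e_{\bar 1 i}$ interchange $M_{\bar 0}$ and $M_{\bar 1}$, which forces $E_{i\bar 1}({\bf h})$ and $E_{\bar 1 i}({\bf h})$ to be block anti-diagonal. Naming these $k\times k$ blocks $A_{IJ}({\bf h}),B_{IJ}({\bf h})$ completes the proof. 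The only genuinely delicate step is pinning down $\phi_g$ — that is, checking that the shift it induces on $\C[{\bf h}]$ cancels $\alpha_g$ exactly — which is the elementary comparison above; the remainder is the standard identification of $\C[{\bf h}]$-linear endomorphisms of a finite free module with matrices.
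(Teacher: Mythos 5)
Your proof is correct and takes essentially the same route as the paper: both derive, from the commutators of $h_\ell$ with $e_{i\bar 1}$, $e_{\bar 1 i}$, $e_{ij}$, the semilinearity of the action over $\C[{\bf h}]$, then use freeness of $M$ over $\C[{\bf h}]$ to produce the matrices and the $\Z_2$-grading to force the block shapes. Your twisting of the operator by $\phi_g$ to make it $\C[{\bf h}]$-linear is just a repackaging of the paper's step of moving $g({\bf h})$ past the root vector and reading off the columns $g\cdot e_\ell$.
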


\begin{proof}
For $\ell \in \{1,2,\dots,2k\},$  let $e_\ell$ be the column vector with a $1$ in the $\ell$-th position and $0$ elsewhere. Since $h_ie_{i\bar{1}} = e_{i\bar{1}}h_i $, and $h_je_{i\bar{1}} - e_{i\bar{1}}h_j = -e_{i\bar{1}}$ for all $j\neq i$, it follows (by induction on the degree) that, for every $g({\bf h})\in\C[{\bf h}]$,
$$e_{i\bar{1}}\; g({\bf{h}}) = \left(\prod_{j\in{\bf{m}}\setminus \{i\}} \sigma^{-1}_j \right)\!\!\Big(g({\bf{h}})\Big)\;e_{i\bar{1}} = \Delta^{-1}_i\big(g({\bf{h}})\big)\;e_{i\bar{1}}.$$

\np
Therefore,
$$e_{i\bar{1}}\cdot \mathbf f({\bf h})
=\sum_{\ell=1}^{2k} \Delta_i^{-1}\!\bigl(f_\ell({\bf h})\bigr)\,\bigl(e_{i\bar{1}}\cdot e_\ell\bigr)
=\Bigl[\,e_{i\bar{1}}\cdot e_1 \ \; \dots \ \; e_{i\bar{1}}\cdot e_{2k}\,\Bigr]\,
\Delta_i^{-1}\!\left( \mathbf f({\bf h}) \right).$$
\np
Define $E_{i\bar{1}}({\bf h})
:= \bigl[\,e_{i\bar{1}}\!\cdot e_1 \ \dots \ e_{i\bar{1}}\!\cdot e_{2k}\,\bigr]
\;\in\; \Mat_{2k}\bigl(\C[{\bf h}]\bigr)$. Then, for any column vector $\mathbf f({\bf h}) := \bigl[f_1({\bf h})\;\;\dots\;\;f_{2k}({\bf h})\bigr]^{\mathsf T}$, $e_{i\bar{1}}\cdot
\mathbf f({\bf h})
\;=\;
E_{i\bar{1}}({\bf h})\,
\Delta_i^{-1}\!\left(
\mathbf f({\bf h})\right)$. Similarly, we deduce
$$
e_{\bar{1} i}\cdot
\mathbf f({\bf h})
=
E_{\bar{1}i}({\bf h})\,
\Delta_i\left(
\mathbf f({\bf h})
\right),
\qquad
e_{ij}\cdot
\mathbf f({\bf h})
=
E_{ij}({\bf h})\,
\sigma_i\sigma_j^{-1}\!\left(
\mathbf f({\bf h})
\right),
$$
where
$$
E_{\bar{1}i}({\bf h})
:= \bigl[\,e_{\bar{1}i}\cdot e_1 \ \dots \ e_{\bar{1}i}\cdot e_{2k}\,\bigr],\quad E_{ij}({\bf h})
:= \bigl[\,e_{ij}\cdot e_1 \ \dots \ e_{ij}\cdot e_{2k}\,\bigr]
\in \Mat_{2k}\!\bigl(\C[{\bf h}]\bigr).$$

\np
Since, for $\epsilon\in\{0,1\}$,
$$
x\cdot M_{\bar\epsilon}\subseteq M_{\overline{\epsilon+1}}  \quad\text{for all }x\in\{e_{i\bar1},\,e_{\bar1 i}\}, \qquad\text{whereas}\qquad e_{ij}\cdot M_{\bar\epsilon}\subseteq M_{\bar\epsilon}, $$
we conclude that the representing matrices must have the forms
$$
E_{i\bar 1}({\bf h})=
\begin{bmatrix}
0 & A_{i\bar 1}({\bf h})\\
B_{i\bar 1}({\bf h}) & 0
\end{bmatrix},
\quad
E_{\bar 1 i}({\bf h})=
\begin{bmatrix}
0 & A_{\bar 1 i}({\bf h})\\
B_{\bar 1 i}({\bf h}) & 0
\end{bmatrix},
\quad 
E_{ij}({\bf h})=
\begin{bmatrix}
A_{ij}({\bf h}) & 0\\
0 & B_{ij}({\bf h})
\end{bmatrix},
$$
with $A_{IJ}({\bf h}),B_{IJ}({\bf h})\in \Mat_k\bigl(\C[{\bf h}]\bigr)$ for all $I,J\in {\bf m}\cup\{\bar 1\}$.
\end{proof}

\subsection{The category $ \mathcal{M}_{\mathfrak{sl}(m|1)}\bigl(1|1\bigr)$} 
In this subsection, we recall the description of the category $\mathcal{M}_{\mathfrak{sl}(m|1)}(1|1)$ from \cite{DN}.

\begin{definition} \label{sl(m|1)rank2}
    Let $\mathbf{a}:=(a_1,\dots,a_m)\in(\mathbb{C}^{\times})^{m}$ and let $\mathcal{S}\subseteq\mathbf{m}$.
Define
$$
M(\mathbf{a},\mathcal{S})
:=\C[{\bf h}] \;\oplus \; \C[{\bf h}]
$$
endowed with the following $\mathcal{U}(\mathfrak{sl}(m|1))$-action: for $i\in{\bf m}$ and $f_1({\bf h}),f_2({\bf h})\in\C[{\bf h}]$,
    $$h_i \cdot \begin{bmatrix} f_1({\bf{h}})  \\ f_{2}({\bf{h}})\end{bmatrix} = \begin{bmatrix}h_if_1({\bf{h}})  \\ h_if_{2}({\bf{h}}) \end{bmatrix}.$$
 \begin{enumerate}  
   \item[$\bullet$] If $i\in\mathcal S$,
$$
e_{i\bar{1}}\cdot
\begin{bmatrix} f_1({\bf h})\\ f_2({\bf h}) \end{bmatrix}
=
\begin{bmatrix} 0 & a_i h_i\\ 0 & 0 \end{bmatrix}
\Delta_i^{-1}
\left(\begin{bmatrix} f_1({\bf h})\\ f_2({\bf h}) \end{bmatrix}\right),
\qquad
e_{\bar{1} i}\cdot
\begin{bmatrix} f_1({\bf h})\\ f_2({\bf h}) \end{bmatrix}
=
\begin{bmatrix} 0 & 0\\ a_i^{-1} & 0 \end{bmatrix}
\Delta_i
\left(\begin{bmatrix} f_1({\bf h})\\ f_2({\bf h}) \end{bmatrix}\right).
$$
\item[$\bullet$]If $i\notin\mathcal S$,
$$
e_{i\bar{1}}\cdot
\begin{bmatrix} f_1({\bf h})\\ f_2({\bf h}) \end{bmatrix}
=
\begin{bmatrix} 0 & a_i\\ 0 & 0 \end{bmatrix}
\Delta_i^{-1}
\left(\begin{bmatrix} f_1({\bf h})\\ f_2({\bf h}) \end{bmatrix}\right),
\qquad
e_{\bar{1} i}\cdot
\begin{bmatrix} f_1({\bf h})\\ f_2({\bf h}) \end{bmatrix}
=
\begin{bmatrix} 0 & 0\\ a_i^{-1} h_i & 0 \end{bmatrix}
\Delta_i
\left(\begin{bmatrix} f_1({\bf h})\\ f_2({\bf h}) \end{bmatrix}\right).
$$
\end{enumerate}
\end{definition}

\np
\begin{remark}
    We omit an explicit closed formula for the action of $e_{ij}$ with distinct $i,j\in{\bf m}$, since it is determined by the actions of odd generators $e_{i\bar1}$ and $e_{\bar1 j}$. Indeed,
$$e_{ij} \cdot \begin{bmatrix} f_1({\bf{h}})  \\ f_{2}({\bf{h}})\end{bmatrix} = e_{i\bar{1}}\cdot e_{\bar{1}j} \cdot \begin{bmatrix} f_1({\bf{h}})  \\ f_{2}({\bf{h}})\end{bmatrix} +  e_{\bar{1}j} \cdot e_{i\bar{1}}\cdot \begin{bmatrix} f_1({\bf{h}})  \\ f_{2}({\bf{h}})\end{bmatrix}. $$
For instance, if $i\in \mathcal{S}$ and $j\notin \mathcal{S}$, then
$$e_{ij} \cdot \begin{bmatrix} f_1({\bf{h}})  \\ f_{2}({\bf{h}})\end{bmatrix} = \dfrac{a_i}{a_j}\begin{bmatrix} h_i(h_j+1) & 0 \\ 0 & (h_i-1)h_j \end{bmatrix}\, \sigma_i \sigma_j^{-1}\,\left(\begin{bmatrix} f_1({\bf{h}})  \\ f_{2}({\bf{h}})\end{bmatrix} \right) .$$
\end{remark}

\begin{theorem}[\cite{DN}, Thm.~4.10]\label{classisl(m|1)}
The following hold.
\begin{enumerate}
    \item For ${\bf a}=(a_1,\dots,a_m)\in(\C^\times)^m$ and $\mathcal S\subseteq{\bf m}$, the actions in Definition~\ref{sl(m|1)rank2} endow $M({\bf a},\mathcal S)$ with a $\mathcal U(\mathfrak{sl}(m|1))$-module structure.
    \item If $M\in \mathcal{M}_{\mathfrak{sl}(m|1)}(1|1)$, then
    $M \,\simeq\, M({\bf a},\mathcal S)$,
    for some ${\bf a}\in(\C^\times)^m$ and $\mathcal S\subseteq{\bf m}$.
\end{enumerate}
\end{theorem}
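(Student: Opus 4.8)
The statement combines a construction, (1), with a classification, (2). I would treat (1) by a direct check of the defining relations of $\mathfrak{sl}(m|1)$, and obtain (2) from Lemma~\ref{structuresl(m|1)} by analysing the constraints the bracket relations put on the representing matrices $E_{IJ}({\bf h})$.

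\emph{Part (1).} Since $\mathfrak{sl}(m|1)$ is of type I, it is generated by its odd root vectors $e_{i\bar1}$, $e_{\bar1 i}$, with $\mathfrak{gl}(m)$–part spanned by the $e_{ij}=\{e_{i\bar1},e_{\bar1 j}\}$ (as in the Remark after Definition~\ref{sl(m|1)rank2}). So it suffices to verify: the $\mathfrak h$–weight relations, which are automatic from the $\Delta_i^{\pm1}$–shape of the action; the relations $\{e_{i\bar1},e_{j\bar1}\}=0$ and $\{e_{\bar1 i},e_{\bar1 j}\}=0$ (all $i,j$, including the squares), which hold because in $M({\bf a},\mathcal S)$ each $E_{i\bar1}({\bf h})$ has its only nonzero entry in the $(1,2)$–slot, so that $E_{i\bar1}\,\Delta_i^{-1}(E_{j\bar1})=0$; the identity $\{e_{i\bar1},e_{\bar1 i}\}=h_i$, which — using that $\Delta_i$ fixes $h_i$ and fixes scalars — becomes $a_ih_i\cdot a_i^{-1}=h_i$ if $i\in\mathcal S$ and $a_i\cdot a_i^{-1}h_i=h_i$ if $i\notin\mathcal S$; and the mixed relations $[e_{i\bar1},e_{jk}]=-\delta_{ki}e_{j\bar1}$ and $[e_{\bar1 i},e_{jk}]=\delta_{ij}e_{\bar1 k}$, which are elementary matrix identities in $\Mat_2(\C[{\bf h}])$ once one uses $\Delta_i^{-1}\sigma_j\sigma_k^{-1}=\sigma_j\sigma_k^{-1}\Delta_i^{-1}$ and the like. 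The remaining $\mathfrak{gl}(m)$–relations among the $e_{ij}$ then follow formally from the super–Jacobi identity in $\End(M)$. (Alternatively, these modules arise via the oscillator homomorphism $\mathfrak{sl}(m|1)\to\mathcal D(m|1)$, $e_{ij}\mapsto x_i\partial_{x_j}$, $e_{i\bar1}\mapsto x_i\partial_\xi$, $e_{\bar1 i}\mapsto\xi\partial_{x_i}$, $h_i\mapsto x_i\partial_{x_i}+\xi\partial_\xi$, applied to a suitable twisted $\mathcal D(m|1)$–module.)

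\emph{Part (2).} Apply Lemma~\ref{structuresl(m|1)} with $k=1$: identify $M=\C[{\bf h}]\oplus\C[{\bf h}]$, so the actions of $e_{i\bar1}$, $e_{\bar1 i}$ are encoded by scalars $A_{i\bar1},B_{i\bar1},A_{\bar1 i},B_{\bar1 i}\in\C[{\bf h}]$. The relation $e_{i\bar1}^2=0$ gives $E_{i\bar1}\,\Delta_i^{-1}(E_{i\bar1})=0$; since $\C[{\bf h}]$ is a domain on which $\Delta_i$ acts by an automorphism, this forces one of $A_{i\bar1},B_{i\bar1}$ to vanish, and likewise $e_{\bar1 i}^2=0$ forces one of $A_{\bar1 i},B_{\bar1 i}$ to vanish. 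The relation $\{e_{i\bar1},e_{\bar1 i}\}=h_i$ reads $E_{i\bar1}\,\Delta_i^{-1}(E_{\bar1 i})+E_{\bar1 i}\,\Delta_i(E_{i\bar1})=h_i I_2$, whose diagonal entries are $A_{i\bar1}\,\Delta_i^{-1}(B_{\bar1 i})+A_{\bar1 i}\,\Delta_i(B_{i\bar1})=h_i$ and $B_{i\bar1}\,\Delta_i^{-1}(A_{\bar1 i})+B_{\bar1 i}\,\Delta_i(A_{i\bar1})=h_i$. In particular $e_{i\bar1}\neq0\neq e_{\bar1 i}$ (else a left side would be $0\neq h_i$), so exactly one of $A_{i\bar1},B_{i\bar1}$ and, complementarily, exactly one of $A_{\bar1 i},B_{\bar1 i}$ is nonzero; using $\{e_{i\bar1},e_{j\bar1}\}=0$ one checks that all indices lie in the same branch, which is the one realised by $M({\bf a},\mathcal S)$ (the other being its $\Z_2$–grading reversal). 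In that branch the two displayed equations reduce to $A_{i\bar1}\,\Delta_i^{-1}(B_{\bar1 i})=h_i=B_{\bar1 i}\,\Delta_i(A_{i\bar1})$ in $\C[{\bf h}]$; since $h_i$ is prime there and the units of $\C[{\bf h}]$ are the nonzero scalars, this forces either $A_{i\bar1}=a_i$, $B_{\bar1 i}=a_i^{-1}h_i$ or $A_{i\bar1}=a_ih_i$, $B_{\bar1 i}=a_i^{-1}$ for some $a_i\in\C^\times$. Putting $\mathcal S:=\{i:A_{i\bar1}=a_ih_i\}$ and ${\bf a}:=(a_1,\dots,a_m)$, the actions of $e_{i\bar1},e_{\bar1 i}$ — hence of all generators — coincide with those on $M({\bf a},\mathcal S)$, so the identity map realises an isomorphism $M\simeq M({\bf a},\mathcal S)$.

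The main obstacle is the analysis in Part~(2): isolating a tractable subset of the many relations of $\mathfrak{sl}(m|1)$ (essentially the squares of the odd generators and the anticommutators $\{e_{i\bar1},e_{\bar1 j}\}$) that already determines all of the $E_{IJ}({\bf h})$, and then using primality of $h_i$ in $\C[{\bf h}]$ to exclude solutions of higher degree or depending on $h_j$ for $j\neq i$. The recurring technical ingredient, in both parts, is the calculus of the shift operators $\Delta_i,\Delta_i^{-1},\sigma_i$ under composition of actions (e.g.\ $\Delta_i^{-1}\Delta_j=\sigma_i\sigma_j^{-1}$), which is exactly what makes each relation collapse to an elementary identity in $\C[{\bf h}]$.
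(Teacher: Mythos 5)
The paper itself gives no proof of this theorem; it is imported from \cite{DN}, Thm.~4.10, so there is no internal argument to measure yours against. Your overall strategy is sound. For (1), checking the weight relations, the odd--odd relations, $\{e_{i\bar1},e_{\bar1 i}\}=h_i$, and the mixed relations with $e_{ij}:=\{e_{i\bar1},e_{\bar1 j}\}$ does suffice: once those hold as operator identities, the remaining $\mathfrak{gl}(m)$ brackets follow from the super-Jacobi identity in $\End(M)$ (and your parenthetical route through the homomorphism to $\mathcal D(m|1)$ is essentially how the paper later recovers these modules, cf.\ Theorem~\ref{classi-sl(m|1)asD(m|1)-mod}); the mixed relations do still have to be computed rather than asserted, but that is a matter of detail, not of approach. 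For (2), the use of Lemma~\ref{structuresl(m|1)} with $k=1$, the squares $e_{i\bar1}^2=e_{\bar1 i}^2=0$, the anticommutator with $h_i$, and the factorization argument using that $h_i$ is irreducible in $\C[\mathbf h]$ and that the units of $\C[\mathbf h]$ are the nonzero scalars correctly pin the entries down to exactly the two forms of Definition~\ref{sl(m|1)rank2}, and your same-branch argument via $\{e_{i\bar1},e_{j\bar1}\}=0$ is correct.

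The one genuine gap is the disposition of the two branches in (2). You establish the dichotomy and that all indices lie in a common branch, but nothing forces that branch to be the one occurring in $M(\mathbf a,\mathcal S)$ (where $e_{i\bar1}$ kills $M_{\bar0}$ and maps $M_{\bar1}$ to $M_{\bar0}$): the parity reversal of $M(\mathbf a,\mathcal S)$ is again an object of $\mathcal M_{\mathfrak{sl}(m|1)}(1|1)$ and realizes the other branch. An even (parity-preserving) $\mathcal U(\mathfrak h)$-linear isomorphism between rank-$(1|1)$ free modules is conjugation by $\diag(u,v)$ with $u,v\in\C^\times$, hence preserves which off-diagonal blocks vanish; so the ``lower-branch'' modules are not isomorphic to any $M(\mathbf a,\mathcal S)$ through an even map. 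Your parenthetical ``the other being its $\Z_2$-grading reversal'' therefore does not finish the classification: you must state explicitly that isomorphisms are taken in the $\Z_2$-graded sense allowing odd homogeneous morphisms (which the paper's computation of $\Hom(\cdot,\cdot)_{\bar1}$ in Proposition~\ref{simplified-W(h)} implicitly permits), so that parity reversal becomes an isomorphism and the lower branch is absorbed into the upper one, or else invoke whatever parity convention \cite{DN} adopts. As written, the lower-branch case is left unresolved, and with only even isomorphisms the statement you prove would in fact be weaker than the one claimed.
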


\subsection{Modules over ${\mathfrak{sl}(1|1)}$} 
For $\mathcal U(\mathfrak h)$-free modules of rank $(1|1)$ over $\mathcal U(\mathfrak{sl}(1|1))$, we replace Definition~\ref{sl(m|1)rank2} with the following.

\begin{definition}
Let $\alpha\in\C^\times$. Endow $M(\alpha h_1)$ and $M(\alpha)$ with the same $\Z_2$-graded vector space
$$
M(\alpha h_1)=M(\alpha)=\C[h_1]\,\oplus \, \C[h_1],
$$
with the $\mathfrak h$–action
$$
h_1\cdot
\begin{bmatrix} f_1(h_1)\\ f_2(h_1) \end{bmatrix}
=
\begin{bmatrix} h_1 f_1(h_1)\\ h_1 f_2(h_1) \end{bmatrix}.
$$
The odd generators act as follows:
\begin{enumerate}
\item[$\bullet$] $M(\alpha h_1)$:
$$
e_{1\bar1}\cdot
\begin{bmatrix} f_1(h_1)\\ f_2(h_1) \end{bmatrix}
=
\begin{bmatrix} 0 & \alpha h_1\\ 0 & 0 \end{bmatrix}
\begin{bmatrix} f_1(h_1)\\ f_2(h_1) \end{bmatrix},
\qquad
e_{\bar11}\cdot
\begin{bmatrix} f_1(h_1)\\ f_2(h_1) \end{bmatrix}
=
\begin{bmatrix} 0 & 0\\ \alpha^{-1} & 0 \end{bmatrix}
\begin{bmatrix} f_1(h_1)\\ f_2(h_1) \end{bmatrix}.
$$

\item[$\bullet$]$M(\alpha)$:
$$
e_{1\bar1}\cdot
\begin{bmatrix} f_1(h_1)\\ f_2(h_1) \end{bmatrix}
=
\begin{bmatrix} 0 & \alpha\\ 0 & 0 \end{bmatrix}
\begin{bmatrix} f_1(h_1)\\ f_2(h_1) \end{bmatrix},
\qquad
e_{\bar11}\cdot
\begin{bmatrix} f_1(h_1)\\ f_2(h_1) \end{bmatrix}
=
\begin{bmatrix} 0 & 0\\ \alpha^{-1} h_1 & 0 \end{bmatrix}
\begin{bmatrix} f_1(h_1)\\ f_2(h_1) \end{bmatrix}.
$$
\end{enumerate}
Here $f_1,f_2\in\C[h_1]$. These actions endow $M(\alpha h_1)$ and $M(\alpha)$ with $\mathcal U(\mathfrak{sl}(1|1))$-module structures.
\end{definition}

\np
\begin{proposition}\label{simplified-sl(1|1)}
For any $\alpha\in \C^\times$, $M(\alpha h_1)\simeq M(h_1)$ and $M(\alpha)\simeq M(1)$. Moreover, every $M\in\mathcal{M}_{\mathfrak{sl}(1|1)}(1|1)$ is isomorphic to either $M(h_1)$ or $M(1)$.
\end{proposition}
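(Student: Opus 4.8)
The plan is to establish Proposition~\ref{simplified-sl(1|1)} in two moves: first normalize the parameter $\alpha$ to $1$ by an explicit $\mathcal U(\mathfrak{h})$-linear rescaling isomorphism, and then show that the only two possible module structures on $\C[h_1]^{\oplus 1}\oplus\C[h_1]^{\oplus 1}$ are $M(h_1)$ and $M(1)$. For the first move, I would write down a diagonal change of basis $\Phi_c\colon M(\alpha h_1)\to M(h_1)$ of the form $\Phi_c[f_1\;\;f_2]^{\mathsf T}=[c f_1\;\;f_2]^{\mathsf T}$ (or with the scalar on the second coordinate, whichever matches the conventions), and check directly that for a suitable $c\in\C^\times$ it intertwines the two actions of $e_{1\bar1}$ and $e_{\bar11}$; since $h_1$ acts by multiplication in both modules, $\mathcal U(\mathfrak h)$-linearity is automatic, and because $\mathfrak{sl}(1|1)$ is generated by $h_1,e_{1\bar1},e_{\bar11}$ (with $[e_{1\bar1},e_{\bar11}]=h_1$, up to sign/normalization), intertwining these generators suffices. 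The same computation, mutatis mutandis, handles $M(\alpha)\simeq M(1)$.

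For the classification part, let $M\in\mathcal M_{\mathfrak{sl}(1|1)}(1|1)$ and identify $M=\C[h_1]^{\oplus 1}\oplus\C[h_1]^{\oplus 1}$ as in the standing conventions, so that $h_1$ acts by componentwise multiplication. Applying Lemma~\ref{structuresl(m|1)} with $m=1$, $k=1$, the odd generators act by
$$
e_{1\bar1}\cdot\mathbf f(h_1)=\begin{bmatrix}0 & A_{1\bar1}(h_1)\\ B_{1\bar1}(h_1)&0\end{bmatrix}\Delta_1^{-1}\bigl(\mathbf f(h_1)\bigr),
\qquad
e_{\bar11}\cdot\mathbf f(h_1)=\begin{bmatrix}0 & A_{\bar11}(h_1)\\ B_{\bar11}(h_1)&0\end{bmatrix}\Delta_1\bigl(\mathbf f(h_1)\bigr),
$$
with scalar polynomials $A_{1\bar1},B_{1\bar1},A_{\bar11},B_{\bar11}\in\C[h_1]$, and $\Delta_1=\sigma_1$ here. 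Since $\Delta_1^{-1}$ and $\Delta_1$ are inverse automorphisms, the parity grading forces $e_{1\bar1}$ to send the first summand to $0$ and the second to the first (so only $A_{1\bar1}\neq 0$ is relevant once we impose more relations), and symmetrically for $e_{\bar11}$ — here I would be careful and keep all four polynomials until the relations kill two of them. The $\mathfrak{sl}(1|1)$ relations to impose are $e_{1\bar1}^2=0$, $e_{\bar11}^2=0$, and $[e_{1\bar1},e_{\bar11}]=e_{1\bar1}e_{\bar11}+e_{\bar11}e_{1\bar1}=h_1$ (as an operator, multiplication by $h_1$); the first two are automatic from the off-diagonal block shape, and the third becomes a pair of functional equations
$$
A_{1\bar1}(h_1)\,\sigma_1^{-1}\!\bigl(B_{\bar11}(h_1)\bigr)+A_{\bar11}(h_1)\,\sigma_1\!\bigl(B_{1\bar1}(h_1)\bigr)=h_1,
$$
together with the analogous identity on the other diagonal block, after tracking how the automorphisms compose (using $\Delta_1^{-1}\Delta_1=\mathrm{id}$ and the Remark computing $\Delta_i^{-1}\Delta_j=\sigma_i\sigma_j^{-1}$). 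Solving these polynomial identities over $\C[h_1]$: a product of two polynomials in shifted variables equalling $h_1$ (a linear, irreducible, monic polynomial) forces each factor to be a scalar except for one factor on one side which is $\alpha^{-1}h_1$ or a scalar times $h_1$; a short case analysis yields exactly two solution families, one where the "$h_1$" sits on $e_{1\bar1}$ (giving $M(\alpha h_1)$-type) and one where it sits on $e_{\bar11}$ (giving $M(\alpha)$-type). Combined with the first move, this proves every such $M$ is isomorphic to $M(h_1)$ or $M(1)$.

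The main obstacle I anticipate is the bookkeeping in the second step: correctly composing the automorphisms $\sigma_1^{\pm1}$ when expanding $e_{1\bar1}e_{\bar11}$ and $e_{\bar11}e_{1\bar1}$ as operators on column vectors (each application of $e_{1\bar1}$ both multiplies by a matrix and applies $\Delta_1^{-1}$ to everything to its right, so the matrices pick up shifts), and then reading off from the resulting two scalar functional equations that the only freedom is a single nonzero scalar $\alpha$ and a binary choice of where the factor $h_1$ lives. I would also need to double-check that $M(h_1)\not\simeq M(1)$ is not being claimed here (the proposition only asserts "isomorphic to either", not that the two are distinct), so no nonisomorphism argument is required; and to confirm the normalization of the bracket $[e_{1\bar1},e_{\bar11}]$ against the $h_i=e_{ii}+e_{\bar1\bar1}$ convention fixed in Section~2, since a sign or scalar discrepancy there would change the functional equations but not the shape of the conclusion.
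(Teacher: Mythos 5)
The paper states Proposition~\ref{simplified-sl(1|1)} without proof, so there is no in-text argument to compare against; judged on its own, your plan (normalize $\alpha$ by a diagonal rescaling, then classify via Lemma~\ref{structuresl(m|1)} and the $\mathfrak{sl}(1|1)$ relations) is the right one and your first step is correct. The classification step, however, has three concrete problems. First, for $m=1$ the operator $\Delta_1$ is the \emph{empty} product, i.e.\ the identity, not $\sigma_1$: $h_1=e_{11}+e_{\bar1\bar1}$ is central in $\mathfrak{sl}(1|1)$, so no shifts occur anywhere (this is exactly why the paper's $\mathfrak{sl}(1|1)$ definition carries no $\Delta$'s). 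Your functional equation with the factors $\sigma_1^{\pm1}(\cdot)$ is therefore not the correct relation; solving it as written leads to factorizations such as $A(h_1)\,D(h_1-1)=h_1$, whose solutions are not the entries $\alpha h_1,\ \alpha^{-1}$ of $M(\alpha h_1)$ and $M(\alpha)$, so the shifts must be removed for the conclusion to come out right. Second, the claim that $e_{1\bar1}^2=0$ and $e_{\bar11}^2=0$ are ``automatic from the off-diagonal block shape'' is false: the square of $\begin{bmatrix}0&A\\ B&0\end{bmatrix}$ is $\diag(AB,\,BA)$, which vanishes only when $A=0$ or $B=0$. These two relations are precisely what eliminate two of the four polynomials; the anticommutator alone gives the single identity $A_{1\bar1}B_{\bar11}+A_{\bar11}B_{1\bar1}=h_1$, which has many extra solutions with all four entries nonzero (e.g.\ $A_{1\bar1}=B_{\bar11}=B_{1\bar1}=1$, $A_{\bar11}=h_1-1$) that do not define modules, so the ``short case analysis'' as described does not close.

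Third, even after imposing the square relations there remain, up to \emph{even} isomorphism, not two but three shapes: besides the expected ones ($e_{1\bar1}$ strictly upper triangular and $e_{\bar11}$ strictly lower triangular, with the product of the nonzero entries equal to $h_1$), there is the parity-swapped family $e_{1\bar1}=\begin{bmatrix}0&0\\ B&0\end{bmatrix}$, $e_{\bar11}=\begin{bmatrix}0&C\\ 0&0\end{bmatrix}$ with $BC=h_1$, which satisfies all relations and lies in $\mathcal M_{\mathfrak{sl}(1|1)}(1|1)$. Any even $\C[h_1]$-linear isomorphism is diagonal with scalar entries and cannot carry a strictly lower triangular $e_{1\bar1}$ to a strictly upper triangular one, so identifying this family with $M(h_1)$ or $M(1)$ requires an odd (parity-reversing) isomorphism --- which the paper's conventions do admit, since $\Hom$ in this category is $\Z_2$-graded with a possibly nonzero odd part (cf.\ Proposition~\ref{simplified-W(h)}) --- or an explicit appeal to parity shift. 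Your case analysis never mentions this family, so the ``moreover'' part of the statement is not fully established. All three issues are repairable, but they are exactly where the substance of the computation lies.
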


\section{Generalized permutation matrix and $\mathcal{U}(\mathfrak{h})$-free modules of finite rank over $\mathfrak{sl}(m|1)$}
\np
To generalize the rank-$(1|1)$ module $M(\mathbf a,\mathcal S)$ (Definition~\ref{sl(m|1)rank2}) to higher-rank
$\mathcal U(\mathfrak h)$–free modules, we first introduce generalized permutation matrices over a ring $\mathcal R$.

\subsection{Generalized permutation matrices and $h_i$-companion pairs}

\begin{definition}[\textbf{Generalized permutation matrix (GPM)}] Let $\mathcal{R}$ be a ring, and $n \in \Z_{\geq 1}$. A matrix $A \in \Mat_n(\mathcal{R})$ is called a \emph{generalized permutation matrix (GPM)} if each row and each column contains exactly one non‑zero entry.
\end{definition}

\np
Let $A\in\Mat_n(\mathcal R)$ be a GPM.
The positions of the non-zero entries in $A$ determine a permutation 
matrix $P_\pi$ associated to a permutation $\pi\in S_n$.
Furthermore $A$ admits a unique factorization:
$$
A=P_\pi D,\qquad
D=\diag\bigl(A_{\pi(1),1},\dots,A_{\pi(n),n}\bigr),
$$
as the product of a permutation matrix and  a diagonal matrix. 
We write $\pi(A)$ to denote the permutation $\pi$.

\begin{example} The matrix
$$A = \begin{bmatrix} u_1 &0&0&0\\ 0&0&u_3 &0 \\0& 0 & 0 & u_4 \\ 0&u_2 &0&0\end{bmatrix},$$
where $u_i\in\C[x]\setminus\{0\}$ for $i \in \{1,2,3,4\}$,
is a GPM with $\pi(A) = (2\,4\,3)$ and $D = \diag \bigl(u_1, u_2, u_3, u_4\bigr)$.
\end{example}

\begin{definition}[\textbf{$h_i$-companion pair}]
Let $i\in{\bf m}$ and $\ell\in\Z_{\ge1}$. A pair $(A,A^{\comp})$ with
$A,A^{\comp}\in \Mat_\ell\bigl(\C[h_i]\bigr)$ is called an \textbf{$h_i$-companion pair} if
$$
\text{$A$ is a GPM}\quad\text{and}\quad
A\,A^{\comp}=A^{\comp}A=h_i\,\I_\ell.
$$
\end{definition}

\begin{remark}
   If $(A,A^{\comp})$ is an $h_i$-companion pair, then $A^{\comp}$ is also a GPM and
$$
\pi\!\left(A^{\comp}\right)=\pi(A)^{-1}.
$$
\end{remark}
\np
\begin{example}
    For $a_1, a_2, a_3 \in \C^{\times}$, the pair $\left(\begin{bmatrix}a_1 &0 &0 \\0&0&a_2h_i \\0&a_3h_i &0 \end{bmatrix}, \begin{bmatrix}a_1^{-1}h_i &0 &0 \\0&0&a_3^{-1} \\0&a_2^{-1} &0 \end{bmatrix} \right) $ is an $h_i$-companion pair.
\end{example}

\np
\begin{remark}\label{observation-sl(m|1)-evenrank}
Since $A$ and $A^{\comp}$ are GPMs and
$
A\,A^{\comp}=h_i\,\I_\ell,
$
each nonzero entry of $A$ (and likewise of $A^{\comp}$) is either a nonzero scalar or a nonzero scalar times $h_i$.
\end{remark}

\np
\begin{lemma}\label{equivofA_12}
Fix $i\in{\bf m}$ and $\ell\geq 1$. Let $(A,A^{\comp})$ and $(B,B^{\comp})$ be $h_i$-companion pairs with
$A,A^{\comp},B,B^{\comp}\in \Mat_\ell(\C[h_i])$, and let $W(\mathbf h),V(\mathbf h)\in \Mat_\ell(\C[\mathbf h])$. Then the following are equivalent:
\begin{enumerate}
\item $\,W(\mathbf h)\,A \;=\; B\,\Delta_i^{-1}\bigl(V(\mathbf h)\bigr)$;
\item $\,V(\mathbf h)\,A^{\comp} \;=\; B^{\comp}\,\Delta_i\bigl(W(\mathbf h)\bigr)$.
\end{enumerate}
\end{lemma}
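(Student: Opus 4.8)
The plan is to use the defining relations of the $h_i$-companion pairs to pass between the two identities by right-multiplication. Starting from (1), namely $W(\mathbf h)\,A = B\,\Delta_i^{-1}(V(\mathbf h))$, I would right-multiply both sides by $A^{\comp}$ and use $A A^{\comp} = h_i\,\I_\ell$ to get $h_i\,W(\mathbf h) = B\,\Delta_i^{-1}(V(\mathbf h))\,A^{\comp}$. The key point is now to move $A^{\comp}$ inside the $\Delta_i^{-1}$: since $A^{\comp}\in\Mat_\ell(\C[h_i])$ and $\Delta_i^{-1}$ fixes $h_i$ (it only shifts the variables $h_j$ for $j\neq i$), we have $\Delta_i^{-1}(V(\mathbf h))\,A^{\comp} = \Delta_i^{-1}\bigl(V(\mathbf h)\,A^{\comp}\bigr)$, because $\Delta_i^{-1}$ acts entrywise as a ring automorphism of $\C[\mathbf h]$ that is the identity on the subring $\C[h_i]$ containing the entries of $A^{\comp}$. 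Thus $h_i\,W(\mathbf h) = B\,\Delta_i^{-1}\bigl(V(\mathbf h)\,A^{\comp}\bigr)$.

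Next I would left-multiply this last identity by $B^{\comp}$ and use $B^{\comp}B = h_i\,\I_\ell$, obtaining $h_i\,B^{\comp}W(\mathbf h) = h_i\,\Delta_i^{-1}\bigl(V(\mathbf h)\,A^{\comp}\bigr)$. Here I again use that $B^{\comp}$ has entries in $\C[h_i]$, which commutes with the scalar $h_i$ and with $\Delta_i^{-1}$. Since $\C[\mathbf h]$ is an integral domain and $h_i\neq 0$, I can cancel the factor $h_i$ from both sides of this matrix equation (entrywise), yielding $B^{\comp}W(\mathbf h) = \Delta_i^{-1}\bigl(V(\mathbf h)\,A^{\comp}\bigr)$. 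Applying the automorphism $\Delta_i$ to both sides and using that $\Delta_i$ fixes the entries of $B^{\comp}$ gives $B^{\comp}\,\Delta_i\bigl(W(\mathbf h)\bigr) = V(\mathbf h)\,A^{\comp}$, which is exactly (2). The reverse implication $(2)\Rightarrow(1)$ is entirely symmetric: the hypotheses on $(A,A^{\comp})$ and $(B,B^{\comp})$ are symmetric under swapping $A\leftrightarrow A^{\comp}$, $B\leftrightarrow B^{\comp}$ and replacing $\Delta_i$ by $\Delta_i^{-1}$, so the same chain of manipulations applies.

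The only genuinely delicate point — and the one I would state as a small auxiliary observation before the main computation — is the commutation of $\Delta_i^{\pm1}$ with right- or left-multiplication by a matrix whose entries lie in $\C[h_i]$. This is where the hypothesis that $A,A^{\comp},B,B^{\comp}\in\Mat_\ell(\C[h_i])$ (rather than $\Mat_\ell(\C[\mathbf h])$) is essential: $\Delta_i^{\pm1}$ is the ring automorphism of $\C[\mathbf h]$ sending $h_j\mapsto h_j\mp 1$ for $j\neq i$ and fixing $h_i$, so for any $P(\mathbf h)\in\Mat_\ell(\C[\mathbf h])$ and any $C\in\Mat_\ell(\C[h_i])$ one has $\Delta_i^{\pm1}(P(\mathbf h)\,C)=\Delta_i^{\pm1}(P(\mathbf h))\,C$ and $\Delta_i^{\pm1}(C\,P(\mathbf h))=C\,\Delta_i^{\pm1}(P(\mathbf h))$, since matrix multiplication only involves sums of products of entries and $\Delta_i^{\pm1}$ is a ring homomorphism fixing the entries of $C$. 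Once this is recorded, the rest is the bookkeeping of multiplying by companions and cancelling $h_i$ in the domain $\C[\mathbf h]$, with no real obstacle.
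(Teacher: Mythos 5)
Your proposal is correct and uses essentially the same ingredients as the paper's proof: the companion relations $AA^{\comp}=B^{\comp}B=h_i\I_\ell$, the fact that $\Delta_i^{\pm1}$ fixes matrices with entries in $\C[h_i]$, and cancellation of $h_i$ in the integral domain $\C[\mathbf h]$. The paper merely organizes the same manipulations as a single chain of equivalences (applying $\Delta_i$ first, then multiplying by $B^{\comp}$ and $A^{\comp}$), whereas you do one direction and invoke symmetry, which is a cosmetic difference.
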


\begin{proof}
Since $(A,A^{\comp})$ and $(B,B^{\comp})$ are $h_i$-companion pairs, we have
$$AA^{\comp}=A^{\comp}A= BB^{\comp}=B^{\comp}B=h_i\,\I_\ell.$$
Moreover, $\Delta_i$ acts trivially on $\C[h_i]$, hence on any matrix with
entries in $\C[h_i]$; in particular,
$$
\Delta_i(A)=A,\quad \Delta_i(A^{\comp})=A^{\comp},\quad \Delta_i(B)=B,\quad \Delta_i(B^{\comp})=B^{\comp}.
$$
\np
The statement follows from the chain of equivalences
$$
\begin{aligned}
   & W(\mathbf{h})\,A
     \;=\;
     B\,\Delta_i^{-1}\bigl(V(\mathbf{h})\bigr)\\
   \Longleftrightarrow\;&
     \Delta_i\bigl(W(\mathbf{h})\,A\bigr)
     \;=\;  \Delta_i\Bigl(B\,\Delta_i^{-1}\bigl(V(\mathbf{h})\bigr)\Bigr)\\
   \Longleftrightarrow\;&
     \Delta_i\bigl(W(\mathbf{h})\bigr)\,A
     \;=\;
     B\,V(\mathbf{h})\\
   \Longleftrightarrow\;&
     B^{\comp}\,\Delta_i\bigl(W(\mathbf{h})\bigr)\,A\,A^{\comp}
     \;=\;
     B^{\comp}\,B\,V(\mathbf{h})\,A^{\comp}\\
   \Longleftrightarrow\;&
     B^{\comp}\,\Delta_i\bigl(W(\mathbf{h})\bigr)\,(h_i\,\mathbf{I}_\ell)
     \;=\;
     (h_i\,\mathbf{I}_\ell)\,V(\mathbf{h})\,A^{\comp}\\
   \Longleftrightarrow\;&
     B^{\comp}\,\Delta_i\bigl(W(\mathbf{h})\bigr)
     \;=\;
     V(\mathbf{h})\,A^{\comp}. 
\end{aligned} 
$$
\end{proof}
\np
The next two lemmas will be used in the proof of Theorem~\ref{maintheorem_indec}.

\begin{lemma}\label{simplifymatrixentries}
Let $g(\mathbf h)\in \C[\mathbf h]$ and $k\in\Z_{\geq 1}$. Fix distinct indices $i,j\in\mathbf m$.
Let $\ell\geq 1$ and choose integers
$$
1\leq i_1 < j_1 < i_2 < j_2 < \dots < i_\ell < j_\ell \leq k,
\quad\text{or}\quad
1\leq j_1 < i_1 < j_2 < i_2 < \dots < j_\ell < i_\ell \leq k.
$$
Then
$$
  g(\mathbf h)
  \;=\;
  \prod_{r=1}^{\ell}\frac{h_j+i_r}{\,h_j+j_r\,}\,
  \bigl(\sigma_i\sigma_j^{-1}\bigr)^{k}\bigl(g(\mathbf h)\bigr)
  \quad\text{implies}\quad
  g(\mathbf h)=0.
  $$
\end{lemma}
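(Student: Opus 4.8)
The idea is to compare degrees (or, more precisely, the behavior of $g$ along a line in the direction $\varepsilon_i - \varepsilon_j$) after iterating the functional equation. Write $\tau := \sigma_i\sigma_j^{-1}$, so $\tau$ sends $h_i \mapsto h_i - 1$, $h_j \mapsto h_j + 1$, and fixes all other $h_t$; note $\tau$ has infinite order. Suppose $g(\mathbf h) \neq 0$. The plan is to apply $\tau^k$ repeatedly: from
$$
g(\mathbf h) = \prod_{r=1}^{\ell}\frac{h_j+i_r}{h_j+j_r}\,\tau^{k}\bigl(g(\mathbf h)\bigr)
$$
one gets, for every $N \geq 1$,
$$
g(\mathbf h) = \Biggl(\prod_{s=0}^{N-1}\prod_{r=1}^{\ell}\frac{h_j + i_r + sk}{h_j + j_r + sk}\Biggr)\,\tau^{Nk}\bigl(g(\mathbf h)\bigr),
$$
since $\tau^{sk}$ shifts $h_j$ by $sk$ (and the product expression is unchanged as a rational identity because every application of $\tau^k$ commutes past the scalar factors after the appropriate shift). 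The first key step is therefore to establish this iterated identity by induction on $N$, keeping careful track of the shifts.

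**Main step.** The second, and crucial, step is to read off a contradiction from the chosen interlacing of the $i_r$ and $j_r$. Because $g(\mathbf h) \in \C[\mathbf h]$ is a genuine polynomial, the rational function $\prod_{s=0}^{N-1}\prod_{r=1}^{\ell}\frac{h_j + i_r + sk}{h_j + j_r + sk}$ must, after multiplication by the polynomial $\tau^{Nk}(g(\mathbf h))$, land back in $\C[\mathbf h]$. Now specialize all variables except $h_j$ to generic constants, so that $g$ restricts to a nonzero one-variable polynomial $\tilde g(h_j)$ of some degree $d$; the identity becomes a one-variable rational-function identity. The denominator $\prod_{s,r}(h_j + j_r + sk)$ has $N\ell$ roots; I must show these roots cannot all be cancelled. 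Here the strict interlacing $i_1 < j_1 < i_2 < j_2 < \cdots$ (or the other pattern) is exactly what is needed: the multiset of numerator roots $\{-(i_r + sk)\}$ and denominator roots $\{-(j_r + sk)\}$ overlap in only a bounded number of elements (independent of $N$) — roughly, $-(i_r + sk) = -(j_{r'} + s'k)$ forces $i_r - j_{r'} = (s' - s)k$, and the interlacing together with $i_r, j_{r'} \in \{1,\dots,k\}$ (wait: they range in $1 \le \cdots \le k$, so all differences lie strictly between $-k$ and $k$) forces $s = s'$ and then $i_r = j_{r'}$, impossible. So *no* cancellation occurs at all, the denominator contributes $N\ell$ genuine poles, yet the left side $g(\mathbf h)$ has none — contradiction once $N\ell \geq 1$, i.e. immediately. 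Hence $g(\mathbf h) = 0$.

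**Remarks on execution.** Actually one does not even need to iterate: already at $N = 1$, the identity $g = \prod_r \frac{h_j+i_r}{h_j+j_r}\,\tau^k(g)$ shows that $h_j + j_r$ divides $\tau^k(g)$ in $\C[\mathbf h]$ for each $r$ — equivalently $h_j + j_r - k$ divides $g$, i.e. $h_j + (j_r - k)$ divides $g$ — while simultaneously no factor $h_j + i_r$ of the numerator cancels unless $i_r = j_{r'}$ for some $r'$. The cleanest route: clear denominators to get $\prod_r (h_j + j_r)\, g(\mathbf h) = \prod_r (h_j + i_r)\, \tau^k(g(\mathbf h))$, restrict to one variable $h_j$ as above to get $\tilde g$ of degree $d$, and compare the degree-$d$ coefficient and the root at $h_j = -j_1$ (say): since the $j_r$ are distinct from all $i_{r'}$ by the interlacing, $h_j + j_1 \mid \tau^k(\tilde g(h_j)) = \tilde g(h_j - k)$, hence $h_j + j_1 - k \mid \tilde g$; iterating $d+1$ times produces $d+1$ distinct roots $-j_1 + k, -j_1 + 2k, \dots$ of the degree-$d$ polynomial $\tilde g$ — provided these are genuinely distinct, which holds since $k \geq 1$. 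That is the contradiction. The main obstacle is purely bookkeeping: verifying that the interlacing hypothesis is precisely strong enough to prevent the numerator from absorbing the newly created roots at each stage, and handling both interlacing patterns (the second is symmetric under swapping the roles of $i$ and $j$, equivalently replacing $\tau$ by $\tau^{-1}$), but no genuinely hard idea is needed beyond the observation that a nonzero polynomial has finitely many roots.
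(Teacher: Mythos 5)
Your ``Remarks on execution'' contain, in substance, the paper's own proof: clearing denominators gives $\prod_{r}(h_j+j_r)\,g=\prod_{r}(h_j+i_r)\,\bigl(\sigma_i\sigma_j^{-1}\bigr)^{k}(g)$ in $\C[\mathbf h]$; since $1\le i_r,j_{r'}\le k$ and the interlacing makes all $i_r$ distinct from all $j_{r'}$, no linear factor on one side coincides with one on the other, so each $h_j+i_r$ divides $g$ (the paper's choice) or equivalently each $h_j+j_r$ divides $\bigl(\sigma_i\sigma_j^{-1}\bigr)^{k}(g)$ (yours); iterating then forces infinitely many pairwise non-associate linear factors of $g$, hence $g=0$. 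Your iterated identity in the ``Plan'' is also correct, including the observation that $i_r+sk=j_{r'}+s'k$ would force $s=s'$ and $i_r=j_{r'}$.

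However, two steps as written do not hold up. First, the ``Main step'' is flawed: ruling out coincidences between the explicit numerator roots $-(i_r+sk)$ and denominator roots $-(j_r+sk)$ does not make the poles of the right-hand side ``genuine'', because the remaining factor $\bigl(\sigma_i\sigma_j^{-1}\bigr)^{Nk}(g)$ is a polynomial whose zeros can cancel them --- and if a nonzero $g$ satisfied the identity they would have to, which is exactly the divisibility the proof runs on. So there is no contradiction ``once $N\ell\ge 1$''; a pole-counting version needs $N\ell>\deg_{h_j}g$ (the $h_j$-degree of $\bigl(\sigma_i\sigma_j^{-1}\bigr)^{Nk}(g)$ is bounded while $N\ell$ grows), or one simply uses the divisibility route. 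Second, the one-variable specialization is delicate: $\bigl(\sigma_i\sigma_j^{-1}\bigr)^{k}$ shifts $h_i$ as well as $h_j$, so the restriction of $\bigl(\sigma_i\sigma_j^{-1}\bigr)^{k}(g)$ at $h_i=c_i$ is a restriction of $g$ at the shifted point $c_i-k$, not $\tilde g$ with $h_j$ shifted; hence the functional equation is not a self-map of the single polynomial $\tilde g$, and ``iterating $d+1$ times'' on $\tilde g$ does not literally parse. The clean fix is to stay in $\C[\mathbf h]$, where the iterated identity yields $(h_j+i_r+pk)\mid g$ for all $p\in\Z_{\geq 0}$ (the paper's formulation), and a nonzero polynomial has only finitely many such divisors. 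There is also a sign slip: with the paper's conventions $\bigl(\sigma_i\sigma_j^{-1}\bigr)^{k}$ substitutes $h_j+k$ for $h_j$, so the relevant restriction is $\tilde g(h_j+k)$, not $\tilde g(h_j-k)$; your stated conclusion $h_j+j_1-k\mid\tilde g$ is nevertheless the one that follows from the correct convention.
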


\begin{proof}
    Since 
 $$
  g(\mathbf h)
  \;=\;
  \prod_{r=1}^{\ell}\frac{h_j+i_r}{\,h_j+j_r\,}\,
  \bigl(\sigma_i\sigma_j^{-1}\bigr)^{k}\bigl(g(\mathbf h)\bigr),$$
it follows that $(h_j+ i_r) \,|\, g(\mathbf h)$, for all $r \in \{1,\dots,\ell\}$. Iterating the same argument (equivalently, replacing $h_j$ by $h_j+pk$) gives
$$(h_j+ i_r +pk) \,|\, g(\mathbf h),\quad \text{for all}\quad  p\in \Z_{\geq 0}.$$
\np
Hence, $g(\mathbf h)=0.$
\end{proof}
\np
\begin{lemma} \label{simplifymatrixentries2}
Let $k \in \Z_{\geq 1}$ and $\alpha, \beta \in \C$ with $\alpha \neq \beta$ and $|\Re(\alpha)-\Re(\beta)|<k$. 
Let $W({\bf{h}}) \in \Mat_k(\C[{\bf{h}}])$ and let $A(h_j)\in \Mat_k(\C[h_j])$ be a GPM such that
$$
W({\bf{h}}) 
= \frac{h_i - \alpha}{h_i - \beta}\,
A(h_j)\,\bigl(\sigma_i\sigma_j^{-1}\bigr)^{k}\bigl(W({\bf{h}})\bigr)\,A(h_j)^{-1},
\quad \text{where } i,j \in {\bf{m}},\; i \neq j.
$$
Then $W({\bf{h}})=0$.
\end{lemma}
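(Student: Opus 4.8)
The plan is to reduce the matrix identity to a scalar identity entry-by-entry and then apply Lemma~\ref{simplifymatrixentries} (or a minor variant of its argument). First I would write $A(h_j)=P_\pi D$ for the permutation $\pi:=\pi(A(h_j))$ and a diagonal matrix $D=\diag(d_1(h_j),\dots,d_k(h_j))$ with each $d_s(h_j)\in\C[h_j]\setminus\{0\}$; by Remark~\ref{observation-sl(m|1)-evenrank}-type reasoning the precise form of $d_s$ is irrelevant here since $A(h_j)$ and $A(h_j)^{-1}$ only conjugate. Conjugation by $A(h_j)$ sends the $(s,t)$ entry of a matrix to the $(\pi^{-1}(s),\pi^{-1}(t))$ entry times the scalar-in-$\C(h_j)$ factor $d_s(h_j)/d_t(h_j)$; crucially, since $(\sigma_i\sigma_j^{-1})^k$ fixes $\C[h_i]$-independent rational functions in $h_j$ up to the shift $h_j\mapsto h_j$ on... wait, $(\sigma_i\sigma_j^{-1})$ sends $h_j\mapsto h_j+1$, so $(\sigma_i\sigma_j^{-1})^k$ sends $h_j\mapsto h_j+k$. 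I would track this shift carefully.

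Writing $W({\bf h})=(W_{st}({\bf h}))_{s,t}$, the identity becomes, for every pair $(s,t)$,
$$
W_{st}({\bf h})=\frac{h_i-\alpha}{h_i-\beta}\cdot\frac{d_s(h_j)}{d_t(h_j)}\cdot(\sigma_i\sigma_j^{-1})^k\bigl(W_{\pi(s)\,\pi(t)}({\bf h})\bigr).
$$
Iterating this $N$ times (following the cycle structure of $\pi$) and using that $\pi^N=\mathrm{id}$ for $N$ a multiple of the order of $\pi$, one gets $W_{st}({\bf h})$ equal to $(\sigma_i\sigma_j^{-1})^{Nk}\bigl(W_{st}({\bf h})\bigr)$ times a product of factors of the form $\dfrac{h_i-\alpha-ck}{\,h_i-\beta-ck\,}$ (for $c=0,1,\dots,N-1$, each appearing with multiplicity equal to the relevant cycle length) and a telescoping product of $d$-ratios that collapses to $1$ because the cycle returns to its start. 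This yields, for each $(s,t)$, a scalar functional equation
$$
W_{st}({\bf h})=\prod_{c=0}^{N-1}\frac{h_i-\alpha-ck}{\,h_i-\beta-ck\,}\;(\sigma_i\sigma_j^{-1})^{Nk}\bigl(W_{st}({\bf h})\bigr).
$$

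The heart of the argument is then the same divisibility/degree contradiction as in Lemma~\ref{simplifymatrixentries}: the hypothesis $|\Re(\alpha)-\Re(\beta)|<k$ guarantees that the numerators $h_i-\alpha-ck$ and denominators $h_i-\beta-ck$ over all $c$ and all iterations are pairwise distinct as affine-linear polynomials in $h_i$ (no numerator ever coincides with a denominator, and they are not absorbed into the $h_j$-shift since they involve $h_i$), so the right-hand side forces infinitely many distinct linear factors $h_i-\alpha-ck$ (and more, after re-iterating with $h_i\mapsto h_i+Nk$) to divide $W_{st}({\bf h})$; hence $W_{st}({\bf h})=0$. I expect the main obstacle to be the bookkeeping in the first paragraph: verifying that conjugation by the GPM $A(h_j)$ really does nothing more than permute entry indices and multiply by an $h_i$-free rational function, and confirming that the telescoping product of diagonal ratios over a full $\pi$-cycle is exactly $1$ — both are routine but must be stated cleanly. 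Once that is in place, the conclusion is immediate from the argument of Lemma~\ref{simplifymatrixentries}, and I would simply invoke it (applied to each entry, with $g=W_{st}$) rather than repeat the divisibility induction.
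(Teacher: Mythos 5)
Your strategy coincides with the paper's: iterate the conjugation identity along the permutation of the GPM until the row and column indices return to themselves, obtaining for each entry a scalar relation of the form $w_{st}(\mathbf h)=f(h_j)\,\prod_{c=0}^{N-1}\frac{h_i-\alpha-ck}{h_i-\beta-ck}\,(\sigma_i\sigma_j^{-1})^{Nk}\bigl(w_{st}(\mathbf h)\bigr)$, and then use $\alpha\neq\beta$ together with $|\Re(\alpha)-\Re(\beta)|<k$ to rule out any coincidence $\alpha+ck=\beta+c'k$, so that iterating the relation produces infinitely many distinct linear divisors of $w_{st}$ and forces $w_{st}=0$. This is exactly the paper's proof (there the divisibility is run through the denominator factor $h_i-\beta$, giving $(h_i-\beta+kn)\mid w_{p,q}$ and then $(h_i-\beta+mkn)\mid w_{p,q}$ for all $m$; your variant through the numerator factors $h_i-\alpha-ck$ works equally well, since they are coprime to every $h_i-\beta-c'k$ and to any $h_i$-free factor).

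One intermediate assertion is false as stated: the accumulated product of diagonal ratios over a full cycle does \emph{not} collapse to $1$. Since $(\sigma_i\sigma_j^{-1})^k$ shifts $h_j$ by $k$ at each step (and since $s$ and $t$ need not lie in the same cycle of $\pi$), the accumulated factor is a product of terms $d_{\bullet}(h_j+ck)$ in the numerator against differently aligned terms in the denominator; already for a $2\times 2$ GPM with diagonal part $\diag(h_j,1)$ and $\pi=(1\,2)$ one gets the nontrivial ratio $(h_j+k)/h_j$. Fortunately your argument never needs this factor to be $1$: all that matters is that it is a rational function of $h_j$ alone, hence coprime to every linear factor in $h_i$, and this is precisely how the paper handles it, carrying an unspecified $f_{p,q}(h_j)\in\C(h_j)$ through the computation. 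Relatedly, Lemma~\ref{simplifymatrixentries} cannot be invoked verbatim (its hypotheses require interlacing integer constants in $[1,k]$ and factors in $h_j$ rather than $h_i$), but, as you indicate, the variant of its divisibility induction applies without change. With these two points repaired, your proof is complete and essentially identical to the paper's.
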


\begin{proof}
    Let $W({\bf{h}}) = \bigl(w_{p,q}({\bf{h}})\bigr)_{p,q=1}^k$, where $w_{p,q}({\bf{h}}) \in \C[{\bf{h}}]$. Since 
     $$
W({\bf{h}}) = \frac{h_i - \alpha}{h_i - \beta}\,
A(h_j)\,\bigl(\sigma_i\sigma_j^{-1}\bigr)^{k}\bigl(W({\bf{h}})\bigr)\,A(h_j)^{-1},
$$
then for $p,q \in \{1,\dots, k\}$, there exists $n(p,q) \in \Z_{\geq 1}$ such that
\begin{equation} \label{eqlem}
    w_{p,q}({\bf{h}}) 
= f_{p,q}(h_j)\,
\prod_{\ell=0}^{n(p,q)-1} \frac{h_i - \alpha - \ell k}{h_i - \beta - \ell k}\;
\bigl(\sigma_i\sigma_j^{-1}\bigr)^{k\,n(p,q)}\bigl(w_{p,q}({\bf{h}})\bigr), 
\end{equation}
for some $f_{p,q}(h_j) \in \C(h_j)$. Since the real parts of the roots
$$\beta,\; \beta + k,\; \dots,\; \beta + (n(p,q)-1) k$$
differ by multiples of $k$, $|\Re(\alpha)-\Re(\beta)|<k$, and $\alpha \neq \beta$,
it follows that 
$$(h_i-\beta) \,|\,\bigl(\sigma_i\sigma_j^{-1}\bigr)^{k\,n(p,q)}\bigl(w_{p,q}({\bf{h}})\bigr),$$
which implies
$$(h_i-\beta+k\,n(p,q)) \,|\,w_{p,q}({\bf{h}})\,.$$
\np
Equivalently, there exists $w'(\mathbf h) \in \C[{\bf{h}}]$ such that $w_{p,q}({\bf{h}}) = (h_i-\beta+k\,n(p,q))w'(\mathbf h)$. Substituting this expression into \eqref{eqlem}, we obtain 
$$
w'(\mathbf h) 
= f_{p,q}(h_j)\,
 \frac{\prod_{\ell=0}^{n(p,q)-1}(h_i - \alpha - \ell k)}{(h_i-\beta+k\,n(p,q))\prod_{\ell=1}^{n(p,q)-1}(h_i - \beta - \ell k)}\;
\bigl(\sigma_i\sigma_j^{-1}\bigr)^{k\,n(p,q)}\bigl(w'(\mathbf h)\bigr) .$$
\np
This implies that $(h_i-\beta+k\,n(p,q))$ divides  $\bigl(\sigma_i\sigma_j^{-1}\bigr)^{k\,n(p,q)}\bigl(w'(\mathbf h)\bigr)$
and hence it divides $\bigl(\sigma_i\sigma_j^{-1}\bigr)^{k\,n(p,q)}\bigl(w_{p,q}(\mathbf h)\bigr)$. 
Repeating this argument, we conclude that
$$(h_i-\beta+mk\,n(p,q)) \,|\, w_{p,q}({\bf{h}}),\qquad \text{for all} \quad m \in \Z_{\geq 1},$$
which implies $w_{p,q}({\bf{h}}) =0$. This concludes the proof.
\end{proof}

\np
\subsection{The module $M(A_1,\dots,A_m)$}
 Let $ \ell \in \Z_{\geq 1}$. 
 
 \begin{proposition}\label{sl(m|1)-module}
 Let $A_i, A_i^{\comp} \in \Mat_\ell\bigl(\C[h_i]\bigr)$
 form $h_i$-companion pairs for $1 \leq i \leq m$, i.e., 
\[A_iA_i^{\comp}=A_i^{\comp}A_i=h_i\,\I_\ell \ . \]
Define an action of the generators $e_{i\bar{1}}$, and $e_{\bar{1}i}$ of $\mathcal{U}(\mathfrak{sl}(m|1))$ on \newline
$\begin{bmatrix}f_1({\bf{h}})\\\vdots \\f_{2\ell}({\bf{h}}) \end{bmatrix} \in 
\mathbb{C}[\mathbf{h}]^{\oplus \ell}\;\oplus\;\mathbb{C}[\mathbf{h}]^{\oplus \ell}$ by the formulas
    
    \begin{equation} \label{eq_new_action1}
    \begin{array}{ccl}
    e_{i\bar{1}} \cdot \begin{bmatrix}f_1({\bf{h}})\\\vdots \\f_{2\ell}({\bf{h}}) \end{bmatrix} &=& \left[\begin{array}{ c | c }
    \mathbf 0 & A_i \\
    \hline
    \mathbf 0 & \mathbf 0
  \end{array}\right] \Delta^{-1}_i\left(\begin{bmatrix}f_1({\bf{h}})\\\vdots \\f_{2\ell}({\bf{h}}) \end{bmatrix} \right), \\
  &&\\
 e_{\bar{1}i}  \cdot \begin{bmatrix}f_1({\bf{h}})\\\vdots \\f_{2\ell}({\bf{h}}) \end{bmatrix} &=& \left[\begin{array}{ c | c }
    \mathbf 0 & \mathbf 0 \\
    \hline
    A_i^{\comp} & \mathbf 0
  \end{array}\right] \Delta_i\left(\begin{bmatrix}f_1({\bf{h}})\\\vdots \\f_{2\ell}({\bf{h}}) \end{bmatrix} \right) \ . 
  \end{array}
  \end{equation}
Then \eqref{eq_new_action1} endows $\mathbb{C}[\mathbf{h}]^{\oplus \ell}\;\oplus\;\mathbb{C}[\mathbf{h}]^{\oplus \ell}$
with a $\mathcal{U}(\mathfrak{sl}(m|1))$-module structure. Moreover, this module is $\mathcal{U}(\mathfrak{h})$-free of
rank $(\ell|\ell)$, i.e.,
\begin{equation} \label{eq_new_action2}
h_i \cdot \begin{bmatrix}f_1({\bf{h}})\\\vdots \\f_{2\ell}({\bf{h}}) \end{bmatrix}= \begin{bmatrix}h_if_1({\bf{h}})\\\vdots \\h_if_{2\ell}({\bf{h}}) \end{bmatrix} \ .
\end{equation}

\end{proposition}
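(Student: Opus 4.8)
The plan is to verify that the formulas \eqref{eq_new_action1}--\eqref{eq_new_action2}, together with the actions of the remaining generators $e_{ij}$ ($i\neq j$ in $\mathbf m$), extend to a well-defined representation of $\mathfrak{sl}(m|1)$, and hence of $\mathcal U(\mathfrak{sl}(m|1))$. Concretely, since $\mathfrak{sl}(m|1)$ is generated by $\mathfrak h$ together with the odd root vectors $e_{i\bar1}, e_{\bar1 i}$ ($i\in\mathbf m$) — the even root vectors $e_{ij}$ being recovered as brackets $[e_{i\bar1},e_{\bar1 j}]$, as already noted in the rank-$(1|1)$ case — it suffices to check that the assigned operators satisfy all the defining bracket relations of $\mathfrak{sl}(m|1)$ among these generators. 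I would organize the verification by the type of relation: (i) $[\h,\h]=0$, which is immediate since the $h_i$ act by commuting multiplications; (ii) the brackets $[h_j, e_{i\bar1}]$ and $[h_j, e_{\bar1 i}]$, i.e.\ checking that each odd operator is an eigen-operator for $\ad h_j$ with the correct eigenvalue (this is where the twists $\Delta_i^{\pm1}$ do their job); (iii) the odd-odd anticommutators $[e_{i\bar1},e_{j\bar1}]=0$, $[e_{\bar1 i},e_{\bar1 j}]=0$, and $[e_{i\bar1},e_{\bar1 i}]=h_i$ and $[e_{i\bar1},e_{\bar1 j}]=e_{ij}$ for $i\neq j$; and (iv) once $e_{ij}$ is defined by the latter, the Serre-type and mixed relations involving $e_{ij}$, which then follow formally from the Jacobi identity in $\mathcal U(\mathfrak{sl}(m|1))$ once (i)--(iii) hold. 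The $\mathcal U(\h)$-freeness of rank $(\ell|\ell)$ is built into the construction: the underlying space is $\C[\mathbf h]^{\oplus\ell}\oplus\C[\mathbf h]^{\oplus\ell}$ and $h_i$ acts by componentwise multiplication by \eqref{eq_new_action2}, so $\Res M\simeq\mathcal U(\h)^{\oplus 2\ell}$ with the stated parity grading.

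The computational heart is step (iii), and within it the two identities $[e_{i\bar1},e_{\bar1 i}]=h_i\I$ and $[e_{i\bar1},e_{\bar1 j}]=e_{ij}$ for $i\neq j$; I expect this to be the main obstacle. For the first, applying $e_{i\bar1}$ then $e_{\bar1 i}$ (and vice versa) to a column vector and using that $e_{i\bar1}$ twists the argument by $\Delta_i^{-1}$ while $e_{\bar1 i}$ twists by $\Delta_i$, one finds the two compositions involve $A_i\,\Delta_i^{-1}(A_i^{\comp})\Delta_i^{-1}\Delta_i=A_iA_i^{\comp}$ and $A_i^{\comp}\Delta_i(A_i)\Delta_i\Delta_i^{-1}=A_i^{\comp}A_i$ acting on the appropriate block — here one crucially uses that $A_i,A_i^{\comp}$ have entries in $\C[h_i]$ only, so $\Delta_i$ fixes them (exactly the observation exploited in Lemma~\ref{equivofA_12}) — and the $h_i$-companion relation $A_iA_i^{\comp}=A_i^{\comp}A_i=h_i\I_\ell$ gives precisely $h_i\I_{2\ell}$ after accounting for the block-off-diagonal structure. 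For the mixed anticommutator with $i\neq j$, the two twists $\Delta_i^{-1}$ and $\Delta_j$ combine: one composition contributes $\Delta_i^{-1}\circ\Delta_j$ and the other $\Delta_j\circ\Delta_i^{-1}$; these coincide (the $\sigma_\ell$ commute), and by the Remark after the automorphism definitions $\Delta_i^{-1}\Delta_j=\sigma_i\sigma_j^{-1}$, matching the twist $\sigma_i\sigma_j^{-1}$ that must govern $e_{ij}$ by Lemma~\ref{structuresl(m|1)}; the matrix coefficient of $e_{ij}$ is then read off as the appropriate block-diagonal product of $A_i,A_i^{\comp},A_j,A_j^{\comp}$ (with the necessary $\sigma$-shifts applied to $A_j,A_j^{\comp}$, which do have $h_j$-dependence). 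The vanishing anticommutators $[e_{i\bar1},e_{j\bar1}]=0$ etc.\ are easier: each composition maps $M_{\bar0}\to M_{\bar1}\to M_{\bar1}$ through two strictly-block-upper (resp.\ block-lower) matrices whose product is the zero $2\ell\times2\ell$ matrix because of the nilpotent block shape, independently of the twists.

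Finally, for step (iv) I would not attempt to verify the Serre relations and the $[e_{ij},e_{k\bar1}]$-type relations by brute force. Instead, once steps (i)--(iii) are established, the assignment is a well-defined Lie superalgebra homomorphism on the free Lie superalgebra modulo the relations we have already imposed; since $\mathfrak{sl}(m|1)$ is presented (as a Lie superalgebra) by its Chevalley-type generators and relations, and the relations among $\h$ and the odd generators we have checked generate all of them via Jacobi, the remaining relations hold automatically in the associative algebra generated by our operators. (Alternatively, one can invoke the identical bookkeeping already carried out in \cite{DN} for the rank-$(1|1)$ case and for the rank-$2$ case, of which the present computation is the obvious block-matrix generalization — the only new input being that scalars are replaced by the GPM factors of an $h_i$-companion pair, and GPMs with entries in $\C[h_i]$ are fixed by every $\Delta_j$.) I would close by remarking that $\mathcal U(\h)$-freeness of rank $(\ell|\ell)$ is immediate from \eqref{eq_new_action2} and the chosen $\Z_2$-grading, so $M(A_1,\dots,A_m)\in\mathcal M_{\mathfrak{sl}(m|1)}(\ell|\ell)$.
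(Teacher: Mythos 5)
Your proposal is correct in substance and takes essentially the same route as the paper: the paper's entire proof consists of the remark that it suffices to check that \eqref{eq_new_action1}--\eqref{eq_new_action2} satisfy the commutation relations of $\mathfrak{sl}(m|1)$, with the calculation omitted. Your sketch supplies the key points of that omitted verification, notably that $\Delta_i$ fixes matrices with entries in $\C[h_i]$, so the companion relation $A_iA_i^{\comp}=A_i^{\comp}A_i=h_i\,\I_\ell$ gives $[e_{i\bar1},e_{\bar1 i}]=h_i$, and that the twists combine to $\sigma_i\sigma_j^{-1}$ in the mixed anticommutators defining the action of $e_{ij}$.
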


\begin{proof} It suffices to check that \eqref{eq_new_action1} and  
\eqref{eq_new_action2} satisfy the commutation relations in $\mathfrak{sl}(m|1)$. We omit this calculation here.
\end{proof}

\np
\begin{definition} \label{M(A_i)}
    We denote the $\mathcal{U}(\mathfrak{sl}(m|1))$-module defined above by $M(A_1,\dots,A_m)$.
\end{definition}

\np Note that in the notation $M(A_1,\dots,A_m)$, we suppress the companion matrices 
$A_1^{\comp}, \dots, A_m^{\comp}$. Nevertheless, whenever we discuss the module 
$M(A_1,\dots,A_m)$, we will assume that $A_1, \dots, A_m$ admit companion matrices.

\np
\begin{remark}
 If $\ell = 1$, then $M(A_1,\dots,A_m)$ coincides with $M(\mathbf{a},\mathcal{S})$ (see Definition \ref{sl(m|1)rank2}) for appropriate $A_i$. Indeed, for $\mathbf a=(a_1,\dots,a_m)\in(\C^\times)^m$ and
$\mathcal S\subseteq\mathbf m$, set
$$
A_i :=
\begin{cases}
[a_i h_i], & i \in \mathcal{S}, \\
[a_i],     & i \notin \mathcal{S},
\end{cases}
\qquad (1 \le i \le m).
$$
Then $M(A_1,\dots,A_m) =  M(\mathbf{a},\mathcal{S}).$
\end{remark}
\np
\begin{proposition} \label{simplified-W(h)}
    For each $i \in {\bf{m}}$, let $(A_i,A_i^{\comp})$ and $(B_i,B_i^{\comp})$ be $h_i$-companion pairs with
$A_i,A_i^{\comp},B_i,B_i^{\comp}\in \Mat_\ell(\C[h_i])$. Then 
\[\Hom_{\mathcal{M}_{\mathfrak{sl}(m|1)}(\ell|\ell)} (M(A_1,\dots,A_m), M(B_1,\dots,B_m))_{\bar{1}} = 0\ .\]
More precisely,
every $\mathcal{U}(\mathfrak{sl}(m|1))$-homomorphism $\Phi \colon M(A_1,\dots,A_m)\to M(B_1,\dots,B_m)$ is of the block–diagonal form
$$\Phi(\mathbf f({\bf h})) = \left[\begin{array}{ c | c }
    W_1({\bf{h}}) & \mathbf 0 \\
    \hline
    \mathbf 0 & W_4({\bf{h}})
  \end{array}\right]\;\mathbf f({\bf h}),\quad \text{for all}\quad \mathbf f({\bf h}) \in \C[{\bf{h}}]^{\oplus 2\ell}, $$
where $W_1({\bf{h}}), W_4({\bf{h}}) \in \Mat_{\ell}(\C[{\bf{h}}])$ satisfy
  $$W_1({\bf h})\,A_i \;=\; B_i\,\Delta_i^{-1}\bigl(W_4({\bf h})\bigr), \quad\text{for all}\quad i \in \mathbf{m}. $$
Moreover,  $\Phi$ is an isomorphism if and only if $ W_1({\bf{h}}), W_4({\bf{h}}) \in \GL_{\ell}\left(\C[{\bf{h}}]\right).$  
\end{proposition}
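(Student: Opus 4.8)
The plan is to analyze an arbitrary $\Z_2$-graded $\mathcal U(\mathfrak{sl}(m|1))$-homomorphism $\Phi \colon M(A_1,\dots,A_m)\to M(B_1,\dots,B_m)$ by first using $\mathcal U(\mathfrak h)$-linearity and then the compatibility with the odd generators. Since $\Phi$ commutes with the action of each $h_i$, which is componentwise multiplication, $\Phi$ is left multiplication by a matrix $\Phi({\bf h}) \in \Mat_{2\ell}(\C[{\bf h}])$ with polynomial entries; write it in block form $\Phi({\bf h}) = \left[\begin{smallmatrix} W_1({\bf h}) & W_2({\bf h}) \\ W_3({\bf h}) & W_4({\bf h})\end{smallmatrix}\right]$ with $W_t({\bf h})\in\Mat_\ell(\C[{\bf h}])$. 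The $\Z_2$-grading forces $\Phi$ to respect parity; since the degree-$\bar 0$ and degree-$\bar 1$ summands are $\C[{\bf h}]^{\oplus\ell}$ and $\C[{\bf h}]^{\oplus\ell}$ respectively, parity-preservation already gives $W_2 = W_3 = 0$, which is the block-diagonal claim. (If one instead wants to derive the vanishing of $\Hom_{\bar 1}$ rather than assume parity-preservation of morphisms, one argues that an odd homomorphism would be left multiplication by an antidiagonal matrix, and then the intertwining conditions below force the off-diagonal blocks to vanish as well; this is where Lemma~\ref{equivofA_12} enters, but since morphisms in the category are by definition $\Z_2$-graded, the grading argument suffices.)

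Next I would impose that $\Phi$ intertwines the action of the odd generators $e_{i\bar 1}$ and $e_{\bar 1 i}$. Applying $\Phi$ to $e_{i\bar 1}\cdot {\bf f}({\bf h})$ and using \eqref{eq_new_action1}, the relation $\Phi(e_{i\bar 1}\cdot {\bf f}) = e_{i\bar 1}\cdot \Phi({\bf f})$ becomes, in block form (with the companion structure $A_i A_i^{\comp} = h_i \I_\ell$), the single matrix identity $W_1({\bf h})\,A_i = B_i\,\Delta_i^{-1}(W_4({\bf h}))$ for each $i\in{\bf m}$. Here one uses that $\Delta_i^{-1}$ fixes $A_i$ (entries in $\C[h_i]$) and commutes with componentwise multiplication in the expected way. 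By Lemma~\ref{equivofA_12}, this condition is equivalent to $W_4({\bf h})\,A_i^{\comp} = B_i^{\comp}\,\Delta_i(W_1({\bf h}))$, which is exactly the intertwining condition coming from $e_{\bar 1 i}$; so the single family of equations $W_1({\bf h})A_i = B_i\Delta_i^{-1}(W_4({\bf h}))$ captures both, and conversely these equations plus block-diagonality imply $\Phi$ is a homomorphism (the remaining relations involving $e_{ij}$, $i\neq j$, and $h_i$ follow automatically since $e_{ij}$ is generated by $e_{i\bar1}$ and $e_{\bar1 j}$, cf. the remark after Definition~\ref{sl(m|1)rank2}).

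Finally, for the isomorphism statement: if $W_1({\bf h}), W_4({\bf h}) \in \GL_\ell(\C[{\bf h}])$, then $\Phi({\bf h})$ is invertible over $\C[{\bf h}]$ with block-diagonal inverse $\diag(W_1({\bf h})^{-1}, W_4({\bf h})^{-1})$, and one checks directly (or invokes that a two-sided $\C[{\bf h}]$-linear inverse of a module homomorphism is automatically a module homomorphism, using the same intertwining equations rearranged) that this inverse is again a morphism in $\mathcal{M}_{\mathfrak{sl}(m|1)}(\ell|\ell)$; hence $\Phi$ is an isomorphism. Conversely, if $\Phi$ is an isomorphism, its inverse $\Psi$ is also a $\Z_2$-graded $\mathcal U(\mathfrak{sl}(m|1))$-homomorphism, hence by the first part is itself block-diagonal, say $\Psi({\bf h}) = \diag(V_1({\bf h}), V_4({\bf h}))$; then $\Phi\Psi = \Psi\Phi = \Id$ forces $W_1 V_1 = V_1 W_1 = \I_\ell$ and $W_4 V_4 = V_4 W_4 = \I_\ell$ as matrices over $\C[{\bf h}]$, so $W_1({\bf h}), W_4({\bf h}) \in \GL_\ell(\C[{\bf h}])$.

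I expect the main obstacle to be purely bookkeeping rather than conceptual: carefully tracking how $\Delta_i^{\pm1}$ and componentwise multiplication interact when one pushes $\Phi$ (a left multiplication) past the twisted action in \eqref{eq_new_action1}, and making sure the block computation produces exactly $W_1 A_i = B_i \Delta_i^{-1}(W_4)$ with no stray twists. The one genuine input is Lemma~\ref{equivofA_12}, which is what lets us avoid separately handling the $e_{\bar 1 i}$ relations and what guarantees the listed condition is not just necessary but sufficient.
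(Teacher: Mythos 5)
Your treatment of the even blocks and of the isomorphism criterion is essentially the paper's argument: $\mathcal U(\mathfrak h)$-linearity makes $\Phi$ multiplication by a matrix over $\C[{\bf h}]$, the $e_{i\bar 1}$-intertwining yields $W_1({\bf h})A_i=B_i\,\Delta_i^{-1}\bigl(W_4({\bf h})\bigr)$, Lemma~\ref{equivofA_12} shows this is equivalent to the $e_{\bar 1 i}$-condition $W_4({\bf h})A_i^{\comp}=B_i^{\comp}\,\Delta_i\bigl(W_1({\bf h})\bigr)$, and the invertibility statement is immediate once block-diagonality is known.

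The gap is in how you dispose of the first assertion. In this category the Hom space is $\Z_2$-graded, so $\Hom(\cdot,\cdot)_{\bar 1}$ consists of odd homomorphisms (which intertwine odd generators up to the usual sign); the claim $\Hom_{\bar 1}=0$ is therefore genuine content, not a consequence of the definition of morphisms, and your main line, which declares parity-preservation automatic, would render that claim vacuous and would also leave the statement ``every homomorphism is block-diagonal'' unproved for odd (or inhomogeneous) maps. The paper instead takes an arbitrary $\Phi$, writes it with all four blocks $W_1,W_2,W_3,W_4$, decomposes $\Phi=\Phi_{\bar 0}+\Phi_{\bar 1}$, and extracts from the $e_{i\bar 1}$- and $e_{\bar 1 i}$-relations not only the conditions on $W_1,W_4$ but also $W_3({\bf h})\,A_i=0$ and $W_2({\bf h})\,A_i^{\comp}=0$; since $A_i$ and $A_i^{\comp}$ are GPMs (hence invertible over $\C(h_i)$, so right multiplication by them is injective), this forces $W_2=W_3=0$. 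Your parenthetical sketch of the odd case points in the right direction, but it misattributes the decisive input: Lemma~\ref{equivofA_12} plays no role in killing the off-diagonal blocks (it only identifies the two surviving even conditions); what is needed is precisely the GPM non-degeneracy argument above, and that step should be made explicit rather than left as an aside.
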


\begin{proof}
    Let $\Phi:M(A_1,\dots, A_m) \to  M(B_1,\dots, B_m) $ be a $\mathcal{U}(\mathfrak{sl}(m|1))$-homomorphism. Then,
$$\Phi( \mathbf f({\bf h})) = W_{\Phi}({\bf{h}})\;\mathbf f({\bf h})=\left[\begin{array}{ c | c }
    W_1({\bf{h}}) & W_2({\bf{h}}) \\
    \hline
    W_3({\bf{h}}) & W_4({\bf{h}})
  \end{array}\right]\;\mathbf f({\bf h}),\quad \text{for all}\quad \mathbf f({\bf h}) \in \C[{\bf{h}}]^{\oplus 2\ell}, $$
where $W_i({\bf{h}}) \in \Mat_{\ell}(\C[{\bf{h}}])$.
 In particular, $\Phi = \Phi_{\bar{0}} + \Phi_{\bar{1}}$, where 
  $$\Phi_{\bar{0}}\left(\mathbf f({\bf h})\right) = \left[\begin{array}{ c | c }
    W_1({\bf{h}}) & \mathbf 0 \\
    \hline
    \mathbf 0 & W_4({\bf{h}})
  \end{array}\right]\; \mathbf f({\bf h}), \qquad \Phi_{\bar{1}}\left(\mathbf f({\bf h})\right) = \left[\begin{array}{ c | c }
    \mathbf 0 & W_2({\bf{h}}) \\
    \hline
    W_3({\bf{h}}) & \mathbf 0
  \end{array}\right]\; \mathbf f({\bf h}) .$$
\np  
Since 
  $$\Phi\left(e_{i \bar{1}} \cdot \mathbf f({\bf h})\right) =\left(\Phi_{\bar{0}}+ \Phi_{\bar{1}}\right)\left(e_{i \bar{1}} \cdot\mathbf f({\bf h})\right)= e_{i\bar{1}} \cdot \Phi_{\bar{0}}\left(\mathbf f({\bf h})\right) - e_{i\bar{1}} \cdot \Phi_{\bar{1}}\left(\mathbf f({\bf h})\right),\; \text{for all}\;\; \mathbf f({\bf h}) \in \C[{\bf{h}}]^{\oplus 2\ell},$$
we obtain
      \begin{align*}
          \left[\begin{array}{ c | c }
    W_1({\bf{h}}) & W_2({\bf{h}}) \\
    \hline
    W_3({\bf{h}}) & W_4({\bf{h}})
  \end{array}\right] \left[\begin{array}{ c | c }
    \mathbf 0 & A_{i} \\
    \hline
    \mathbf 0 & \mathbf 0
  \end{array}\right] \Delta^{-1}_i\left(\mathbf f({\bf h})\right) &=\left[\begin{array}{ c | c }
    \mathbf 0 & B_{i} \\
    \hline
    \mathbf 0 & \mathbf 0
  \end{array}\right] \Delta^{-1}_i\left(\left[\begin{array}{ c | c }
    W_1({\bf{h}}) & \mathbf 0 \\
    \hline
    \mathbf 0 & W_4({\bf{h}})
  \end{array}\right] \mathbf f({\bf h})\right)\\
  &-\left[\begin{array}{ c | c }
    \mathbf 0 & B_{i} \\
    \hline
    \mathbf 0 & \mathbf 0
  \end{array}\right] \Delta^{-1}_i\left(\left[\begin{array}{ c | c }
    \mathbf 0 & W_2({\bf{h}}) \\
    \hline
    W_3({\bf{h}}) & \mathbf 0
  \end{array}\right] \mathbf f({\bf h})\right) .
      \end{align*}

\np
Then,
    $$\left[\begin{array}{ c | c }
    \mathbf 0 & W_1({\bf{h}})A_i \\
    \hline
    \mathbf 0 & W_3({\bf{h}})A_i
  \end{array}\right]\Delta^{-1}_i\left(\mathbf f({\bf h})\right)=\left[\begin{array}{ c | c }
    -B_i \Delta^{-1}_i\left(W_3({\bf{h}})\right) & B_i \Delta^{-1}_i\left(W_4({\bf{h}})\right) \\
    \hline
    \mathbf 0 & \mathbf 0
  \end{array}\right]\Delta^{-1}_i\left(\mathbf f({\bf h})\right),$$
  for all $\mathbf f({\bf h}) \in \C[{\bf{h}}]^{\oplus 2\ell} $.
 Therefore,
  $$W_1({\bf h})\,A_i \;=\; B_i\,\Delta_i^{-1}\bigl(W_4({\bf h})\bigr),\quad\text{and}\quad B_i\, \Delta^{-1}_i\left(W_3({\bf{h}})\right) \;=\; W_3({\bf{h}})\,A_i \;=\; 0. $$

\np
    Since $A_i$ is a GPM, it follows that  $W_3({\bf{h}}) = 0$. Similarly,
  $$\Phi\left(e_{\bar{1}i} \cdot\mathbf f({\bf h})\right) =\left(\Phi_{\bar{0}}+ \Phi_{\bar{1}}\right)\left(e_{ \bar{1}i} \cdot\mathbf f({\bf h})\right)= e_{\bar{1}i} \cdot \Phi_{\bar{0}}\left(\mathbf f({\bf h})\right) - e_{\bar{1}i} \cdot \Phi_{\bar{1}}\left(\mathbf f({\bf h})\right),\; \text{for all}\;\; \mathbf f({\bf h}) \in \C[{\bf{h}}]^{\oplus 2\ell},$$

implies
\begin{align*}
   \left[\begin{array}{ c | c }
    W_1({\bf{h}}) & W_2({\bf{h}}) \\
    \hline
    W_3({\bf{h}}) & W_4({\bf{h}})
  \end{array}\right] \left[\begin{array}{ c | c }
    \mathbf 0 & \mathbf 0 \\
    \hline
    A_{i}^{\comp} & \mathbf 0
  \end{array}\right] \Delta_i\left(\mathbf f({\bf h})\right) &=\left[\begin{array}{ c | c }
    \mathbf 0 & \mathbf 0 \\
    \hline
    B_{i}^{\comp} & \mathbf 0
  \end{array}\right] \Delta_i\left(\left[\begin{array}{ c | c }
    W_1({\bf{h}}) & \mathbf 0 \\
    \hline
    \mathbf 0 & W_4({\bf{h}})
  \end{array}\right] \mathbf f({\bf h})\right)\\
  &-\left[\begin{array}{ c | c }
    \mathbf 0 & \mathbf 0 \\
    \hline
    B_{i}^{\comp} & \mathbf 0
  \end{array}\right] \Delta_i\left(\left[\begin{array}{ c | c }
    \mathbf 0 & W_2({\bf{h}}) \\
    \hline
    W_3({\bf{h}}) & \mathbf 0
  \end{array}\right] \mathbf f({\bf h})\right).   
\end{align*}
\np
Then,
    $$\left[\begin{array}{ c | c }
    W_2({\bf{h}})A_i^{\comp} & \mathbf 0 \\
    \hline
    W_4({\bf{h}})A_i^{\comp} & \mathbf 0
  \end{array}\right]\Delta_i\left(\mathbf f({\bf h})\right)=\left[\begin{array}{ c | c }
    \mathbf 0 & \mathbf 0 \\
    \hline
    B_i^{\comp} \Delta_i\left(W_1({\bf{h}})\right) & -B_i^{\comp} \Delta_i\left(W_2({\bf{h}})\right)
  \end{array}\right]\Delta_i\left(\mathbf f({\bf h})\right),$$
  for all $\mathbf f({\bf h}) \in \C[{\bf{h}}]^{\oplus 2\ell} $. Therefore, 
$$W_4({\bf{h}})\, A_i^{\comp} \;=\;  B_i^{\comp}\, \Delta_i\left(W_1({\bf{h}})\right),\quad\text{and}\quad B_i^{\comp}\, \Delta_i\left(W_2({\bf{h}})\right) \;=\; W_2({\bf{h}})\,A_i^{\comp} \;=\; 0. $$
\np
    Since $A_i^{\comp}$ is a GPM, it follows that $ W_2({\bf{h}}) = 0$. By Lemma \ref{equivofA_12}, we have
   $$W_1({\bf{h}}) A_i =  B_i \Delta^{-1}_i\left(W_4({\bf{h}})\right) \quad\text{and}\quad W_4({\bf{h}}) A_i^{\comp} =  B_i^{\comp} \Delta_i\left(W_1({\bf{h}})\right) $$
are equivalent. Therefore, every homomorphism 
$$\Phi:M(A_1,\dots, A_m) \;\to\; M(B_1,\dots, B_m) $$ 
can be written as
$$\Phi(\mathbf f({\bf h})) = \left[\begin{array}{ c | c }
    W_1({\bf{h}}) & \mathbf 0 \\
    \hline
    \mathbf 0 & W_4({\bf{h}})
  \end{array}\right]\;\mathbf f({\bf h}),\quad \text{for all}\quad \mathbf f({\bf h}) \in \C[{\bf{h}}]^{\oplus 2\ell}, $$
where $W_1({\bf{h}}), W_4({\bf{h}}) \in \Mat_{\ell}(\C[{\bf{h}}])$ satisfy
  $$W_1({\bf h})\,A_i \;=\; B_i\,\Delta_i^{-1}\bigl(W_4({\bf h})\bigr), \quad\text{for all}\quad i \in \mathbf{m}. $$

\np
  The fact that $\Phi$ is an isomorphism if and only if 
  $ W_1({\bf{h}})$ and $ W_4({\bf{h}})$ are invertible is
 a tautology.
\end{proof}

\np 
\begin{lemma}
    Let $A_i\in\Mat_\ell\bigl(\C[h_i]\bigr)$ $(i\in\mathbf m)$, each admitting an $h_i$–companion $A_i^{\comp}\in\Mat_\ell\bigl(\C[h_i]\bigr)$. Then
    \begin{enumerate}
        \item The $\mathcal{U}(\mathfrak{sl}(m|1))$-module $M(A_1,\dots, A_m)$ has infinite length.
        \item Suppose the index set $\{1,2,\dots,\ell\}$ decomposes as a disjoint union  
      $$\{1,2,\dots,\ell\}\;=\; I \sqcup J,\qquad I\neq \varnothing,\; J\neq \varnothing,$$
      where $I$ and $J$ are $\pi(A_i)$‑stable for every $i \in \mathbf{m}$.  
      Then $M(A_1,\dots,A_m)$ is decomposable.
    \end{enumerate}
\end{lemma}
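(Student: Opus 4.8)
The plan for part~(1) is to exhibit an injective but non-surjective $\mathcal U(\mathfrak{sl}(m|1))$-endomorphism of $M(A_1,\dots,A_m)$ and iterate it. Set $s:=h_1+\cdots+h_m\in\C[\mathbf h]$ and consider the block-scalar map $\Phi:=\left[\begin{smallmatrix}(s+m-1)\,\I_\ell & \mathbf 0\\ \mathbf 0 & s\,\I_\ell\end{smallmatrix}\right]$ on $\C[\mathbf h]^{\oplus\ell}\oplus\C[\mathbf h]^{\oplus\ell}$. First I would check that $\Phi$ is a $\mathcal U(\mathfrak{sl}(m|1))$-homomorphism: by Proposition~\ref{simplified-W(h)} (or directly from \eqref{eq_new_action1}--\eqref{eq_new_action2}) this reduces to the identities $\Delta_i^{-1}(s)=s+(m-1)$, $\Delta_i(s)=s-(m-1)$, and $\sigma_i\sigma_j^{-1}(s)=s$, which hold because $\sigma_j^{\pm1}$ shifts only the variable $h_j$; in particular $W_1:=(s+m-1)\I_\ell$ and $W_4:=s\,\I_\ell$ satisfy $W_1A_i=A_i\,\Delta_i^{-1}(W_4)$. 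Since $s$ and $s+m-1$ are nonzero in the domain $\C[\mathbf h]$, $\Phi$ is injective; since $s$ is a nonunit, $\Phi$ is not surjective. Hence $\Phi^n(M)=(s+m-1)^n\C[\mathbf h]^{\oplus\ell}\oplus s^n\C[\mathbf h]^{\oplus\ell}$ gives an infinite strictly descending chain $M\supsetneq\Phi(M)\supsetneq\Phi^2(M)\supsetneq\cdots$ of submodules (an equality $\Phi^n(M)=\Phi^{n+1}(M)$ would force $M=\Phi(M)$ by injectivity of $\Phi^n$, contradicting non-surjectivity), so $M(A_1,\dots,A_m)$ has infinite length.

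For part~(2), the idea is that a $\pi(A_i)$-stable partition of the index set makes each $A_i$ and each companion $A_i^{\comp}$ block-diagonal, and this block structure splits the module. Since $A_ie_p=(A_i)_{\pi(A_i)(p),p}\,e_{\pi(A_i)(p)}$ and $I,J$ are $\pi(A_i)$-stable, $A_i=A_i|_I\oplus A_i|_J$ with respect to $\C[h_i]^{\oplus\ell}=\C[h_i]^{I}\oplus\C[h_i]^{J}$; as $\pi(A_i^{\comp})=\pi(A_i)^{-1}$ also stabilizes $I,J$, likewise $A_i^{\comp}=A_i^{\comp}|_I\oplus A_i^{\comp}|_J$, and restricting $A_iA_i^{\comp}=h_i\I_\ell$ to each block shows $(A_i|_I,A_i^{\comp}|_I)$ and $(A_i|_J,A_i^{\comp}|_J)$ are $h_i$-companion pairs. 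Inside $M(A_1,\dots,A_m)=\C[\mathbf h]^{\oplus\ell}\oplus\C[\mathbf h]^{\oplus\ell}$ I would then set $M_I:=\C[\mathbf h]^{I}\oplus\C[\mathbf h]^{I}$ (the $I$-indexed copies in each parity) and $M_J:=\C[\mathbf h]^{J}\oplus\C[\mathbf h]^{J}$, so that $M=M_I\oplus M_J$ as $\Z_2$-graded vector spaces with $M_I,M_J\neq0$. Because $\Delta_i^{\pm1}$ act componentwise (hence preserve $\C[\mathbf h]^{I}$) and $A_i,A_i^{\comp}$ preserve the $I$-block, the formulas \eqref{eq_new_action1}--\eqref{eq_new_action2} show $M_I$ is stable under $h_i,e_{i\bar1},e_{\bar1 i}$ for every $i$; since these generate $\mathcal U(\mathfrak{sl}(m|1))$ (e.g.\ $[e_{i\bar1},e_{\bar1 j}]=e_{ij}$ for $i\neq j$), $M_I$ is a submodule, and symmetrically so is $M_J$. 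Thus $M=M_I\oplus M_J$ decomposes into nonzero submodules; in fact $M_I\cong M(A_1|_I,\dots,A_m|_I)$ and $M_J\cong M(A_1|_J,\dots,A_m|_J)$ by construction.

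The main obstacle lies in part~(1): the naive scalar endomorphisms $g(\mathbf h)\,\I_{2\ell}$ \emph{fail} for $m\geq2$, because commuting with $e_{i\bar1}$ forces $g=\Delta_i^{-1}(g)$ for all $i$, hence $g$ translation-invariant, hence constant. The point is to choose the one polynomial $s=h_1+\cdots+h_m$ whose $\Delta_i^{-1}$-shift is independent of $i$, together with the parity-dependent scalars $s$ and $s+m-1$ (for $m=1$ this collapses to $\Phi=h_1\I_{2\ell}$). Once $\Phi$ is in hand the chain argument is routine. Part~(2) is essentially formal; the only care needed is the bookkeeping of the reindexing $\{1,\dots,\ell\}=I\sqcup J$ and the observation that it suffices to test stability on the generators $e_{i\bar1},e_{\bar1 i}$.
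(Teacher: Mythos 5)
Your proof is correct and follows essentially the same route as the paper: part (1) rests on exactly the paper's key observation that multiplication by $F(H+m-1)$ on the even part and $F(H)$ on the odd part (with $H=h_1+\dots+h_m$) intertwines the actions of $e_{i\bar1}$ and $e_{\bar1 i}$, your iterated endomorphism $\Phi$ simply producing the paper's filtration $M_{F}$ with $F(X)=X^{n}$; part (2) is the same block decomposition $M_I\oplus M_J$ from $\pi(A_i)$-stability used in the paper.
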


\begin{proof}
{\bf (1)} We denote $H:=\sum_{j=1}^m h_j$ and write a generic vector of $M(A_1,\dots,A_m)$ as
$$\bigl[f_{-\ell}({\bf h})\;\;\dots\;\;f_{-1}({\bf h})\;\;f_{1}({\bf h})\;\;\dots\;\;f_{\ell}({\bf h})\bigr]^{\mathsf T},\qquad f_j({\bf h})\in \C[{\bf h}].$$
\np
For any $F\in\C[X]$ and $i\in{\bf m}$,
\begin{align*}e_{i\bar1}\cdot
\begin{bmatrix}F(H+m-1)f_{-\ell}({\bf h})\\\vdots\\F(H+m-1)f_{-1}({\bf h})\\F(H)f_{1}({\bf h})\\\vdots\\F(H)f_{\ell}({\bf h})\end{bmatrix}
&=\begin{bmatrix}A_i\begin{bmatrix}F(H+m-1)\Delta_i^{-1}(f_{1}({\bf h}))\\\vdots\\F(H+m-1)\Delta_i^{-1}(f_{\ell}({\bf h}))\end{bmatrix}\\ 0\\\vdots\\ 0\end{bmatrix},\end{align*}

\begin{align*}
    e_{\bar{1}i} \cdot \begin{bmatrix}F(H+m-1)f_{-\ell}({\bf h})\\\vdots\\F(H+m-1)f_{-1}({\bf h})\\F(H)f_{1}({\bf h})\\\vdots\\F(H)f_{\ell}({\bf h})\end{bmatrix} =
  \begin{bmatrix}  0\\ \vdots\\0\\A^{\comp}_{i} \begin{bmatrix} F\left(H \right)\Delta_i(f_{-\ell}(\mathbf h))\\ \vdots \\F\left(H \right)\Delta_i(f_{-1}(\mathbf h))\end{bmatrix} \end{bmatrix}.
\end{align*}
\np
Define
$$
M_F \;:=\; \Bigl(F(H+m-1)\,\C[{\bf h}]\Bigr)^{\oplus \ell}\ \oplus\ \Bigl(F(H)\,\C[{\bf h}]\Bigr)^{\oplus \ell}
\ \subseteq\ \C[{\bf h}]^{\oplus 2\ell}.
$$
\np
The above computation shows that $M_F$ is a submodule of $M(A_1,\dots, A_m)$, for all $F \in \C[X]$. Moreover, fixing an infinite sequence $\{\lambda_r\}_{r\geq 1}$ of complex numbers and setting
$$
F_0(X):=1, \qquad F_k(X):=\prod_{r=1}^k (X-\lambda_r) ,
$$
\np
we obtain the  filtration
 $$ \dots\; \subsetneq M_k \subsetneq \;\dots\; \subsetneq M_2 \subsetneq M_1 \subsetneq M_0 = M(A_1,\dots, A_m),  $$
where 
  $$M_k := \Bigl(F_k(H+m-1)\C[{\bf{h}}]\Bigr)^{\oplus \ell}  
\;\bigoplus\; \Bigl(F_k(H)\C[{\bf{h}}]\Bigr)^{\oplus \ell}.$$

\np
{\bf (2)} Since $I$ and $J$ are $\pi(A_i)$-stable for all $i\in \mathbf m$, and
$\pi\left(A_i^{\comp}\right)=\pi(A_i)^{-1}$, they are also
$\pi\left(A_i^{\comp}\right)$-stable for all $i\in \mathbf m$.
Let $\{e_{-\ell},\dots,e_{-1},e_1,\dots,e_\ell\}$ be the standard
$\C[{\bf h}]$-basis of $M(A_1,\dots,A_m)$, and set
$$
M_I:=\bigoplus_{|i|\in I}\C[{\bf h}]\,e_i, \qquad M_J:=\bigoplus_{|j|\in J}\C[{\bf h}]\,e_j.
$$
\np
Then, $M_I$ and $M_J$ are nonzero $\mathcal U(\mathfrak{sl}(m|1))$-submodules and $M(A_1,\dots,A_m)=M_I\oplus M_J$. 
\end{proof}

\subsection{$\mathcal{U}(\mathfrak{h})$-duality}
In this subsection, we define the notion of $\mathcal{U}(\mathfrak h)$-duality on $\mathcal{M}_{\mathfrak{sl}(m|1)}(\ell|\ell)$ for $\ell\in \Z_{\geq 1}$. For $M\in\mathcal{M}_{\mathfrak{sl}(m|1)}(\ell|\ell)$, set
$$
M^\vee := \Hom_{\,\mathcal{U}(\mathfrak h)}\bigl(M,\mathcal{U}(\mathfrak h)\bigr) .
$$
Since $\mathcal{U}(\mathfrak h)$ is commutative, $M^\vee$ is a $\mathcal{U}(\mathfrak h)$-module.

\begin{lemma} \label{dualstructure}
    Let $\ell\in \Z_{\geq 1}$ and $M\in \mathcal{M}_{\mathfrak{sl}(m|1)}(\ell|\ell)$. For $i,j\in\mathbf{m}$ with $i\neq j$, $\omega\in M^{\vee}$, and $v\in M$, define
$$
  (e_{i\bar{1}}\cdot \omega)(v)
  := \Delta_i^{-1}\bigl(\,\omega(e_{\bar{1}i}\cdot v)\,\bigr), \qquad 
  (e_{\bar{1}i}\cdot \omega)(v)
  := \Delta_i\bigl(\,\omega(e_{i\bar{1}}\cdot v)\,\bigr),$$
 $$ (e_{ij}\cdot \omega)(v)
  := \sigma_i\sigma_j^{-1}\bigl(\,\omega(e_{ji}\cdot v)\,\bigr).
  $$
   Then these formulas endow $M^{\vee}$ with a $\mathcal{U}(\mathfrak{sl}(m|1))$-module structure compatible with the 
   $\mathcal{U}(\mathfrak h)$-module structure.
\end{lemma}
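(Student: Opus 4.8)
\np
The plan is to treat the desired $\mathcal{U}(\mathfrak{sl}(m|1))$-module structure on $M^\vee$ as a homomorphism of Lie superalgebras $\rho\colon\mathfrak{sl}(m|1)\to\End(M^\vee)$ (with the super-commutator bracket on $\End(M^\vee)$), and to carry out every verification by evaluating against an arbitrary $v\in M$ through the $\mathcal{U}(\mathfrak h)$-bilinear pairing $\langle\omega,v\rangle:=\omega(v)$. Since $M$ is $\mathcal{U}(\mathfrak h)$-free of rank $(\ell|\ell)$, so is $M^\vee=\Hom_{\mathcal{U}(\mathfrak h)}(M,\mathcal{U}(\mathfrak h))$; letting $h_i$ act on $M^\vee$ by the stated $\mathcal{U}(\mathfrak h)$-action makes $\rho|_{\mathfrak h}$ coincide with the $\mathcal{U}(\mathfrak h)$-module structure, which is the claimed compatibility. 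It then remains to check that the displayed formulas give operators that (a) are well defined on $M^\vee$ and have the correct parity, and (b) satisfy $[\rho(x),\rho(y)]=\rho([x,y])$ for all $x,y$ in the spanning set $\{h_i\mid i\in\mathbf m\}\cup\{e_{IJ}\mid I,J\in\mathbf m\cup\{\bar1\},\,I\neq J\}$ of $\mathfrak{sl}(m|1)$.

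\np
For (a), I observe that each displayed formula, together with the $\mathfrak h$-action, has the common shape $\langle\rho(x)\omega,v\rangle=\tau_x\bigl(\langle\omega,x^\sharp\cdot v\rangle\bigr)$, where $x^\sharp$ denotes the matrix transpose of $x$ (so $e_{i\bar1}^\sharp=e_{\bar1i}$, $e_{\bar1i}^\sharp=e_{i\bar1}$, $e_{ij}^\sharp=e_{ji}$, $h_i^\sharp=h_i$) and $\tau_x\in\Aut(\C[\mathbf h])$ is, respectively, $\Delta_i^{-1}$, $\Delta_i$, $\sigma_i\sigma_j^{-1}$, or $\mathrm{id}$. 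Exactly as in the proof of Lemma~\ref{structuresl(m|1)}, the relations between $\mathfrak h$ and $x^\sharp$ in $\mathcal{U}(\mathfrak{sl}(m|1))$ yield $x^\sharp\cdot(u\,v)=\tau_x^{-1}(u)\,(x^\sharp\cdot v)$ for all $u\in\C[\mathbf h]$ and $v\in M$. Combining this with the $\C[\mathbf h]$-linearity of $\omega$ and the fact that $\tau_x$ is a ring homomorphism gives $\langle\rho(x)\omega,u\,v\rangle=\tau_x\bigl(\tau_x^{-1}(u)\,\langle\omega,x^\sharp\cdot v\rangle\bigr)=u\,\langle\rho(x)\omega,v\rangle$, so $\rho(x)\omega\in M^\vee$; and because $x^\sharp$ has the same parity as $x$ and acts on $M$ accordingly, while $\langle\,,\,\rangle$ pairs $(M^\vee)_{\bar\varepsilon}$ with $M_{\bar\varepsilon}$, the operator $\rho(x)$ has the parity of $x$.

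\np
For (b), unwinding the formula twice and using that the automorphisms $\tau_\bullet$ are $\C$-linear and pairwise commute (being products of the commuting $\sigma_\ell$), one obtains
$$
\bigl\langle[\rho(x),\rho(y)]\,\omega,\,v\bigr\rangle
=\tau_x\tau_y\Bigl(\bigl\langle\omega,\ \bigl(y^\sharp x^\sharp-(-1)^{|x||y|}x^\sharp y^\sharp\bigr)\cdot v\bigr\rangle\Bigr)
=\tau_x\tau_y\bigl(\bigl\langle\omega,\,[y^\sharp,x^\sharp]\cdot v\bigr\rangle\bigr).
$$
Then I invoke two bookkeeping identities, valid whenever $[x,y]\neq0$: first $[y^\sharp,x^\sharp]=[x,y]^\sharp$, immediate from $(xy)^\sharp=y^\sharp x^\sharp$ and the superbracket formula; second $\tau_x\tau_y=\tau_{[x,y]}$, which follows from the definitions of $\sigma_i$ and $\Delta_i$ and the relations $\Delta_i^{-1}\Delta_j=\sigma_i\sigma_j^{-1}$, $\Delta_i\sigma_i=\Delta$ (cf.\ the Remark following the definition of $\Delta_i$), under the convention $\tau_{h_i}=\mathrm{id}$ and extended $\C$-linearly to the Cartan combinations occurring as brackets (e.g.\ $[e_{ij},e_{ji}]=h_i-h_j$). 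With these, the right-hand side equals $\tau_{[x,y]}\bigl(\langle\omega,[x,y]^\sharp\cdot v\rangle\bigr)=\langle\rho([x,y])\,\omega,\,v\rangle$, the last equality being the defining formula for $[x,y]$ together with the fact that $M$ is a module; when $[x,y]=0$ both sides vanish. I would write out the representative case $[e_{i\bar1},e_{\bar1j}]=e_{ij}$ ($i\neq j$), where $\tau_x\tau_y=\Delta_i^{-1}\Delta_j=\sigma_i\sigma_j^{-1}=\tau_{e_{ij}}$ and $[y^\sharp,x^\sharp]=[e_{j\bar1},e_{\bar1i}]=e_{ji}=e_{ij}^\sharp$, and then remark that the remaining pairs — covering $[e_{i\bar1},e_{\bar1i}]=h_i$, the vanishing brackets among the $e_{I\bar1}$ and among the $e_{\bar1I}$, the $\mathfrak{sl}(m)$-brackets $[e_{ij},e_{kl}]$, the mixed even--odd brackets, and the $\mathfrak h$-relations $[h_j,e_{IJ}]$ — are handled by the identical argument, each collapsing to a one-line identity among the automorphisms $\Delta_i^{\pm1}$, $\sigma_i\sigma_j^{-1}$, $\mathrm{id}$.

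\np
The only genuine content — and thus the main obstacle — is the second bookkeeping identity $\tau_x\tau_y=\tau_{[x,y]}$: one must check that the weight-shift automorphisms attached to the generators compose precisely as the corresponding roots add, so that composing two displayed formulas produces exactly the automorphism attached to their bracket. This is entirely mechanical — the same manipulation of the $\sigma_\ell$'s already used in Lemma~\ref{equivofA_12} and in the proof of Lemma~\ref{structuresl(m|1)} — with no genuinely difficult step; in keeping with the style of the preceding results, I would compress the case-by-case check.
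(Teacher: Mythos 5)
Your proposal is correct: it fills in exactly the verification the paper compresses into ``direct computation confirms that the commutation relations are satisfied,'' and your two bookkeeping identities $[y^\sharp,x^\sharp]=[x,y]^\sharp$ and $\tau_x\tau_y=\tau_{[x,y]}$ (the latter being additivity of the root shifts, with $\tau_{x^\sharp}=\tau_x^{-1}$ handling well-definedness over $\C[\mathbf h]$) do check out on all generator pairs, including the odd--odd case $[e_{i\bar1},e_{\bar1i}]=h_i$. So this is essentially the paper's approach, merely organized more systematically.
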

\begin{proof}Direct computation confirms that the commutation relations are satisfied. 
\end{proof}
\begin{proposition} \label{dualsl(m|1)}
     For each $i \in {\bf{m}}$, let $(A_i,A_i^{\comp})$ be a $h_i$-companion pair with
$A_i,A_i^{\comp}\in \Mat_\ell(\C[h_i])$. Then 
    $$M(A_1,\dots, A_m)^{\vee} \;\simeq \; M\left(\left(A^{\comp}_{1}\right)^{\mathsf T},\dots, \left(A^{\comp}_{m}\right)^{\mathsf T}\right).$$
\end{proposition}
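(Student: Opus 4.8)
\np
The plan is to make the $\mathcal{U}(\mathfrak h)$-module $M(A_1,\dots,A_m)^{\vee}$ completely explicit by passing to the dual basis, and then to recognize the resulting $\mathcal{U}(\mathfrak{sl}(m|1))$-action as exactly the one prescribed by \eqref{eq_new_action1} for the companion matrices $(A_1^{\comp})^{\mathsf T},\dots,(A_m^{\comp})^{\mathsf T}$. As a preliminary remark, the module $M\bigl((A_1^{\comp})^{\mathsf T},\dots,(A_m^{\comp})^{\mathsf T}\bigr)$ does make sense: transposing $A_iA_i^{\comp}=A_i^{\comp}A_i=h_i\,\I_\ell$ gives $(A_i^{\comp})^{\mathsf T}A_i^{\mathsf T}=A_i^{\mathsf T}(A_i^{\comp})^{\mathsf T}=h_i\,\I_\ell$, and since the transpose of a GPM is again a GPM, the pair $\bigl((A_i^{\comp})^{\mathsf T},\,A_i^{\mathsf T}\bigr)$ is an $h_i$-companion pair for every $i\in{\bf m}$.

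\np
I would then fix the standard $\C[{\bf h}]$-basis $\{e_{-\ell},\dots,e_{-1},e_1,\dots,e_\ell\}$ of $M:=M(A_1,\dots,A_m)$ (with $M_{\bar0}$ spanned by the $e_{-s}$ and $M_{\bar1}$ by the $e_s$ with $s>0$, consistent with \eqref{eq_new_action1}), together with the dual basis $\{e_{-\ell}^{*},\dots,e_\ell^{*}\}$ of $M^{\vee}$ defined by $e_s^{*}(e_t)=\delta_{st}$ and $\C[{\bf h}]$-linearity. Since $M\simeq\mathcal{U}(\mathfrak h)^{\oplus 2\ell}$ as a $\mathcal{U}(\mathfrak h)$-module, the assignment $\omega\mapsto\bigl(\omega(e_s)\bigr)_s$ is a $\Z_2$-graded $\mathcal{U}(\mathfrak h)$-linear isomorphism $M^{\vee}\simeq\C[{\bf h}]^{\oplus\ell}\oplus\C[{\bf h}]^{\oplus\ell}$ (graded because $e_s^{*}$ kills the part of $M$ of parity opposite to that of $e_s$). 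Writing $\omega\leftrightarrow{\bf p}=[\,{\bf p}^{-}\mid{\bf p}^{+}\,]^{\mathsf T}$ and $v\leftrightarrow{\bf g}=[\,{\bf g}^{-}\mid{\bf g}^{+}\,]^{\mathsf T}$ for these coordinate vectors, one has $\omega(v)={\bf p}^{\mathsf T}{\bf g}$.

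\np
The core of the proof is a short substitution into the defining formulas of Lemma~\ref{dualstructure}. Reading off from \eqref{eq_new_action1} that the coordinate vector of $e_{\bar1 i}\cdot v$ is $[\,0\mid A_i^{\comp}\Delta_i({\bf g}^{-})\,]^{\mathsf T}$, one finds
\begin{align*}
(e_{i\bar1}\cdot\omega)(v)
&=\Delta_i^{-1}\!\bigl(\omega(e_{\bar1 i}\cdot v)\bigr)
=\Delta_i^{-1}\!\bigl(({\bf p}^{+})^{\mathsf T}A_i^{\comp}\,\Delta_i({\bf g}^{-})\bigr)\\
&=\bigl(\Delta_i^{-1}({\bf p}^{+})\bigr)^{\mathsf T}A_i^{\comp}\,{\bf g}^{-}
=\Bigl((A_i^{\comp})^{\mathsf T}\,\Delta_i^{-1}({\bf p}^{+})\Bigr)^{\mathsf T}{\bf g}^{-},
\end{align*}
where I use that $\Delta_i^{-1}$ is a ring automorphism of $\C[{\bf h}]$ and that $\Delta_i^{\pm1}$ fixes $A_i^{\comp}\in\Mat_\ell(\C[h_i])$ entrywise --- the same triviality already exploited in Lemma~\ref{equivofA_12}. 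Thus $e_{i\bar1}\cdot\omega$ corresponds to the coordinate vector $\bigl[\,(A_i^{\comp})^{\mathsf T}\Delta_i^{-1}({\bf p}^{+})\mid 0\,\bigr]^{\mathsf T}$, which is exactly the $e_{i\bar1}$-action of \eqref{eq_new_action1} with $A_i$ replaced by $(A_i^{\comp})^{\mathsf T}$. The analogous computation, using $\Delta_i(A_i)=A_i$, gives $e_{\bar1 i}\cdot\omega\leftrightarrow\bigl[\,0\mid A_i^{\mathsf T}\Delta_i({\bf p}^{-})\,\bigr]^{\mathsf T}$, which matches the $e_{\bar1 i}$-action in $M\bigl((A_1^{\comp})^{\mathsf T},\dots,(A_m^{\comp})^{\mathsf T}\bigr)$ since $\bigl((A_i^{\comp})^{\mathsf T}\bigr)^{\comp}=A_i^{\mathsf T}$.

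\np
To finish, I would observe that the $\C[{\bf h}]$-linear bijection $M^{\vee}\to M\bigl((A_1^{\comp})^{\mathsf T},\dots,(A_m^{\comp})^{\mathsf T}\bigr)$, $\omega\mapsto{\bf p}$, is $\Z_2$-graded and intertwines the actions of every $h_i$ (componentwise multiplication on both sides) and of every $e_{i\bar1},e_{\bar1 i}$ $(i\in{\bf m})$. Since $\{e_{i\bar1},e_{\bar1 i}\mid i\in{\bf m}\}$ generates $\mathfrak{sl}(m|1)$ as a Lie superalgebra --- indeed $[e_{i\bar1},e_{\bar1 i}]=h_i$ and $[e_{i\bar1},e_{\bar1 j}]=e_{ij}$ for $i\neq j$ --- and a linear bijection between $\mathcal{U}(\mathfrak{sl}(m|1))$-modules that intertwines a generating set of $\mathfrak{sl}(m|1)$ is automatically a module isomorphism, this yields $M(A_1,\dots,A_m)^{\vee}\simeq M\bigl((A_1^{\comp})^{\mathsf T},\dots,(A_m^{\comp})^{\mathsf T}\bigr)$. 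The only delicate point is the bookkeeping: keeping the two parity blocks and the various transposes in their correct positions, and invoking the triviality of $\Delta_i^{\pm1}$ on $\Mat_\ell(\C[h_i])$ at the right moments; beyond that, the argument is just the routine substitution displayed above.
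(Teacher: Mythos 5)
Your proof is correct and takes essentially the same route as the paper's: identify $M^{\vee}$ with $\C[\mathbf h]^{\oplus \ell}\oplus\C[\mathbf h]^{\oplus \ell}$ via the dual basis, substitute the actions of $e_{i\bar 1}$ and $e_{\bar 1 i}$ into the formulas of Lemma~\ref{dualstructure}, and transpose, using that $\Delta_i^{\pm 1}$ fixes matrices with entries in $\C[h_i]$. Your two additional remarks --- that $\bigl((A_i^{\comp})^{\mathsf T},A_i^{\mathsf T}\bigr)$ is again an $h_i$-companion pair and that intertwining the odd generators (together with the $\mathcal U(\mathfrak h)$-structure) suffices --- are details the paper leaves implicit, not a different argument.
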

\begin{proof}
Let $\left\{e_1,\dots,e_{2\ell}\right\}$ be a standard ordered
$\C[{\bf h}]$-basis of $M(A_1,\dots,A_m)$. For $v \in M(A_1,\dots,A_m)$, write
$$v = \bigl[f_1({\bf h})\;\;\dots\;\;f_{2\ell}({\bf h})\bigr]^{\mathsf T} =\sum_{i=1}^{2\ell} f_i({\bf{h}})e_i. $$
\np
Let $\{\varepsilon_1,\dots,\varepsilon_{2\ell}\}$ be the ordered $\C[\mathbf h]$-dual basis, so $\varepsilon_i(e_j)=\delta_{ij}$ for all $i,j$.
Then for any
$$
\omega \in M(A_1,\dots,A_m)^{\vee}
=\Hom_{\C[{\bf h}]}\big(M(A_1,\dots,A_m),\C[{\bf h}]\big),
$$
there exist unique $\omega_i({\bf h})\in \C[{\bf h}]$ such that $\omega=\bigl[\omega_1({\bf h})\;\;\dots\;\;\omega_{2\ell}({\bf h})\bigr]^{\mathsf T} = \sum_{i=1}^{2\ell} \omega_i({\bf h})\,\varepsilon_i$. Hence,
$$\omega(v) \;=\; \sum_{i=1}^{2\ell} \omega_i({\bf{h}}) f_i({\bf{h}}) \;=\; \omega^{\mathsf T}v . $$
From Lemma \ref{dualstructure}, for all $i \in \bf{m}$, we have
\begin{multline*}
   (e_{i\bar{1}}\cdot \omega)(v) = \left( \Delta^{-1}_i \circ \omega \circ e_{\bar{1}i} \right)(v) = \Delta_i^{-1}\left(\omega^{\mathsf T} \left[\begin{array}{ c | c }
    \mathbf 0 & \mathbf 0 \\
    \hline
    A_i^{\comp} & \mathbf 0
  \end{array}\right]\Delta_i(v)\right)\\
  =\left(\Delta_i^{-1}\left( \left[\begin{array}{ c | c }
    \mathbf 0 & \left(A_i^{\comp}\right)^{\mathsf T} \\
    \hline
    \mathbf 0 & \mathbf 0
  \end{array}\right]\right)\Delta_i^{-1} (\omega) \right)^{\mathsf T} (v)=\left( \left[\begin{array}{ c | c }
    \mathbf 0 & \left(A_i^{\comp}\right)^{\mathsf T} \\
    \hline
    \mathbf 0 & \mathbf 0
  \end{array}\right]\Delta_i^{-1} (\omega) \right)^{\mathsf T} (v), 
\end{multline*}

\begin{multline*}
     (e_{\bar{1}i}\cdot \omega)(v) = \left( \Delta_i \circ \omega \circ e_{i\bar{1}}\right)(v)
    =\Delta_i\left(\omega^{\mathsf T} \left[\begin{array}{ c | c }
    \mathbf 0 & A_i \\
    \hline
    \mathbf 0 & \mathbf 0
  \end{array}\right]\Delta^{-1}_i(v)\right)\\
  =\left(\Delta_i\left( \left[\begin{array}{ c | c }
    \mathbf 0 & \mathbf 0 \\
    \hline
    A_i^{\mathsf T} & \mathbf 0
  \end{array}\right]\right)\Delta_i(\omega) \right)^{\mathsf T} (v)
  =\left( \left[\begin{array}{ c | c }
    \mathbf 0 & \mathbf 0 \\
    \hline
    A_i^{\mathsf T} & \mathbf 0
  \end{array}\right]\Delta_i(\omega) \right)^{\mathsf T} (v).
\end{multline*}
\np
 The above computations imply 

$$e_{\bar{1}i}\cdot\omega = \left[\begin{array}{ c | c }
    \mathbf 0 & \left(A_i^{\comp}\right)^{\mathsf T} \\
    \hline
    \mathbf 0 & \mathbf 0
  \end{array}\right]\Delta_i^{-1}(\omega),\quad \text{and} \quad e_{\bar{1}i}\cdot\omega = \left[\begin{array}{ c | c }
    \mathbf 0 & \mathbf 0 \\
    \hline
    A_i^{\mathsf T} & \mathbf 0
  \end{array}\right]\Delta_i(\omega),$$
for all  $i \in {\bf{m}}$. Therefore, $M(A_1,\dots, A_m)^{\vee} \;\simeq \; M\left(\left(A^{\comp}_{1}\right)^{\mathsf T},\dots, \left(A^{\comp}_{m}\right)^{\mathsf T}\right).$
\end{proof}

\section{Exponential modules for $\mathcal{U}(\mathfrak{sl}(m|1))$ }
\np
This section is devoted to constructing and studying objects of $\mathcal{M}_{\mathfrak{sl}(m|1)}(\ell|\ell)$, which arise as a special case of \emph{exponential modules} over $\mathcal{U}(\mathfrak{sl}(m|1))$. From now on, for $n\in\Z_{\geq 1}$, we take $S_n$ to be the symmetric group on $\{0,1,\dots,n-1\}$, replacing the earlier convention $\{1,\dots,n\}$. We begin with some examples of ${\mathcal D}(m|1)$-modules.

\np
Let ${\boldsymbol{\mu}} := (\mu_1, \dots, \mu_m) \in \mathbb{C}^m$, and define ${\bf{x}}^{\boldsymbol{\mu}} := x_1^{\mu_1} \dots x_m^{\mu_m}$. Then the space
 $$\mathcal{F}({\boldsymbol{\mu}}):={\bf{x}}^{\boldsymbol{\mu}}\C\left[x_1^{\pm 1},\dots,x_m^{\pm 1}, \xi\right]={\bf{x}}^{\boldsymbol{\mu}}\C\left[x_1^{\pm 1},\dots,x_m^{\pm 1}\right]\; \oplus \; {\bf{x}}^{\boldsymbol{\mu}}\xi \C\left[x_1^{\pm 1},\dots,x_m^{\pm 1}\right], $$ 
 has a natural structure of ${\mathcal D} (m|1)$-module. That is, the action of ${\mathcal D} (m|1)$ on a basis vector of $\mathcal{F}({\boldsymbol{\mu}})$ is as follows
$$\partial_{x_i}\cdot {\bf x}^{\bf{a}} \xi^\varepsilon = a_i {\bf x}^{{\bf{a}} -\varepsilon_i}\xi^\varepsilon, \quad \partial_{\xi} \cdot {\bf x}^{\bf{a}} \xi^\varepsilon = \varepsilon{\bf x}^{\bf{a}} ,$$
 $$x_i \cdot {\bf x}^{\bf{a}} \xi^\varepsilon = {\bf x}^{{\bf{a}}+\varepsilon_i} \xi^\varepsilon, \quad \xi \cdot {\bf x}^{\bf{a}} \xi^\varepsilon = (1-\varepsilon){\bf x}^{\bf{a}} \xi,$$
where ${\bf{a}}:=(a_1,a_2,\dots, a_m) \in \C^m$ and $\varepsilon \in \{0,1\}$. 

\np
 \begin{remark}
The ${\mathcal{D}}(m|1)$-module $\mathcal{F}({\boldsymbol{\mu}})$ is a weight module (see Definition \ref{D(m|1)-weightmodule}), which was extensively studied in \cite{GPS}. In particular, $\mathcal{F}({\boldsymbol{\mu}})$ is indecomposable for all ${\boldsymbol{\mu}} \in \mathbb{C}^m$. Moreover, if $\mu_i \notin \mathbb{Z}$ for all $i \in {\bf{m}}$, then the module $\mathcal{F}({\boldsymbol{\mu}})$ is simple.
\end{remark}

\np
\begin{lemma} \label{D(m|1)-exp}
Let $g({\bf x}) \in \C[x_1,\dots, x_m]$ and $\varepsilon \in \{0,1\}$. The space $\C[x_1,\dots, x_m, \xi]\,e^{g({\bf x})}$ becomes a ${\mathcal D}(m|1)$-module via the following action:
$$ x_i \cdot {\bf x}^{\bf{k}} \xi^\varepsilon e^{g({\bf x})} = {\bf x}^{{\bf{k}}+\varepsilon_i} \xi^\varepsilon e^{g({\bf x})}, \quad \xi \cdot {\bf x}^{\bf{k}} \xi^\varepsilon e^{g({\bf x})} = (1-\varepsilon){\bf x}^{\bf{k}} \xi e^{g({\bf x})},\quad \partial_{\xi} \cdot {\bf x}^{\bf{k}} \xi^\varepsilon e^{g({\bf x})} = \varepsilon{\bf x}^{\bf{k}} e^{g({\bf x})}, $$
$$\partial_{x_i}\cdot {\bf x}^{\bf{k}} \xi^\varepsilon e^{g({\bf x})} = k_i {\bf x}^{{\bf{k}} -\varepsilon_i}\xi^\varepsilon e^{g({\bf x})} + \dfrac{\partial g({\bf x})}{\partial x_i}{\bf x}^{\bf{k}} \xi^\varepsilon e^{g({\bf x})}, $$
for all ${\bf{k}} \in \left(\Z_{\geq 0}\right)^m$.
\end{lemma}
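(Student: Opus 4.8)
The plan is to recognize the module in the statement as a \emph{twist} of the standard polynomial representation of $\mathcal D(m|1)$, thereby reducing the verification of all the defining relations to a familiar case together with the equality of mixed second partial derivatives of a polynomial. Throughout, I identify $\C[x_1,\dots,x_m,\xi]$ with $\C[x_1,\dots,x_m]\otimes\Lambda(\xi)$.

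First recall the standard polynomial representation: $\mathcal D(m|1)$ acts on $P:=\C[x_1,\dots,x_m,\xi]$ with $x_i$ and $\xi$ acting by left multiplication, $\partial_{x_i}$ by $\partial/\partial x_i$, and $\partial_\xi$ by the odd derivation $\partial/\partial\xi$. Since all the defining relations of $\mathcal D(m|1)$ hold as operator identities on polynomials, $P$ is a genuine $\mathcal D(m|1)$-module. Next, for $g(\mathbf x)\in\C[x_1,\dots,x_m]$ I introduce the assignment $\theta_g$ on generators by $\theta_g(x_i)=x_i$, $\theta_g(\xi)=\xi$, $\theta_g(\partial_\xi)=\partial_\xi$, and $\theta_g(\partial_{x_i})=\partial_{x_i}+\dfrac{\partial g}{\partial x_i}$, the last being a legitimate element of $\mathcal D(m|1)$ because $\partial g/\partial x_i\in\C[\mathbf x]\subseteq\mathcal D(m|1)$. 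All defining relations are preserved: $[\theta_g(\partial_{x_i}),\theta_g(x_j)]=[\partial_{x_i},x_j]=\delta_{ij}$; the mixed relations involving $\xi,\partial_\xi$ are untouched; and the only one requiring a word is
$$\bigl[\theta_g(\partial_{x_i}),\,\theta_g(\partial_{x_j})\bigr]=\Bigl[\partial_{x_i}+\tfrac{\partial g}{\partial x_i},\ \partial_{x_j}+\tfrac{\partial g}{\partial x_j}\Bigr]=\tfrac{\partial^2 g}{\partial x_i\partial x_j}-\tfrac{\partial^2 g}{\partial x_j\partial x_i}=0,$$
which holds because $g$ is a polynomial. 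Hence $\theta_g$ extends to an algebra endomorphism of $\mathcal D(m|1)$, and as $\theta_{-g}$ is a two-sided inverse, $\theta_g$ is an automorphism.

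Now form the pulled-back module $P^{\theta_g}$ (same underlying space $\C[\mathbf x,\xi]$, with $D$ acting as $\theta_g(D)$ acts on $P$), and transport it along the $\C$-linear bijection $\C[\mathbf x,\xi]\to\C[\mathbf x,\xi]\,e^{g(\mathbf x)}$, $p\mapsto p\,e^{g(\mathbf x)}$. Under this transport, $x_i$, $\xi$, $\partial_\xi$ act exactly by the formulas in the statement (multiplication and the odd derivation), while for $\partial_{x_i}$ the product rule gives
$$\partial_{x_i}\cdot\bigl(\mathbf x^{\mathbf k}\xi^\varepsilon e^{g(\mathbf x)}\bigr)=\Bigl(\bigl(\partial_{x_i}+\tfrac{\partial g}{\partial x_i}\bigr)\mathbf x^{\mathbf k}\xi^\varepsilon\Bigr)e^{g(\mathbf x)}=k_i\,\mathbf x^{\mathbf k-\varepsilon_i}\xi^\varepsilon e^{g(\mathbf x)}+\tfrac{\partial g}{\partial x_i}\,\mathbf x^{\mathbf k}\xi^\varepsilon e^{g(\mathbf x)},$$
which is precisely the formula of the lemma. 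Therefore the given formulas define a $\mathcal D(m|1)$-module structure on $\C[x_1,\dots,x_m,\xi]\,e^{g(\mathbf x)}$, isomorphic to $P^{\theta_g}$.

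I do not anticipate a genuine obstacle: the content is entirely bookkeeping, and the sole non-formal input is that $\theta_g$ respects $[\partial_{x_i},\partial_{x_j}]=0$, where the polynomiality of $g$ enters through the symmetry of mixed second partials. One can equivalently bypass the automorphism and check the defining relations directly on the stated action; the same observation about mixed partials is again the only step that is not immediate.
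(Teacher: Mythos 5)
Your proof is correct and follows essentially the same route as the paper: both define the automorphism $\theta_g$ of $\mathcal D(m|1)$ sending $\partial_{x_i}\mapsto \partial_{x_i}+\partial g/\partial x_i$ and realize the module as the twist of the natural polynomial representation by $\theta_g$. Your write-up merely supplies the routine verifications (symmetry of mixed partials, invertibility via $\theta_{-g}$, transport along $p\mapsto p\,e^{g}$) that the paper leaves implicit.
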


\begin{proof}
     For $g({\bf x}) \in \C[x_1,\dots, x_m]$, we define the automorphism $\theta_{g({\bf x})}$ of $\mathcal{D}(m|1)$ via
    $$\theta_{g({\bf x})} (x_i) = x_i, \qquad \theta_{g({\bf x})}(\partial_{x_i}) = \partial_{x_i} +\dfrac{\partial g({\bf x})}{\partial x_i},\qquad \theta_{g({\bf x})} (\xi) = \xi, \qquad \theta_{g({\bf x})}(\partial_{\xi}) = \partial_{\xi},$$
    for all $i \in {\bf{m}}$. It is straightforward to verify that the $\mathcal{D}(m|1)$-action on $\C[x_1,\dots,x_m,\xi]e^{g(\mathbf x)}$ is the twist of the natural action on $\C[x_1,\dots,x_m,\xi]$ by the automorphism $\theta_{g(\mathbf x)}$.
\end{proof}

\begin{proposition}
Let $\mathcal{S}\subseteq {\bf m}$. Define $\Phi^{\mathcal S}_{m|1}:\mathcal{U}(\mathfrak{sl}(m|1))\to \mathcal{D}(m|1)$ on generators by
$$\Phi^{\mathcal S}_{m|1}(h_i)=
\begin{cases}
x_i\,\partial_{x_i}+\xi\,\partial_{\xi}, & i\notin \mathcal S,\\
-\,x_i\,\partial_{x_i}-1+\xi\,\partial_{\xi}, & i\in \mathcal S,
\end{cases} \qquad
\Phi^{\mathcal S}_{m|1}(e_{ij})=
\begin{cases}
x_i\,\partial_{x_j}, & i,j\notin \mathcal S,\\
\partial_{x_i}\,\partial_{x_j}, & i\in \mathcal S,\ j\notin \mathcal S,\\
-\,x_i x_j, & i\notin \mathcal S,\ j\in \mathcal S,\\
-\,x_j\,\partial_{x_i}, & i,j\in \mathcal S,
\end{cases}
$$
$$
\Phi^{\mathcal S}_{m|1}(e_{i\bar{1}})=
\begin{cases}
x_i\,\partial_{\xi}, & i\notin \mathcal S,\\
\partial_{x_i}\,\partial_{\xi}, & i\in \mathcal S,
\end{cases}
\qquad
\Phi^{\mathcal S}_{m|1}(e_{\bar{1}i})=
\begin{cases}
\xi\,\partial_{x_i}, & i\notin \mathcal S,\\
-\,x_i\,\xi, & i\in \mathcal S.
\end{cases}
$$
Then $\Phi^{\mathcal S}_{m|1}$ extends to an algebra homomorphism
$\Phi^{\mathcal S}_{m|1}:\mathcal{U}(\mathfrak{sl}(m|1))\to \mathcal{D}(m|1)$.
\end{proposition}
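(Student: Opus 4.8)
The plan is to avoid checking the defining relations of $\mathfrak{sl}(m|1)$ by hand, and instead to exhibit $\Phi^{\mathcal S}_{m|1}$ as the composition of the classical oscillator realization (the case $\mathcal S=\varnothing$) with a product of partial Fourier-transform automorphisms of $\mathcal D(m|1)$, one for each index of $\mathcal S$. Concretely, I will show $\Phi^{\mathcal S}_{m|1}=\bigl(\prod_{i\in\mathcal S}\mathcal F_i\bigr)\circ\Phi^{\varnothing}_{m|1}$, where $\mathcal F_i\in\Aut(\mathcal D(m|1))$ is the Fourier transform in the $i$-th even variable; since a composition of an algebra homomorphism with algebra automorphisms is again an algebra homomorphism, it then suffices to treat $\mathcal S=\varnothing$ and to identify the composite on generators.

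\textbf{Step 1 ($\mathcal S=\varnothing$).} Here I would use that $\mathfrak{sl}(m|1)$ sits inside $\mathfrak{gl}(m|1)=\Span\{e_{IJ}\mid I,J\in\mathbf m\cup\{\bar1\}\}$ with $h_i=e_{ii}+e_{\bar1\bar1}$. Adopting the uniform notation $x_{\bar1}:=\xi$, $\partial_{x_{\bar1}}:=\partial_\xi$ (so $[\partial_{x_I},x_J]=\delta_{IJ}$, and $x_I,\partial_{x_I}$ are even for $I\in\mathbf m$, odd for $I=\bar1$), define $\widetilde\Phi\colon\mathfrak{gl}(m|1)\to\mathcal D(m|1)$ on the basis by $\widetilde\Phi(e_{IJ})=x_I\partial_{x_J}$. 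The crux is to verify the identity
$$
[x_I\partial_{x_J},\,x_K\partial_{x_L}] \;=\; \delta_{JK}\,x_I\partial_{x_L}\;-\;(-1)^{([I]+[J])([K]+[L])}\,\delta_{IL}\,x_K\partial_{x_J}
$$
in $\mathcal D(m|1)$, where $[\,\cdot\,]$ is the supercommutator and $[I]$ the parity of the index $I$; the right-hand side is exactly $[e_{IJ},e_{KL}]$ in $\mathfrak{gl}(m|1)$. This is a routine but sign-sensitive calculation from the relations $[\partial_{x_I},x_J]=\delta_{IJ}$, $[x_I,x_J]=[\partial_{x_I},\partial_{x_J}]=0$, organised by which of $J=K$, $I=L$ hold. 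Restricting $\widetilde\Phi$ to $\mathfrak{sl}(m|1)$ gives a Lie superalgebra homomorphism into $\mathcal D(m|1)$ (with its supercommutator), which by the universal property of $\mathcal U$ extends to $\mathcal U(\mathfrak{sl}(m|1))\to\mathcal D(m|1)$; on generators it is $h_i\mapsto x_i\partial_{x_i}+\xi\partial_\xi$, $e_{ij}\mapsto x_i\partial_{x_j}$, $e_{i\bar1}\mapsto x_i\partial_\xi$, $e_{\bar1 i}\mapsto\xi\partial_{x_i}$, i.e.\ it coincides with $\Phi^{\varnothing}_{m|1}$.

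\textbf{Steps 2--3 (Fourier automorphisms and assembly).} For each $i\in\mathbf m$ define $\mathcal F_i$ on generators of $\mathcal D(m|1)$ by $\mathcal F_i(x_i)=\partial_{x_i}$, $\mathcal F_i(\partial_{x_i})=-x_i$, and $\mathcal F_i$ fixing all other generators. A one-line check that the defining relations survive — the only nontrivial one being $[\mathcal F_i(\partial_{x_i}),\mathcal F_i(x_i)]=[-x_i,\partial_{x_i}]=1$ — together with invertibility shows $\mathcal F_i\in\Aut(\mathcal D(m|1))$, and $\mathcal F_i,\mathcal F_j$ commute for $i\neq j$ (disjoint supports). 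Put $\Psi_{\mathcal S}:=\prod_{i\in\mathcal S}\mathcal F_i$ and $\Phi:=\Psi_{\mathcal S}\circ\Phi^{\varnothing}_{m|1}$, an algebra homomorphism $\mathcal U(\mathfrak{sl}(m|1))\to\mathcal D(m|1)$. Finally I would evaluate $\Phi$ on each generator: $\Psi_{\mathcal S}(x_i\partial_{x_i})=\partial_{x_i}(-x_i)=-x_i\partial_{x_i}-1$ for $i\in\mathcal S$ yields the stated $h_i$; $\Psi_{\mathcal S}(x_i\partial_{x_j})$ becomes $\partial_{x_i}\partial_{x_j}$, $-x_ix_j$, or $-x_j\partial_{x_i}$ according as exactly $i$, exactly $j$, or both of $i,j$ lie in $\mathcal S$ (using $i\neq j$); and $\Psi_{\mathcal S}(x_i\partial_\xi)=\partial_{x_i}\partial_\xi$, $\Psi_{\mathcal S}(\xi\partial_{x_i})=-x_i\xi$ for $i\in\mathcal S$. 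Since these match the formulas in the statement and two algebra homomorphisms from $\mathcal U(\mathfrak{sl}(m|1))$ agreeing on $\mathfrak{sl}(m|1)$ are equal, $\Phi^{\mathcal S}_{m|1}=\Phi$ is an algebra homomorphism.

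\textbf{Main obstacle.} The only genuine work is sign bookkeeping: getting the supercommutator identity of Step 1 correct across all index-coincidence cases (especially those involving the odd index $\bar1$), and then confirming in Steps 2--3 that the Fourier automorphisms reproduce precisely the signs in the displayed formulas (the $-1$ in $h_i$, and the minus signs in $e_{ij}$ and $e_{\bar1 i}$ for $i\in\mathcal S$). A fully elementary alternative is to forgo Steps 1--2 and directly check that the stated assignment preserves every defining bracket of $\mathfrak{sl}(m|1)$; this works but multiplies the number of cases and hides the structural reason behind the formulas.
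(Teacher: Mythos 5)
Your proposal is correct and follows essentially the same route as the paper: the paper also first establishes the $\mathcal S=\varnothing$ case (the oscillator realization $h_i\mapsto x_i\partial_{x_i}+\xi\partial_\xi$, $e_{IJ}\mapsto x_I\partial_{x_J}$, verified directly) and then obtains $\Phi^{\mathcal S}_{m|1}=\Psi_{\mathcal S}\circ\Phi_{m|1}$ by twisting with the automorphism $\Psi_{\mathcal S}$ of $\mathcal D(m|1)$, which is exactly your product of partial Fourier transforms $\prod_{i\in\mathcal S}\mathcal F_i$. Your only additions — spelling out the base case via the $\mathfrak{gl}(m|1)$ supercommutator identity and factoring $\Psi_{\mathcal S}$ into commuting single-variable automorphisms — are elaborations of steps the paper leaves as a direct check, and the sign bookkeeping you describe matches the stated formulas.
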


\begin{proof}
For all distinct $i,j \in \mathbf m$, a direct verification shows that the correspondence
$$e_{ij} \mapsto x_i\partial_{x_j}, \quad e_{i\bar{1}} \mapsto x_i\partial_{\xi}, \quad e_{\bar{1}i} \mapsto \xi\partial_{x_i}, \quad h_i \mapsto x_i\partial_{x_i}+\xi\partial_{\xi},$$
    extends to a homomorphism $\Phi_{m|1}: \mathcal{U}(sl(m|1)) \to \mathcal{D}(m|1) $. For $\mathcal{S} \subseteq {\bf{m}}$, we define the automorphism $\Psi_{\mathcal{S}}$ of $\mathcal{D}(m|1)$ as follows
$$\Psi_{\mathcal S}(\xi)=\xi,\quad \Psi_{\mathcal S}(\partial_{\xi})=\partial_{\xi},\qquad
\Psi_{\mathcal S}(x_i)=\begin{cases}  \partial_{x_i}, & i\in\mathcal S,\\ x_i, & i\notin\mathcal S,
\end{cases} \quad
\Psi_{\mathcal S}(\partial_{x_i})= \begin{cases}-\,x_i, & i\in\mathcal S,\\ \partial_{x_i}, & i\notin\mathcal S. \end{cases} $$
\np
Then, $\Phi^{\mathcal S}_{m|1}\;=\;\Psi_{\mathcal S}\circ \Phi_{m|1}. $
\end{proof} 

\begin{definition}[Exponential modules of $\mathcal{U}(\mathfrak{sl}(m|1))$]\label{generalexp-module}
For $\mathcal{S}\subseteq {\bf{m}}$, $g({\bf x}) \in \C[x_1,\dots, x_m]$, we write
$$
E\bigl(g({\bf x}),\mathcal{S}\bigr)\;:=\;\C[x_1,\dots,x_m,\xi]\,e^{g({\bf x})},
$$
to denote the $\mathcal{U}(\mathfrak{sl}(m|1))$-module defined via the homomorphism $\Phi^{\mathcal{S}}_{m|1}$.
\end{definition}
\np
\begin{remark}
    In the case where $g({\bf x}) = c \in \C$, the module $E(c, \mathcal{S})$ is a weight module.
\end{remark}
\np
\begin{lemma} \label{higherrank-exp}
    Let $\mathcal{S} \subseteq {\bf m}$, $\alpha, a_i \in \C^{\times}$, and $k_i \in \Z_{\geq 1}$ for all $ i\in {\bf{m}}$. Then,
    \begin{enumerate}
        \item $E\bigl(\sum_{i=1}^m a_i x_i^{k_i},\,\mathcal S\bigr) \in \mathcal{M}_{\mathfrak{sl}(m|1)}\bigl(\prod^m_{i=1}k_i|\prod^m_{i=1}k_i\bigr)$.
        \item For all $\boldsymbol{\ell}:= (\ell_1,\ell_2,\dots,\ell_m) \in (\Z_{\geq 0})^{m}$ and $m\geq 2$, $ E\bigl(\alpha{\bf{x}}^{\boldsymbol{\ell}},\mathcal{S}\bigr) \notin \mathcal{M}_{\mathfrak{sl}(m|1)}\bigl(p|p\bigr)  $ for all $p \in \Z_{\geq 1}$.
    \end{enumerate}
\end{lemma}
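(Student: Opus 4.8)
For part (1), the plan is to use the homomorphism $\Phi^{\mathcal S}_{m|1}$ together with the explicit description of the action on $E(g(\mathbf x),\mathcal S) = \C[x_1,\dots,x_m,\xi]\,e^{g(\mathbf x)}$ from Lemma~\ref{D(m|1)-exp}. Taking $g(\mathbf x) = \sum_{i=1}^m a_i x_i^{k_i}$, I first compute how $h_i$ acts on the monomial basis vectors $\mathbf x^{\mathbf k}\xi^\varepsilon e^{g(\mathbf x)}$: from $\Phi^{\mathcal S}_{m|1}(h_i) = \pm x_i\partial_{x_i} + (\text{shift}) + \xi\partial_\xi$ and Lemma~\ref{D(m|1)-exp}, the operator $x_i\partial_{x_i}$ acts on $\mathbf x^{\mathbf k}\xi^\varepsilon e^{g(\mathbf x)}$ by $k_i + k_i a_i x_i^{k_i}$ — i.e. it is \emph{not} diagonal, but shifts the $x_i$-exponent up by $k_i$. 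The key observation is then that $\C[x_1,\dots,x_m,\xi]\,e^{g(\mathbf x)}$, regarded as a $\C[h_1,\dots,h_m]$-module via these operators, is free on the $\prod_{i=1}^m k_i$ vectors $\{\,x_1^{r_1}\cdots x_m^{r_m}\xi^\varepsilon e^{g(\mathbf x)} : 0\le r_i < k_i,\ \varepsilon\in\{0,1\}\,\}$: one shows that, for each $i$ separately, $\C[x_i]$ is free of rank $k_i$ over $\C[x_i\partial_{x_i}]$ with basis $1,x_i,\dots,x_i^{k_i-1}$, because $x_i\partial_{x_i}$ sends $x_i^{r}\mapsto (r+k_i a_i) x_i^{r+k_i}\pmod{\text{lower}}$ in a way whose leading behaviour is invertible; tensoring over $i$ and over the two parities gives the claimed rank $(\prod k_i \mid \prod k_i)$. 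I expect this freeness verification to be the main technical point of part~(1), though it is elementary.

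For part (2), with $g(\mathbf x) = \alpha\,\mathbf x^{\boldsymbol\ell} = \alpha x_1^{\ell_1}\cdots x_m^{\ell_m}$ and $m\ge 2$, the plan is to show that $E(\alpha\mathbf x^{\boldsymbol\ell},\mathcal S)$ restricted to $\mathcal U(\mathfrak h)$ is \emph{not} a free $\C[\mathbf h]$-module of finite rank in each parity. The obstruction is that $\partial g/\partial x_i = \alpha\ell_i\,x_i^{\ell_i-1}\prod_{j\neq i} x_j^{\ell_j}$ involves all the variables simultaneously. I would argue by a degree/leading-term analysis: on a monomial $\mathbf x^{\mathbf k}\xi^\varepsilon e^{g(\mathbf x)}$, the operator $x_i\partial_{x_i}$ (coming from $\Phi^{\mathcal S}_{m|1}(h_i)$ up to the $\xi\partial_\xi$ and constant terms) acts as $k_i\,\mathbf x^{\mathbf k} + \alpha\ell_i\,\mathbf x^{\mathbf k + \boldsymbol\ell}$, i.e. it raises the total degree by $|\boldsymbol\ell| = \sum\ell_j$ while coupling all variables. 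Consequently any nonzero element of the $\C[\mathbf h]$-submodule generated by finitely many monomials has its set of exponent vectors confined to finitely many translates of the lattice $\Z_{\ge 0}\boldsymbol\ell$, and since $m\ge 2$ these translates cannot exhaust $(\Z_{\ge 0})^m$; hence no finite set of generators can generate $E(\alpha\mathbf x^{\boldsymbol\ell},\mathcal S)$ over $\C[\mathbf h]$, so it cannot lie in $\mathcal M_{\mathfrak{sl}(m|1)}(p|p)$ for any $p$. (One must be slightly careful when some $\ell_i = 0$: then $h_i$ acts simply by the scalar-shifted Euler operator in $x_i$ alone and $x_i$ does behave freely, but as long as $\boldsymbol\ell$ is supported on fewer than $m$ coordinates — which it is, since $g$ is a single monomial and $m\ge 2$ — the same lattice-span argument applied to the variables actually occurring in $g$ shows finite generation fails; and if $\boldsymbol\ell = 0$ then $g$ is constant and $E(c,\mathcal S)$ is a weight module, so it is not $\mathcal U(\mathfrak h)$-free of finite rank either.)

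The step I expect to be the main obstacle is part~(2): making the "finitely many translates of $\Z_{\ge0}\boldsymbol\ell$ cannot cover $(\Z_{\ge0})^m$" argument fully rigorous requires tracking exactly which monomials appear in an arbitrary $\C[\mathbf h]$-combination of a finite generating set, and handling the constant and $\xi\partial_\xi$ corrections in $\Phi^{\mathcal S}_{m|1}(h_i)$, as well as the edge cases where $\boldsymbol\ell$ has zero entries, uniformly. The cleanest formulation is probably to pass to an associated-graded or leading-monomial picture with respect to total degree, where $h_i$ acts by the pure shift $\mathbf x^{\mathbf k}\mapsto \alpha\ell_i\,\mathbf x^{\mathbf k+\boldsymbol\ell}$, reducing the claim to the statement that $\C[x_1,\dots,x_m]$ is not finitely generated over the subalgebra $\C[\mathbf x^{\boldsymbol\ell}]$ when $m\ge 2$ — which is immediate from a Hilbert-series or Krull-dimension count.
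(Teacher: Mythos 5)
Your proposal is correct and follows essentially the same route as the paper: you compute the $h_i$-action through $\Phi^{\mathcal S}_{m|1}$, observe that in case (1) each $h_i$ shifts the $x_i$-exponent by $k_i$ with invertible leading coefficient, yielding $\mathcal U(\mathfrak h)$-freeness on the $2\prod_i k_i$ monomials $\mathbf x^{\mathbf r}\xi^{\varepsilon}e^{g(\mathbf x)}$ with $0\le r_i<k_i$ (the paper phrases this as the direct-sum decomposition over $\mathfrak R(k_1,\dots,k_m)$), and in case (2) that the $\mathcal U(\mathfrak h)$-submodule generated by any monomial has exponents confined to a single translate of $\Z_{\ge0}\boldsymbol\ell$, so finitely many generators cannot suffice when $m\ge2$ (the paper's infinite index set $\mathfrak P(\ell_1,\dots,\ell_m)$). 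The only real divergence is cosmetic: for the case $\ell_i=0$ the paper exhibits explicit $\mathcal U(\mathfrak h)$-torsion, namely $(h_i-b_i)(h_i+b_i+1)$ annihilating basis vectors, while you fold that case into the same lattice/non-finite-generation argument, which also works (though your aside that ``$x_i$ behaves freely'' there is inaccurate, since $h_i$ then acts diagonally).
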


\begin{proof}
Throughout, put ${\bf b}:=(b_1,\dots,b_m)\in\Z_{\geq 0}^m$ and let $g(\mathbf x):=\sum_{i=1}^m a_i x_i^{k_i}$. 

\np
{\bf (1)}We address part (1) by first computing the explicit action of $h_j$. For $ j\notin \mathcal S$,
$$
\begin{aligned}
h_j \cdot {\bf x}^{\bf b} e^{g(\mathbf x)}
&=\bigl(x_j \partial_{x_j} + \xi \partial_{\xi}\bigr)\, {\bf x}^{\bf b} e^{g(\mathbf x)}
= b_j\,{\bf x}^{\bf b} e^{g(\mathbf x)} + k_j a_j\,{\bf x}^{{\bf b}+k_j\varepsilon_j} e^{g(\mathbf x)},\\
h_j \cdot {\bf x}^{\bf b}\xi\, e^{g(\mathbf x)}
&=\bigl(x_j \partial_{x_j} + \xi \partial_{\xi}\bigr)\, {\bf x}^{\bf b}\xi\, e^{g(\mathbf x)}
= (b_j+1)\,{\bf x}^{\bf b}\xi\, e^{g(\mathbf x)} + k_j a_j\,{\bf x}^{{\bf b}+k_j\varepsilon_j}\xi\, e^{g(\mathbf x)}.
\end{aligned}
$$
\np
For $ j\in \mathcal S$,
$$
\begin{aligned}
h_j \cdot {\bf x}^{\bf b} e^{g(\mathbf x)}
&=\bigl(-x_j \partial_{x_j} - 1 + \xi \partial_{\xi}\bigr)\, {\bf x}^{\bf b} e^{g(\mathbf x)}
= (-b_j-1)\,{\bf x}^{\bf b} e^{g(\mathbf x)} - k_j a_j\,{\bf x}^{{\bf b}+k_j\varepsilon_j} e^{g(\mathbf x)},\\
h_j \cdot {\bf x}^{\bf b}\xi\, e^{g(\mathbf x)}
&=\bigl(-x_j \partial_{x_j} - 1 + \xi \partial_{\xi}\bigr)\, {\bf x}^{\bf b}\xi\, e^{g(\mathbf x)}
= -b_j\,{\bf x}^{\bf b}\xi\, e^{g(\mathbf x)} - k_j a_j\,{\bf x}^{{\bf b}+k_j\varepsilon_j}\xi\, e^{g(\mathbf x)}.
\end{aligned}
$$

\np
Therefore, for $\varepsilon\in\{0,1\}$,
$$
\mathcal{U}(\mathfrak{h})\cdot {\bf x}^{\bf b}\,\xi^{\varepsilon} e^{g(\mathbf x)}
= {\bf x}^{\bf b}\,\xi^{\varepsilon}\,\C[x_1^{k_1},\dots,x_m^{k_m}]\,e^{g(\mathbf x)}.
$$
\np
Set $\;\mathfrak{R}(k_1,\dots,k_m):=\Z/k_1\Z \times \dots \times \Z/k_m\Z$. We have 
$$
\mathbf{x}^{\mathbf r}\,\xi^{\varepsilon}\,\C[x_1^{k_1},\dots,x_m^{k_m}]\,e^{g(\mathbf x)}
\;=\;
\Span\Big\{
\mathbf{x}^{\mathbf b}\,\xi^{\varepsilon}\,e^{g(\mathbf x)}
:\ \mathbf b\in\Z_{\geq 0}^{\,m},\ b_i\equiv r_i \!\!\!\pmod{k_i}\ \forall i
\Big\}.
$$
\np
Consequently,
$$E\left(\sum_{i=1}^m a_i x_i^{k_i},\,\mathcal S\right) = \bigoplus_{\varepsilon \in \{0,1\}}\bigoplus_{{\bf{r}} \in \mathfrak{R}(k_1,\dots,k_m)} {\bf x}^{{\bf{r}}}\xi^{\varepsilon}\C[x_1^{k_1},x_2^{k_2},\dots,x_m^{k_m}]e^{\sum_{i=1}^{m}a_ix_i^{k_i}},$$
as a vector space. Since $\mathcal{U}(\mathfrak{h})\cdot {\bf x}^{\bf b}\,\xi^{\varepsilon} e^{g(\mathbf x)}
= {\bf x}^{\bf b}\,\xi^{\varepsilon}\,\C[x_1^{k_1},\dots,x_m^{k_m}]\,e^{g(\mathbf x)}$ for all $\mathbf{b} \in \Z_{\geq 0}^{\,m}$, and $\varepsilon\in\{0,1\}$, the above is a direct-sum decomposition of $\mathcal U (\mathfrak h)$-modules. This completes the proof of part (1).

\np
{\bf (2)} To prove part (2), we make the following observation.
If $\ell_i=0$ for some $i\in{\bf m}$, then $\partial_{x_i}\left(e^{\alpha{\bf x}^{\boldsymbol{\ell}}}\right)=0$, and hence
$$
\begin{aligned}
&h_i \cdot {\bf x}^{\bf b} e^{\alpha{\bf x}^{\boldsymbol{\ell}}}
= \bigl(x_i \partial_{x_i} + \xi \partial_{\xi}\bigr)\,{\bf x}^{\bf b} e^{\alpha{\bf x}^{\boldsymbol{\ell}}}
= b_i\,{\bf x}^{\bf b} e^{\alpha{\bf x}^{\boldsymbol{\ell}}},\quad\text{for } i\notin\mathcal S\\
&h_i \cdot {\bf x}^{\bf b} e^{\alpha{\bf x}^{\boldsymbol{\ell}}}
= \bigl(-x_i \partial_{x_i} - 1 + \xi \partial_{\xi}\bigr)\,e^{\alpha{\bf x}^{\boldsymbol{\ell}}}
= (-b_i - 1)\,{\bf x}^{\bf b} e^{\alpha{\bf x}^{\boldsymbol{\ell}}},\quad \text{for } i\in\mathcal S.
\end{aligned}
$$
\np
Hence, $(h_i-b_i)(h_i+b_i+1)$ annihilates ${\bf x}^{\bf b} e^{\alpha{\bf x}^{\boldsymbol{\ell}}}$. Consequently,
$$
E\left(\alpha{\bf x}^{\boldsymbol{\ell}},\,\mathcal S\right)\notin 
\mathcal{M}_{\mathfrak{sl}(m|1)}(p|p)
\quad\text{for all}\quad p\in\Z_{\geq1}.
$$
\np
We now assume that $\boldsymbol{\ell} \in (\Z_{\geq 1})^{m}$. For  $j\notin\mathcal S$,
\begin{align*}
h_j \cdot {\bf x}^{\bf b} e^{\alpha{\bf x}^{\boldsymbol{\ell}}}
&=\bigl(x_j \partial_{x_j} + \xi \partial_{\xi}\bigr)\,{\bf x}^{\bf b} e^{\alpha{\bf x}^{\boldsymbol{\ell}}}
= b_j\,{\bf x}^{\bf b} e^{\alpha{\bf x}^{\boldsymbol{\ell}}} + \ell_j\alpha\,{\bf x}^{{\bf b}+\boldsymbol{\ell}} e^{\alpha{\bf x}^{\boldsymbol{\ell}}},\\
h_j \cdot {\bf x}^{\bf b}\xi\, e^{\alpha{\bf x}^{\boldsymbol{\ell}}}
&=\bigl(x_j \partial_{x_j} + \xi \partial_{\xi}\bigr)\,{\bf x}^{\bf b}\xi\, e^{\alpha{\bf x}^{\boldsymbol{\ell}}}
= (b_j+1)\,{\bf x}^{\bf b}\xi\, e^{\alpha{\bf x}^{\boldsymbol{\ell}}} + \ell_j\alpha\,{\bf x}^{{\bf b}+\boldsymbol{\ell}}\xi\, e^{\alpha{\bf{x}}^{\boldsymbol{\ell}}}.
\end{align*}
\np
For $j \in \mathcal S$,
  \begin{align*}
h_j \cdot {\bf x}^{\bf b} e^{\alpha{\bf x}^{\boldsymbol{\ell}}}
&=\bigl(-x_j \partial_{x_j} - 1 + \xi \partial_{\xi}\bigr)\,{\bf x}^{\bf b} e^{\alpha{\bf x}^{\boldsymbol{\ell}}}
= (-b_j-1)\,{\bf x}^{\bf b} e^{\alpha{\bf x}^{\boldsymbol{\ell}}} - \ell_j\alpha\,{\bf x}^{{\bf b}+\boldsymbol{\ell}} e^{\alpha{\bf x}^{\boldsymbol{\ell}}},\\
h_j \cdot {\bf x}^{\bf b}\xi\, e^{\alpha{\bf x}^{\boldsymbol{\ell}}}
&=\bigl(-x_j \partial_{x_j} - 1 + \xi \partial_{\xi}\bigr)\,{\bf x}^{\bf b}\xi\, e^{\alpha{\bf x}^{\boldsymbol{\ell}}}
= -b_j\,{\bf x}^{\bf b}\xi\, e^{\alpha{\bf x}^{\boldsymbol{\ell}}} - \ell_j\alpha\,{\bf x}^{{\bf b}+\boldsymbol{\ell}}\xi\, e^{\alpha{\bf x}^{\boldsymbol{\ell}}}.
\end{align*}
\np
Proceeding as in the proof of part~(1), $E\left(\alpha\,\mathbf{x}^{\boldsymbol{\ell}},\,\mathcal{S}\right)$ 
admits the following $\mathcal{U}(\mathfrak{h})$-module decomposition: 
$$E\left(\alpha{\bf x}^{\boldsymbol{\ell}},\,\mathcal S\right)= \bigoplus_{\varepsilon \in \{0,1\}}\bigoplus_{{\bf{r}} \in \mathfrak{P}(\ell_1,\dots,\ell_m)}{\bf x}^{{\bf{r}}}\xi^{\varepsilon} \C\left[{\bf x}^{{\boldsymbol{\ell}}}\right]e^{\alpha{\bf{x}}^{\boldsymbol{\ell}}},$$
where 
$$\mathfrak{P}(\ell_1,\dots,\ell_m) := \left\{{\bf{r}}= (r_1,\dots,r_m)\bigm|\ \text{there exists}~i\in {\bf m} ~\text{such that}~ 0 \leq r_i \leq \ell_i-1\right\}.$$ 
\np
Since $\card(\mathfrak{P}(\ell_1,\dots,\ell_m)) = \infty$, it follows that
$$
E\left(\alpha{\bf x}^{\boldsymbol{\ell}},\,\mathcal S\right)\notin 
\mathcal{M}_{\mathfrak{sl}(m|1)}(p|p)
\quad\text{for all}\quad p\in\Z_{\geq 1}. \qedhere
$$
\end{proof}

\np
Next we show that every module in $ \mathcal{M}_{\mathfrak{sl}(m|1)}\bigl(1|1\bigr)$ stated in Theorem~\ref{classisl(m|1)} is of the
form $E\bigl(a_1 x_1 + \dots + a_m x_m,\mathcal{S}\bigr)$ for appropriate $\mathbf{a} \in (\C^\times)^m$ and $\mathcal{S} \subseteq \mathbf m$.

\np
\begin{theorem}\label{classi-sl(m|1)asD(m|1)-mod}
Let $\mathcal{S} \subseteq \mathbf{m}$ and let $a_i \in \C^\times$ for $1 \leq i \leq m$.  Then
$$
E\bigl(a_1 x_1 + \dots + a_m x_m,\;\mathcal{S}\bigr)
\;\simeq\;
M\bigl(\mathbf{a}_{\mathcal{S}},\; \mathbf{m}\setminus\mathcal{S}\bigr),
$$
where $\mathbf{a}_{\mathcal{S}} := (\,(\mathbf{a}_{\mathcal{S}})_1,\dots,(\mathbf{a}_{\mathcal{S}})_m\,) \in (\C^\times)^m$ is defined by
$$
(\mathbf{a}_{\mathcal{S}})_i \;=\;
\begin{cases}
a_i,      & i \in \mathcal{S}, \\
a_i^{-1}, & i \notin \mathcal{S}.
\end{cases}
$$
\end{theorem}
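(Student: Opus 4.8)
The plan is to set $g(\mathbf x):=a_1x_1+\dots+a_mx_m$ and to build an explicit $\Z_2$-graded $\mathcal{U}(\mathfrak{sl}(m|1))$-isomorphism $\Psi\colon E\bigl(g(\mathbf x),\mathcal S\bigr)\to M\bigl(\mathbf a_{\mathcal S},\mathbf m\setminus\mathcal S\bigr)$ by transporting the obvious $\mathcal{U}(\mathfrak h)$-basis. First I would invoke Lemma~\ref{higherrank-exp}(1) in the case $k_1=\dots=k_m=1$: it gives $E(g(\mathbf x),\mathcal S)\in\mathcal{M}_{\mathfrak{sl}(m|1)}(1|1)$, and its proof shows that $v_0:=e^{g(\mathbf x)}$ and $v_1:=\xi e^{g(\mathbf x)}$ form a homogeneous $\mathcal{U}(\mathfrak h)$-basis, so $E_{\bar0}=\mathcal{U}(\mathfrak h)v_0$ and $E_{\bar1}=\mathcal{U}(\mathfrak h)v_1$ are free of rank one. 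Hence there is a well-defined $\C[\mathbf h]$-linear graded bijection $\Psi$ with $\Psi(v_0)=[\,1\;\;0\,]^{\mathsf T}$ and $\Psi(v_1)=[\,0\;\;1\,]^{\mathsf T}$. Since $\mathfrak{sl}(m|1)$ is generated by $\mathfrak h$ together with the odd elements $e_{i\bar1},e_{\bar1 i}$ ($i\in\mathbf m$)---recall $e_{ij}=[e_{i\bar1},e_{\bar1 j}]$ for $i\neq j$---and since, by (the proof of) Lemma~\ref{structuresl(m|1)}, in any rank-$(1|1)$ module the operators $e_{i\bar1}$ and $e_{\bar1 i}$ are recovered from their values on $\{v_0,v_1\}$ via $e_{i\bar1}\cdot\mathbf f(\mathbf h)=E_{i\bar1}(\mathbf h)\Delta_i^{-1}(\mathbf f(\mathbf h))$ and $e_{\bar1 i}\cdot\mathbf f(\mathbf h)=E_{\bar1 i}(\mathbf h)\Delta_i(\mathbf f(\mathbf h))$, it suffices to check that $\Psi$ intertwines the actions of $e_{i\bar1}$ and $e_{\bar1 i}$ on the two basis vectors.

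The computational core is then to evaluate $e_{i\bar1}\cdot v_1$ and $e_{\bar1 i}\cdot v_0$ in $E(g(\mathbf x),\mathcal S)$ for each $i$ (the evaluations $e_{i\bar1}\cdot v_0$ and $e_{\bar1 i}\cdot v_1$ both vanish, consistently with the block shapes in Lemma~\ref{structuresl(m|1)}) and to re-express the outcomes in the basis $\{v_0,v_1\}$. Using the definition of $\Phi^{\mathcal S}_{m|1}$ and the $\mathcal D(m|1)$-action of Lemma~\ref{D(m|1)-exp}---notably $\partial_{x_i}\cdot e^{g(\mathbf x)}=a_i e^{g(\mathbf x)}$ and $\partial_\xi\cdot(\xi e^{g(\mathbf x)})=e^{g(\mathbf x)}$---one obtains, for $i\notin\mathcal S$, $e_{i\bar1}\cdot v_1=x_i\cdot v_0$ and $e_{\bar1 i}\cdot v_0=a_i v_1$; and the relation $h_i\cdot v_0=a_i\,(x_i\cdot v_0)$, read off from $\Phi^{\mathcal S}_{m|1}(h_i)=x_i\partial_{x_i}+\xi\partial_\xi$, rewrites the first as $e_{i\bar1}\cdot v_1=a_i^{-1}h_i\,v_0$. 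For $i\in\mathcal S$, using instead $\Phi^{\mathcal S}_{m|1}(e_{i\bar1})=\partial_{x_i}\partial_\xi$, $\Phi^{\mathcal S}_{m|1}(e_{\bar1 i})=-x_i\xi$, and $\Phi^{\mathcal S}_{m|1}(h_i)=-x_i\partial_{x_i}-1+\xi\partial_\xi$ (whence $h_i\cdot v_1=-a_i\,(x_i\cdot v_1)$), one obtains $e_{i\bar1}\cdot v_1=a_i v_0$ and $e_{\bar1 i}\cdot v_0=a_i^{-1}h_i\,v_1$. Comparing with Definition~\ref{sl(m|1)rank2} applied to $M(\mathbf a_{\mathcal S},\mathbf m\setminus\mathcal S)$: because the distinguished subset there is $\mathbf m\setminus\mathcal S$, the ``$i\in\mathcal S$'' and ``$i\notin\mathcal S$'' cases are interchanged, and the scalar attached to index $i$ is $(\mathbf a_{\mathcal S})_i=a_i^{-1}$ when $i\notin\mathcal S$ and $=a_i$ when $i\in\mathcal S$; with these identifications the generator actions on $[\,1\;\;0\,]^{\mathsf T}$ and $[\,0\;\;1\,]^{\mathsf T}$ match exactly those of $\Psi(v_0),\Psi(v_1)$, so $\Psi$ is a module isomorphism. (Alternatively one could first cite Theorem~\ref{classisl(m|1)} to know $E(g(\mathbf x),\mathcal S)\simeq M(\mathbf b,\mathcal T)$ for some $\mathbf b,\mathcal T$ and then extract $\mathbf b,\mathcal T$ from the same generator computations, but building $\Psi$ directly is cleaner and self-contained.)

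I expect the only genuine difficulty to be bookkeeping rather than conceptual: one must keep the four cases straight (index in $\mathcal S$ or not, times $e_{i\bar1}$ or $e_{\bar1 i}$) and, most delicately, track the two ``dualizing'' features that distinguish the target $M(\mathbf a_{\mathcal S},\mathbf m\setminus\mathcal S)$ from a naive guess---the swap $\mathcal S\leftrightarrow\mathbf m\setminus\mathcal S$ of the distinguished subset and the inversion $a_i\mapsto a_i^{-1}$ on the complement of $\mathcal S$. The shift operators $\Delta_i^{\pm1}$ need no separate treatment: they fix the scalars $a_i^{\pm1}$ and fix $h_i$, and their effect on the remaining variables is precisely the one already encoded in Definition~\ref{sl(m|1)rank2}, so once the generator values on $v_0,v_1$ are matched, Lemma~\ref{structuresl(m|1)} propagates the agreement to all of $E(g(\mathbf x),\mathcal S)$.
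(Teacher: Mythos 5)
Your proof is correct, and the map you build is in fact the same isomorphism as the paper's $\Theta$ (both send $e^{g(\mathbf x)}$ and $\xi e^{g(\mathbf x)}$ to the two standard free generators, and $\Theta$, being $\mathcal U(\mathfrak h)$-linear, is determined by those two values), but your verification runs along a genuinely different route. The paper does not invoke Lemma~\ref{higherrank-exp} or Lemma~\ref{structuresl(m|1)} in this proof: it defines $\Theta$ explicitly on the $\C$-basis $\{\mathbf x^{\mathbf k}\xi^{\varepsilon}e^{g(\mathbf x)}\}$ via the falling/rising factorial polynomials $u(\mathbf k,\mathcal S)$, $v(\mathbf k,\mathcal S)$, notes that these form bases of $\mathcal U(\mathfrak h)$, and checks the intertwining relations generator by generator for every exponent $\mathbf k$. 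You instead cite Lemma~\ref{higherrank-exp}(1) (which precedes the theorem, so there is no circularity) to get that $E(g(\mathbf x),\mathcal S)$ is $\mathcal U(\mathfrak h)$-free on $\{e^{g(\mathbf x)},\xi e^{g(\mathbf x)}\}$, take $\Psi$ to be the coordinate identification, and reduce everything to the values of $e_{i\bar1},e_{\bar1 i}$ on the two generators, propagating by the relation $e_{i\bar1}\,q(\mathbf h)=\Delta_i^{-1}\bigl(q(\mathbf h)\bigr)e_{i\bar1}$ in $\mathcal U(\mathfrak{sl}(m|1))$ (as in the proof of Lemma~\ref{structuresl(m|1)}) together with $e_{ij}=[e_{i\bar1},e_{\bar1 j}]$; this reduction is sound. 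Your four case computations match Definition~\ref{sl(m|1)rank2} applied to the subset $\mathbf m\setminus\mathcal S$ with parameters $\mathbf a_{\mathcal S}$, including the two ``dualizing'' features (swap of the subset, inversion of $a_i$ off $\mathcal S$), so the identification is exactly right. What your route buys is economy: no explicit factorial formulas and only two vectors to check; what the paper's route buys is a closed formula for the isomorphism on the monomial basis and independence from the freeness assertion of Lemma~\ref{higherrank-exp}. If you write this up, make explicit the small point that cyclicity of each parity component over $\mathcal U(\mathfrak h)$ (shown in the proof of Lemma~\ref{higherrank-exp}) together with freeness of rank one forces $e^{g(\mathbf x)}$ and $\xi e^{g(\mathbf x)}$ to be free generators (a surjective endomorphism of the Noetherian module $\C[\mathbf h]$ is injective), which you use implicitly when extending $\Psi$ $\C[\mathbf h]$-linearly.
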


\begin{proof}
For brevity, fix $\mathcal S\subseteq {\bf m}$ and denote the following falling/rising factorial notation:
$$
(x)^{\underline{k}}:=x(x-1)\dots(x-k+1),\qquad
(x)^{\overline{k}}:=x(x+1)\dots(x+k-1),
$$
where $k \in \Z_{\geq 0}$, and $(x)^{\underline{0}}=(x)^{\overline{0}}:=1$. For ${\bf k}=(k_1,\dots,k_m)\in\Z_{\ge 0}^m$, set
$$
u({\bf k},\mathcal S)
:= \prod_{i\notin \mathcal S}\left(\frac{1}{a_i^{k_i}}\,(h_i)^{\underline{k_i}}\right)
   \prod_{j\in \mathcal S}\left(\Bigl(-\frac{1}{a_j}\Bigr)^{k_j}\,(h_j+1)^{\overline{k_j}}\right),
$$
$$
v({\bf k},\mathcal S)
:= \prod_{i\notin \mathcal S}\left(\frac{1}{a_i^{k_i}}\,(h_i-1)^{\underline{k_i}}\right)
   \prod_{j\in \mathcal S}\left(\Bigl(-\frac{1}{a_j}\Bigr)^{k_j}\,h_j^{\overline{k_j}}\right).
$$
\np
Define
$$
C_1:=\bigl\{\,u({\bf k},\mathcal S)\bigm| {\bf k}\in\Z_{\geq 0}^m\,\bigr\},\qquad
C_2:=\bigl\{\,v({\bf k},\mathcal S)\bigm| {\bf k}\in\Z_{\geq 0}^m\,\bigr\}.
$$
\np
It is immediate that both $C_1$ and $C_2$ are $\C$-bases of $\mathcal U(\mathfrak h)$. We define the following linear map
   $$
\Theta:\ E\left(a_1x_1+\dots+a_m x_m,\mathcal S\right)\;\rightarrow\;
M\left({\bf a}_{\mathcal S},\,{\bf m}\setminus\mathcal S\right)
$$
by setting
$$
\Theta\left({\bf x}^{\bf k}\,e^{\sum_{i=1}^m a_i x_i}\right)=
\begin{bmatrix} u({\bf k},\mathcal S) \\ 0 \end{bmatrix},
\qquad
\Theta\left({\bf x}^{\bf k}\,\xi\,e^{\sum_{i=1}^m a_i x_i}\right)=
\begin{bmatrix} 0 \\ v({\bf k},\mathcal S) \end{bmatrix}.
$$
\np 
We verify that $\Theta$ is a $\mathcal{U}(\mathfrak{sl}(m|1))$-homomorphism by checking the following relations:

\noindent For $j\notin \mathcal S$,
\begin{align*}
\Theta\!\left(h_j\cdot \mathbf x^{\mathbf k} e^{\sum_{i=1}^m a_i x_i}\right)
&=\Theta\!\left((x_j\partial_{x_j}+\xi\partial_\xi)\bigl(\mathbf x^{\mathbf k} e^{\sum_{i=1}^m a_i x_i}\bigr)\right) \\
&= k_j\,\Theta\!\left(\mathbf x^{\mathbf k} e^{\sum_{i=1}^m a_i x_i}\right)
   + a_j\,\Theta\!\left(\mathbf x^{\mathbf k+\varepsilon_j} e^{\sum_{i=1}^m a_i x_i}\right) \\
&= k_j \begin{bmatrix} u(\mathbf k,\mathcal S) \\  0 \end{bmatrix}
   + a_j \begin{bmatrix} u(\mathbf k+\varepsilon_j,\mathcal S) \\  0 \end{bmatrix} \\
&= h_j \begin{bmatrix} u(\mathbf k,\mathcal S) \\  0 \end{bmatrix}
 = h_j\cdot \Theta\!\left(\mathbf x^{\mathbf k} e^{\sum_{i=1}^m a_i x_i}\right),
\end{align*}
\begin{align*}
 \Theta\!\left(h_j\cdot \mathbf x^{\mathbf k}\xi\, e^{\sum_{i=1}^m a_i x_i}\right)
&=\Theta\!\left((x_j\partial_{x_j}+\xi\partial_\xi)\bigl(\mathbf x^{\mathbf k}\xi\, e^{\sum_{i=1}^m a_i x_i}\bigr)\right) \\
&= (k_j+1)\,\Theta\!\left(\mathbf x^{\mathbf k}\xi\, e^{\sum_{i=1}^m a_i x_i}\right)
   + a_j\,\Theta\!\left(\mathbf x^{\mathbf k+\varepsilon_j}\xi\, e^{\sum_{i=1}^m a_i x_i}\right)\\
   &= (k_j+1)\begin{bmatrix}  0 \\ v(\mathbf k,\mathcal S) \end{bmatrix}
   + a_j \begin{bmatrix}  0 \\ v(\mathbf k+\varepsilon_j,\mathcal S) \end{bmatrix} \\
&= h_j \begin{bmatrix}  0 \\ v(\mathbf k,\mathcal S) \end{bmatrix}
 = h_j\cdot \Theta\!\left(\mathbf x^{\mathbf k}\xi\, e^{\sum_{i=1}^m a_i x_i}\right),\\
 \Theta\!\left(e_{j\bar 1}\cdot \mathbf x^{\mathbf k} e^{\sum a_i x_i}\right)
&=\Theta\!\left(x_j\partial_\xi(\mathbf x^{\mathbf k} e^{\sum a_i x_i})\right)=0
=e_{j\bar 1}\cdot \Theta\!\left(\mathbf x^{\mathbf k} e^{\sum a_i x_i}\right),\\
\Theta\!\left(e_{j\bar 1}\cdot \mathbf x^{\mathbf k}\xi\, e^{\sum a_i x_i}\right)
&=\Theta\!\left(\mathbf x^{\mathbf k+\varepsilon_j} e^{\sum a_i x_i}\right)
 =\begin{bmatrix} u(\mathbf k+\varepsilon_j,\mathcal S) \\  0 \end{bmatrix} \\
&=\begin{bmatrix} 0 & a_j^{-1}h_j \\  0 & 0 \end{bmatrix}
  \Delta_j^{-1}
  \left(\begin{bmatrix} \mathbf 0 \\ v(\mathbf k,\mathcal S) \end{bmatrix}\right)
 = e_{j\bar 1}\cdot \Theta\!\left(\mathbf x^{\mathbf k}\xi\, e^{\sum a_i x_i}\right),\\
\Theta\!\left(e_{\bar 1 j}\cdot \mathbf x^{\mathbf k} e^{\sum a_i x_i}\right)
&=\Theta\!\left(\xi\partial_{x_j}(\mathbf x^{\mathbf k} e^{\sum a_i x_i})\right) \\
&= k_j \begin{bmatrix}  0 \\ v(\mathbf k-\varepsilon_j,\mathcal S) \end{bmatrix}
  + a_j \begin{bmatrix}  0 \\ v(\mathbf k,\mathcal S) \end{bmatrix} \\
&=\begin{bmatrix}  0 &  0 \\ a_j &  0 \end{bmatrix}
  \Delta_j\left(
  \begin{bmatrix} u(\mathbf k,\mathcal S) \\  0 \end{bmatrix}\right)
 = e_{\bar 1 j}\cdot \Theta\!\left(\mathbf x^{\mathbf k} e^{\sum a_i x_i}\right),\\
\Theta\!\left(e_{\bar 1 j}\cdot \mathbf x^{\mathbf k}\xi\, e^{\sum a_i x_i}\right)
&=\Theta\!\left(\xi\partial_{x_j}(\mathbf x^{\mathbf k}\xi\, e^{\sum a_i x_i})\right)=0
= e_{\bar 1 j}\cdot \Theta\!\left(\mathbf x^{\mathbf k}\xi\, e^{\sum a_i x_i}\right).
\end{align*}

\noindent
For $j\in\mathcal S$,
\begin{align*}
\Theta\!\left(h_j\cdot \mathbf x^{\mathbf k} e^{\sum a_i x_i}\right)
&=\Theta\!\left((-x_j\partial_{x_j}-1+\xi\partial_\xi)\bigl(\mathbf x^{\mathbf k} e^{\sum a_i x_i}\bigr)\right) \\
&=(-k_j-1)\begin{bmatrix} u(\mathbf k,\mathcal S) \\  0 \end{bmatrix}
   - a_j \begin{bmatrix} u(\mathbf k+\varepsilon_j,\mathcal S) \\  0 \end{bmatrix} \\
&= h_j \begin{bmatrix} u(\mathbf k,\mathcal S) \\  0 \end{bmatrix}
 = h_j\cdot \Theta\!\left(\mathbf x^{\mathbf k} e^{\sum a_i x_i}\right),\\
\Theta\!\left(h_j\cdot \mathbf x^{\mathbf k}\xi\, e^{\sum a_i x_i}\right)
&=\Theta\!\left((-x_j\partial_{x_j}-1+\xi\partial_\xi)\bigl(\mathbf x^{\mathbf k}\xi\, e^{\sum a_i x_i}\bigr)\right) \\
&= -k_j \begin{bmatrix}  0 \\ v(\mathbf k,\mathcal S) \end{bmatrix}
   - a_j \begin{bmatrix}  0 \\ v(\mathbf k+\varepsilon_j,\mathcal S) \end{bmatrix} \\
&= h_j \begin{bmatrix}  0 \\v(\mathbf k,\mathcal S) \end{bmatrix}
 = h_j\cdot \Theta\!\left(\mathbf x^{\mathbf k}\xi\, e^{\sum a_i x_i}\right),\\
\Theta\!\left(e_{j\bar 1}\cdot \mathbf x^{\mathbf k} e^{\sum a_i x_i}\right)
&=\Theta\!\left(\partial_{x_j}\partial_\xi(\mathbf x^{\mathbf k} e^{\sum a_i x_i})\right)=0
= e_{j\bar 1}\cdot \Theta\!\left(\mathbf x^{\mathbf k} e^{\sum a_i x_i}\right),\\
\Theta\!\left(e_{j\bar 1}\cdot \mathbf x^{\mathbf k}\xi\, e^{\sum a_i x_i}\right)
&=\Theta\!\left(k_j \mathbf x^{\mathbf k-\varepsilon_j} e^{\sum a_i x_i}
                 + a_j \mathbf x^{\mathbf k} e^{\sum a_i x_i}\right) \\
&= k_j \begin{bmatrix} u(\mathbf k-\varepsilon_j,\mathcal S) \\  0 \end{bmatrix}
  + a_j \begin{bmatrix} u(\mathbf k,\mathcal S) \\  0 \end{bmatrix} \\
&=\begin{bmatrix}  0 & a_j \\  0 &  0 \end{bmatrix}
  \Delta_j^{-1}
  \left(\begin{bmatrix}  0 \\ v(\mathbf k,\mathcal S) \end{bmatrix}\right)
 = e_{j\bar 1}\cdot \Theta\!\left(\mathbf x^{\mathbf k}\xi\, e^{\sum a_i x_i}\right),\\
 \end{align*}
\begin{align*}
\Theta\!\left(e_{\bar 1 j}\cdot \mathbf x^{\mathbf k} e^{\sum a_i x_i}\right)
&=\Theta\!\left(-x_j\xi\,\mathbf x^{\mathbf k} e^{\sum a_i x_i}\right)
 = -\begin{bmatrix}  0 \\ v(\mathbf k+\varepsilon_j,\mathcal S) \end{bmatrix} \\
&=\begin{bmatrix}  0 &  0 \\ a_j^{-1}h_j &  0 \end{bmatrix}
  \Delta_j \left(\begin{bmatrix} u(\mathbf k,\mathcal S) \\  0 \end{bmatrix}\right)
 = e_{\bar 1 j}\cdot \Theta\!\left(\mathbf x^{\mathbf k} e^{\sum a_i x_i}\right),\\
\Theta\!\left(e_{\bar 1 j}\cdot \mathbf x^{\mathbf k}\xi\, e^{\sum a_i x_i}\right)
&=\Theta\!\left(-x_j\xi\,\mathbf x^{\mathbf k}\xi\, e^{\sum a_i x_i}\right)=0
= e_{\bar 1 j}\cdot \Theta\!\left(\mathbf x^{\mathbf k}\xi\, e^{\sum a_i x_i}\right).
\end{align*}

\np
Finally, $\Theta$ is an isomorphism, as it maps each basis element to its corresponding basis element. 
\end{proof}

\begin{theorem} \label{realizationofsl(m|1)}
    Let $\mathcal S\subseteq {\bf m}$ and let $a_i\in \C^\times$, $k_i\in \Z_{\ge 1}$ for all $i\in {\bf m}$. Then
$$
E\!\left(\sum_{i=1}^m a_i x_i^{k_i},\,\mathcal S\right)\;\simeq\; M(A_1,\dots,A_m),
$$
for some $A_i \in \Mat_{\prod_{j=1}^m k_j}\bigl(\C[h_i]\bigr)$.
\end{theorem}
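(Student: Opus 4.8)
The plan is to exhibit, for each $i\in\mathbf m$, an explicit GPM $A_i\in\Mat_K\bigl(\C[h_i]\bigr)$ with $K:=\prod_{j=1}^m k_j$ and an $h_i$-companion $A_i^{\comp}$, so that the action of $h_i,e_{i\bar1},e_{\bar1i}$ on $E:=E\bigl(\sum_{i=1}^m a_ix_i^{k_i},\mathcal S\bigr)$ matches, under a suitable identification of $\C[\mathbf h]$-modules, the action on $M(A_1,\dots,A_m)$. Because $[e_{i\bar1},e_{\bar1j}]=e_{ij}$ for distinct $i,j\in\mathbf m$, the set $\{h_i,e_{i\bar1},e_{\bar1i}:i\in\mathbf m\}$ generates $\mathcal U(\mathfrak{sl}(m|1))$, so such a match forces $E\simeq M(A_1,\dots,A_m)$. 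First I would recall, from the proof of Lemma~\ref{higherrank-exp}(1), that $E$ is $\mathcal U(\mathfrak h)$-free with $\C[\mathbf h]$-basis $\bigl\{\mathbf x^{\mathbf r}\xi^{\varepsilon}e^{g}:\mathbf r\in R,\ \varepsilon\in\{0,1\}\bigr\}$, where $g:=\sum_{i=1}^m a_ix_i^{k_i}$ and $R:=\prod_{i=1}^m\{0,1,\dots,k_i-1\}$, so that $\card(R)=K$. Fix a bijection $R\cong\{1,\dots,K\}$ and identify $E=\C[\mathbf h]^{\oplus K}\oplus\C[\mathbf h]^{\oplus K}$ with $\mathbf x^{\mathbf r}e^{g}$ placed in the even block and $\mathbf x^{\mathbf r}\xi e^{g}$ in the odd block.

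The heart of the argument is the computation of $e_{i\bar1}$ and $e_{\bar1i}$ on this basis. From the explicit $h_i$-action recorded in the proof of Lemma~\ref{higherrank-exp}, for every $\mathbf b\in\Z_{\geq0}^m$ and $\varepsilon\in\{0,1\}$ the operator $x_i^{k_i}$ acts on $\mathbf x^{\mathbf b}\xi^{\varepsilon}e^{g}$ as multiplication by a degree-one polynomial in $h_i$ whose coefficients depend only on $b_i$, $\varepsilon$, and whether $i\in\mathcal S$. Feeding this into $\Phi^{\mathcal S}_{m|1}(e_{i\bar1})$ — which is $x_i\partial_\xi$ if $i\notin\mathcal S$ and $\partial_{x_i}\partial_\xi$ if $i\in\mathcal S$ — one checks that $e_{i\bar1}\cdot(\mathbf x^{\mathbf r}\xi e^{g})$ equals $\mathbf x^{\mathbf r'}e^{g}$ times a coefficient in $\C[h_i]$, where $\mathbf r'$ is obtained from $\mathbf r$ by shifting the $i$-th coordinate by $\pm1$ modulo $k_i$, and the coefficient is a nonzero scalar unless the shift wraps around, in which case it is a nonzero scalar times $h_i$. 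Thus the coefficient matrix $A_i$ is a GPM in $\Mat_K\bigl(\C[h_i]\bigr)$ whose underlying permutation $\pi(A_i)$ is the cyclic shift of the $i$-th coordinate on $R$. The parallel computation for $\Phi^{\mathcal S}_{m|1}(e_{\bar1i})$ (equal to $\xi\partial_{x_i}$, resp.\ $-x_i\xi$) produces a GPM $A_i^{\comp}\in\Mat_K\bigl(\C[h_i]\bigr)$ with $\pi(A_i^{\comp})=\pi(A_i)^{-1}$. Since $e_{i\bar1}$ annihilates the even block of $E$ and $e_{\bar1i}$ annihilates the odd block (immediate from $\Phi^{\mathcal S}_{m|1}$), this computation together with Lemma~\ref{structuresl(m|1)} shows that on $E$
$$
e_{i\bar1}\cdot\mathbf f=\begin{bmatrix}\mathbf 0 & A_i\\\mathbf 0 & \mathbf 0\end{bmatrix}\Delta_i^{-1}(\mathbf f),
\qquad
e_{\bar1i}\cdot\mathbf f=\begin{bmatrix}\mathbf 0 & \mathbf 0\\ A_i^{\comp} & \mathbf 0\end{bmatrix}\Delta_i(\mathbf f).
$$

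It remains to see that $(A_i,A_i^{\comp})$ is an $h_i$-companion pair. This follows from the relation $[e_{i\bar1},e_{\bar1i}]=h_i$ of $\mathfrak{sl}(m|1)$: restricting it to the even block of $E$, on which $e_{i\bar1}$ acts by $0$, gives $e_{i\bar1}e_{\bar1i}=h_i$ there, and substituting the two displayed formulas (and using that $\Delta_i^{-1}$ fixes matrices with entries in $\C[h_i]$) this reads $A_iA_i^{\comp}=h_i\I_K$; the odd block yields $A_i^{\comp}A_i=h_i\I_K$. (Alternatively, one verifies these two products directly from the explicit description of $A_i$ and $A_i^{\comp}$.) Hence each $A_i$ is a GPM admitting the companion $A_i^{\comp}$, so $M(A_1,\dots,A_m)$ is well defined by Proposition~\ref{sl(m|1)-module} and Definition~\ref{M(A_i)}, and the displayed formulas say precisely that the $\C[\mathbf h]$-linear bijection $E\to M(A_1,\dots,A_m)$ sending each $\mathbf x^{\mathbf r}\xi^{\varepsilon}e^{g}$ to the corresponding standard basis vector intertwines the actions of $h_i,e_{i\bar1},e_{\bar1i}$; since these generate $\mathcal U(\mathfrak{sl}(m|1))$ it is an isomorphism, proving $E\simeq M(A_1,\dots,A_m)$. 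The one place that needs care is the main computation: tracking the four sign-and-constant cases coming from $i\in\mathcal S$ versus $i\notin\mathcal S$ and $\varepsilon\in\{0,1\}$, and checking that the "wrap-around" monomial is exactly the one carrying the $h_i$-factor, so that $A_i$ genuinely comes out as a GPM. For $k_1=\dots=k_m=1$ one recovers Theorem~\ref{classi-sl(m|1)asD(m|1)-mod}.
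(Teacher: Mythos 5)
Your proposal is correct and follows essentially the same route as the paper: identify $E\bigl(\sum_i a_ix_i^{k_i},\mathcal S\bigr)$ with $\C[\mathbf h]^{\oplus K}\oplus\C[\mathbf h]^{\oplus K}$ via the monomial basis $\mathbf x^{\mathbf r}\xi^{\varepsilon}e^{g}$ indexed by $\mathfrak R(k_1,\dots,k_m)$, compute the $e_{i\bar 1}$ and $e_{\bar 1 i}$ actions case by case to read off GPMs $A_i$ and $A_i^{\comp}$ over $\C[h_i]$, and obtain the companion identity $A_iA_i^{\comp}=A_i^{\comp}A_i=h_i\I_K$ from the relation $[e_{i\bar1},e_{\bar1i}]=h_i$, exactly as the paper does. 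The only slip is a cosmetic one: for $i\in\mathcal S$ the $h_i$-factor sits on the non-wrap-around entries (the blocks of type $V$) while the wrap-around entry is the scalar, the opposite of what you state for the $i\notin\mathcal S$ case; this does not affect the GPM or companion-pair conclusions, hence not the proof.
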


\begin{proof} 
For brevity, set $K:=\prod_{i=1}^m k_i$. Recall from Lemma \ref{higherrank-exp} that
$$
\mathfrak{R}(k_1,\dots,k_m)\;:=\;\mathbb{Z}/k_1\mathbb{Z}\times\dots\times \mathbb{Z}/k_m\mathbb{Z}.
$$
\np
As a $\mathcal{U}(\mathfrak h)$-module,
$$E\!\left(\sum_{i=1}^m a_i x_i^{k_i},\,\mathcal S\right) = \bigoplus_{\varepsilon \in \{0,1\}}\bigoplus_{{\bf{r}} \in \mathfrak{R}(k_1,\dots,k_m)} {\bf x}^{{\bf{r}}}\xi^{\varepsilon}\C[x_1^{k_1},x_2^{k_2},\dots,x_m^{k_m}]e^{\sum_{i=1}^{m}a_ix_i^{k_i}}.$$
\np
 Since for any $\varepsilon \in \{0,1\}$ and ${\bf{b}}\in \mathfrak{R}(k_1,\dots,k_m)$,
$$\mathcal{U}(\mathfrak{h}) \cdot {\bf x}^{\bf{b}}\xi^{\varepsilon}e^{\sum_{i=1}^{m}a_ix_i^{k_i}}= {\bf x}^{\bf{b}}\xi^{\varepsilon} \C[x_1^{k_1},x_2^{k_2},\dots,x_m^{k_m}]e^{\sum_{i=1}^{m}a_ix_i^{k_i}},$$
we can rewrite the decomposition as
$$E\!\left(\sum_{i=1}^m a_i x_i^{k_i},\,\mathcal S\right) = \bigoplus_{\varepsilon \in \{0,1\}}\bigoplus_{{\bf{r}} \in \mathfrak{R}(k_1,\dots,k_m)} \mathcal{U}(\mathfrak{h}) \cdot{\bf x}^{{\bf{r}}}\xi^{\varepsilon}e^{\sum_{i=1}^{m}a_ix_i^{k_i}}.$$
\np
For $j\in{\bf m}$, ${\bf r}\in\mathfrak{R}(k_1,\dots,k_m)$, $\varepsilon\in\{0,1\}$, and $g({\bf h})\in\mathcal U(\mathfrak h)$,
\begin{eqnarray*}
    e_{j\bar{1}} \cdot \left(g({\bf{h}})\cdot {\bf x}^{{\bf{r}}}\xi^{\varepsilon}e^{\sum_{i=1}^{m}a_ix_i^{k_i}}\right) &=& \Delta_j^{-1}(g({\bf{h}}))\cdot \left(e_{j\bar{1}} \cdot {\bf x}^{{\bf{r}}}\xi^{\varepsilon}e^{\sum_{i=1}^{m}a_ix_i^{k_i}}\right).
\end{eqnarray*}
\np
It is immediate that, for every ${\bf r}\in \mathfrak{R}(k_1,\dots,k_m)$ and every $g({\bf h})\in\C[{\bf h}]$,
$$
e_{j\bar{1}}\cdot\left(g({\bf h})\,{\bf x}^{\bf r} e^{\sum_{i=1}^{m} a_i x_i^{k_i}}\right)=0.
$$
\np
We now compute the action of $e_{j\bar{1}}$ on $\left(g({\bf{h}})\cdot {\bf x}^{{\bf{r}}}\xi e^{\sum_{i=1}^{m}a_ix_i^{k_i}}\right)$; it is given by the following
case-by-case formulas (according to whether $j\in\mathcal S$ or $j\notin\mathcal S$).

\np
\textbf{Case 1:} If $j\notin \mathcal{S}$,
\begin{eqnarray*}
    e_{j\bar{1}} \cdot \left(g({\bf{h}})\cdot {\bf x}^{{\bf{r}}}\xi e^{\sum_{i=1}^{m}a_ix_i^{k_i}}\right) &=& \Delta_j^{-1}(g({\bf{h}})) \cdot \left(x_j\partial_{\xi} \left({\bf x}^{{\bf{r}}}\xi e^{\sum_{i=1}^{m}a_ix_i^{k_i}}\right)\right)\\
    &=& \Delta_j^{-1}(g({\bf{h}})) \cdot \left({\bf x}^{{\bf{r}}+\varepsilon_j}e^{\sum_{i=1}^{m}a_ix_i^{k_i}}\right).
\end{eqnarray*}
\np
Note that
$$x_j^{k_j} \left(\prod_{\ell\in {\bf m}\setminus\{j\}} x_\ell^{r_\ell}\right)e^{\sum_{i=1}^{m}a_ix_i^{k_i}} = \dfrac{1}{a_jk_j}h_j \cdot \left(\left(\prod_{\ell\in {\bf m}\setminus\{j\}} x_\ell^{r_\ell}\right)e^{\sum_{i=1}^{m}a_ix_i^{k_i}}\right). $$
\np
Therefore,
\begin{equation}\label{importanteq1}
e_{j\bar{1}}\cdot\bigl(g({\bf h})\,{\bf x}^{\bf r}\,\xi\,e^{\sum_{i=1}^{m} a_i x_i^{k_i}}\bigr)
=
\begin{cases}
\Delta_j^{-1}\!\bigl(g({\bf h})\bigr)\cdot{\bf x}^{{\bf r}+\varepsilon_j}\,e^{\sum_{i=1}^{m} a_i x_i^{k_i}}, \;\;\;\;\;\;\;\ 0\leq r_j \leq k_j-2,\\
\frac{1}{a_j k_j}\,h_j\,\Delta_j^{-1}\!\bigl(g({\bf h})\bigr)\cdot
\left(\displaystyle\prod_{\ell\in {\bf m}\setminus\{j\}} x_\ell^{r_\ell}\right)\,e^{\sum_{i=1}^{m} a_i x_i^{k_i}},\;  r_j = k_j-1.
\end{cases}
\end{equation}
\np
\textbf{Case 2:} If $j\in \mathcal{S}$, 
\begin{eqnarray*}
    e_{j\bar{1}} \cdot \left(g({\bf{h}})\cdot {\bf x}^{{\bf{r}}}\xi e^{\sum_{i=1}^{m}a_ix_i^{k_i}}\right) &=& \Delta_j^{-1}(g({\bf{h}})) \cdot \left(\partial_{x_j}\partial_{\xi} \left({\bf x}^{{\bf{r}}}\xi e^{\sum_{i=1}^{m}a_ix_i^{k_i}}\right)\right)\\
    &=& \Delta_i^{-1}(g({\bf{h}})) \cdot \left( r_j {\bf x}^{{\bf{r}}-\varepsilon_j} + a_jk_j {\bf x}^{{\bf{r}}+(k_j-1)\varepsilon_j} \right)e^{\sum_{i=1}^{m}a_ix_i^{k_i}}.
\end{eqnarray*}
\np
We observe that
\begin{align*}
h_j \cdot x_j^{r_j-1}\left(\prod_{\ell\in {\bf m}\setminus\{j\}} x_\ell^{r_\ell}\right)e^{\sum_{i=1}^{m}a_ix_i^{k_i}} &= \left(-x_j \partial_{x_j} -1 + \xi \partial_{\xi}\right)\left(  x_j^{r_j-1}\left(\prod_{\ell\in {\bf m}\setminus\{j\}} x_\ell^{r_\ell}\right)e^{\sum_{i=1}^{m}a_ix_i^{k_i}}\right)\\
&= \left(-r_j x_j^{r_j-1} -a_jk_jx_j^{r_j -1 +k_j}\right)\left(\prod_{\ell\in {\bf m}\setminus\{j\}} x_\ell^{r_\ell}\right)e^{\sum_{i=1}^{m}a_ix_i^{k_i}}.
\end{align*}
\np
Therefore,
\begin{equation}\label{importanteq2}
e_{j\bar{1}}\cdot\bigl(g({\bf h})\,{\bf x}^{\bf r}\,\xi\,e^{\sum_{i=1}^{m} a_i x_i^{k_i}}\bigr)
=
\begin{cases}
a_j k_j\,\Delta_j^{-1}\!\bigl(g({\bf h})\bigr)\cdot\,
{\bf x}^{{\bf r}+(k_j-1)\varepsilon_j}\,e^{\sum_{i=1}^{m} a_i x_i^{k_i}},\;\;  r_j=0,\\
-\,h_j\,\Delta_j^{-1}\!\bigl(g({\bf h})\bigr)\cdot
x_j^{\,r_j-1}\!\left(\displaystyle\prod_{\ell\in {\bf m}\setminus\{j\}} x_\ell^{r_\ell}\right)
e^{\sum_{i=1}^{m} a_i x_i^{k_i}},\  1\leq r_j\leq k_j-1.
\end{cases}
\end{equation}
\np
We equip $\mathfrak R(k_1,\dots,k_m)$ with the lexicographic order
$\preccurlyeq_{\mathfrak R(k_1,\dots,k_m)}$ defined as follows: for
$\mathbf r=(r_1,\dots,r_m)$ and $\mathbf r'=(r'_1,\dots,r'_m)$,
$$
\mathbf r \preccurlyeq_{\mathfrak R(k_1,\dots,k_m)} \mathbf r'
\iff
\mathbf r=\mathbf r'\;\;\; \text{or}\;\;\; \exists\;t\in \mathbf m\; \text{such that } 
r_s=r'_s\ (1\leq s<t)\ \text{and}\ r_t<r'_t.
$$
\np
With respect to the lexicographic order $\preccurlyeq_{\mathfrak R(k_1,\dots,k_m)}$ on $\mathfrak R(k_1,\dots,k_m)$, define a total order $\preccurlyeq$ on the set
$$\mathcal B \;:=\;\left\{V_{\mathbf r,\varepsilon}:= \mathbf x^{\mathbf r}\,\xi^{\varepsilon}\, e^{\sum_{i=1}^m a_i x_i^{k_i}}\bigm|  \mathbf r\in \mathfrak R(k_1,\dots,k_m),\ \varepsilon\in\{0,1\}\right\},$$
by
$$
V_{\mathbf r,\varepsilon}\ \preccurlyeq\ V_{\mathbf r',\varepsilon'}
\iff
\varepsilon<\varepsilon'\qquad \text{ or }\qquad \varepsilon=\varepsilon'\; \text{and}\; \mathbf r \preccurlyeq_{\mathfrak R(k_1,\dots,k_m)} \mathbf r'.
$$
In particular, all $V_{\mathbf r,0}$ precede all $V_{\mathbf r,1}$, and within each parity the order agrees with $\preccurlyeq_{\mathfrak R(k_1,\dots,k_m)}$.

\np
Using the the order $\preccurlyeq$ on the $\mathcal U(\mathfrak h)$-generating ordered set $\mathcal B$, the action of $e_{i\bar{1}}$ on $E\big(\sum_{i=1}^m a_i x_i^{k_i},\,\mathcal S\big)$ is given by:
$$e_{i\bar{1}} \cdot \begin{bmatrix}f_1({\bf{h}} )\\\vdots\\ f_{2K}({\bf{h}})\end{bmatrix} = \left[\begin{array}{ c | c }
    \mathbf 0 & A_{i} \\
    \hline
    \mathbf 0 & \mathbf 0
  \end{array}\right] \Delta^{-1}_i\left(\begin{bmatrix}f_1({\bf{h}})\\\vdots \\f_{2K}({\bf{h}}) \end{bmatrix} \right),\quad \text{for all}\quad i\in {\bf{m}},$$
where $A_i\in\Mat_{K}\bigl(\C[{\bf h}]\bigr)$, and the vector has been ordered so that its first $K$ entries correspond to the subspaces with $\xi$–degree $0$ and the last $K$ entries to those with $\xi$–degree $1$. Moreover, Formulas \eqref{importanteq1}–\eqref{importanteq2} imply that each column of $A_i$ has precisely one nonzero entry, and every nonzero entry is either $\alpha$ or $\alpha\,h_i$ with $\alpha\in\C^\times$.

\np
    By a similar argument, the action of $e_{\bar{1}i}$ on $E\big(\sum_{i=1}^m a_i x_i^{k_i},\,\mathcal S\big)$ can be written with respects to the order $\preccurlyeq$ on the $\mathcal U(\mathfrak h)$-generating ordered set $\mathcal B$ as follows:
  $$e_{\bar{1}i} \cdot \begin{bmatrix}f_1({\bf{h}} )\\\vdots\\ f_{2K}({\bf{h}})\end{bmatrix} = \left[\begin{array}{ c | c }
    \mathbf 0 & \mathbf 0 \\
    \hline
    B_{i} & \mathbf 0
  \end{array}\right] \Delta_i\left(\begin{bmatrix}f_1({\bf{h}})\\\vdots \\f_{2K}({\bf{h}}) \end{bmatrix} \right), \quad \text{for all}\quad i\in {\bf{m}}, $$
   where $B_{i} \in \Mat_K(\C[{\bf{h}}])$ with the property that each column of $B_{i}$ contains exactly one nonzero entry. Each nonzero entry of $B_i$ is either $\beta$ or $\beta h_i$ with $\beta \in \C^{\times}$.

   \np
   Since $e_{i\bar{1}} \cdot e_{\bar{1}i} \cdot \begin{bmatrix}f_1({\bf{h}} )\\\vdots\\ f_{2K}({\bf{h}})\end{bmatrix} + e_{\bar{1}i} \cdot e_{i\bar{1}} \cdot \begin{bmatrix}f_1({\bf{h}} )\\\vdots\\ f_{2K}({\bf{h}})\end{bmatrix} = \begin{bmatrix}h_if_1({\bf{h}} )\\\vdots\\ h_if_{2K}({\bf{h}})\end{bmatrix} $, we deduce that
   $$A_i \Delta_i^{-1} (B_i) = A_i B_i = h_i \I_K\quad \text{and}\quad B_i \Delta_i(A_i) = B_i A_i =  h_i \I_K.$$
   Hence, $A_i, B_i$ are GPMs, and $(A_i, B_i)$ is an $h_i$-companion pair for every $i\in{\bf m}$. This completes the proof.
\end{proof}

\section{The $M(A_1,\dots,A_m)$-realization of $E\!\left(\sum_{i=1}^m a_i x_i^{k_i},\,\mathcal S\right)$ and its properties}

\np 
In this section, we describe explicitly the matrices $A_i$ from Theorem \ref{realizationofsl(m|1)}. We start with some notation.

\np
For $i,j \in \Z_{\geq 1}$ and $\alpha\in\C^\times$, fix an index $\ell$ and define
$$
U_{(h_\ell,\alpha)}(i,j):=\begin{bmatrix}
\mathbf 0 & \dfrac{1}{\alpha} h_\ell \I_{j} \\
\I_{i} & \mathbf 0
\end{bmatrix},\qquad
V_{(h_\ell,\alpha)}(i,j):=\begin{bmatrix}
\mathbf 0 & -h_\ell \I_{j} \\
\alpha \I_{i} & \mathbf 0
\end{bmatrix}\in\Mat_{i+j}(\C[h_\ell]),
$$
where $\mathbf 0$ denotes zero blocks of compatible sizes. In particular, we have the following
$$U_{(h_i,\alpha)}\left(k-1,1\right):=\begin{bmatrix}
 0      &  0      & \dots &  0      & \dfrac{1}{\alpha}h_i \\
1      &  0      & \dots &  0      & 0              \\
 0      & 1      & \ddots & \vdots & \vdots         \\
\vdots & \vdots & \ddots &  0      &  0              \\
 0      & 0      & \dots & 1      & 0
\end{bmatrix} \in \Mat_{k}(\C[h_i]),$$
$$V_{(h_i,\alpha)}\left(1,k-1\right):=\begin{bmatrix}
0      & -h_i     & 0      & \dots & 0     \\
0      & 0      & -h_i     & \dots & 0     \\
\vdots & \vdots & \ddots & \ddots & \vdots \\
0      & 0      & \dots & 0      & -h_i    \\
\alpha     & 0      & \dots & 0      & 0
\end{bmatrix}\in \Mat_{k}(\C[h_i]).$$

\subsection{The $\mathcal{U}(\mathfrak{sl}(1|1))$-module $E(ax_1^k, \mathcal{S})$}

\begin{proposition}
Let $ a \in \C^{\times} $. Then:
$$
E\left(ax_1, \varnothing\right) \,\simeq \, M(h_1), \qquad E\left(ax_1, \{1\}\right)\, \simeq \, M(1).
$$
Moreover, for any $ k \in \Z_{> 1} $, we have:
\begin{enumerate}
    \item $ E(ax_1^k, \varnothing) \,\simeq\, M\left(U_{(h_1,ak)}(k-1,1)\right) \,\simeq\, M(h_1) \,\oplus \, \bigoplus_{i=1}^{k-1} M(1) $,
    \item $ E(ax_1^k, \{1\}) \,\simeq\, M\left(V_{(h_1,ak)}(1,k-1)\right) \,\simeq\, M(1)\, \oplus\, \bigoplus_{i=1}^{k-1} M(h_1) $.
\end{enumerate}
\end{proposition}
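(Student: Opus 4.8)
The plan is to separate the linear case $k=1$, which is immediate from results already in hand, from the case $k>1$, which requires making Theorem~\ref{realizationofsl(m|1)} explicit when $m=1$ and then decomposing the resulting module.

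For $k=1$, I would apply Theorem~\ref{classi-sl(m|1)asD(m|1)-mod} with $m=1$. Since the index set $\mathbf m=\{1\}$ is a singleton, $\Delta_1=\prod_{j\in\mathbf m\setminus\{1\}}\sigma_j$ is the empty product, hence the identity on $\C[h_1]$; consequently the module $M(\mathbf a,\mathcal S)$ of Definition~\ref{sl(m|1)rank2} reduces, for $m=1$, to $M(a_1 h_1)$ when $\mathcal S=\{1\}$ and to $M(a_1)$ when $\mathcal S=\varnothing$. Theorem~\ref{classi-sl(m|1)asD(m|1)-mod} then gives $E(ax_1,\varnothing)\simeq M(\mathbf a_{\varnothing},\{1\})=M(a^{-1}h_1)$ and $E(ax_1,\{1\})\simeq M(\mathbf a_{\{1\}},\varnothing)=M(a)$, and Proposition~\ref{simplified-sl(1|1)} finishes this case via $M(a^{-1}h_1)\simeq M(h_1)$ and $M(a)\simeq M(1)$.

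For $k>1$, I would first read off the GPM $A_1$ furnished by Theorem~\ref{realizationofsl(m|1)} (with $m=1$), so that $E(ax_1^{k},\mathcal S)\simeq M(A_1)$ with $A_1\in\Mat_k(\C[h_1])$. Ordering the $\mathcal U(\mathfrak h)$-generators $V_{r,\varepsilon}=x_1^{r}\xi^{\varepsilon}e^{ax_1^{k}}$ ($0\le r\le k-1$) so that all $k$ vectors of $\xi$-degree $0$ precede all $k$ vectors of $\xi$-degree $1$, and using $\Delta_1=\mathrm{id}$, formulas \eqref{importanteq1}--\eqref{importanteq2} in the proof of Theorem~\ref{realizationofsl(m|1)} give: for $\mathcal S=\varnothing$, $e_{1\bar1}\cdot V_{r,1}=V_{r+1,0}$ for $0\le r\le k-2$ and $e_{1\bar1}\cdot V_{k-1,1}=\tfrac{1}{ak}h_1\,V_{0,0}$, so $A_1=U_{(h_1,ak)}(k-1,1)$; and for $\mathcal S=\{1\}$, $e_{1\bar1}\cdot V_{0,1}=ak\,V_{k-1,0}$ and $e_{1\bar1}\cdot V_{r,1}=-h_1\,V_{r-1,0}$ for $1\le r\le k-1$, so $A_1=V_{(h_1,ak)}(1,k-1)$. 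Next I would invoke Proposition~\ref{simplified-W(h)} with $W_1:=\I_k$ and $W_4:=B_1^{-1}A_1$, where $B_1:=\diag(h_1,1,\dots,1)$ in case (1) and $B_1:=\diag(h_1,\dots,h_1,1)$ in case (2); $B_1$ is chosen exactly so that dividing the $i$-th row of $A_1$ by $(B_1)_{ii}$ clears the single occurrence of $h_1$ in that row, so $W_4$ is again a GPM, all of whose nonzero entries are scalars (the entries $\tfrac1{ak}$ and $1$ in case (1); the entries $-1$ and $ak$ in case (2)), whence $\det W_4\in\C^\times$ and $W_4\in\GL_k(\C[h_1])$. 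Since $W_4$ has constant entries, $\Delta_1^{-1}(W_4)=W_4$, so $W_1 A_1=B_1\,\Delta_1^{-1}(W_4)$ holds by construction, and Proposition~\ref{simplified-W(h)} yields $M(A_1)\simeq M(B_1)$.

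Finally, because $B_1$ is diagonal, the $\C[h_1]$-span of the $i$-th pair of standard basis vectors of $M(B_1)$ (one in each parity) is a rank-$(1|1)$ submodule isomorphic to $M\bigl([(B_1)_{ii}]\bigr)$, and $M(B_1)$ is the internal direct sum of these $k$ submodules; since $M([h_1])=M(h_1)$ and $M([1])=M(1)$, this gives $M(h_1)\oplus\bigoplus_{i=1}^{k-1}M(1)$ in case (1) and $M(1)\oplus\bigoplus_{i=1}^{k-1}M(h_1)$ in case (2), completing the proof. The one genuinely delicate step is the extraction of $A_1$: one must track the chosen total order on the generating set and the case split in \eqref{importanteq1}--\eqref{importanteq2} carefully enough to be sure of the exact nonzero positions and of the precise scalars $\tfrac{1}{ak}h_1$, $-h_1$, $ak$. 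Everything after that is elementary linear algebra over $\C[h_1]$.
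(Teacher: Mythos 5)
Your argument is correct, and it reaches the same endpoints as the paper by a noticeably different route, so let me compare. For $k=1$ the paper just computes the $\mathcal U(\mathfrak h)$-module structure of $E(ax_1,\varnothing)$ directly and identifies it with $M(\tfrac1a h_1)$, whereas you invoke Theorem~\ref{classi-sl(m|1)asD(m|1)-mod} at $m=1$; that is legitimate (its proof never uses $m\geq 2$), but note that you are implicitly re-interpreting the target $M(\mathbf a_{\mathcal S},\mathbf m\setminus\mathcal S)$ of Definition~\ref{sl(m|1)rank2} in the $\mathfrak{sl}(1|1)$ notation $M(\alpha h_1)$, $M(\alpha)$, exactly the translation the paper sidesteps by computing directly. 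For $k>1$ the paper does not pass through Theorem~\ref{realizationofsl(m|1)}: it writes down the explicit intertwiners $\Phi_{\varnothing}$, $\Phi_{\{1\}}$ (with closed-form images $\prod_j(h_1-p-kj)$, etc., of all monomials $x_1^{\ell k+p}\xi^{\varepsilon}e^{ax_1^k}$), and then decomposes $M\bigl(U_{(h_1,ak)}(k-1,1)\bigr)$ and $M\bigl(V_{(h_1,ak)}(1,k-1)\bigr)$ \emph{in place}, pairing the standard generators according to the cycle structure of the GPM ($\C[h_1]e_1\oplus\C[h_1]e_{2k}\simeq M(\tfrac{1}{ak}h_1)$, $\C[h_1]e_i\oplus\C[h_1]e_{k+i-1}\simeq M(1)$, and similarly $M(ak)$, $M(-h_1)$ in case (2), followed by Proposition~\ref{simplified-sl(1|1)}). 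You instead extract $A_1$ from \eqref{importanteq1}--\eqref{importanteq2} (your matrices and scalars are right) and then change basis to a diagonal GPM $B_1$ via $W_1=\I_k$, $W_4=B_1^{-1}A_1$. That works, but be aware of one small point: Proposition~\ref{simplified-W(h)} as stated only asserts that every homomorphism has the block-diagonal form with $W_1A_i=B_i\Delta_i^{-1}(W_4)$; you are using the (easy) converse, that such a pair defines a homomorphism. For $m=1$ this is immediate — compatibility with $e_{1\bar1}$ is the stated relation, compatibility with $e_{\bar1 1}$ follows from Lemma~\ref{equivofA_12}, and $\Delta_1=\mathrm{id}$ — and it is how the paper itself later uses that proposition, but it deserves a sentence (or you could skip the detour through $M(B_1)$ entirely and decompose $M(A_1)$ directly as the paper does). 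What your route buys is brevity and no need to guess the closed-form polynomials appearing in $\Phi_{\varnothing}$, $\Phi_{\{1\}}$; what the paper's route buys is a completely explicit isomorphism from the exponential model and a decomposition that requires no auxiliary change of basis.
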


\begin{proof}
  {\bf (1)}  We prove that $E\left(ax_1, \varnothing\right) \,\simeq\, M(h_1)$; the case $E\bigl(ax_1,\{1\}\bigr)\;\simeq\; M(1)$ is analogous. For $\ell \in \Z_{\geq 0},$
    $$ h_1 \cdot x_1^\ell e^{ax_1} = (x_1\partial_{x_1} + \xi \partial_{\xi})x_1^\ell e^{ax_1} = \ell x_1^\ell e^{ax_1} + ax_{1}^{\ell+1}e^{ax_1},$$
    $$ h_1 \cdot x_1^\ell \xi e^{ax_1} = (x_1\partial_{x_1} + \xi \partial_{\xi})x_1^\ell \xi e^{ax_1} = (\ell+1)x_1^\ell \xi e^{ax_1} + ax_1^{\ell+1} \xi e^{ax_1}.$$
    \np
Hence, as a $\mathcal{U}(\mathfrak h)$-module,
$E(ax_1,\varnothing)
=\C[h_1]\cdot\,e^{a x_1} \oplus \C[h_1]\cdot\,\xi e^{a x_1}$. For $g_1(h_1) , g_2(h_1) \in \C[h_1] $,
\begin{align*}
e_{1\bar1}\cdot\bigl(g_1(h_1)e^{a x_1}+g_2(h_1)\,\xi e^{a x_1}\bigr)
&= g_2(h_1)\cdot\,\left(x_1 e^{a x_1}\right)
= \frac{1}{a}\,h_1\,g_2(h_1)\cdot\,e^{a x_1},\\
e_{\bar1 1}\cdot\bigl(g_1(h_1)e^{a x_1}+g_2(h_1)\,\xi e^{a x_1}\bigr)
&= g_1(h_1)\cdot\,\big(\xi\,\partial_{x_1}(e^{a x_1})\big)
= a\,g_1(h_1)\cdot\,\xi e^{a x_1},
\end{align*}
\np
  With respect to the ordered $\mathcal U(\mathfrak h)$-basis $\{e^{ax_1},\,\xi e^{ax_1}\}$, for all $g_1,g_2\in\C[h_1]$,
$$
e_{1\bar{1}}\cdot
\begin{bmatrix} g_1(h_1)\\ g_2(h_1)\end{bmatrix}
=
\begin{bmatrix} 0 & \frac{1}{a}\,h_1\\ 0&0\end{bmatrix}
\begin{bmatrix} g_1(h_1)\\ g_2(h_1)\end{bmatrix},
\qquad
e_{\bar{1}1}\cdot
\begin{bmatrix} g_1(h_1)\\ g_2(h_1)\end{bmatrix}
=
\begin{bmatrix} 0&0\\ a&0\end{bmatrix}
\begin{bmatrix} g_1(h_1)\\ g_2(h_1)\end{bmatrix}.
$$
\np
Hence, $E(ax_1,\varnothing)\,\simeq \, M\left(\frac{1}{a}\,h_1\right)$. By Lemma \ref{simplified-sl(1|1)} part 1,  $E(ax_1, \varnothing) \,\simeq\, M\left( h_1 \right)$. 

\np
{\bf (2)} To prove the remaining parts of the proposition, we regard
$$
M\!\left(U_{(h_1,ak)}(k-1,1)\right)
\quad\text{and}\quad
M\!\left(V_{(h_1,ak)}(1,k-1)\right)
$$
as modules with the common underlying $\Z_2$-graded vector space
$$
\C[h_1]^{\oplus 2k}:=\C[h_1]^{\oplus k}
\;\oplus\;
\C[h_1]^{\oplus k}.
$$
\np
To establish the first isomorphisms in parts~(1) and~(2), we define the following maps.
\begin{align*}
  \Phi_{\varnothing}:  E(ax_1^k, \varnothing) &\to M\left(U_{(h_1,ak)}\bigl(k-1,1\bigr)\right)\\
x_1^{\ell k+p} e^{ax_1^k} &\mapsto \left(\dfrac{1}{(ak)^\ell} \prod_{j=0}^{\ell-1}(h_1-p-kj) \right) e_{p+1},\\
x_1^{\ell k+p}\xi e^{ax_1^k} &\mapsto \left(\dfrac{1}{(ak)^\ell} \prod_{j=0}^{\ell-1}(h_1-p-1-kj) \right) e_{k+p+1};
\end{align*}
\begin{align*}
  \Phi_{\{1\}}:  E(ax_1^k, \{1\}) &\to M\left(V_{(h_1,ak)}\bigl(1,k-1\bigr)\right)\\
x_1^{\ell k+p} e^{ax_1^k} &\mapsto \left(\dfrac{(-1)^\ell}{(ak)^\ell} \prod_{j=0}^{\ell-1}(h_1+p+1+jk)\right) e_{p+1},\\
x_1^{\ell k+p}\xi e^{ax_1^k} &\mapsto \left(\dfrac{(-1)^\ell}{(ak)^\ell} \prod_{j=0}^{\ell-1}(h_1+p+jk)\right) e_{k+p+1};
\end{align*}
where $\ell\in\mathbb{Z}_{\geq 0}$, $0\leq p\leq k-1$. A direct computation shows that both $\Phi_{\varnothing}$ and $\Phi_{\{1\}}$ are isomorphisms of $\mathcal{U}(\mathfrak{sl}(1|1))$-modules.

\np
The second isomorphism in part~(1) follows from the direct-sum decomposition
$$
M\!\left(U_{(h_1,ak)}(k-1,1)\right)
\;\simeq\;
\bigl(\C[h_1]e_1 \oplus \C[h_1]e_{2k}\bigr)
\;\oplus\;
\bigoplus_{i=2}^{k}\bigl(\C[h_1]e_i \oplus \C[h_1]e_{\,k+i-1}\bigr),
$$
where each two-generator summand is stable under the $\mathcal{U}(\mathfrak{sl}(1|1))$-action. Moreover,
$$
\C[h_1]e_1 \oplus \C[h_1]e_{2k}\;\simeq\; M\!\left(\frac{1}{ak}\,h_1\right)\;\simeq\; M(h_1),
$$
and, for every $2\leq i\leq k$,
$$
\C[h_1]e_i \oplus \C[h_1]e_{\,k+i-1}\;\simeq\; M(1).
$$

\np
Similarly, the second isomorphism in part~(2) follows from the decomposition
$$
M\!\left(V_{(h_1,ak)}(1,k-1)\right)
\;\simeq\;
\bigl(\C[h_1]e_k \oplus \C[h_1]e_{k+1}\bigr)
\;\oplus\;
\bigoplus_{i=1}^{k-1}\bigl(\C[h_1]e_i \oplus \C[h_1]e_{\,k+i+1}\bigr),
$$
where each two–generator summand is stable under the $\mathcal{U}(\mathfrak{sl}(1|1))$-action. Moreover,
$$
\C[h_1]e_k \oplus \C[h_1]e_{k+1}\;\simeq\; M(ak)\;\simeq\; M(1),
$$
and, for every $1\leq i\leq k-1$,
$$
\C[h_1]e_i \oplus \C[h_1]e_{\,k+i+1}\;\simeq\; M(-h_1)\;\simeq\; M(h_1).
$$
\end{proof}

\begin{corollary}
Let $a,b\in \C^\times$ and $k\in \Z_{\geq 1}$. Then
$$
E\!\left(ax_1^{k},\,\varnothing\right)\;\simeq\;
E\!\left(bx_1^{k},\,\{1\}\right)
\quad\Longleftrightarrow\quad k=2.
$$
In particular, for $k=2$ the isomorphism holds for all $a,b\in\C^\times$.
\end{corollary}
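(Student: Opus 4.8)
The plan is to reduce the statement to the explicit decompositions obtained in the Proposition immediately above: for $k>1$,
$$E\!\left(ax_1^{k},\varnothing\right)\simeq M(h_1)\oplus\bigoplus_{i=1}^{k-1}M(1),\qquad E\!\left(bx_1^{k},\{1\}\right)\simeq M(1)\oplus\bigoplus_{i=1}^{k-1}M(h_1),$$
while for $k=1$ the first part of that Proposition gives $E(ax_1,\varnothing)\simeq M(h_1)$ and $E(bx_1,\{1\})\simeq M(1)$. The implication ``$\Leftarrow$'' is then immediate: for $k=2$ both modules are isomorphic to $M(h_1)\oplus M(1)$, for every $a,b\in\C^\times$.

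For ``$\Rightarrow$'' I would attach to each module a single numerical isomorphism invariant that distinguishes $M(h_1)$ from $M(1)$. For $M\in\mathcal{M}_{\mathfrak{sl}(1|1)}(\ell|\ell)$ set
$$c(M):=\dim_{\C}\operatorname{coker}\!\bigl(e_{1\bar 1}\colon M_{\bar 1}\to M_{\bar 0}\bigr).$$
Every isomorphism in this category is a $\Z_2$-graded $\mathcal{U}(\mathfrak{sl}(1|1))$-homomorphism, hence commutes with the action of $e_{1\bar 1}$ and preserves the parity decomposition, so it induces an isomorphism on the corresponding cokernels; thus $c$ is an isomorphism invariant, and it is clearly additive under finite direct sums because $e_{1\bar 1}$ then acts blockwise. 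Reading off the matrices of the odd generators from the definitions of $M(1)$ and $M(h_1)$ shows that $e_{1\bar 1}|_{M_{\bar 1}}$ is multiplication by a nonzero scalar on $M(1)$, so $c(M(1))=0$, and multiplication by $h_1$ on $M(h_1)$, so $c(M(h_1))=\dim_\C\bigl(\C[h_1]/h_1\C[h_1]\bigr)=1$. (Finite-dimensionality of these cokernels is guaranteed in general by the fact, recorded in Theorem~\ref{realizationofsl(m|1)}, that $e_{1\bar 1}$ is represented by a generalized permutation matrix over $\C[h_1]$ each of whose nonzero entries is a scalar or a scalar times $h_1$, so the cokernel is a finite direct sum of copies of $\C[h_1]/h_1\C[h_1]$.)

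Combining additivity with the values above gives $c\!\left(E(ax_1^k,\varnothing)\right)=1$ and $c\!\left(E(bx_1^k,\{1\})\right)=k-1$ for every $k\ge 1$ (the empty sum being $0$). Hence an isomorphism $E(ax_1^{k},\varnothing)\simeq E(bx_1^{k},\{1\})$ forces $1=k-1$, that is $k=2$, which completes the equivalence. The only step demanding any real care is checking that $c$ is well defined --- i.e.\ intrinsic and finite --- in this infinite-rank-over-$\C$ setting, and the generalized permutation matrix description above settles it; everything else is a direct unwinding of the definitions and of the preceding Proposition.
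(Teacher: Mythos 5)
Your proposal is correct. The paper states this corollary without proof, the intended argument being exactly the one you start from: read off the decompositions $E(ax_1^k,\varnothing)\simeq M(h_1)\oplus\bigoplus_{i=1}^{k-1}M(1)$ and $E(bx_1^k,\{1\})\simeq M(1)\oplus\bigoplus_{i=1}^{k-1}M(h_1)$ from the preceding proposition and compare the multiplicities of the two non-isomorphic rank-$(1|1)$ summands. Where you genuinely add something is in the ``only if'' direction: a bare comparison of multiplicities tacitly needs uniqueness of such decompositions, and Krull--Schmidt is not available off the shelf here (the summands have infinite length and non-local endomorphism rings, e.g.\ $\End(M(1))\supseteq\C[h_1]$), whereas your invariant $c(M)=\dim_\C\operatorname{coker}\bigl(e_{1\bar 1}\colon M_{\bar 1}\to M_{\bar 0}\bigr)$ is manifestly additive and isomorphism-invariant and certifies $1\neq k-1$ for $k\neq 2$ directly; the values $c(M(1))=0$, $c(M(h_1))=1$ and the finiteness argument via the GPM shape of the odd action are all correct. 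One small point to make explicit: an isomorphism should be taken even (parity-preserving) for the cokernels to match up as you claim; this is harmless because by Proposition~\ref{simplified-W(h)} the odd part of any homomorphism between the relevant modules $M(A_1)$ vanishes, so every isomorphism is automatically even. With that remark included, your argument is complete and, if anything, slightly more careful than the implicit one in the paper.
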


\subsection{The $\mathcal{U}(\mathfrak{sl}(m|1))$-module $E\!\left(\sum_{i=1}^m a_i x_i^{k_i},\,\mathcal S\right)$} 
For the rest of the paper we assume $m \in \Z_{\geq 2}$. For each $i\in \mathbf m$, define \footnote{By convention, an empty product equals 1, e.g. 
in the following formulas
each of the products  equals 1 if $i+1>m$.}
$$U\left(i,a_i,\mathbf k_{[i,m]}\right):= U_{(h_i,a_ik_i)}\left((k_i-1)\prod_{j=i+1}^m k_j,\prod_{j=i+1}^m k_j\right) \in \Mat_{\prod_{j=i}^m k_j}(\C[h_i]), $$
$$V\left(i,a_i,\mathbf k_{[i,m]}\right):= V_{(h_i,a_ik_i)}\left(\prod_{j=i+1}^m k_j,(k_i-1)\prod_{j=i+1}^m k_j\right) \in \Mat_{\prod_{j=i}^m k_j}(\C[h_i]). $$

\np
Recall that, for $n\in\Z_{\geq 1}$, $S_n$ denotes the symmetric group on $\{0,1,\dots,n-1\}$. For $\mathcal S_1 \subseteq \mathcal S_2 \subseteq\mathbf m$, define the following sign vector 
$$\nu(\mathcal S_1, \mathcal S_2)\in\{\pm 1\}^{\times \card(\mathcal S_2)},\quad \text{where}\quad \nu(\mathcal S_1, \mathcal S_2)_i \;:=\;\begin{cases} 1, & i\in\mathcal{S}_2 \setminus \mathcal S_1,\\-1, & i\in\mathcal{S}_1.\end{cases}$$
\np
\begin{remark} \label{k=1-case}
    For $i \in \mathbf m$, if $k_i =1$ then
    $$U\left(i,a_i,\mathbf k_{[i,m]}\right) = \frac{h_i}{a_i}\I_{\prod_{j=i+1}^mk_j},\qquad V\left(i,a_i,\mathbf k_{[i,m]}\right) = a_i\I_{\prod_{j=i+1}^mk_j}.$$
    In particular, assuming $k_m=1$, we obtain
    $$U\left(m,a_m,\mathbf k_{[m,m]}\right)=\begin{bmatrix}\frac{h_m}{a_m}\end{bmatrix},\qquad V\left(m,a_m,\mathbf k_{[m,m]}\right)=\begin{bmatrix}a_m \end{bmatrix}.$$
\end{remark}

\begin{lemma} \label{explicitrealizationexp}
    Let $a_1, a_2,\dots, a_m \in \C^{\times}$, $k_1,k_2,\dots,k_m \in \Z_{\geq 1}$, and $\mathcal{S} \subseteq {\bf{m}}$. Then
    $$E\!\left(\sum_{i=1}^m a_i x_i^{k_i},\,\mathcal S\right) \;\simeq\; M\left(A_1,A_2,\dots,A_m\right),\qquad A_i \in \Mat_{\prod_{i=1}^m k_i}(\C[h_i])$$
   with
    $$A_i =
\begin{cases}
\diag\Big(\underbrace{U\left(i,a_i,\mathbf k_{[i,m]}\right),\dots,U\left(i,a_i,\mathbf k_{[i,m]}\right)}_{\prod_{j=1}^{i-1} k_j}\Big), & i \notin \mathcal{S},\\
\diag\Big(\underbrace{V\left(i,a_i,\mathbf k_{[i,m]}\right),\dots,V\left(i,a_i,\mathbf k_{[i,m]}\right)}_{\prod_{j=1}^{i-1} k_j}\Big), & i \in \mathcal{S}.
\end{cases}$$
\end{lemma}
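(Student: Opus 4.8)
The plan is to build directly on Theorem~\ref{realizationofsl(m|1)}, which already produces an isomorphism $E\!\left(\sum_{i=1}^m a_i x_i^{k_i},\,\mathcal S\right)\simeq M(A_1,\dots,A_m)$ in which, by construction, $A_i$ is the matrix of $e_{i\bar 1}$ acting from the $\xi$-degree-$1$ part to the $\xi$-degree-$0$ part, written with respect to the ordered $\mathcal U(\mathfrak h)$-generating set $\mathcal B=\{V_{\mathbf r,\varepsilon}\}$. Thus the whole task reduces to reading this matrix off explicitly from Formulas~\eqref{importanteq1}--\eqref{importanteq2} evaluated at $g(\mathbf h)\equiv 1$, i.e.\ from the values $e_{i\bar 1}\cdot V_{\mathbf r,1}$ (recall $e_{i\bar 1}\cdot V_{\mathbf r,0}=0$).

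The first step is to unwind the combinatorial shape of the lexicographic order on $\mathfrak R(k_1,\dots,k_m)$. Grouping the indices $\mathbf r=(r_1,\dots,r_m)$ first by the prefix $(r_1,\dots,r_{i-1})$ and then by $r_i$ exhibits $\mathcal B$, within each parity, as $\prod_{j=1}^{i-1}k_j$ consecutive \emph{outer blocks}, each of length $\prod_{j=i}^m k_j$; and each outer block as $k_i$ consecutive \emph{sub-blocks}, each of length $N:=\prod_{j=i+1}^m k_j$ and labelled by the value $r_i\in\{0,\dots,k_i-1\}$, with the suffix $(r_{i+1},\dots,r_m)$ running in lex order inside a sub-block. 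Formulas~\eqref{importanteq1}--\eqref{importanteq2} show that $e_{i\bar 1}$ alters only the coordinate $r_i$ (cyclically), leaves the prefix and suffix untouched, and produces a scalar depending only on $r_i$. Hence $A_i$ is block diagonal with $\prod_{j=1}^{i-1}k_j$ identical blocks — one per prefix — and each block is a block-cyclic permutation of the $k_i$ sub-blocks, weighted by $1$, $\tfrac{1}{a_ik_i}h_i$, $a_ik_i$, or $-h_i$ according to the two cases.

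Concretely: if $i\notin\mathcal S$, \eqref{importanteq1} says that the sub-block $r_i=p$ is carried by the identity to the sub-block $r_i=p+1$ for $0\le p\le k_i-2$, while the sub-block $r_i=k_i-1$ is carried to the sub-block $r_i=0$ by $\tfrac{1}{a_ik_i}h_i$ times the identity; writing out the resulting $(k_iN)\times(k_iN)$ block identifies it with $U_{(h_i,a_ik_i)}\!\big((k_i-1)N,\,N\big)=U\!\left(i,a_i,\mathbf k_{[i,m]}\right)$. If $i\in\mathcal S$, \eqref{importanteq2} says that the sub-block $r_i=0$ is carried to the sub-block $r_i=k_i-1$ by $a_ik_i$ times the identity, and the sub-block $r_i=p$ to the sub-block $r_i=p-1$ by $-h_i$ times the identity for $1\le p\le k_i-1$; this is exactly $V_{(h_i,a_ik_i)}\!\big(N,\,(k_i-1)N\big)=V\!\left(i,a_i,\mathbf k_{[i,m]}\right)$. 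Assembling these identifications for all $i\in\mathbf m$ gives the asserted presentation, and the degenerate instances $k_i=1$ (Remark~\ref{k=1-case}) and $i=m$ are verified directly.

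The main obstacle is purely the index bookkeeping: one must faithfully translate the lexicographic order on $\mathfrak R(k_1,\dots,k_m)$ into the outer-block/sub-block picture, keep straight which sub-block is the ``wrap-around'' target in each of the two cases, and match the resulting cyclic pattern against the precise definitions of $U$ and $V$ — in particular the asymmetric block sizes $(k_i-1)\prod_{j>i}k_j$ versus $\prod_{j>i}k_j$ and the sign $-h_i$ occurring in $V$. No separate computation is needed for $e_{\bar 1 i}$: by Theorem~\ref{realizationofsl(m|1)} its representing matrix is forced to be the $h_i$-companion of $A_i$, so one need only observe that $U\!\left(i,a_i,\mathbf k_{[i,m]}\right)$ and $V\!\left(i,a_i,\mathbf k_{[i,m]}\right)$ are indeed GPMs admitting $h_i$-companions.
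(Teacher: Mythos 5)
Your proposal is correct and follows essentially the same route as the paper: the paper's proof likewise just reads the matrices $A_i$ off from the formulas \eqref{importanteq1}--\eqref{importanteq2} for the action of $e_{i\bar 1}$ with respect to the $\preccurlyeq$-ordered basis $\mathcal B$ produced in Theorem~\ref{realizationofsl(m|1)}, with the companion matrices for $e_{\bar 1 i}$ determined automatically. Your write-up simply makes the block/sub-block bookkeeping explicit, which the paper leaves implicit.
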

\begin{proof}
   The lemma follows by combining \eqref{importanteq1} and \eqref{importanteq2} where the matrices $A_i$ 
   defined from the action of the elements $e_{i, \bar 1}$ is with respect to the $\mathcal U(\mathfrak h)$-basis
   $$\mathcal B \;=\;\left\{V_{\mathbf r,\varepsilon}:= \mathbf x^{\mathbf r}\,\xi^{\varepsilon}\, 
   e^{\sum_{i=1}^m a_i x_i^{k_i}}\bigm|  \mathbf r\in \mathfrak R(k_1,\dots,k_m),\ \varepsilon\in\{0,1\}\right\}$$
of $E\!\left(\sum_{i=1}^m a_i x_i^{k_i},\,\mathcal S\right)$ ordered by $\preccurlyeq$.
\end{proof}

\np
\begin{corollary} \label{first-explicit}
Let $a_1,\dots,a_n\in \C^\times$, $k\in\Z_{\geq 1}$, and $\mathcal S\subseteq\mathbf m$. Then 
$$
E\!\left(a_1x_1^{k}+\sum_{i=2}^m a_i x_i,\ \mathcal S\right)
\;\simeq\; M\bigl(A_1,A_2,\dots,A_m\bigr),
$$
where each $A_i\in \Mat_k\big(\C[\mathbf h]\big)$ and
$$
A_1=\begin{cases}
U_{(h_1,a_1k)}(k-1,1), & 1\notin\mathcal S,\\
V_{(h_1,a_1k)}(1,k-1), & 1\in\mathcal S,
\end{cases}
\quad\text{and}\quad A_i=\begin{cases}
\dfrac{h_i}{a_i}\,\I_k, & i\notin\mathcal S,\\
a_i\,\I_k, & i\in\mathcal S,
\end{cases}\qquad (i\in\mathbf m\setminus\{1\}).
$$
\end{corollary}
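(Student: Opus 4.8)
The plan is to obtain Corollary~\ref{first-explicit} as the specialization of Lemma~\ref{explicitrealizationexp} to the tuple $k_1=k$, $k_2=\dots=k_m=1$. Under this choice $\prod_{i=1}^m k_i=k$, so each matrix $A_i$ produced by the lemma indeed lies in $\Mat_k(\C[h_i])\subseteq\Mat_k(\C[\mathbf h])$, and what remains is only to simplify the block-diagonal descriptions of the $A_i$ furnished by the lemma.

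First I would record the case $i=1$. Since $\prod_{j=2}^m k_j=1$, the defining formulas give
$$U\!\left(1,a_1,\mathbf k_{[1,m]}\right)=U_{(h_1,a_1k_1)}\!\bigl((k_1-1)\cdot 1,\ 1\bigr)=U_{(h_1,a_1k)}(k-1,1),$$
and likewise $V\!\left(1,a_1,\mathbf k_{[1,m]}\right)=V_{(h_1,a_1k)}(1,k-1)$. The outer $\diag$ in Lemma~\ref{explicitrealizationexp} consists of $\prod_{j=1}^{0}k_j=1$ block (an empty product), so $A_1=U_{(h_1,a_1k)}(k-1,1)$ if $1\notin\mathcal S$ and $A_1=V_{(h_1,a_1k)}(1,k-1)$ if $1\in\mathcal S$, as asserted.

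Next I would treat $i\in\mathbf m\setminus\{1\}$. Here $k_i=1$ and $\prod_{j=i+1}^m k_j=1$, so Remark~\ref{k=1-case} gives $U\!\left(i,a_i,\mathbf k_{[i,m]}\right)=\bigl[\frac{h_i}{a_i}\bigr]$ and $V\!\left(i,a_i,\mathbf k_{[i,m]}\right)=\bigl[a_i\bigr]$, both $1\times 1$ matrices. The outer $\diag$ now has $\prod_{j=1}^{i-1}k_j=k_1=k$ identical blocks, so $A_i=\frac{h_i}{a_i}\I_k$ when $i\notin\mathcal S$ and $A_i=a_i\I_k$ when $i\in\mathcal S$. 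These match the stated formulas, which completes the argument.

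No step presents a genuine obstacle; the only point requiring care is the bookkeeping of the empty-product conventions $\prod_{j=1}^{0}=\prod_{j=i+1}^{m}=1$ fixed before Lemma~\ref{explicitrealizationexp}, together with the observation that for $k_2=\dots=k_m=1$ the index set $\mathfrak R(k,1,\dots,1)$ has exactly $k$ elements, so the underlying $\mathcal U(\mathfrak h)$-free rank in each parity is $k$, consistent with $A_i\in\Mat_k(\C[\mathbf h])$.
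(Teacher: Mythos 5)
Your proposal is correct and matches the paper's intended derivation: Corollary~\ref{first-explicit} is exactly the specialization of Lemma~\ref{explicitrealizationexp} to $k_1=k$, $k_2=\dots=k_m=1$, with Remark~\ref{k=1-case} handling the indices with $k_i=1$ and the empty-product conventions giving the single block for $i=1$ and the $k$ identical $1\times 1$ blocks for $i\geq 2$. Nothing is missing.
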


\begin{lemma} \label{orbitlemma}
Let $\mathcal{S} \subseteq \mathbf m$. For each $i\in\mathbf m$, define
$$
  \pi_{k_i} :=
  \begin{cases}
    (0\;\;1\;\;\dots\;\;k_i\!-\!1) \in S_{k_i}, & \text{if } i \notin \mathcal S, \\
    (0\;\;k_i\!-\!1\;\;k_i\!-\!2\;\;\dots\;\;1) \in S_{k_i}, & \text{if } i \in \mathcal S,
  \end{cases}
$$    
viewed as a permutation of $\mathbb{Z}/k_{i}\mathbb{Z}$. Extend $\pi_{k_{i}}$ to a permutation of $\mathfrak{R}(k_{1},\dots,k_{m})$ by letting it act on the $i$-th component and fix every other component. For ${\bf{a}} \in \mathfrak{R}(k_1,\dots,k_m)$, let
$$\|{\bf{a}}\|_{\mathcal{S}} := \sum_{i\notin S} a_i - \sum_{j\in S} a_j.$$
Then $H:= \left\langle \pi_{k_i} \pi_{k_{i+1}}^{-1}\bigm|\ i\in \mathbf m \setminus \{m\} \right\rangle$ acts transitively on
$$
O_{p,\mathcal S}\;:=\;\bigl\{{\bf a}\in \mathfrak R(k_1,\dots,k_m)\bigm|\ \|{\bf a}\|_{\mathcal S}\equiv p \pmod s\bigr\},\qquad p\in\{0,1,\dots, s-1\}, 
$$
where $s= \gcd(k_1,\dots,k_m)$.
\end{lemma}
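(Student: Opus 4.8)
\emph{Proof proposal.} The plan is to recognize the whole situation as a translation action of a subgroup of the abelian group $A:=\mathfrak R(k_1,\dots,k_m)=\bigoplus_{i=1}^m\Z/k_i\Z$ on itself, and then to identify that subgroup explicitly.

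First I would observe that each $\pi_{k_i}$ acts on $\mathfrak R(k_1,\dots,k_m)$ as the translation adding $\epsilon_i$ to the $i$-th coordinate and fixing all others, where $\epsilon_i=1$ if $i\notin\mathcal S$ and $\epsilon_i=-1$ if $i\in\mathcal S$ (this is just unwinding the two cycle notations). Consequently $H$ acts on $A$ by translations, and as a group of translations $H$ is identified with the subgroup $G:=\langle g_1,\dots,g_{m-1}\rangle\le A$, where $g_i:=\epsilon_i e_i-\epsilon_{i+1}e_{i+1}$ and $e_i$ denotes the standard generator of the $i$-th summand. Note also that $\|\mathbf a\|_{\mathcal S}=\sum_{i=1}^m\epsilon_i a_i$, so that, with $s:=\gcd(k_1,\dots,k_m)$, the assignment $\phi(\mathbf a):=\sum_{i=1}^m\epsilon_i a_i \bmod s$ gives a well-defined group homomorphism $\phi:A\to\Z/s\Z$ (well-defined because $s\mid k_i$ for every $i$), with $O_{p,\mathcal S}=\phi^{-1}(p)$ for each $p\in\{0,1,\dots,s-1\}$.

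The heart of the argument is to prove that $G=\ker\phi$. One inclusion is immediate: $\phi(g_i)=\epsilon_i^2-\epsilon_{i+1}^2=0$, so $G\subseteq\ker\phi$; and $\phi$ is surjective (if $s>1$ then each $k_i\ge s>1$, whence $\phi(e_i)=\epsilon_i$ generates $\Z/s\Z$; if $s=1$ there is nothing to check), so $\lvert\ker\phi\rvert=\lvert A\rvert/s=\tfrac{1}{s}\prod_{i=1}^m k_i$. For the reverse inclusion I would bound $\lvert A/G\rvert$ from above by $s$. In $A/G$ the relations imposed by the generators $g_i$ read $\epsilon_1\overline{e_1}=\epsilon_2\overline{e_2}=\dots=\epsilon_m\overline{e_m}$; calling this common element $c$, every $\overline{e_i}$ equals $\epsilon_i c$, so $A/G=\langle c\rangle$ is cyclic. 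Moreover $k_i c=\epsilon_i k_i\overline{e_i}=0$ for every $i$, so by Bézout $s\,c=0$; hence the order of $c$ divides $s$, giving $\lvert A/G\rvert\le s$ and therefore $\lvert G\rvert\ge\tfrac{1}{s}\prod_{i=1}^m k_i=\lvert\ker\phi\rvert$. Together with $G\subseteq\ker\phi$ this forces $G=\ker\phi$.

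Finally, since $A$ acts on $\mathfrak R(k_1,\dots,k_m)$ transitively by translation, the $H$-orbits on $\mathfrak R(k_1,\dots,k_m)$ are exactly the cosets of $G$ in $A$; as $G=\ker\phi$, these are precisely the fibres $\phi^{-1}(p)=O_{p,\mathcal S}$, proving that $H$ acts transitively on each $O_{p,\mathcal S}$. The only step requiring an idea is the upper bound $\lvert A/G\rvert\le s$, and even there the observation is short — $A/G$ is cyclic and killed by every $k_i$, hence by $s$; the remainder is bookkeeping.
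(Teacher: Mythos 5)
Your proof is correct, and it takes a somewhat different route from the paper's. Both arguments start from the same translation picture: $\pi_{k_i}$ adds $\nu(\mathcal S,\mathbf m)_i=\pm1$ to the $i$-th coordinate, so the generators of $H$ translate by $v_i=\nu_i e_i-\nu_{i+1}e_{i+1}$, which preserve $\|\cdot\|_{\mathcal S}$ modulo $s$. Where the paper is constructive, you are structural. The paper fixes $\mathbf a,\mathbf b$ with $\|\mathbf a\|_{\mathcal S}\equiv\|\mathbf b\|_{\mathcal S}\pmod s$, uses B\'ezout coefficients $u_i$ with $\sum u_ik_i=s$ to correct the coordinate differences by multiples of the $k_i$, and then exhibits an explicit $h\in H$ (with exponents the partial sums $c_j$) satisfying $h\cdot\mathbf a=\mathbf b$. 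You instead identify the $H$-orbits with the cosets of $G=\langle v_1,\dots,v_{m-1}\rangle$ in $A=\mathfrak R(k_1,\dots,k_m)$, observe that $O_{p,\mathcal S}$ is the fibre over $p$ of the homomorphism $\phi(\mathbf a)=\sum_i\nu_i a_i\bmod s$, and prove $G=\ker\phi$ by a counting argument: $G\subseteq\ker\phi$, while $A/G$ is cyclic (generated by the common class $c=\nu_i\overline{e_i}$) and killed by every $k_i$, hence by $s$ via B\'ezout, so $\lvert A/G\rvert\le s=\lvert A/\ker\phi\rvert$. Both proofs rest on the same B\'ezout input; yours trades the explicit element for a short index computation and, as a by-product, shows that the $H$-orbits are exactly the sets $O_{p,\mathcal S}$ (the paper obtains the reverse containment separately from norm-invariance). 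Either argument is complete; the paper's explicit $h$ is slightly more hands-on, your quotient argument is cleaner and less bookkeeping-heavy.
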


\begin{proof}
We recall that
$$\nu(\mathcal S, \mathbf m)\in\{\pm 1\}^{\times m},\quad \text{where}\quad \nu(\mathcal S, \mathbf m)_i \;:=\;\begin{cases} 1, & i\notin \mathcal{S} ,\\-1, & i\in\mathcal{S}.\end{cases}$$
\np
Then, for ${\bf{a}} \in \mathfrak{R}(k_1,\dots,k_m)$,
$$\|{\bf{a}}\|_{\mathcal{S}} = \sum_{i\notin S} a_i - \sum_{j\in S} a_j = \sum_{i=1}^m \nu(\mathcal S, \mathbf m)_i\, a_i.$$
\np
For each $i\in \mathbf m $, $\pi_{k_i}$ acts on $a_i$ by $a_i\mapsto a_i+\nu(\mathcal S, \mathbf m)_i $ with the addition taken in $\Z/k_i\Z$. Therefore, for $i\in \mathbf m \setminus \{m\} $,  $\pi_{k_i} \pi_{k_{i+1}}^{-1}$ acts via
$${\bf{a}} \mapsto {\bf{a}}+ v_i,\qquad  v_i :=  \nu(\mathcal S, \mathbf m)_i \,e_i - \nu(\mathcal S, \mathbf m)_{i+1}\, e_{i+1}.$$
\np
Since
$$\|{{\bf{a}}+ v_i}\|_{\mathcal{S}} = \|{\bf{a}}\|_{\mathcal{S}}  +\bigl(\nu(\mathcal S, \mathbf m)_i\bigr)^2 - \bigl(\nu(\mathcal S, \mathbf m)_{i+1}\bigr)^2 = \|{\bf{a}}\|_{\mathcal{S}},$$

$\pi_{k_i} \pi_{k_{i+1}}^{-1}$ preserves $\|{\bf{a}}\|_{\mathcal{S}}$  (hence its residue class mod $s$). Consequently, $H$ stabilizes $O_{p,\mathcal S}$.
Let $\mathbf a, \mathbf b \in O_{p,\mathcal S}$. Since $\|{\bf{a}}\|_{\mathcal{S}} \equiv \|{\bf{b}}\|_{\mathcal{S}} \equiv p \pmod{s},$ it follows that
$$\sum_{i=1}^m \nu(\mathcal S, \mathbf m)_i\, (b_i-a_i) \equiv 0 \pmod{s}.$$
Hence there exists $t\in\Z$ such that $\sum_{i=1}^m \nu(\mathcal S, \mathbf m)_i\, (b_i-a_i) = ts .$
Since $s = \gcd(k_1, \dots, k_m)$, there exist $u_i \in \Z$ such that $\sum_{i=1}^mu_ik_i =s$. Set $z_i := -t\nu(\mathcal S, \mathbf m)_i\,u_i$, for all $i\in \mathbf m$. Then
$$
  \sum_{i=1}^m \nu(\mathcal S, \mathbf m)_i k_i z_i \;=\;-t\sum_{i=1}^m \left(\nu(\mathcal S, \mathbf m)_i\right)^2 k_i u_i \;=\;  -ts.
$$
Therefore, $\sum_{i=1}^m \nu(\mathcal S, \mathbf m)_i\, (b_i-a_i+z_ik_i) =0$. Set 
$$c_j:= \sum_{\ell=1}^j \nu(\mathcal S, \mathbf m)_\ell\, (b_\ell-a_\ell+z_\ell k_\ell),\qquad 1\leq j \leq m-1,$$ 
$$h:= \left(\pi_{k_{m-1}} \pi_{k_{m}}^{-1}\right)^{c_{m-1}}\left(\pi_{k_{m-2}} \pi_{k_{m-1}}^{-1}\right)^{c_{m-2}}\dots\, \left(\pi_{k_1} \pi_{k_{2}}^{-1}\right)^{c_{1}}\in H.$$ 

\np
Since $\sum_{i=1}^m \nu(\mathcal S, \mathbf m)_i\, (b_i-a_i+z_ik_i) =0$, then $h$ acts on $\mathbf a$ as
$$a_i \mapsto a_i+ \nu(\mathcal S, \mathbf m)_i^2\, (b_i-a_i+z_ik_i) = b_i + z_ik_i,\qquad 1\leq i \leq m. $$

\np
This shows that $h\cdot\mathbf a=\mathbf b$ in $\mathfrak R(k_1,\dots,k_m)$; hence $H$ acts transitively on $O_{p,\mathcal S}$.
\end{proof}

\begin{corollary} \label{generalizedH-S}
    Let $H$ be as in Lemma~\ref{orbitlemma}, $H$ acts transitively on $\mathfrak{R}(k_1,\dots,k_m)$ if and only if $\gcd(k_1,\dots,k_m) =1$.
\end{corollary}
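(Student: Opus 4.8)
The plan is to read off the corollary directly from Lemma~\ref{orbitlemma}, with essentially no extra work. Put $s:=\gcd(k_1,\dots,k_m)$. Since each $k_i$ is divisible by $s$, the quantity $\|{\bf a}\|_{\mathcal S}$ is well defined modulo $s$ on all of $\mathfrak R(k_1,\dots,k_m)$, so one has the disjoint decomposition
$$\mathfrak R(k_1,\dots,k_m)=\bigsqcup_{p=0}^{s-1}O_{p,\mathcal S},$$
and Lemma~\ref{orbitlemma} asserts that $H$ acts transitively on each block $O_{p,\mathcal S}$.

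For the ``if'' direction, assume $s=1$. Then there is a single block, namely $O_{0,\mathcal S}=\mathfrak R(k_1,\dots,k_m)$, and the transitivity of $H$ on this block supplied by Lemma~\ref{orbitlemma} is precisely the transitivity of $H$ on $\mathfrak R(k_1,\dots,k_m)$.

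For the ``only if'' direction I would argue by contraposition: assume $s\geq 2$ and show that $H$ is not transitive. Two observations suffice. First, every block $O_{p,\mathcal S}$ is nonempty: since $s\mid k_1$, as $a_1$ ranges over $\{0,1,\dots,k_1-1\}$ the residue $\nu(\mathcal S,\mathbf m)_1\,a_1 \bmod s$ attains every value, so the tuple $(a_1,0,\dots,0)$ can be chosen to lie in any prescribed $O_{p,\mathcal S}$. Second, $H$ stabilizes each block: the proof of Lemma~\ref{orbitlemma} shows that each generator $\pi_{k_i}\pi_{k_{i+1}}^{-1}$ acts by ${\bf a}\mapsto {\bf a}+v_i$ with $\|{\bf a}+v_i\|_{\mathcal S}=\|{\bf a}\|_{\mathcal S}$, so $H$ preserves the value of $\|\cdot\|_{\mathcal S}$, hence its residue modulo $s$, hence each $O_{p,\mathcal S}$. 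Having at least two nonempty $H$-stable blocks forbids transitivity. There is no genuine obstacle here: all the substance lies in Lemma~\ref{orbitlemma}, and the only point needing a moment's care is the nonemptiness of the blocks $O_{p,\mathcal S}$.
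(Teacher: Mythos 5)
Your proposal is correct and is exactly the intended reading of Lemma~\ref{orbitlemma}: the paper states the corollary without proof, relying on the decomposition $\mathfrak R(k_1,\dots,k_m)=\bigsqcup_{p}O_{p,\mathcal S}$ into nonempty $H$-stable blocks on each of which $H$ is transitive, which is precisely your argument. Your extra check that each $O_{p,\mathcal S}$ is nonempty (via $(a_1,0,\dots,0)$ with $s\mid k_1$) is the only detail the paper leaves tacit, and it is handled correctly.
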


\begin{remark} \label{importantremark}
    Since
    $$
\mathfrak{R}(k_1,\dots,k_m)\;=\;\mathbb{Z}/k_1\mathbb{Z}\times \dots \times \mathbb{Z}/k_m\mathbb{Z},
$$
the symmetric group on this set, $S_{\mathfrak{R}(k_{1},\dots,k_{m})}$, is isomorphic to $S_{\prod_{i=1}^m k_i}$.
By Lemma~\ref{explicitrealizationexp}, the permutation induced by $A_i$ is the $k_i$-cycle
    $$
\pi(A_i)=
\begin{cases}
(0\;\;1\;\;\dots\;\;k_i\!-\!1), & \text{if } i\notin \mathcal S,\\
(0\;\;k_i\!-\!1\;\;k_i\!-\!2\;\dots\;1), & \text{if } i\in \mathcal S.
\end{cases}
$$
For $\mathbf r \in \mathfrak{R}(k_1,\dots,k_m)$, $\pi(A_i)$ acts natually on the $i$-th component of $\mathbf r$ and fixes all other components.
\end{remark}

\begin{lemma} \label{decompositionlemma}
Let $O_{p,\mathcal S}$ be as in Lemma~\ref{orbitlemma}. For $i\in\mathbf m$, define
$$\overline{O}_{p,\mathcal{S}}^{(i)}:=\left\{\pi\left(A_i\right)^{-1}\cdot {\bf{a}}\bigm|\ {\bf{a}} \in {O}_{p,\mathcal{S}}  \right\}. $$
Then 
\begin{enumerate}
    \item The set $\overline{O}_{p,\mathcal S}^{(i)}$ is independent of $i$. In particular, we may write
$$
\overline{O}_{p,\mathcal S}
:=\pi(A_i)^{-1}\!\cdot O_{p,\mathcal S}\quad\text{for any}\quad i\in\mathbf m.
$$
\item For every $i\in\mathbf m$ and every $\mathbf r\in\overline{O}_{p,\mathcal S}$, $\pi(A_i)\cdot \mathbf r \in O_{p,\mathcal S}$.
\end{enumerate}
\end{lemma}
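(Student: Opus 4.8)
The plan is to track how each $\pi(A_i)$ moves the invariant $\|\cdot\|_{\mathcal S}$ and to observe that this motion is the same for every $i$. By Remark~\ref{importantremark} (equivalently, by the description of $\pi_{k_i}$ in Lemma~\ref{orbitlemma}), $\pi(A_i)$ acts on $\mathfrak R(k_1,\dots,k_m)$ by adding $\nu(\mathcal S,\mathbf m)_i\in\{\pm1\}$ to the $i$-th coordinate (in $\Z/k_i\Z$) and fixing every other coordinate; in particular $\pi(A_i)$ and $\pi(A_i)^{-1}$ are mutually inverse bijections of the finite set $\mathfrak R(k_1,\dots,k_m)$.

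First I would compute, for $\mathbf a\in\mathfrak R(k_1,\dots,k_m)$, using $\|\mathbf a\|_{\mathcal S}=\sum_{j=1}^m\nu(\mathcal S,\mathbf m)_j\,a_j$ and $\nu(\mathcal S,\mathbf m)_i^2=1$,
$$
\bigl\|\pi(A_i)^{-1}\!\cdot\mathbf a\bigr\|_{\mathcal S}=\|\mathbf a\|_{\mathcal S}-\bigl(\nu(\mathcal S,\mathbf m)_i\bigr)^2=\|\mathbf a\|_{\mathcal S}-1,\qquad \bigl\|\pi(A_i)\!\cdot\mathbf a\bigr\|_{\mathcal S}=\|\mathbf a\|_{\mathcal S}+1 .
$$
The point to emphasize is that these shifts by $\mp1$ do not depend on $i$. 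Consequently $\pi(A_i)^{-1}\cdot O_{p,\mathcal S}\subseteq O_{p-1,\mathcal S}$ and $\pi(A_i)\cdot O_{p-1,\mathcal S}\subseteq O_{p,\mathcal S}$ for every $i\in\mathbf m$, where the subscript $p-1$ is read modulo $s=\gcd(k_1,\dots,k_m)$. Since $\pi(A_i)$ and $\pi(A_i)^{-1}$ are inverse bijections, both inclusions are equalities, so $\overline{O}_{p,\mathcal S}^{(i)}=\pi(A_i)^{-1}\cdot O_{p,\mathcal S}=O_{p-1,\mathcal S}$, which is manifestly independent of $i$. This proves (1), and we set $\overline{O}_{p,\mathcal S}:=O_{p-1,\mathcal S}$. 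For (2), any $\mathbf r\in\overline{O}_{p,\mathcal S}=O_{p-1,\mathcal S}$ satisfies $\|\pi(A_i)\cdot\mathbf r\|_{\mathcal S}=\|\mathbf r\|_{\mathcal S}+1\equiv p\pmod s$, hence $\pi(A_i)\cdot\mathbf r\in O_{p,\mathcal S}$ for all $i\in\mathbf m$.

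There is no genuine obstacle here: the argument is a short bookkeeping with the sign vector $\nu(\mathcal S,\mathbf m)$, and the one feature that makes the statement true is precisely that $\nu(\mathcal S,\mathbf m)_i\in\{\pm1\}$, so $\nu(\mathcal S,\mathbf m)_i^2=1$ and the value of $\|\cdot\|_{\mathcal S}$ changes by exactly $\mp1$ under $\pi(A_i)^{\pm1}$ regardless of which coordinate is being translated. The only care needed is to interpret the index $p-1$ cyclically modulo $s$ (so that $\overline{O}_{0,\mathcal S}=O_{s-1,\mathcal S}$).
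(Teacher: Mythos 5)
Your proof is correct and uses essentially the same bookkeeping as the paper: the identity $\nu(\mathcal S,\mathbf m)_i^2=1$ makes the shift of $\|\cdot\|_{\mathcal S}$ modulo $s$ under $\pi(A_i)^{\pm1}$ independent of $i$, which is exactly the computation the paper performs for the composite $\pi(A_j)\pi(A_i)^{-1}$. Your version additionally pins down $\overline{O}_{p,\mathcal S}=O_{p-1,\mathcal S}$ (with $p-1$ read modulo $s$), a slightly more explicit but equivalent formulation of the same argument.
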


\begin{proof}
    Let $i,j \in \mathbf m$ with $i\neq j$, and let $\mathbf b\in\overline{O}_{p,\mathcal S}^{(i)}$. Then there exists $\mathbf{a} \in {O}_{p,\mathcal{S}} $ such that $\mathbf{b} = \pi\left(A_i\right)^{-1}\cdot\mathbf{a}$.
    Since 
    $$\big\|\pi(A_j)\cdot\mathbf b\big\|_{\mathcal{S}} =\big\|\pi(A_j)\pi\left(A_i\right)^{-1}\cdot\mathbf{a}\big\|_{\mathcal{S}}= \|{\bf{a}}\|_{\mathcal{S}} - \left(\nu(\mathcal S, \mathbf m)_i\right)^2 + \left(\nu(\mathcal S, \mathbf m)_j\right)^2 = \|{\bf{a}}\|_{\mathcal{S}},$$

it follows that $\pi(A_j)\cdot \mathbf b \in {O}_{p,\mathcal{S}}$, which implies $\mathbf b \in \overline{O}_{p,\mathcal{S}}^{(j)}$. This proves part (1). Part (2) follows directly from (1).
\end{proof}

\begin{corollary} \label{onlyif-main}
Let $a_1,\dots,a_m\in\C^{\times}$, $\mathcal S\subseteq\mathbf m$, and $k_1,\dots,k_m\in\Z_{\geq1}$ with
$s:=\gcd(k_1,\dots,k_m)$. Then, the module $E\big(\sum_{i=1}^m a_i x_i^{k_i},\,\mathcal S\big)$ admits the decomposition:
$$E\!\left(\sum_{i=1}^m a_i x_i^{k_i},\,\mathcal S\right) \;=\; \bigoplus_{j\in \Z/s\Z} M^{j}, \qquad
M^{j}=M^{j}_{\bar 0}\oplus M^{j}_{\bar 1}$$
where,
$$
M^{j}_{\bar 0}
=\bigoplus_{\mathbf r\in O_{j,\mathcal S}}
\mathbf x^{\mathbf r}\,\C[x_1^{k_1},\dots,x_m^{k_m}]\,
e^{\sum_{i=1}^{m} a_i x_i^{k_i}},
\quad
M^{j}_{\bar 1}
=\bigoplus_{\mathbf r'\in \overline{O}_{j,\mathcal S}}
\mathbf x^{\mathbf r'}\,\xi\,\C[x_1^{k_1},\dots,x_m^{k_m}]\,
e^{\sum_{i=1}^{m} a_i x_i^{k_i}}.
$$
\end{corollary}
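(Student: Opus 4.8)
The plan is to exhibit each $M^{j}$ as a $\mathcal U(\mathfrak{sl}(m|1))$-submodule and to check that the $M^{j}$ ($j\in\Z/s\Z$) form an internal direct sum equal to the whole module. First I would record the $\mathcal U(\mathfrak h)$-level decomposition from Lemma~\ref{higherrank-exp}(1):
$$
E\!\left(\sum_{i=1}^m a_i x_i^{k_i},\mathcal S\right)
=\bigoplus_{\varepsilon\in\{0,1\}}\ \bigoplus_{\mathbf r\in\mathfrak R(k_1,\dots,k_m)}
\mathbf x^{\mathbf r}\,\xi^{\varepsilon}\,\C[x_1^{k_1},\dots,x_m^{k_m}]\,e^{\sum_{i=1}^m a_i x_i^{k_i}},
$$
noting that each summand equals $\mathcal U(\mathfrak h)\cdot(\mathbf x^{\mathbf r}\xi^{\varepsilon}e^{\sum a_i x_i^{k_i}})$ and hence is $\mathcal U(\mathfrak h)$-stable. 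Since $s\mid k_i$ for all $i$, the residue $\|\mathbf a\|_{\mathcal S}\bmod s$ is well-defined on $\mathfrak R(k_1,\dots,k_m)$, so the sets $O_{j,\mathcal S}$ partition $\mathfrak R(k_1,\dots,k_m)$; pushing this partition forward through the bijection $\pi(A_i)^{-1}$ of $\mathfrak R(k_1,\dots,k_m)$ — which is independent of $i$ by Lemma~\ref{decompositionlemma}(1) — shows the $\overline O_{j,\mathcal S}$ also partition $\mathfrak R(k_1,\dots,k_m)$. Therefore $M^{j}_{\bar 0}$ and $M^{j}_{\bar 1}$ are $\mathcal U(\mathfrak h)$-submodules and $\bigoplus_{j}M^{j}=E(\sum a_i x_i^{k_i},\mathcal S)$ as $\mathcal U(\mathfrak h)$-modules.

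Next I would check stability of $M^{j}$ under the odd generators $e_{i\bar 1}$ and $e_{\bar 1 i}$; this suffices, since the $e_{\ell j}$ (distinct $\ell,j\in\mathbf m$) act as $e_{\ell\bar 1}\,e_{\bar 1 j}+e_{\bar 1 j}\,e_{\ell\bar 1}$ and $\mathfrak h$ acts by multiplication, which preserves each $\mathcal U(\mathfrak h)$-summand. Working in the realization $E(\sum a_i x_i^{k_i},\mathcal S)\simeq M(A_1,\dots,A_m)$ of Lemma~\ref{explicitrealizationexp} with the ordered $\mathcal U(\mathfrak h)$-basis $\mathcal B=\{V_{\mathbf r,\varepsilon}\}$, the block form \eqref{eq_new_action1} shows that $e_{i\bar 1}$ sends the $\bar 1$-block to the $\bar 0$-block via the GPM $A_i$; thus it carries the generator $V_{\mathbf r',1}$ to a $\C[h_i]$-multiple (by $\alpha$ or $\alpha h_i$, $\alpha\in\C^\times$) of $V_{\pi(A_i)\cdot\mathbf r',0}$, and precomposing with an element of $\mathcal U(\mathfrak h)$ only rescales inside $\mathcal U(\mathfrak h)\cdot V_{\pi(A_i)\cdot\mathbf r',0}$. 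By Lemma~\ref{decompositionlemma}(2), $\mathbf r'\in\overline O_{j,\mathcal S}$ forces $\pi(A_i)\cdot\mathbf r'\in O_{j,\mathcal S}$, so $e_{i\bar 1}\,M^{j}_{\bar 1}\subseteq M^{j}_{\bar 0}$. Symmetrically, $e_{\bar 1 i}$ sends the $\bar 0$-block to the $\bar 1$-block via $A_i^{\comp}$, a GPM with $\pi(A_i^{\comp})=\pi(A_i)^{-1}$, so it carries $V_{\mathbf r,0}$ into $\mathcal U(\mathfrak h)\cdot V_{\pi(A_i)^{-1}\cdot\mathbf r,1}$, and $\pi(A_i)^{-1}\cdot\mathbf r\in\overline O_{j,\mathcal S}$ whenever $\mathbf r\in O_{j,\mathcal S}$, directly by the definition of $\overline O_{j,\mathcal S}$. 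Hence $e_{\bar 1 i}\,M^{j}_{\bar 0}\subseteq M^{j}_{\bar 1}$, each $M^{j}$ is a $\mathcal U(\mathfrak{sl}(m|1))$-submodule, and the decomposition follows.

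The main obstacle is the combinatorial bookkeeping: one must fix compatible conventions for (i) which column of the GPM $A_i$ maps to which row, (ii) the action of $\pi(A_i)$ on the $i$-th coordinate of $\mathfrak R(k_1,\dots,k_m)$ as in Remark~\ref{importantremark}, and (iii) the two directions $O_{j,\mathcal S}\leftrightarrow\overline O_{j,\mathcal S}$, and then verify that both inclusions $e_{i\bar 1}M^{j}_{\bar 1}\subseteq M^{j}_{\bar 0}$ and $e_{\bar 1 i}M^{j}_{\bar 0}\subseteq M^{j}_{\bar 1}$ hold for \emph{every} $i\in\mathbf m$ simultaneously. Lemma~\ref{decompositionlemma} is exactly the input that guarantees this, since the sets $\overline O^{(i)}_{j,\mathcal S}$ a priori depend on $i$; once it is invoked, the remainder is a routine generator-by-generator check in which the scalar coefficients play no role.
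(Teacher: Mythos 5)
Your proposal is correct and follows essentially the same route as the paper: one checks that each $M^{j}$ is stable under $e_{i\bar 1}$ and $e_{\bar 1 i}$ (which, together with the $\mathfrak h$-action, generate everything) using the explicit realization of Lemma~\ref{explicitrealizationexp} and the orbit compatibility of Lemma~\ref{decompositionlemma}, and directness comes from the partition of $\mathfrak R(k_1,\dots,k_m)$ by the $O_{j,\mathcal S}$ and the $\overline O_{j,\mathcal S}$. Your write-up simply makes explicit the ``direct computation'' the paper leaves to the reader, so no changes are needed.
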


\begin{proof}
For each $j\in\Z/s\Z$, a direct computation shows that, for all $i\in\mathbf m$, the actions of $e_{i\bar{1}}$ and $e_{\bar{1}i}$ are stable on $M^{j}$. Consequently, $M^{j}$ is a submodule of $E\!\left(\sum_{i=1}^m a_i x_i^{k_i},\,\mathcal S\right)$. The sum is direct since
$$\mathfrak{R}(k_1,\dots,k_m) \;=\; \bigsqcup_{p\in \Z/s\Z}\;O_{p,\mathcal{S}}\;=\; \bigsqcup_{p\in \Z/s\Z}\;\overline{O}_{p,\mathcal{S}}. $$ 
\end{proof}

\begin{theorem} \label{maintheorem_indec}
    Let $a_1,\dots, a_m \in \C^{\times}$, $k_1,\dots,k_m \in \Z_{\geq 1}$, and $\mathcal{S} \subseteq {\bf{m}}$. Then,
    $$E\!\left(\sum_{i=1}^m a_i x_i^{k_i},\,\mathcal S\right)$$
    is indecomposable if and only if $\gcd(k_1,\dots,k_m)=1$.
\end{theorem}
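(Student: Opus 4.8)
The plan is to prove the two implications separately, the "only if" being an immediate consequence of results already at hand and the "if" being the substantial part. For the \emph{only if} direction I argue contrapositively: if $s:=\gcd(k_1,\dots,k_m)\geq 2$, then Corollary~\ref{onlyif-main} already writes $E\!\left(\sum_{i=1}^m a_i x_i^{k_i},\mathcal S\right)=\bigoplus_{j\in\Z/s\Z}M^{j}$. Since $\mathbf a\mapsto\|\mathbf a\|_{\mathcal S}$ surjects onto $\Z/s\Z$ (for instance by varying only the first coordinate and using $s\mid k_1$, so that $\nu(\mathcal S,\mathbf m)_1 a_1$ realizes every residue), each $O_{j,\mathcal S}$ is nonempty and hence each $M^{j}\neq 0$. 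Thus $E$ is a direct sum of $s\geq 2$ nonzero submodules, so it is decomposable.

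For the \emph{if} direction assume $s=1$. By Theorem~\ref{realizationofsl(m|1)} and Lemma~\ref{explicitrealizationexp} identify $E$ with $M(A_1,\dots,A_m)$ for the explicit matrices $A_i$, and by Proposition~\ref{simplified-W(h)} an endomorphism is a pair $(W_1,W_4)$ with $W_i\in\Mat_K(\C[\mathbf h])$ and $W_1A_i=A_i\Delta_i^{-1}(W_4)$ for all $i$; moreover $W_1$ determines $\Phi$, and $\Phi^2=\Phi$ is equivalent to $W_1^2=W_1$. It suffices to show $W_1\in\{0,\I_K\}$. Since $A_i$ is invertible over $\C(\mathbf h)$ and $\Delta_i$ fixes $A_i$, the constraint for index $j$ gives $W_4=A_j^{-1}\Delta_j(W_1)A_j$; substituting this into the constraint for index $i$ and using the identities $\Delta_i^{-1}\Delta_j=\sigma_i\sigma_j^{-1}$ and $\Delta_i^{-1}(A_j)=\sigma_j^{-1}(A_j)$ yields the key self-similarity identity
$$W_1=G_{ij}\,(\sigma_i\sigma_j^{-1})(W_1)\,G_{ij}^{-1},\qquad G_{ij}:=A_i\,\sigma_j^{-1}(A_j)^{-1},$$
valid for all $i\neq j$, where $G_{ij}$ is a GPM with permutation $\pi(A_i)\pi(A_j)^{-1}$. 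Iterating $n_{ij}:=\lcm(k_i,k_j)$ times makes the conjugating GPM a diagonal matrix, so entrywise one gets $w_{\mathbf s\mathbf r}=c_{\mathbf s\mathbf r}\,(\sigma_i\sigma_j^{-1})^{n_{ij}}(w_{\mathbf s\mathbf r})$ with $c_{\mathbf s\mathbf r}\in\C(\mathbf h)^{\times}$ the ratio of the diagonal cycle-products of $G_{ij}$ at $\mathbf s$ and at $\mathbf r$.

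Now I run three steps. \textbf{(i) Off-diagonal vanishing.} If $\mathbf s\neq\mathbf r$, choose a coordinate $i$ with $s_i\neq r_i$ and any $j\neq i$ (possible since $m\geq 2$). From the explicit $U$- and $V$-block structure of $A_i$, the $h_i$-factors of $c_{\mathbf s\mathbf r}$ are $\prod_\ell(h_i-s_i-\ell k_i)$ in the numerator and $\prod_\ell(h_i-r_i-\ell k_i)$ in the denominator — coprime, as their roots lie in distinct residue classes mod $k_i$ — while the $h_j$-factors either cancel (if $s_j=r_j$) or are likewise coprime. Iterating $w_{\mathbf s\mathbf r}=c_{\mathbf s\mathbf r}(\sigma_i\sigma_j^{-1})^{n_{ij}}(w_{\mathbf s\mathbf r})$ then forces $w_{\mathbf s\mathbf r}$ to be divisible by infinitely many distinct irreducibles, hence $w_{\mathbf s\mathbf r}=0$; this is exactly the mechanism of Lemmas~\ref{simplifymatrixentries} and~\ref{simplifymatrixentries2}. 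So $W_1$ is diagonal. \textbf{(ii) Diagonal entries are functions of $H:=h_1+\dots+h_m$.} For $\mathbf s=\mathbf r$ the coefficient $c_{\mathbf s\mathbf s}$ equals $1$, so $w_{\mathbf s\mathbf s}=(\sigma_i\sigma_j^{-1})^{n_{ij}}(w_{\mathbf s\mathbf s})$ for all $i\neq j$; a polynomial fixed by all these shifts is invariant under every lattice translation with zero coordinate sum, hence depends only on $H$. \textbf{(iii) All diagonal entries coincide.} With $W_1=\diag(d_{\mathbf s}(H))$, the identity $W_4=A_i^{-1}\Delta_i(W_1)A_i$ gives $(W_4)_{\mathbf s\mathbf s}=d_{\pi(A_i)(\mathbf s)}(H-m+1)$; comparing over all $i$ shows $d$ is constant on every $\langle\pi(A_i)\pi(A_j)^{-1}\rangle$-orbit, and by Corollary~\ref{generalizedH-S} (this is where $s=1$ is used) this group is transitive on $\mathfrak R(k_1,\dots,k_m)$, so $d$ is a single polynomial and $W_1=f(H)\I_K$. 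Then $W_1^2=W_1$ forces $f(H)^2=f(H)$, so $f(H)\in\{0,1\}$ and $W_1\in\{0,\I_K\}$; hence $W_4\in\{0,\I_K\}$ and $\Phi\in\{0,\mathrm{id}\}$, proving $E$ indecomposable.

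The main obstacle is step (i): one must read off from the explicit description in Lemma~\ref{explicitrealizationexp} precisely which $h_i$- and $h_j$-factors the diagonal cycle-products of $G_{ij}$ pick up, and at which shifts, in order to verify the coprimality (equivalently, the interlacing) that makes Lemmas~\ref{simplifymatrixentries}--\ref{simplifymatrixentries2} applicable; the subcases where $\mathbf s$ and $\mathbf r$ agree, respectively disagree, in the $j$-th coordinate need slightly different bookkeeping, and one should be careful about the direction of the cyclic shifts attached to $\pi(A_i)$ versus $\pi(A_i)^{-1}$ coming from the companion matrices.
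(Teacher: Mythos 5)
Your proposal is correct and follows essentially the same route as the paper: the only-if direction via the decomposition of Corollary~\ref{onlyif-main}, and the if direction via the $M(A_1,\dots,A_m)$ realization, Proposition~\ref{simplified-W(h)}, the same self-similarity identity $W_1=A_i\,\sigma_j^{-1}(A_j)^{-1}\,(\sigma_i\sigma_j^{-1})(W_1)\,\sigma_j^{-1}(A_j)\,A_i^{-1}$, the same divisibility mechanism of Lemmas~\ref{simplifymatrixentries}--\ref{simplifymatrixentries2} to kill the off-diagonal entries, and the same use of transitivity (Corollary~\ref{generalizedH-S}), which is where $\gcd(k_1,\dots,k_m)=1$ enters. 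The only difference is organizational: you iterate each pair $(i,j)$ up to $\lcm(k_i,k_j)$ to get a one-shot entrywise relation with a coprime residue-class ratio, whereas the paper peels off block-diagonal structure coordinate by coordinate (pairs $(1,2),(2,3),\dots$, and finally $(1,m)$); both rest on the same argument.
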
    

\begin{proof}
The ``only if" direction follows directly from Corollary \ref{onlyif-main}. For the “if’’ direction, suppose that $\gcd(k_1,\dots,k_m)=1$. By Proposition~\ref{explicitrealizationexp}, it suffices to prove the indecomposability of $ M(A_1,\dots,A_m)$, where
$$ A_i =
\begin{cases}
\diag\Big(\underbrace{U\left(i,a_i,\mathbf k_{[i,m]}\right),\dots,U\left(i,a_i,\mathbf k_{[i,m]}\right)}_{\prod_{j=1}^{i-1} k_j}\Big), & i \notin \mathcal{S},\\
\diag\Big(\underbrace{V\left(i,a_i,\mathbf k_{[i,m]}\right),\dots,V\left(i,a_i,\mathbf k_{[i,m]}\right)}_{\prod_{j=1}^{i-1} k_j}\Big), & i \in \mathcal{S}.
\end{cases}$$
\np
 Let $\Phi \in \End_{\,\mathcal{U}(\mathfrak{sl}(m|1))} \left(M\bigl(A_1,\dots,A_m \bigr) \right)$. By Proposition \ref{simplified-W(h)}, there exist matrices  $W_1({\bf{h}}), W_4({\bf{h}}) \in \Mat_{\prod_{j=1}^m k_j}(\C[{\bf{h}}])$ such that
$$\Phi( \mathbf f({\bf h})) = \left[\begin{array}{ c | c }
    W_1({\bf{h}}) & \mathbf 0 \\
    \hline
    \mathbf 0 & W_4({\bf{h}})
  \end{array}\right]\,\mathbf f({\bf h}),\quad \text{for all}\quad \mathbf f({\bf h}) \in \C[{\bf{h}}]^{\oplus 2\prod_{j=1}^m k_j}, $$
  and, for every $i\in \mathbf m$,
  $$W_1({\bf{h}})\, A_i \,=\,  A_i\, \Delta^{-1}_i\left(W_4({\bf{h}})\right). $$
\np
In particular,$\, W_1(\mathbf h)\,A_i \;=\; A_i\,\Delta_1^{-1}\big(W_4(\mathbf h)\big)$ for $i \in \{1,2\}$. Eliminating $W_4$ gives
$$
W_1(\mathbf h)
= \Delta_1^{-1}\!\big(A_1A_2^{-1}\big)\;\Delta_1^{-1}\!\big(\Delta_2(W_1(\mathbf h))\big)\;
\Delta_1^{-1}\!\big(A_2A_1^{-1}\big).
$$
\np
Equivalently,
$$ W_1({\bf{h}}) =A_1\sigma_2^{-1}(A_2^{-1}) ~\sigma_1\sigma_2^{-1}(W_1({\bf{h}}))~\sigma_2^{-1}(A_2)A_1^{-1},$$
which yields
\begin{multline*}
 W_1({\bf{h}}) =U\left(1,a_1,\mathbf k\right)\sigma_2^{-1}\left(\diag\Big(\underbrace{\mathfrak{U}(2)^{-1},\dots, \mathfrak{U}(2)^{-1}}_{k_1\, \text{copies}} \Big)\right) ~\sigma_1\sigma_2^{-1}(W_1({\bf{h}}))\\
 ~\sigma_2^{-1}\left(\diag\Big(\underbrace{\mathfrak{U}(2),\dots, \mathfrak{U}(2)}_{k_1\, \text{copies}} \Big)\right)U(1,a_1,\mathbf k)^{-1},\quad\text{if}\quad 1\notin \mathcal{S};
\end{multline*}
\begin{multline*}
 W_1({\bf{h}}) =V(1,a_1,\mathbf k)\sigma_2^{-1}\left(\diag\Big(\underbrace{\mathfrak{U}(2)^{-1},\dots, \mathfrak{U}(2)^{-1}}_{k_1\, \text{copies}} \Big)\right) ~\sigma_1\sigma_2^{-1}(W_1({\bf{h}}))\\
 ~\sigma_2^{-1}\left(\diag\Big(\underbrace{\mathfrak{U}(2),\dots, \mathfrak{U}(2)}_{k_1\, \text{copies}} \Big)\right)V(1,a_1,\mathbf k)^{-1},\quad\text{if}\quad 1\in \mathcal{S};
\end{multline*}
where 
$$
\mathfrak{U}(2) =
\begin{cases}
U\left(2,a_2,\mathbf k_{[2,m]}\right), & \text{if } 2 \notin \mathcal{S}, \\
V\left(2,a_2,\mathbf k_{[2,m]}\right), & \text{if } 2 \in \mathcal{S}.
\end{cases}
$$
\np
\begin{remark}
    If $k_1=1$, we have $W_1(\mathbf h)  \in \Mat_{\prod_{i=2}^m k_i}(\C[\mathbf h])$; 
otherwise, assume $k_1\geq 2$ and proceed as follows.
\end{remark}
\np
We now compute the products $$U(1,a_1,\mathbf k)\sigma_2^{-1}\left(\diag\Big(\underbrace{\mathfrak{U}(2)^{-1},\dots, \mathfrak{U}(2)^{-1}}_{k_1\, \text{copies}} \Big)\right),\; V(1,a_1,\mathbf k)\sigma_2^{-1}\left(\diag\Big(\underbrace{\mathfrak{U}(2)^{-1},\dots, \mathfrak{U}(2)^{-1}}_{k_1\, \text{copies}} \Big)\right)$$ 
explicitly. Let ${\bf 0}$ denote the $\left(\prod_{i=2}^m k_i\right) \times \left(\prod_{i=2}^m k_i\right)$ zero matrix. Then

\begin{align*}
&U(1,a_1,\mathbf k) \sigma_2^{-1}\left(\diag\Big(\underbrace{\mathfrak{U}(2)^{-1},\dots, \mathfrak{U}(2)^{-1}}_{k_1\, \text{copies}} \Big)\right) \\
    &= \left[
\begin{array}{ccccc}
{\bf 0} & {\bf 0} & \cdots & {\bf 0} &
      \dfrac{h_{1}}{a_{1}k_{1}}\,\sigma_2^{-1}\left(\mathfrak{U}(2)^{-1}\right) \\
\sigma_2^{-1}\left(\mathfrak{U}(2)^{-1}\right) & {\bf 0} & \cdots & {\bf 0} & {\bf 0} \\
{\bf 0} & \sigma_2^{-1}\left(\mathfrak{U}(2)^{-1}\right) & \vdots & \vdots & \vdots \\
\vdots & \ddots & \vdots & {\bf 0} & {\bf 0}\\
{\bf 0} & \cdots & {\bf 0} & \sigma_2^{-1}\left(\mathfrak{U}(2)^{-1}\right) & {\bf 0}
\end{array}
\right],
\end{align*}

\begin{align*}
    & V(1,a_1,\mathbf k)\sigma_2^{-1}\left(\diag\Big(\underbrace{\mathfrak{U}(2)^{-1},\dots, \mathfrak{U}(2)^{-1}}_{k_1\, \text{copies}} \Big)\right) \\
    &=-h_1\begin{bmatrix}
{\bf 0}      & \sigma_2^{-1}\left(\mathfrak{U}(2)^{-1}\right)     & {\bf 0}      & \dots & {\bf 0}     \\
{\bf 0}     & {\bf 0}      &  \sigma_2^{-1}\left(\mathfrak{U}(2)^{-1}\right)    & \dots & {\bf 0}    \\
\vdots & \vdots & \ddots & \ddots & \vdots \\
{\bf 0}      & {\bf 0}     & \dots & {\bf 0}     & \sigma_2^{-1}\left(\mathfrak{U}(2)^{-1}\right)    \\
-\frac{a_1k_1}{h_1}\sigma_2^{-1}\left(\mathfrak{U}(2)^{-1}\right)     & {\bf 0}      & \dots & {\bf 0}     & {\bf 0}
\end{bmatrix}.
\end{align*}
\np
Set
$$W_1({\bf{h}}):= \bigl(\mathcal{W}_{i,j}({\bf{h}})\bigr)_{i,j=1}^{k_{1}},\quad \text{where}\quad \mathcal{W}_{i,j}({\bf{h}}) \in \Mat_{\prod_{i=2}^m k_i}(\C[{\bf{h}}]).$$
$$\widetilde{\mathcal{W}}_{i,j}({\bf{h}}):= \sigma_2^{-1}(\mathfrak{U}(2)^{-1}) \sigma_1 \sigma_2^{-1}(\mathcal{W}_{i,j}({\bf{h}})) \sigma_2^{-1}(\mathfrak{U}(2)).$$
\np
Since
$$\bigl(\mathcal{W}_{i,j}({\bf{h}})\bigr)_{i,j=1}^{k_1}  = A_1\sigma_2^{-1}(A_2^{-1})\, \bigl(\sigma_1\sigma_2^{-1}\left(\mathcal{W}_{i,j}({\bf{h}})\right)\bigr)_{i,j=1}^{k_1} \, \sigma_2^{-1}(A_2)A_1^{-1}, $$
then, if $ 1 \notin \mathcal{S} $, 
$$
\bigl(\mathcal{W}_{i,j}({\bf h})\bigr)_{i,j=1}^{k_1}
=
\begin{bmatrix}
\widetilde{\mathcal{W}}_{k_1,k_1}({\bf h}) &
\displaystyle \frac{h_1}{a_1 k_1}\,\widetilde{\mathcal{W}}_{k_1,1}({\bf h}) &
\cdots &
\displaystyle \frac{h_1}{a_1 k_1}\,\widetilde{\mathcal{W}}_{k_1,k_1-1}({\bf h}) \\
\displaystyle \frac{a_1 k_1}{h_1}\,\widetilde{\mathcal{W}}_{1,k_1}({\bf h}) &
\widetilde{\mathcal{W}}_{1,1}({\bf h}) &
\cdots &
\widetilde{\mathcal{W}}_{1,k_1-1}({\bf h}) \\
\vdots & \vdots & \ddots & \vdots \\
\displaystyle \frac{a_1 k_1}{h_1}\,\widetilde{\mathcal{W}}_{k_1-1,k_1}({\bf h}) &
\widetilde{\mathcal{W}}_{k_1-1,1}({\bf h}) &
\cdots &
\widetilde{\mathcal{W}}_{k_1-1,k_1-1}({\bf h})
\end{bmatrix}.
$$
\np
If $ 1 \in \mathcal{S} $, 
$$
\bigl(\mathcal{W}_{i,j}({\bf h})\bigr)_{i,j=1}^{k_1}
=
\begin{bmatrix}
\widetilde{\mathcal{W}}_{2,2}({\bf h}) & \cdots & \widetilde{\mathcal{W}}_{2,k_1}({\bf h}) &
\displaystyle -\frac{h_1}{a_1 k_1}\,\widetilde{\mathcal{W}}_{2,1}({\bf h}) \\
\vdots & \ddots & \vdots & \vdots \\
\widetilde{\mathcal{W}}_{k_1,2}({\bf h}) & \cdots & \widetilde{\mathcal{W}}_{k_1,k_1}({\bf h}) &
\displaystyle -\frac{h_1}{a_1 k_1}\,\widetilde{\mathcal{W}}_{k_1,1}({\bf h}) \\
\displaystyle -\frac{a_1 k_1}{h_1}\,\widetilde{\mathcal{W}}_{1,2}({\bf h}) &
\cdots &
\displaystyle -\frac{a_1 k_1}{h_1}\,\widetilde{\mathcal{W}}_{1,k_1}({\bf h}) &
\widetilde{\mathcal{W}}_{1,1}({\bf h})
\end{bmatrix}.
$$
\np
For $i \neq j$,
\begin{multline*}
    \mathcal{W}_{i,j}({\bf{h}}) =\frac{h_1-i+1}{h_1-j+1} \,\left(\sigma_2^{-1}\left(\mathfrak{U}(2)^{-1}\right)\dots\sigma_2^{-k_1}\left(\mathfrak{U}(2)^{-1}\right)\right) \, \left(\sigma_1 \sigma_2^{-1}\right)^{k_1} \left(\mathcal{W}_{i,j}({\bf{h}})\right) \\\left(\sigma_2^{-1}\left(\mathfrak{U}(2)^{-1}\right)\dots\sigma_2^{-k_1}\left(\mathfrak{U}(2)^{-1}\right)\right)^{-1}, \quad \text{if }\quad 1 \notin \mathcal{S},
\end{multline*}
\begin{multline*}
    \mathcal{W}_{i,j}({\bf{h}}) =\frac{h_1-k_1+j}{h_1-k_1+i} \,\left(\sigma_2^{-1}\left(\mathfrak{U}(2)^{-1}\right)\dots\sigma_2^{-k_1}\left(\mathfrak{U}(2)^{-1}\right)\right) \, \left(\sigma_1 \sigma_2^{-1}\right)^{k_1} \left(\mathcal{W}_{i,j}({\bf{h}})\right) \\\left(\sigma_2^{-1}\left(\mathfrak{U}(2)^{-1}\right)\dots\sigma_2^{-k_1}\left(\mathfrak{U}(2)^{-1}\right)\right)^{-1}, \quad \text{if }\quad 1 \in \mathcal{S}.
\end{multline*}
\np
Since  
$$
\left(\sigma_2^{-1}\bigl(\mathfrak{U}(2)^{-1}\bigr) \dots \sigma_2^{-k_1}\bigl(\mathfrak{U}(2)^{-1}\bigr)\right) 
\in \Mat_{\prod_{j=2}^m k_j}\!\bigl(\C[h_2]\bigr)
$$
is a GPM, it follows from Lemma~\ref{simplifymatrixentries2} that $$\mathcal{W}_{i,j}({\bf{h}}) = {\bf 0},\quad\text{if}\quad i\neq j.$$
\np
Consequently,
$$
W_1(\mathbf h)=\diag\bigl(\mathcal T_1(\mathbf h),\dots,\mathcal T_{k_1}(\mathbf h)\bigr),
\qquad
\mathcal T_i(\mathbf h):=\mathcal W_{i,i}(\mathbf h)\in
\Mat_{\prod_{r=2}^m k_r}\big(\C[\mathbf h]\big)\ \ (1\le i\le k_1).
$$
\np
Similarly, $W_1(\mathbf h)=A_2\,\sigma_3^{-1}(A_3^{-1})\,
\sigma_2\sigma_3^{-1}(W_1(\mathbf h))\,
\sigma_3^{-1}(A_3)\,A_2^{-1}$. Hence, for each $i\in\{1,\dots,k_1\}$, the diagonal block $\mathcal T_i(\mathbf h)$ satisfies
\begin{multline*}
\mathcal{T}_i({\bf{h}}) = 
U\left(2,a_2,\mathbf k_{[2,m]}\right)\,\sigma_3^{-1}\left(\diag\!\Big(\underbrace{\mathfrak{U}(3)^{-1},\dots, \mathfrak{U}(3)^{-1}}_{k_2\, \text{copies}} \Big)\right)\sigma_2\sigma_3^{-1}\left(\mathcal{T}_i({\bf{h}})\right) \\
\sigma_3^{-1}\left(\diag\Big(\underbrace{\mathfrak{U}(3),\dots, \mathfrak{U}(3)}_{k_2\, \text{copies}} \Big)\right)U\left(2,a_2,\mathbf k_{[2,m]}\right)^{-1},\quad\text{if}\quad 2\notin \mathcal{S};
\end{multline*}
\begin{multline*}
 \mathcal{T}_i({\bf{h}}) =V\left(2,a_2,\mathbf k_{[2,m]}\right)\sigma_3^{-1}\left(\diag\Big(\underbrace{\mathfrak{U}(3)^{-1},\dots, \mathfrak{U}(3)^{-1}}_{k_2\, \text{copies}} \Big)\right) ~\sigma_2\sigma_3^{-1}(\mathcal{T}_i({\bf{h}}))\\
 ~\sigma_3^{-1}\left(\diag\Big(\underbrace{\mathfrak{U}(3),\dots, \mathfrak{U}(3)}_{k_2\, \text{copies}} \Big)\right)V\left(2,a_2,\mathbf k_{[2,m]}\right)^{-1},\quad\text{if}\quad 2\in \mathcal{S};
\end{multline*}
where
$$
\mathfrak{U}(3) =
\begin{cases}
U\left(3,a_3,\mathbf k_{[3,m]}\right), & \text{if } 3 \notin \mathcal{S}, \\
V\left(3,a_3,\mathbf k_{[3,m]}\right), & \text{if } 3 \in \mathcal{S}.
\end{cases}
$$
\np
By the same argument, for each $i$, we further deduce
$$
\mathcal T_i(\mathbf h)
= \diag\bigl(\mathcal T_{(i,1)}(\mathbf h),\ldots,\mathcal T_{(i,k_2)}(\mathbf h)\bigr),\qquad \mathcal T_{(i,j)}(\mathbf h)\in \Mat_{\prod_{r=3}^m k_r}\big(\C[\mathbf h]\big),\;\forall j\in\{1,\ldots,k_2\}.
$$

\np
Proceeding inductively in $i$ from $1$ to $m$ and omitting the steps with $k_i=1$, we get
$$
W_1({\bf h})
= \diag\Bigl(\mathcal{P}_1({\bf h}),\dots,\mathcal{P}_{\prod_{r=1}^{m-1} k_r}({\bf h})\Bigr),\qquad\mathcal{P}_{p}({\bf h}) \in \Mat_{k_m}\bigl(\C[{\bf h}]\bigr),\;\forall p\in \bigl\{1,\dots,\prod_{r=1}^{m-1} k_r\bigr\}.
$$
\np
With respect to the lexicographic order $\preccurlyeq_{\mathfrak{R}(k_1,\dots,k_m)}$ on $\mathfrak{R}(k_1,\dots,k_m)$, we index the entries of $W_1({\bf{h}})$ as follows:
$$W_1({\bf{h}}):= \Bigl( w_{{\bf{r}},{\bf{r'}}} ({\bf{h}}) \Bigr)_{{\bf{r}},{\bf{r'}} = (0,\dots,0)}^{(k_1-1,\dots,k_m-1)},\;\;\text{where}\quad w_{{\bf{r}},{\bf{r'}}} ({\bf{h}}) \in \C[{\bf{h}}].$$
 In particular, each $k_m \times k_m$ diagonal block of $W_1({\bf{h}})$ is indexed by a prefix $\mathbf r^{<m}:=(r_1,\dots,r_{m-1})$ using the lexicographic order $\preccurlyeq_{\mathfrak{R}(k_1,\dots,k_{m-1})}$ on $\mathfrak{R}(k_1,\dots,k_{m-1})$. In explicit terms,
$$W_1({\bf{h}}):=\diag\Bigl(\mathcal{P}_{\mathbf r^{<m}}({\bf{h}})\Bigr)_{\mathbf r^{<m} \in \mathfrak{R}(k_1,\dots,k_{m-1})} =  \diag\Bigl(\mathcal{P}_{(0,\dots,0)}({\bf{h}}),\dots, \mathcal{P}_{(k_1-1,\dots,k_{m-1}-1)}({\bf{h}}) \Bigr),$$
where
$$\mathcal{P}_{\mathbf r^{<m}}({\bf{h}}):=\Bigl( w_{{\bf{r}},{\bf{r'}}}({\bf{h}}) \Bigr)_{{\bf{r}},{\bf{r'}} = (\mathbf r^{<m},0)}^{(\mathbf r^{<m},k_{m}-1)}. $$
\np
Note that, for all $i,j \in \{0,\ldots,k_m-1\}$ and all $\mathbf r^{<m},\mathbf r'^{<m}\in \mathfrak R(k_1,\dots,k_{m-1})$ with $\mathbf r^{<m}\neq \mathbf r'^{<m}$, we have
$$
w_{(\mathbf r^{<m},\,i),\,(\mathbf r'^{<m},\,j)}(\mathbf h)=0.
$$

\begin{remark}
Our goal is to show that $W_1(\mathbf h)$ is diagonal. The case $k_m=1$ is immediate, so we henceforth assume $k_m\geq 2$ and proceed with the proof.
\end{remark}
\np
\textbf{Claim.}
Let $\mathbf r^{<m}\in \mathfrak R(k_1,\dots,k_{m-1})$. For any distinct $i,j\in\{0,\ldots,k_m-1\}$,
$$ w_{(\mathbf r^{<m},\,i),\,(\mathbf r^{<m},\,j)}(\mathbf h)=0.
$$

\noindent\textbf{Proof of Claim.}  For $\mathbf r^{<m} :=(r_1,\dots,r_{m-1}) \in \mathfrak R(k_1,\dots,k_{m-1})$, we first compute the products
$$
\sigma_m^{-1}\big(U(m,a_m,k_m)^{-1}\big)\sigma_1\sigma_m^{-1}\big(\mathcal P_{\mathbf r^{<m}}(\mathbf h)\big)\sigma_m^{-1}\big(U(m,a_m,k_m)\big),$$
and
$$\sigma_m^{-1}\big(V(m,a_m,k_m)^{-1}\big)\sigma_1\sigma_m^{-1}\big(\mathcal P_{\mathbf r^{<m}}(\mathbf h)\big)\sigma_m^{-1}\big(V(m,a_m,k_m)\big).
$$
\np
For brevity, we set
$$
\gamma:=\frac{h_m+1}{a_m k_m},
\qquad
\widehat{w}_{p,q}(\mathbf h):=(\sigma_1\sigma_m^{-1})\bigl(w_{(\mathbf r^{<m},p),(\mathbf r^{<m},q)}(\mathbf h)\bigr)\quad \text{for}\;\; p,q \in \{0,\dots, k_m-1\}.
$$
\np
Then
\begin{align*}
    &\sigma_m^{-1}\!\bigl(U(m,a_m,k_m)^{-1}\bigr)\;
\sigma_1\sigma_m^{-1}\!\bigl(\mathcal P_{\mathbf r^{<m}}(\mathbf h)\bigr)\;
\sigma_m^{-1}\!\bigl(U(m,a_m,k_m)\bigr)\\
=&\begin{bmatrix}
\widehat{w}_{1,1}(\mathbf h) & \cdots & \widehat{w}_{1,k_m-1}(\mathbf h) & \gamma\,\widehat{w}_{1,0}(\mathbf h)\\
\vdots            & \ddots & \vdots                 & \vdots\\
\widehat{w}_{k_m-1,1}(\mathbf h) & \cdots & \widehat{w}_{k_m-1,k_m-1}(\mathbf h) & \gamma\,\widehat{w}_{k_m-1,0}(\mathbf h)\\
\gamma^{-1}\widehat{w}_{0,1}(\mathbf h) & \cdots & \gamma^{-1}\widehat{w}_{0,k_m-1}(\mathbf h) & \widehat{w}_{0,0}(\mathbf h)
\end{bmatrix},
\end{align*}
and
\begin{align*}
    &\sigma_m^{-1}\!\bigl(V(m,a_m,k_m)^{-1}\bigr)\;
\sigma_1\sigma_m^{-1}\!\bigl(\mathcal P_{\mathbf r^{<m}}(\mathbf h)\bigr)\;
\sigma_m^{-1}\!\bigl(V(m,a_m,k_m)\bigr)\\
=&\begin{bmatrix}
\widehat{w}_{k_m-1,k_m-1}(\mathbf h) & -\gamma\,\widehat{w}_{k_m-1,0}(\mathbf h) & \cdots & -\gamma\,\widehat{w}_{k_m-1,k_m-2}(\mathbf h)\\
-\gamma^{-1}\widehat{w}_{0,k_m-1}(\mathbf h) & \widehat{w}_{0,0}(\mathbf h) & \cdots & \widehat{w}_{0,k_m-2}(\mathbf h)\\
\vdots & \vdots & \ddots & \vdots\\
-\gamma^{-1}\widehat{w}_{k_m-2,k_m-1}(\mathbf h) & \widehat{w}_{k_m-2,0}(\mathbf h) & \cdots & \widehat{w}_{k_m-2,k_m-2}(\mathbf h)
\end{bmatrix}.
\end{align*}
\np
From Remark \ref{importantremark}, we have
$$
\pi(A_i)=
\begin{cases}
(0\;\;1\;\;\dots\;\;k_i\!-\!1), & \text{if } i\notin \mathcal S,\\
(0\;\;k_i\!-\!1\;\;k_i\!-\!2\;\dots\;1), & \text{if } i\in \mathcal S.
\end{cases}
$$
\np
From 
$$
W_1(\mathbf h)=A_1\,\sigma_m^{-1}(A_m^{-1})\,\sigma_1\sigma_m^{-1}(W_1(\mathbf h))\,\sigma_m^{-1}(A_m)\,A_1^{-1},
$$

we obtain the following relations among the entries. Fix
$\mathbf r^{<m}\in\mathfrak R(k_1,\dots,k_{m-1})$ and distinct $i, j\in\{0,\dots,k_m-1\}$.

\np
\textbf{Case 1 ($m\notin\mathcal S$).}Then
$$w_{(\mathbf r^{<m},\,i),\,(\mathbf r^{<m},\,j)}(\mathbf h) = f_{(i,j)}(h_m)(\sigma_1\sigma_m^{-1})\left(w_{\pi\left(A_1^{-1}\right)\cdot(\mathbf r^{<m},\,i+1),\,\pi\left(A_1^{-1}\right)\cdot(\mathbf r^{<m},\,j+1)}(\mathbf h) \right),$$
where  
$$f_{(i,j)}(h_m) = \begin{cases}\dfrac{a_mk_m}{h_m+1}, & \text{if } i=k_m-1,\\\dfrac{h_m+1}{a_m k_m} , & \text{if } j=k_m-1, \\
 1 , & \text{otherwise. }\end{cases}$$
\np
\textbf{Case 2 ($m\in\mathcal S$).} Then
$$w_{(\mathbf r^{<m},\,i),\,(\mathbf r^{<m},\,j)}(\mathbf h) = g_{(i,j)}(h_m)(\sigma_1\sigma_m^{-1})\left(w_{\pi\left(A_1^{-1}\right)\cdot(\mathbf r^{<m},\,i-1),\,\pi\left(A_1^{-1}\right)\cdot(\mathbf r^{<m},\,j-1)}(\mathbf h) \right),$$
where $$g_{(i,j)}(h_m) = \begin{cases}-\dfrac{h_m+1}{a_m k_m} , & \text{if } i=0, \\
-\dfrac{a_mk_m}{h_m+1}, & \text{if } j=0,\\ 1 , & \text{otherwise. }\end{cases}$$
\np
Set $\ell:=\lcm(k_1,k_m)/k_m$. In either case, there exist positive integers (depending on $i,j$) 
$$
1\leq i_1 < j_1 < i_2 < j_2 < \dots < i_\ell < j_\ell \leq \lcm(k_1,k_m) ,$$
or
$$1\leq j_1 < i_1 < j_2 < i_2 < \dots < j_\ell < i_\ell \leq \lcm(k_1,k_m),
$$
such that
$$
w_{(\mathbf r^{<m},\,i),\,(\mathbf r^{<m},\,j)}({\bf{h}}) 
= \prod_{r=1}^{\ell}\frac{h_m+i_r}{\,h_m+j_r\,}\,\left(\sigma_1\sigma_m^{-1}\right)^{\lcm(k_1,k_m)}\left(w_{(\mathbf r^{<m},\,i),\,(\mathbf r^{<m},\,j)}({\bf{h}})\right).
$$
\np
By Lemma~\ref{simplifymatrixentries}, it follows that $w_{(\mathbf r^{<m},\,i),\,(\mathbf r^{<m},\,j)}({\bf{h}})  =0$. This proves the claim.

\np
Consequently, $W_1(\mathbf h)$ is diagonal, i.e., $W_1(\mathbf h)
=\diag\bigl( w_{\mathbf r,\mathbf r}(\mathbf h)\bigr)_{\mathbf r\in \mathfrak R(k_1,\dots,k_m)}$. Fix ${\bf r}\in \mathfrak R(k_1,\dots,k_m)$. From
$$
W_1({\bf{h}}) = A_i\,\sigma_j^{-1}(A_j^{-1})\,\sigma_i\,\sigma_j^{-1}(W_1({\bf{h}}))\,\sigma_j^{-1}(A_j)\,A_i^{-1},\quad \text{for all} \quad i, j \in \mathbf m,
$$
we obtain, for every $i,j\in \mathbf m$, $w_{{\bf r},{\bf r}}(\mathbf h)
=\bigl(\sigma_i\sigma_j^{-1}\bigr)^{\lcm(k_i,k_j)}
\!\bigl(w_{{\bf r},{\bf r}}(\mathbf h)\bigr)
$. Hence,
$$w_{{\bf{r}},{\bf{r}}}({\bf{h}}) = F\left( \sum_{i=1}^m h_i\right), \quad \text{for some}\quad F(X) \in \C[X].$$
\np
Since $\gcd(k_1, k_2, \dots, k_m) = 1$, then by Corollary \ref{generalizedH-S}, the group $H$ generated by
$$
\left\{ \pi(A_i)\pi(A_m^{-1}) : i \in {\bf m} \setminus \{m\} \right\}
$$
acts transitively on the set $\mathfrak{R}(k_1, \dots, k_m)$. Hence,
$$w_{{\bf{r}},{\bf{r}}}({\bf{h}}) = F\left( \sum_{i=1}^m h_i\right),\quad \text{for all}\quad {\bf{r}} \in \mathfrak{R}(k_1,\dots,k_m).$$
\np
Since $W_1({\bf{h}})\, A_i =  A_i\, \Delta^{-1}_i\left(W_4({\bf{h}})\right)$, for all $i\in {\bf{m}}$, then
$$
W_4(\mathbf h)\;=\;\diag\bigl(v_{\mathbf r,\mathbf r}(\mathbf h)\bigr)_{\mathbf r\in \mathfrak R(k_1,\dots,k_m)},
\quad
v_{\mathbf r,\mathbf r}(\mathbf h)\;=\;F\left(\sum_{j=1}^m h_j - m+1\right).
$$
\np
Therefore, if $\Phi$ is an idempotent, then $\Phi = 0$ or $\Phi = \I_{2\prod_{i=1}^mk_i}$. Henceforth, for any subset $\mathcal S \subseteq \mathbf m$ and any choice of $a_1,\dots,a_m \in \C^{\times}$, the module $E\left(\sum_{i=1}^m a_i x_i^{k_i},\,\mathcal S\right)$
is indecomposable.
\end{proof}

\begin{corollary} \label{indecfirst-explicit}
    Let $a_1,a_2,\dots,a_m \in \C^{\times}$, $k \in \Z_{\geq 1}$, and $\mathcal{S} \subseteq {\bf{m}}$. Then, the module
     $$E\!\left(a_1 x_1^{k}+\sum_{i=2}^m a_i x_i,\ \mathcal S\right)$$
    is indecomposable.
\end{corollary}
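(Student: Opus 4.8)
The plan is to recognize this statement as the special case of Theorem~\ref{maintheorem_indec} corresponding to the exponent vector $(k_1,\dots,k_m)=(k,1,\dots,1)$, and then to invoke that theorem directly. First I would observe that the polynomial $g(\mathbf x):=a_1x_1^{k}+\sum_{i=2}^m a_i x_i$ fits the hypothesis \eqref{exponential module condition}: setting $k_1:=k$ and $k_i:=1$ for $2\leq i\leq m$, we have $g(\mathbf x)=\sum_{i=1}^m a_i x_i^{k_i}$ with all $a_i\in\C^\times$ and all $k_i\in\Z_{\geq 1}$. By Lemma~\ref{higherrank-exp}(1) this places $E(g(\mathbf x),\mathcal S)$ in $\mathcal M_{\mathfrak{sl}(m|1)}\!\bigl(\prod_i k_i\,|\,\prod_i k_i\bigr)=\mathcal M_{\mathfrak{sl}(m|1)}(k|k)$, so indecomposability is being asked within the category under study; if an explicit matrix model is desired one may further cite Corollary~\ref{first-explicit} to identify this module with $M(A_1,\dots,A_m)$ for the displayed $A_i\in\Mat_k(\C[\mathbf h])$, though this is not needed.

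Next I would simply compute $\gcd(k_1,\dots,k_m)=\gcd(k,1,\dots,1)=1$; the standing assumption $m\geq 2$ (in force from Section~4 onward) guarantees that at least one exponent equals $1$, so the gcd is $1$ for every $k\in\Z_{\geq 1}$, every $\mathcal S\subseteq\mathbf m$, and every choice of $a_1,\dots,a_m\in\C^\times$. Applying Theorem~\ref{maintheorem_indec} then yields that $E\!\left(a_1 x_1^{k}+\sum_{i=2}^m a_i x_i,\ \mathcal S\right)$ is indecomposable, which is exactly the assertion. There is no genuine obstacle here: the entire content of the argument is the routine verification that the hypotheses of Theorem~\ref{maintheorem_indec} are met, which is immediate once the exponents are read off.
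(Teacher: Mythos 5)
Your proposal is correct and is exactly the argument the paper intends: the corollary is the special case $(k_1,\dots,k_m)=(k,1,\dots,1)$ of Theorem~\ref{maintheorem_indec}, where $\gcd(k,1,\dots,1)=1$ holds automatically since $m\geq 2$. Nothing further is needed.
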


\begin{corollary} \label{End-descrip}
    Let $a_1,\dots,a_m\in\C^{\times}$, $\mathcal S\subseteq\mathbf m$, and $k_1,\dots,k_m\in\Z_{\geq1}$ with
$s:=\gcd(k_1,\dots,k_m)$.
Then $$\End_{\,\mathcal{U}(\mathfrak{sl}(m|1))}\left(E\!\left(\sum_{i=1}^m a_i x_i^{k_i},\,\mathcal S\right) \right) \;\simeq\; \C[X]^{\oplus s} .$$
\end{corollary}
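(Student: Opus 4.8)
The plan is to compute $\End_{\mathcal U(\mathfrak{sl}(m|1))}(E)$ directly, where $E:=E\!\left(\sum_{i=1}^m a_i x_i^{k_i},\,\mathcal S\right)$ and $s:=\gcd(k_1,\dots,k_m)$, by re-examining the proof of Theorem~\ref{maintheorem_indec} and isolating the one place where $\gcd(k_1,\dots,k_m)=1$ was used. By Lemma~\ref{explicitrealizationexp} we identify $E=M(A_1,\dots,A_m)$ with the explicit matrices $A_i$ given there, so Proposition~\ref{simplified-W(h)} applies: every $\Phi\in\End(E)$ is block-diagonal, with diagonal blocks $W_1(\mathbf h),W_4(\mathbf h)\in\Mat_{\prod_i k_i}(\C[\mathbf h])$ satisfying $W_1(\mathbf h)\,A_i=A_i\,\Delta_i^{-1}\bigl(W_4(\mathbf h)\bigr)$ for all $i\in\mathbf m$.

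First I would run verbatim the portion of the proof of Theorem~\ref{maintheorem_indec} preceding the invocation of $\gcd=1$: eliminating $W_4$ from the relations for pairs of indices and applying Lemmas~\ref{simplifymatrixentries2} and~\ref{simplifymatrixentries} inductively forces $W_1(\mathbf h)$ to be diagonal, $W_1(\mathbf h)=\diag\bigl(w_{\mathbf r,\mathbf r}(\mathbf h)\bigr)_{\mathbf r\in\mathfrak R(k_1,\dots,k_m)}$, and the self-relations $w_{\mathbf r,\mathbf r}=(\sigma_i\sigma_j^{-1})^{\lcm(k_i,k_j)}(w_{\mathbf r,\mathbf r})$ force each $w_{\mathbf r,\mathbf r}$ to be a polynomial in $h_1+\dots+h_m$ alone; none of these steps uses $\gcd=1$. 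The relations $W_1A_i=A_i\Delta_i^{-1}(W_4)$ (and their equivalent forms from Lemma~\ref{equivofA_12}) then show $w_{\mathbf r,\mathbf r}=w_{\mathbf r',\mathbf r'}$ whenever $\mathbf r,\mathbf r'$ lie in a common orbit of the group $H$ of Lemma~\ref{orbitlemma}, and they determine $W_4$ from $W_1$. Since the $H$-orbits on $\mathfrak R(k_1,\dots,k_m)$ are precisely the $s$ sets $O_{0,\mathcal S},\dots,O_{s-1,\mathcal S}$ (transitivity on each is Lemma~\ref{orbitlemma}, and $\mathfrak R=\bigsqcup_p O_{p,\mathcal S}$ as noted in Corollary~\ref{onlyif-main}), this produces unique $F_0,\dots,F_{s-1}\in\C[X]$ with $w_{\mathbf r,\mathbf r}(\mathbf h)=F_j\bigl(\textstyle\sum_i h_i\bigr)$ for all $\mathbf r\in O_{j,\mathcal S}$; thus $\Phi$ is completely encoded by the tuple $(F_0,\dots,F_{s-1})$. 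In terms of the decomposition $E=\bigoplus_j M^j$ of Corollary~\ref{onlyif-main}, this simultaneously re-proves $\Hom(M^i,M^j)=0$ for $i\ne j$ (the $\bar 0$-coordinates of $M^i$ and $M^j$ sit in the disjoint sets $O_{i,\mathcal S}$ and $O_{j,\mathcal S}$, while $W_1$ is diagonal) and shows $\End(M^j)$ injects into one copy of $\C[X]$.

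Conversely I would establish surjectivity of $\Phi\mapsto(F_0,\dots,F_{s-1})$. For fixed $j$ and $F\in\C[X]$, consider the operator on $E$ that is zero on every $M^{j'}$ with $j'\ne j$ and, on $M^j$, multiplies the $\bar 0$-part by $F(\sum_i h_i)$ and the $\bar 1$-part by $F(\sum_i h_i-m+1)$. This is a $\mathcal U(\mathfrak{sl}(m|1))$-endomorphism realizing the tuple $(0,\dots,F,\dots,0)$: the verification that it commutes with every $e_{i\bar1}$ and $e_{\bar1i}$ is exactly the computation carried out in the argument showing $M_F$ is a submodule of $M(A_1,\dots,A_m)$ (the infinite-length lemma in Section~3), the key point being that $\Delta_i^{-1}$ turns $F(\sum_i h_i-m+1)$ into $F(\sum_i h_i)$, matching the shift between the two parities. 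Finally I would check that $\Phi\mapsto(F_0,\dots,F_{s-1})$ is a ring isomorphism: it is additive and bijective by the above, sends $\mathrm{id}_E$ to $(1,\dots,1)$, and is multiplicative because on each $M^j$ composition corresponds to $F\cdot G$ (scalar multiplication operators commute and the parity-shift is preserved under composition) while distinct $M^j$ are never mixed. Hence $\End_{\mathcal U(\mathfrak{sl}(m|1))}(E)\simeq\C[X]^{\oplus s}$.

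The only genuine work, and the main obstacle, is the bookkeeping in the second paragraph: extracting from the long explicit computation in the proof of Theorem~\ref{maintheorem_indec} precisely the steps independent of $\gcd=1$ and recognizing that what remains is exactly the statement ``$W_1$ is diagonal with entries of the form $F(\sum_i h_i)$, constant along $H$-orbits.'' All other ingredients are already in place: the realization $E=M(A_1,\dots,A_m)$ (Lemma~\ref{explicitrealizationexp}), the shape of homomorphisms (Proposition~\ref{simplified-W(h)}), the orbit decomposition (Lemma~\ref{orbitlemma}, Corollary~\ref{onlyif-main}), and the submodule computation reused to build the endomorphisms $(0,\dots,F,\dots,0)$.
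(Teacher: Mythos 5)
Your proposal follows essentially the same route as the paper: identify $E$ with $M(A_1,\dots,A_m)$, apply Proposition~\ref{simplified-W(h)}, rerun the ``if'' direction of Theorem~\ref{maintheorem_indec} up to (but not including) the use of $\gcd(k_1,\dots,k_m)=1$ to get $W_1$ diagonal with entries $F_p\bigl(\sum_i h_i\bigr)$ constant on each orbit $O_{p,\mathcal S}$, and count the $s$ orbits. Your explicit surjectivity construction (the operator acting by $F$ on one summand $M^j$ and by zero elsewhere, with the parity shift $F(\sum_i h_i-m+1)$) and the ring-isomorphism check merely spell out details the paper leaves implicit, and they are correct.
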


\begin{proof}
With $A_i$ as in Theorem~\ref{maintheorem_indec}, and $\Phi \in \End_{\,\mathcal{U}(\mathfrak{sl}(m|1))} \left(M\bigl(A_1,\dots,A_m \bigr) \right)$, we have
$$\Phi = \left[\begin{array}{ c | c }
    W_1({\bf{h}}) & \mathbf 0 \\
    \hline
    \mathbf 0 & W_4({\bf{h}})
  \end{array}\right],\qquad
W_1(\mathbf h),\,W_4(\mathbf h)\in
\Mat_{\prod_{j=1}^m k_j}\bigl(\C[\mathbf h]\bigr),$$
where
$$
W_1({\bf h})\;=\;\diag\bigl(w_{\mathbf r}\bigr)_{\mathbf r \in \mathfrak{R}(k_1,\dots,k_{m})}
\;=\;\diag\!\left(
w_{(0,\dots,0)}({\bf h}),\,\ldots,\,
w_{(k_1-1,\dots,k_m-1)}({\bf h})
\right),
$$
$$
W_4({\bf h})\;=\;\diag\bigl(v_{\mathbf r}\bigr)_{\mathbf r \in \mathfrak{R}(k_1,\dots,k_{m})}
\;=\;\diag\!\left(
v_{(0,\dots,0)}({\bf h}),\,\ldots,\,
v_{(k_1-1,\dots,k_m-1)}({\bf h})
\right),
$$
with the diagonal entries ordered lexicographically by $\preccurlyeq_{\mathfrak{R}(k_1,\dots,k_{m})}$. Using the argument from the “if” direction of Theorem~\ref{maintheorem_indec}, the relations
$$W_1({\bf{h}})\, A_i \,=\,  A_i\, \Delta^{-1}_i\left(W_4({\bf{h}})\right),\qquad \text{for all}\;\; i\in \mathbf m.$$
imply that the diagonal entries of $W_1$ (resp. $W_4$) are the same on each $O_{p,\mathcal S}$ (resp. $\overline{O}_{p,\mathcal S}$). More precisely, for each $p\in \Z/s\Z$, $\mathbf{r}',\mathbf{r} \in O_{p,\mathcal{S}} $, and $\mathbf{r}'',\mathbf{r}''' \in \overline{O}_{p,\mathcal S}, $ there exists $F_p(X)\in\C[X]$ such that
$$w_{{\bf{r}}}({\bf{h}}) = w_{{\bf{r}'}}({\bf{h}})=F_p\left( \sum_{i=1}^m h_i\right),\qquad w_{\mathbf{r}''}({\bf{h}}) = w_{\mathbf{r}'''}({\bf{h}})=F_p\left( \sum_{i=1}^m h_i +m-1\right).$$
This proves the corollary.
\end{proof}

\begin{proposition}
Let $a_1,a_2,\dots,a_m \in \C^{\times}$, $\mathcal{S} \subseteq \mathbf{m}$, and
$k_1,k_2,\dots,k_m \in \Z_{\geq 1}$ with
$$
\gcd(k_1,k_2,\dots,k_m) \;=\; s \geq 2 .
$$ 
Then
\begin{equation} \label{eq5.16}
E\!\left(\sum_{i=1}^m a_i x_i^{k_i},\,\mathcal S\right) \;\simeq\; \bigoplus_{p\in \Z/s\Z}\, M\left(A_1,A_2,\dots,A^p_m\right),
\end{equation}
    where 
    \[A_i,\, A_m^{p} \in \Mat_{\frac{1}{s}\prod_{j=1}^m k_j}\bigl(\C[h_i]\bigr)\] 
    are the following matrices:

    \begin{enumerate}
\item[$\bullet$] For $1 \leq i \leq m-1$, $A_i =  \diag\!\Big(\underbrace{X_{i,a_i},\dots,X_{i,a_i}}_{\prod_{j=1}^{i-1} k_j\ \text{copies}}\Big)$ with 
   $$X_{i,a_i}\;=\;
\begin{cases}
U_{(h_i,a_i k_i)}\!\left(\dfrac{1}{s}(k_i-1)\!\prod_{j=i+1}^m k_j,\ \dfrac{1}{s}\!\prod_{j=i+1}^m k_j\right), & i \notin \mathcal S,\\
V_{(h_i,a_i k_i)}\!\left(\dfrac{1}{s}\!\prod_{j=i+1}^m k_j,\ \dfrac{1}{s}(k_i-1)\!\prod_{j=i+1}^m k_j\right), & i \in \mathcal S.
\end{cases} $$

\item[$\bullet$] For $0 \leq p \leq s-1$, 
the matrix $A_m^p$ is the block-diagonal matrix with blocks $\mathcal P_{\mathbf r}^p(\mathbf h)$ of size $k_m/s\times k_m/s$
indexed by the lexicographically ordered set 
$\mathfrak R(k_1,\dots,k_{m-1})$, where
$$
\mathcal P^p_{\mathbf r}(\mathbf h)
=
\begin{cases}
U_{(h_m,a_m k_m)}\!\left(\frac{k_m}{s}-1,\,1\right), 
& \text{if } m\notin\mathcal S \text{ and }\ \|\mathbf r\|_{\mathcal S}\equiv p \pmod s,\\
\I_{k_m/s},& \text{if } m\notin\mathcal S \text{ and }\ \|\mathbf r\|_{\mathcal S}\not\equiv p \pmod s,\\
V_{(h_m,a_m k_m)}\!\left(1,\,\frac{k_m}{s}-1\right),
& \text{if } m\in\mathcal S \text{ and }\ \|\mathbf r\|_{\mathcal S}\equiv p-1 \pmod s,\\
-h_m\,\I_{k_m/s}, & \text{if } m\in\mathcal S \text{ and }\ \|\mathbf r\|_{\mathcal S}\not\equiv p-1 \pmod s.
\end{cases}
$$
Here, $\|\mathbf r\|_{\mathcal S} = \sum_{i=1}^{m-1} \nu(\mathcal S, \mathbf m)_i\, r_i$. 
\end{enumerate}
\end{proposition}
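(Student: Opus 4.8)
The plan is to identify each summand of the decomposition in Corollary~\ref{onlyif-main} with one of the modules on the right-hand side of \eqref{eq5.16}, by computing the matrices through which the odd generators act on the natural monomial basis. Write $K:=\prod_{j=1}^m k_j$, $N:=K/s$, and $g(\mathbf x):=\sum_{i=1}^m a_i x_i^{k_i}$. By Corollary~\ref{onlyif-main}, $E\bigl(g(\mathbf x),\mathcal S\bigr)=\bigoplus_{p\in\Z/s\Z}M^p$ with $M^p=M^p_{\bar0}\oplus M^p_{\bar1}$, where $M^p_{\bar0}$ has the $\mathcal U(\mathfrak h)$-basis $\{\mathbf x^{\mathbf r}e^{g(\mathbf x)}\mid\mathbf r\in O_{p,\mathcal S}\}$ and $M^p_{\bar1}$ the $\mathcal U(\mathfrak h)$-basis $\{\mathbf x^{\mathbf r}\xi\,e^{g(\mathbf x)}\mid\mathbf r\in\overline O_{p,\mathcal S}\}$; I would order each of these bases by the restriction of the lexicographic order $\preccurlyeq_{\mathfrak R(k_1,\dots,k_m)}$. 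A direct count (fix all but one coordinate and use $s\mid k_i$) gives $\card(O_{p,\mathcal S})=\card(\overline O_{p,\mathcal S})=N$, so $M^p\in\mathcal M_{\mathfrak{sl}(m|1)}(N|N)$. As $M^p$ is a submodule of $E\bigl(g(\mathbf x),\mathcal S\bigr)\simeq M(A_1,\dots,A_m)$ (Lemma~\ref{explicitrealizationexp}) on which $h_i,e_{i\bar1},e_{\bar1i}$ act by restriction, we get $M^p\simeq M(A_1^p,\dots,A_m^p)$, where $A_i^p\in\Mat_N(\C[h_i])$ is the matrix of $e_{i\bar1}$ in the chosen bases (and the matrix of $e_{\bar1i}$ is its $h_i$-companion, the relation $A_i^p(A_i^p)^{\comp}=h_i\I_N$ and the GPM property being inherited from those of $A_i$ via Lemma~\ref{decompositionlemma}). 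So everything reduces to computing the $A_i^p$.

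For $1\leq i\leq m-1$: by Formulas \eqref{importanteq1}--\eqref{importanteq2}, $e_{i\bar1}$ sends $\mathbf x^{\mathbf r}\xi\,e^{g(\mathbf x)}$ to a scalar (or $h_i$ times a scalar) multiple of $\mathbf x^{\pi(A_i)\cdot\mathbf r}e^{g(\mathbf x)}$, and by Remark~\ref{importantremark} the permutation $\pi(A_i)$ shifts only the $i$-th coordinate cyclically. Grouping both bases by the prefix $\mathbf r^{<i}:=(r_1,\dots,r_{i-1})$ — which ranges over all of $\mathfrak R(k_1,\dots,k_{i-1})$, since the congruence cutting out $O_{p,\mathcal S}$ constrains only the remaining coordinates — makes $A_i^p$ block diagonal with $\prod_{j<i}k_j$ blocks. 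Inside the block attached to $\mathbf r^{<i}$, grouping further by the value of $r_i$: since $i<m$ there is a coordinate past $i$, so each of the $k_i$ resulting slices has $N':=\frac1s\prod_{j>i}k_j$ elements, and $\pi(A_i)$, which fixes the tail $(r_{i+1},\dots,r_m)$, matches corresponding slices by the identity on the lexicographically ordered tails with coefficient $1$, except on the single slice where $r_i$ wraps around ($r_i=k_i-1$ if $i\notin\mathcal S$, $r_i=0$ if $i\in\mathcal S$), where the coefficient is $\frac{1}{a_ik_i}h_i$ (resp.\ $a_ik_i$) by \eqref{importanteq1} (resp.\ \eqref{importanteq2}). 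Reading this off, the block equals $X_{i,a_i}$, independently of $\mathbf r^{<i}$; hence $A_i^p=\diag\bigl(X_{i,a_i},\dots,X_{i,a_i}\bigr)$ with $\prod_{j<i}k_j$ copies, and in particular $A_i^p$ does not depend on $p$.

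For $i=m$ the same prefix decomposition makes $A_m^p$ block diagonal with blocks of size $k_m/s$ indexed by $\mathbf r\in\mathfrak R(k_1,\dots,k_{m-1})$. Now there is no coordinate past the $m$-th, so the block attached to $\mathbf r$ is governed by the $r_m$'s with $(\mathbf r,r_m)\in O_{p,\mathcal S}$ on the even side and those with $(\mathbf r,r_m)\in\overline O_{p,\mathcal S}=\{\mathbf s\mid\pi(A_m)\cdot\mathbf s\in O_{p,\mathcal S}\}$ on the odd side (Lemma~\ref{decompositionlemma}). Using $s\mid k_m$ one checks that the wrap-around value of $r_m$ lies in this odd set precisely when $\|\mathbf r\|_{\mathcal S}\equiv p\pmod s$ (if $m\notin\mathcal S$) or $\|\mathbf r\|_{\mathcal S}\equiv p-1\pmod s$ (if $m\in\mathcal S$); in that case the block is the cyclic matrix $U_{(h_m,a_mk_m)}\!\bigl(\frac{k_m}{s}-1,1\bigr)$ (resp.\ $V_{(h_m,a_mk_m)}\!\bigl(1,\frac{k_m}{s}-1\bigr)$), and otherwise $\pi(A_m)$ restricts to an order-preserving bijection carrying a single constant coefficient, so the block is $\I_{k_m/s}$ (resp.\ $-h_m\I_{k_m/s}$). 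This is exactly $\mathcal P^p_{\mathbf r}(\mathbf h)$, whence $A_m^p=\diag\bigl(\mathcal P^p_{\mathbf r}(\mathbf h)\bigr)_{\mathbf r}$ and \eqref{eq5.16} follows.

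The main obstacle is bookkeeping: one must verify that restricting the lexicographic order to $O_{p,\mathcal S}$ (resp.\ $\overline O_{p,\mathcal S}$) refines into slices indexed by $r_i$ (resp.\ $r_m$), each ordered lexicographically in the remaining coordinates, and that $\pi(A_i)$ respects this slicing so as to reproduce exactly the $U$/$V$/$\I$/$-h_m\I$ blocks — in particular, locating the unique $h$-carrying wrap-around slice and, for $i=m$, deciding for which prefixes $\mathbf r$ it survives in the restricted orbit. The contrast between $i<m$, where the wrap-around slice is always present so the block is $p$-independent, and $i=m$, where it occurs only when $\|\mathbf r\|_{\mathcal S}$ hits the relevant residue class, is the delicate point.
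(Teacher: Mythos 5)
Your proposal is correct and follows essentially the same route as the paper: it decomposes $E\bigl(\sum_i a_i x_i^{k_i},\mathcal S\bigr)$ via Corollary~\ref{onlyif-main}, then reads off the matrices of the odd generators on the lexicographically ordered bases indexed by $O_{p,\mathcal S}$ and $\overline O_{p,\mathcal S}$ using Lemma~\ref{decompositionlemma} and formulas \eqref{importanteq1}--\eqref{importanteq2}, with the $p$-dependence confined to the $m$-th matrix exactly as in the paper's proof. The only blemish is a small slip for $i<m$ with $i\in\mathcal S$: off the wrap-around slice the coefficient is $-h_i$ (not $1$), which is what the block $X_{i,a_i}=V_{(h_i,a_ik_i)}\bigl(\tfrac1s\prod_{j>i}k_j,\tfrac1s(k_i-1)\prod_{j>i}k_j\bigr)$ actually records, so your conclusion is unaffected.
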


\begin{proof}
From Corollary \ref{onlyif-main},
$$E\!\left(\sum_{i=1}^m a_i x_i^{k_i},\,\mathcal S\right) \;=\; \bigoplus_{p\in \Z/s\Z} M^{p}, \qquad
M^{p}=M^{p}_{\bar 0}\oplus M^{p}_{\bar 1}$$
where,
$$
M^{p}_{\bar 0}
=\bigoplus_{\mathbf r\in O_{p,\mathcal S}}
\mathbf x^{\mathbf r}\,\C[x_1^{k_1},\dots,x_m^{k_m}]\,
e^{\sum_{i=1}^{m} a_i x_i^{k_i}},
\quad
M^{p}_{\bar 1}
=\bigoplus_{\mathbf r'\in \overline{O}_{p,\mathcal S}}
\mathbf x^{\mathbf r'}\,\xi\,\C[x_1^{k_1},\dots,x_m^{k_m}]\,
e^{\sum_{i=1}^{m} a_i x_i^{k_i}}.
$$
\np
We will show that
$$M^{p} \;\simeq\; M\left(A_1,A_2,\dots,A^p_m\right),\qquad
A_i,\, A_m^{p} \in \Mat_{\frac{1}{s}\prod_{j=1}^m k_j}\bigl(\C[h_i]\bigr).$$
To determine $A_i$ where $i\in\mathbf m\setminus\{m\}$, we use Lemma~\ref{decompositionlemma}, which yields $\overline{O}_{p,\mathcal S}=\pi(A_i)^{-1}\!\cdot O_{p,\mathcal S}$. Using the total order $\preccurlyeq$ restricted to the $\mathcal U(\mathfrak{h})$-basis of the module $M^{p}$
$$\mathcal B_{M^{p}}:=\left\{ \mathbf x^{\mathbf r'}\, e^{\sum_{i=1}^m a_i x_i^{k_i}}\bigm|  \mathbf r'\in O_{p,\mathcal S}\right\}\, \sqcup \,\left\{ \mathbf x^{\mathbf r''}\,\xi\, e^{\sum_{i=1}^m a_i x_i^{k_i}}\bigm|  \mathbf r''\in \pi(A_i)^{-1}\!\cdot O_{p,\mathcal S} \right\},$$
together with the formulas \eqref{importanteq1} and \eqref{importanteq2}, the $A_i$ (with respect to $\mathcal B_{M^{p}}$) is block-diagonal with $\prod_{j=1}^{i-1} k_j$ identical square blocks, each of size $\frac{1}{s}\prod_{j=i}^{m} k_j \times \frac{1}{s}\prod_{j=i}^{m} k_j$ (each $(r_1,\dots,r_{i-1})$ indexes a block). Moreover, every block equals
$$\begin{cases}
U_{(h_i,a_i k_i)}\!\left(\dfrac{1}{s}(k_i-1)\!\prod_{j=i+1}^m k_j,\ \dfrac{1}{s}\!\prod_{j=i+1}^m k_j\right), & i \notin \mathcal S,\\
V_{(h_i,a_i k_i)}\!\left(\dfrac{1}{s}\!\prod_{j=i+1}^m k_j,\ \dfrac{1}{s}(k_i-1)\!\prod_{j=i+1}^m k_j\right), & i \in \mathcal S.
\end{cases} $$
\np
To determine the matrix $A_m^{p}$, we first use the total order $\preccurlyeq$ restricted to the $\mathcal U(\mathfrak{h})$-basis 
$$\mathcal B_{M^{p}}:=\left\{ \mathbf x^{\mathbf r'}\, e^{\sum_{i=1}^m a_i x_i^{k_i}}\bigm|  \mathbf r'\in O_{p,\mathcal S}\right\}\, \sqcup \,\left\{ \mathbf x^{\mathbf r''}\,\xi\, e^{\sum_{i=1}^m a_i x_i^{k_i}}\bigm|  \mathbf r''\in \pi(A_m)^{-1}\!\cdot O_{p,\mathcal S} \right\}.$$
With respect to this ordered basis,
$$
A_m^{p}=\diag\Bigl(\,\mathcal P^{p}_{\mathbf r}(\mathbf h)\Bigr)_{\;\mathbf r\in \mathfrak R(k_1,\dots,k_{m-1})}, \qquad \mathcal P^p_{\mathbf r}(\mathbf h) \in \Mat_{k_m/s} (\C[\mathbf h]).
$$

\np
We consider the case when $m \notin \mathcal S$, the case $m \in \mathcal S$ is analogous. 

\np Fix $\mathbf r:=(r_1,\dots,r_{m-1})$. It determines the block $\mathcal{P}^{p}_{\mathbf r}(\mathbf h)$ which is indexed by the set
\[ \left\{(r_1, \dots, r_{m-1}, i_{\mathbf r} + t s )\, | \, 0 \leq t \leq \frac{k_m}{s}-1 \,\right\} \]
where $(r_1, \dots, r_{m-1}, i_{\mathbf r})$ is the index corresponding to the leftmost entry in the top row of $\mathcal{P}^{p}_{\mathbf r}(\mathbf h)$. Using the formula \eqref{importanteq1}, we deduce 
$$
\mathcal P^{p}_{\mathbf r}(\mathbf h)
=
\begin{cases}
U_{(h_m,a_m k_m)}\!\left(\frac{k_m}{s}-1,\,1\right), 
& \text{if }\;\; i_{\mathbf r} =0,\\
\I_{k_m/s}, & \text{otherwise}.
\end{cases}
$$
\np 
The condition $i_{\mathbf r} =0$ is equivalent to $\|\mathbf r\|_{\mathcal S}\equiv p \pmod s$. Therefore, the proposition follows.
\end{proof}

\begin{remark}
By Proposition~\ref{End-descrip}, the direct summands in \eqref{eq5.16} are indecomposable and pairwise nonisomorphic.
\end{remark}

\begin{proposition} \label{isomthm1}
Let $\mathcal S_1,\mathcal S_2\subseteq{\bf m}$, let
$\mathbf a=(a_1,\dots,a_m)\in(\C^\times)^m,\,\mathbf b=(b_1,\dots,b_m)\in(\C^\times)^m$, and 
let $\mathbf k=(k_1,\dots,k_m)\in\Z_{\geq 1}^m,\,\boldsymbol{\ell}=(\ell_1,\dots,\ell_m)\in\Z_{\geq 1}^m$.
If
$$
E\Bigl(\textstyle\sum_{i=1}^m a_i x_i^{k_i},\,\mathcal S_1\Bigr) \;\simeq\; E\Bigl(\textstyle\sum_{i=1}^m b_i x_i^{\ell_i},\,\mathcal S_2\Bigr),
$$
then $\mathbf k=\boldsymbol{\ell}$.  
Moreover, for any $i \in S_1 \symdiffsmall S_2$, we have $k_i=\ell_i=2$.
\end{proposition}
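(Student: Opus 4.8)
The plan is to extract from the matrix realization of Lemma~\ref{explicitrealizationexp} a single numerical invariant that pins down each exponent $k_i$. To an $m$-tuple $M(A_1,\dots,A_m)$ of $h_i$-companion generalized permutation matrices attach, for each $i\in\mathbf m$, the integer
$$d_i:=\deg_{h_i}\bigl(\det A_i\bigr).$$
The strategy is then: (a) isomorphic modules force the numbers $d_i$ to coincide; and (b) for the exponential modules $d_i$ determines $k_i$ together with the membership $i\in\mathcal S$.

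First I would observe that an isomorphism $E(\sum a_i x_i^{k_i},\mathcal S_1)\simeq E(\sum b_i x_i^{\ell_i},\mathcal S_2)$ is in particular a $\mathcal U(\mathfrak h)$-module isomorphism, so by Lemma~\ref{higherrank-exp}(1) the ranks agree and $K:=\prod_{i=1}^m k_i=\prod_{i=1}^m \ell_i$. By Lemma~\ref{explicitrealizationexp} write the two modules as $M(A_1,\dots,A_m)$ and $M(B_1,\dots,B_m)$ with the explicit block-diagonal $A_i,B_i\in\Mat_K(\C[h_i])$ described there ($U$-type blocks when the index lies outside $\mathcal S_j$, $V$-type blocks when it lies inside, including the degenerate shapes of Remark~\ref{k=1-case}). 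Proposition~\ref{simplified-W(h)} then supplies $W_1,W_4\in\GL_K(\C[\mathbf h])$ with $W_1 A_i=B_i\,\Delta_i^{-1}(W_4)$ for every $i$. Taking determinants and using that the units of $\C[\mathbf h]$ are exactly $\C^\times$ — so $\det W_1,\det W_4\in\C^\times$ and therefore $\det\bigl(\Delta_i^{-1}(W_4)\bigr)=\Delta_i^{-1}(\det W_4)=\det W_4$ — one gets $\det A_i=c_i\,\det B_i$ for some $c_i\in\C^\times$, for all $i$.

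Next I would compute $\det A_i$ up to a nonzero scalar. Since $A_i$ is a generalized permutation matrix, $\det A_i$ is $\pm$ the product of its nonzero entries, each of which is a nonzero scalar or a nonzero scalar times $h_i$ (Remark~\ref{observation-sl(m|1)-evenrank}); from the explicit shapes of $U_{(h_i,\alpha)}$ and $V_{(h_i,\alpha)}$ one counts that the number of $h_i$-type entries equals $K/k_i$ if $i\notin\mathcal S_1$ and $K-K/k_i$ if $i\in\mathcal S_1$ (this formula also covers the case $k_i=1$ via Remark~\ref{k=1-case}). Hence $\det A_i=(\text{nonzero scalar})\cdot h_i^{d_i}$ with $d_i=K/k_i$ or $d_i=K-K/k_i$ accordingly, and likewise $\det B_i=(\text{nonzero scalar})\cdot h_i^{e_i}$ with $e_i=K/\ell_i$ or $e_i=K-K/\ell_i$. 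Since $\C[\mathbf h]$ is a unique factorization domain, the relation $\det A_i=c_i\det B_i$ forces $d_i=e_i$ for every $i$.

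It remains to run the case analysis on the membership of $i$ in $\mathcal S_1,\mathcal S_2$. If $i$ lies in both or in neither of $\mathcal S_1,\mathcal S_2$, the equality $d_i=e_i$ reads $K/k_i=K/\ell_i$ or $K-K/k_i=K-K/\ell_i$, and in either case $k_i=\ell_i$. If $i\in\mathcal S_1\symdiffsmall\mathcal S_2$, say $i\in\mathcal S_1\setminus\mathcal S_2$, then $K-K/k_i=K/\ell_i$; dividing by $K$ gives $1-\tfrac{1}{k_i}=\tfrac{1}{\ell_i}$, i.e. $k_i(\ell_i-1)=\ell_i$, whose only solution with $k_i,\ell_i\in\Z_{\geq1}$ is $k_i=\ell_i=2$, and symmetrically for $i\in\mathcal S_2\setminus\mathcal S_1$. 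In all cases $k_i=\ell_i$, so $\mathbf k=\boldsymbol{\ell}$, and $k_i=\ell_i=2$ whenever $i\in\mathcal S_1\symdiffsmall\mathcal S_2$. I expect no genuine obstacle: the only mildly delicate points are the justification that $\det\bigl(\Delta_i^{-1}(W_4)\bigr)$ is a nonzero constant and the bookkeeping for the exponents $d_i$ (in particular the degenerate $k_i=1$ cases), and once the invariant $\deg_{h_i}\det A_i$ is isolated the argument is short.
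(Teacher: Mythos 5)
Your proposal is correct and follows essentially the same route as the paper: realize both modules via Lemma~\ref{explicitrealizationexp}, invoke Proposition~\ref{simplified-W(h)} to get $W_1(\mathbf h)A_i=B_i\,\Delta_i^{-1}\bigl(W_4(\mathbf h)\bigr)$ with $W_1,W_4$ invertible, and compare $h_i$-degrees of determinants. Your packaging of this as the single invariant $d_i=\deg_{h_i}\det A_i\in\{K/k_i,\,K-K/k_i\}$ merely streamlines the paper's four-case analysis (and its separate treatment of $i\in\mathcal S_1\symdiffsmall\mathcal S_2$), with the degenerate $k_i=1$ case correctly covered.
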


\begin{proof}
 Since
$$
E\!\left(\sum_{i=1}^m a_i x_i^{k_i},\,\mathcal S_1\right)
\in
\mathcal{M}_{\mathfrak{sl}(m|1)}\!\left(\prod_{i=1}^{m} k_i \,\Big|\, \prod_{i=1}^{m} k_i\right),
$$
then
$$
E\!\left(\sum_{i=1}^m a_i x_i^{k_i},\,\mathcal S_1\right)
\simeq
E\!\left(\sum_{i=1}^m b_i x_i^{\ell_i},\,\mathcal S_2\right)
\quad \text{implies} \quad
\prod_{i=1}^{m} k_i \;=\; \prod_{i=1}^{m} \ell_i.
$$
By Proposition~\ref{explicitrealizationexp}, we have
$$E\!\left(\sum_{i=1}^m a_i x_i^{k_i},\,\mathcal S_1\right)\;\simeq\; M\left(A_1,A_2,\dots,A_m\right),$$
where for each $i\in\mathbf{m}$,
$$
A_i=
\diag\bigl(\underbrace{T_i,\dots,T_i}_{\prod_{j=1}^{i-1} k_j\ \text{copies}}\bigr),
\qquad
T_i=
\begin{cases}
U\left(i,a_i,\mathbf k_{[i,m]}\right), & i\notin\mathcal{S}_1,\\
V\left(i,a_i,\mathbf k_{[i,m]}\right), & i\in\mathcal{S}_1.
\end{cases}
$$

$$E\!\left(\sum_{i=1}^m b_i x_i^{\ell_i},\,\mathcal S_2\right) \;\simeq\; M\left(B_1,B_2,\dots,B_m\right),$$
where for each $i\in\mathbf{m}$,
$$
B_i=
\diag\!\Big(\underbrace{T'_i,\dots,T'_i}_{\prod_{j=1}^{i-1} \ell_j\ \text{copies}}\Big),
\qquad
T'_i=
\begin{cases}
U\left(i,b_i,\boldsymbol{\ell}_{[i,m]}\right), & i\notin\mathcal{S}_2,\\
V\left(i,b_i,\boldsymbol{\ell}_{[i,m]}\right), & i\in\mathcal{S}_2.
\end{cases}
$$
\np
By Proposition ~\ref{simplified-W(h)}, we have $M(A_1,\dots, A_m) \simeq M(B_1,\dots, B_m)$ if and only if there exist matrices $ W_1({\bf{h}}), W_4({\bf{h}}) \in \GL_{\prod_{i=1}^{m}k_i}(\C[{\bf{h}}]),$ such that
\begin{equation} \label{maineq}
    W_1({\bf{h}})A_i =B_i \Delta^{-1}_i(W_4({\bf{h}})),\quad \text{for all}\quad i\in {\bf{m}}.
\end{equation}
\np
Assume $k_1>\ell_1$. Then $\prod_{i=2}^m k_i < \prod_{i=2}^m \ell_i$. We consider four cases.

\noindent
 \textbf{Case 1:} Assume $1\in\mathcal S_1\cap\mathcal S_2$. Then $A_1=U(1,a_1,\mathbf k)$ and $B_1=U(1,b_1,\boldsymbol{\ell})$. By \eqref{maineq},
$$(-1)^{k_1+1} \left(\frac{h_1}{a_1k_1}\right)^{\prod_{i=2}^m k_i}\, \det \bigl(W_1(\mathbf h)\bigr)= (-1)^{\ell_1+1} \left(\frac{h_1}{b_1\ell_1}\right)^{\prod_{i=2}^m \ell_i}\det \bigl(W_4(\mathbf h)\bigr).$$
\np
Equating $h_1$-degrees forces $\prod_{i=2}^m k_i=\prod_{i=2}^m \ell_i$, contradicting
$\prod_{i=2}^m k_i<\prod_{i=2}^m \ell_i$.

\np
  \textbf{Case 2:} Assume $1\notin\mathcal S_1\cup\mathcal S_2$. Then $A_1=V(1,a_1,\mathbf k)$ and $B_1=V(1,b_1,\boldsymbol{\ell})$. By  \eqref{maineq},
    \begin{multline*}
        (-1)^{k_1+1} \left({a_1k_1}\right)^{\prod_{i=2}^m k_i} (-h_1)^{(k_1-1)\prod_{j=2}^mk_j}\, \det \bigl(W_1(\mathbf h)\bigr)\\
        =(-1)^{\ell_1+1} \left({b_1\ell_1}\right)^{\prod_{i=2}^m \ell_i} (-h_1)^{(\ell_1-1)\prod_{j=2}^m\ell_j}\,\det \bigl(W_4(\mathbf h)\bigr).
    \end{multline*}
\np
If $\ell_1=1$, this yields a contradiction. Hence, $\ell_1>1$. 
Comparing the $h_1$-degrees gives
$$
(k_1-1)\prod_{j=2}^m k_j \;=\; (\ell_1-1)\prod_{j=2}^m \ell_j,
$$
which, together with $\prod_{i=1}^m k_i=\prod_{i=1}^m \ell_i$, forces $k_1=\ell_1$, a contradiction.

\np
  \textbf{Case 3:} Assume $1\in\mathcal S_1$ and $1\notin\mathcal S_2$. Then $A_1=V(1,a_1,\mathbf k)$ and $B_1=U(1,b_1,\boldsymbol{\ell})$. By ~\eqref{maineq},
\begin{multline*}
        (-1)^{k_1+1} \left({a_1k_1}\right)^{\prod_{i=2}^m k_i} (-h_1)^{(k_1-1)\prod_{j=2}^mk_j}\, \det \bigl(W_1(\mathbf h)\bigr)=(-1)^{\ell_1+1} \left(\frac{h_1}{b_1\ell_1}\right)^{\prod_{i=2}^m \ell_i}\det \bigl(W_4(\mathbf h)\bigr).
\end{multline*}
\np    
Comparing the $h_1$-degrees gives $(k_1-1)\prod_{j=2}^m k_j=\prod_{j=2}^m \ell_j.
$
Since $\prod_{i=1}^m k_i=\prod_{i=1}^m \ell_i$, we get $k_1-1=\dfrac{k_1}{\ell_1}$, i.e. $(\ell_1-1)k_1=\ell_1$, forcing $\ell_1=2$ and $k_1=2$, a contradiction.

\np
 \textbf{Case 4:} Assume $1\notin\mathcal S_1$ and $1\in\mathcal S_2$. Then $A_1=U(1,a_1,\mathbf k)$ and $B_1=V(1,b_1,\boldsymbol{\ell})$. By ~\eqref{maineq},
$$(-1)^{k_1+1} \left(\frac{h_1}{a_1k_1}\right)^{\prod_{i=2}^m k_i}\, \det \bigl(W_1(\mathbf h)\bigr)= (-1)^{\ell_1+1} \left({b_1\ell_1}\right)^{\prod_{i=2}^m \ell_i} (-h_1)^{(\ell_1-1)\prod_{j=2}^m\ell_j}\,\det \bigl(W_4(\mathbf h)\bigr).$$
\np
Comparing the $h_1$-degrees gives 
$$\prod_{j=2}^m k_j \;=\; (\ell_1-1)\prod_{j=2}^m \ell_j.$$
\np
If $\ell_1=1$, this yields a contradiction. Hence $\ell_1>1$. From $\prod_{i=1}^m k_i=\prod_{i=1}^m \ell_i$ we get
$$
\ell_1-1=\frac{\prod_{j=2}^m k_j}{\prod_{j=2}^m \ell_j}
=\frac{\ell_1}{k_1},
$$
i.e. $(k_1-1)\ell_1=k_1$. The only integer solution with $\ell_1>1$ is $(k_1,\ell_1)=(2,2)$, contradicting $k_1>\ell_1$.

\np
Therefore, $k_1=\ell_1$. Repeating the same argument for each index $i=2,\dots,m$ yields
$$
k_i=\ell_i \qquad \text{for all}\quad i\in \mathbf m.
$$
\np
Assume $\mathcal S_1\neq \mathcal S_2$. Without loss of generality, pick 
 $i\in \mathcal S_1\setminus \mathcal S_2$. Then
$$A_i=\diag\!\Big(\underbrace{V\left(i,a_i,\mathbf k_{[i,m]}\right),\dots,V\left(i,a_i,\mathbf k_{[i,m]}\right)}_{\prod_{j=1}^{i-1} k_j\ \text{copies}}\Big),\; B_i =\diag\!\Big(\underbrace{U\left(i,b_i,\mathbf k_{[i,m]}\right),\dots,U\left(i,b_i,\mathbf k_{[i,m]}\right)}_{\prod_{j=1}^{i-1} k_j\ \text{copies}}\Big).$$
\np
Equation ~\eqref{maineq} yields
$$\det\Big(V\left(i,a_i,\mathbf k_{[i,m]}\right)\Big)^{\prod_{j=1}^{i-1} k_j}\, \det \bigl(W_1(\mathbf h)\bigr)= \det\Big(U\left(i,b_i,\mathbf k_{[i,m]}\right)\Big)^{\prod_{j=1}^{i-1} k_j}\,\det \bigl(W_4(\mathbf h)\bigr).$$
\np
Comparing the $h_i$-degrees gives $(k_i-1)\prod_{j=i+1}^m k_j \;=\;  \prod_{j=i+1}^m k_j,
$ which implies that $k_i =2$.    
\end{proof}
\np
Throughout the rest of the section, we denote
$$ \mathcal S_1 \,{\symdiffsmall^c}\, \mathcal S_2\;:=\;\mathbf m \setminus (\mathcal{S}_1 \,{\symdiffsmall}\, \mathcal{S}_2) \;=\; \left(\mathcal{S}_1\cap \mathcal{S}_2 \right) \;\cup\;(\mathbf{m}\setminus (\mathcal{S}_1\cup \mathcal{S}_2)).$$

\np
\begin{definition}
    Let $\mathbf k:=(k_1,\dots,k_m) \in \Z_{\geq 1}^m$. 
    Set 
$$
N(\mathbf k):=\{\,i\in\mathbf m\bigm| k_i=2\,\},\qquad G_{\mathbf k}:=\big(\Z/2\Z\big)^{\times \card(N(\mathbf k))}.
$$
\np
On the weighted projective space
$$
\mathbb P(\mathbf k):=\mathbb P(k_1,\dots,k_m)=\operatorname{Proj}\,\mathbb C[x_1,\dots,x_m],\quad \deg x_i=k_i,
$$
consider the subset
$$
U_m:=\{\,(a_1,\dots,a_m)\in\mathbb P(\mathbf k)\ :\ a_i\neq 0 \;\;\text{for all }\;\; i\in \mathbf m\,\}.
$$
\np
Define a group action of $G_{\mathbf k}$ on $U_m$, which acts nontrivially only in the coordinates $j$ with $k_j=2$. Explicitly, for $\varepsilon=(\varepsilon_i)_{i\in N(\mathbf k)}\in G_{\mathbf k}$, 
$
\varepsilon\cdot_{G_{\mathbf k}}(a_1,\dots,a_m)
 = (a'_1,\dots,a'_m)$ where 
$$a'_i = \begin{cases}
    a_i,&\text{if } k_i \neq 2;\\
    \left(-\frac{1}{4}\right)^{\varepsilon_i}a_i^{\,1-2\varepsilon_i},
    & \text{if } k_i =2.
\end{cases}$$
\end{definition}
\np
 For $\varepsilon \in G_{\mathbf k}$, define
$$\Supp(\varepsilon) := \left\{i\in N(\mathbf k) \bigm| \varepsilon_i =1 \right\}.$$

\np 
\begin{theorem} \label{main-isom}
    Let $\mathcal S_1,\mathcal S_2\subseteq{\bf m}$, $\mathbf a=(a_1,\dots,a_m)\in(\C^\times)^m,\,\mathbf b=(b_1,\dots,b_m)\in(\C^\times)^m$ 
and $\mathbf k=(k_1,\dots,k_m) \in\Z_{\geq 1}^m$. Then
$$
E\Bigl(\textstyle\sum_{i=1}^m a_i x_i^{k_i},\,\mathcal S_1\Bigr) \;\simeq\; E\Bigl(\textstyle\sum_{i=1}^m b_i x_i^{k_i},\,\mathcal S_2\Bigr),
$$
if and only if there exists $g \in G_{\mathbf k}$ such that 
$$\Supp(g) =\mathcal S_1 \symdiffsmall \mathcal S_2  \qquad\text{and}\qquad \left(g\cdot_{G_{\mathbf k}}\mathbf a\right)_{\mathcal{S}_2} =\mathbf b_{\mathcal{S}_2}\quad\text{in }\quad \mathbb P(\mathbf k). $$
\np
Here $(g\cdot_{G_{\mathbf k}}\mathbf a)_{\mathcal S_2}$ and $\mathbf b_{\mathcal S_2}$ are as defined in Theorem~\ref{classi-sl(m|1)asD(m|1)-mod}.
\end{theorem}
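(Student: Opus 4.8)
The strategy is to reduce the isomorphism question to a system of matrix equations via Proposition~\ref{simplified-W(h)}, then analyze those equations index by index. First I would invoke Theorem~\ref{realizationofsl(m|1)} (equivalently Lemma~\ref{explicitrealizationexp}) to write both modules in the form $M(A_1,\dots,A_m)$ and $M(B_1,\dots,B_m)$ with the explicit block-diagonal matrices built from the $U$ and $V$ blocks. By Proposition~\ref{simplified-W(h)}, an isomorphism corresponds to invertible $W_1(\mathbf h),W_4(\mathbf h)\in\GL_{\prod k_i}(\C[\mathbf h])$ satisfying $W_1(\mathbf h)\,A_i = B_i\,\Delta_i^{-1}(W_4(\mathbf h))$ for all $i\in\mathbf m$. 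From Proposition~\ref{isomthm1} we already know that an isomorphism forces $\mathbf k=\boldsymbol\ell$ and that $k_i=2$ for every $i\in\mathcal S_1\symdiffsmall\mathcal S_2$; this is why the group $G_{\mathbf k}$ only acts nontrivially on the coordinates with $k_i=2$, and why the condition $\Supp(g)=\mathcal S_1\symdiffsmall\mathcal S_2$ is forced.

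\textbf{The ``only if'' direction.} Assuming the isomorphism, I would combine the matrix equations to eliminate $W_4$ exactly as in the proof of Theorem~\ref{maintheorem_indec}: using $W_1(\mathbf h)A_i = B_i\Delta_i^{-1}(W_4(\mathbf h))$ and $W_1(\mathbf h)A_j = B_j\Delta_j^{-1}(W_4(\mathbf h))$ one gets, for each pair $i\neq j$, a conjugation-type relation for $W_1(\mathbf h)$ involving $A_i B_i^{-1}$, $(\sigma_i\sigma_j^{-1})$, and the GPM structure. Running the same inductive argument that forces $W_1(\mathbf h)$ to be diagonal in Theorem~\ref{maintheorem_indec} (applying Lemmas~\ref{simplifymatrixentries} and~\ref{simplifymatrixentries2}, now with $\gcd=1$ replaced by the relevant $\gcd$, but for the ``isomorphism between indecomposables'' case $\gcd(k_1,\dots,k_m)$ may be $>1$ — here I would restrict to one indecomposable summand or observe that $W_1,W_4$ must respect the decomposition of Corollary~\ref{onlyif-main}), I conclude that $W_1(\mathbf h)$ is diagonal and each diagonal entry is (a scalar multiple of) a power of $h_j$ determined by comparing the GPM cycle structures of $A_i$ and $B_i$. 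For an index $i$ with $i\notin\mathcal S_1\symdiffsmall\mathcal S_2$ (so $i\in\mathcal S_1\cap\mathcal S_2$ or $i\notin\mathcal S_1\cup\mathcal S_2$), $A_i$ and $B_i$ are of the same type ($U$ with $U$, or $V$ with $V$) and matching the diagonal entries of $W_1$ on the cycle of $\pi(A_i)$ pins down $b_i$ in terms of $a_i$ up to the $\C^\times$-scaling of $\mathbb P(\mathbf k)$, giving $(\mathbf a)_i$-to-$(\mathbf b)_i$ compatibility after the relabeling in Theorem~\ref{classi-sl(m|1)asD(m|1)-mod}. For an index $i\in\mathcal S_1\symdiffsmall\mathcal S_2$ with $k_i=2$, $A_i$ is $U_{(h_i,2a_i)}(1,1)=\begin{bmatrix}0 & \frac{h_i}{2a_i}\\ 1 & 0\end{bmatrix}$ on one side and $V_{(h_i,2b_i)}(1,1)=\begin{bmatrix}0 & -h_i\\ 2b_i & 0\end{bmatrix}$ on the other (in the appropriate $2\times 2$ block), and the corresponding scalar equation forces exactly the relation $b_i = -\tfrac14 a_i^{-1}$ up to scaling — this is precisely the $\varepsilon_i=1$ component of the $G_{\mathbf k}$-action. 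Assembling these constraints over all $i$ gives the element $g\in G_{\mathbf k}$ with $\Supp(g)=\mathcal S_1\symdiffsmall\mathcal S_2$ and $(g\cdot_{G_{\mathbf k}}\mathbf a)_{\mathcal S_2}=\mathbf b_{\mathcal S_2}$ in $\mathbb P(\mathbf k)$.

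\textbf{The ``if'' direction.} Conversely, given such a $g$, I would construct the isomorphism explicitly. On each index $i\notin N(\mathbf k)$ or $i\notin\Supp(g)$, the two local pieces $E(a_i x_i^{k_i},\cdot)$ agree up to the $\C^\times$-rescaling already handled by Theorem~\ref{classi-sl(m|1)asD(m|1)-mod} and the $\mathbb P(\mathbf k)$-identification; for $i\in\Supp(g)$ with $k_i=2$, I would exhibit the explicit $2\times 2$ change-of-basis intertwining $U_{(h_i,2a_i)}(1,1)$ with $V_{(h_i,-\frac{1}{2a_i}\cdot 2)}(1,1)$ (mirroring the $k=2$ coincidence $E(ax_1^2,\varnothing)\simeq E(bx_1^2,\{1\})$ recorded in the Corollary after the $\mathfrak{sl}(1|1)$ proposition). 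Taking the tensor-product-like assembly of these local intertwiners — concretely, a single diagonal $W_1(\mathbf h)$ with entries given by the appropriate products of $h_j$-powers and scalars, and the matching $W_4(\mathbf h)=\Delta(\cdot)$-shift — yields invertible $W_1,W_4$ satisfying all the equations $W_1 A_i = B_i\Delta_i^{-1}(W_4)$, hence an isomorphism by Proposition~\ref{simplified-W(h)}.

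\textbf{Main obstacle.} The delicate point is the ``only if'' direction: extracting, from the single global relation $W_1(\mathbf h)A_i = B_i\Delta_i^{-1}(W_4(\mathbf h))$, the exact scalar identities coordinate-by-coordinate. The diagonality of $W_1(\mathbf h)$ comes from re-running the (already nontrivial) argument of Theorem~\ref{maintheorem_indec}, and one must be careful that when $\gcd(k_1,\dots,k_m)>1$ the modules are decomposable, so one should either work summand-by-summand using Corollary~\ref{onlyif-main} and Corollary~\ref{End-descrip}, or verify that any isomorphism necessarily permutes the canonical summands and matches them up — tracking the $\|\cdot\|_{\mathcal S}$-invariant across the two labelings $\mathcal S_1,\mathcal S_2$. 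Once diagonality is in hand, comparing $h_i$-degrees and leading coefficients in the GPM equations is bookkeeping, but the $k_i=2$ case is where the non-obvious scalar $-\tfrac14$ and the ``inversion'' $a_i\mapsto a_i^{-1}$ enter, and getting that bookkeeping to land exactly on the stated $G_{\mathbf k}$-action is the crux.
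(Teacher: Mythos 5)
Your proposal follows essentially the same route as the paper's proof: realize both modules via Lemma~\ref{explicitrealizationexp}, translate an isomorphism into the equations $W_1(\mathbf h)A_i=B_i\Delta_i^{-1}(W_4(\mathbf h))$ of Proposition~\ref{simplified-W(h)}, invoke Proposition~\ref{isomthm1} to force $k_i=2$ on $\mathcal S_1\symdiffsmall\mathcal S_2$, re-run the diagonality argument of Theorem~\ref{maintheorem_indec}, and then do the scalar bookkeeping along the cycles of $\pi(A_i)$ that produces exactly the $G_{\mathbf k}$-action and the $\mathbb P(\mathbf k)$-identification, with the converse obtained by exhibiting the diagonal intertwiner. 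Two small corrections that do not affect the outcome: the diagonality step needs no $\gcd$ hypothesis and no summand-by-summand reduction, since it only uses that $\pi(A_i)=\pi(B_i)$ for every $i$ (which holds because $k_i=2$ on the symmetric difference), and once $W_1(\mathbf h)\in\GL_{\prod_i k_i}(\C[\mathbf h])$ is diagonal its entries are nonzero constants rather than powers of $h_j$.
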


\begin{proof}
By Proposition~\ref{explicitrealizationexp},
 $$ E\Bigl(\textstyle\sum_{i=1}^m a_i x_i^{k_i},\,\mathcal S_1\Bigr)
    \;\simeq\; M\left(A_1,A_2,\dots,A_m\right),$$
 where 
 $$
A_i=
\diag\!\Big(\underbrace{T_{i,a_i},\dots,T_{i,a_i}}_{\prod_{j=1}^{i-1} k_j\ \text{copies}}\Big),\qquad 
T_{i,a_i}=
\begin{cases}
U\left(i,a_i,\mathbf k_{[i,m]}\right), & i\notin\mathcal{S}_1,\\
V\left(i,a_i,\mathbf k_{[i,m]}\right), & i\in\mathcal{S}_1.
\end{cases}
$$
\np
Similarly, 
$$E\Bigl(\textstyle\sum_{i=1}^m b_i x_i^{k_i},\,\mathcal S_2\Bigr)\;\simeq\; M\left(B_1,B_2,\dots,B_m\right),$$
 where
  $$
B_i=
\diag\!\Big(\underbrace{T_{i,b_i},\dots,T_{i,b_i}}_{\prod_{j=1}^{i-1} k_j\ \text{copies}}\Big),\qquad 
T_{i,b_i}=
\begin{cases}
U\left(i,b_i,\mathbf k_{[i,m]}\right), & i\notin\mathcal{S}_2,\\
V\left(i,b_i,\mathbf k_{[i,m]}\right), & i\in\mathcal{S}_2.
\end{cases}
$$
\np
From Proposition \ref{isomthm1}, it suffices to assume that $k_i=2$, for all $ i\in \mathcal{S}_1 \,{\symdiffsmall}\, \mathcal{S}_2$.
 Let 
$$\Phi:M(A_1,\dots, A_m) \;\to \; M(B_1,\dots, B_m) $$ 
be a $\mathcal{U}(\mathfrak{sl}(m|1))$-isomorphism. Then, by Proposition \ref{simplified-W(h)}, 
$$
\Phi\big( \mathbf f({\bf h})\big) \;=\;
\left[\begin{array}{ c | c }
    W_1({\bf{h}}) & \mathbf 0 \\
    \hline
    \mathbf 0 & W_4({\bf{h}})
  \end{array}\right]\,
 \mathbf f({\bf h}),
\qquad
\text{for all }\quad  \mathbf f({\bf h})\in\C[\mathbf{h}]^{\oplus 2\prod_{i=1}^m k_i},
$$
where $W_1(\mathbf{h}),\,W_4(\mathbf{h})\in \Mat_{\prod_{i=1}^m k_i}(\C[\mathbf{h}])$ satisfy
$$
W_1(\mathbf{h})\,A_i \;=\;
B_i\,\Delta_i^{-1}\!\bigl(W_4(\mathbf{h})\bigr),
\qquad
\text{for all }\quad i\in\mathbf{m}.
$$
\np
For every $i\in\mathbf m$ (including $i\in \mathcal{S}_1 \,{\symdiffsmall}\, \mathcal{S}_2$),  the permutations 
$\pi(A_i)$ and $\pi(B_i)$ are equal. Namely,
$$
\pi(A_i)=\pi(B_i)=\begin{cases}(0\;\;\;1\;\;\;\dots\;\;\;k_i-1), & \text{if } i\notin\mathcal S,\\
(0\;\;\;k_i-1\;\;\;k_i-2\;\;\;\dots\;\;\;1), & \text{if } i\in\mathcal S.
\end{cases}
$$
Applying the argument from the “if” direction of Theorem~\ref{maintheorem_indec}, we conclude that
$$
W_1({\bf h})\;=\;\diag\bigl(w_{\mathbf r}\bigr)_{\mathbf r \in \mathfrak{R}(k_1,\dots,k_{m})}
\;=\;\diag\!\left(
w_{(0,\dots,0)}({\bf h}),\,\ldots,\,
w_{(k_1-1,\dots,k_m-1)}({\bf h})
\right),
$$
where the diagonal entries are indexed using the lexicographic order $\preccurlyeq_{\mathfrak{R}(k_1,\dots,k_{m})}$. Since $\Phi$ is a $\mathcal{U}(\mathfrak{sl}(m|1))$-isomorphism, $ W_1({\bf{h}}), W_4({\bf{h}}) \in \GL_{\prod_{i=1}^{m}k_i}(\C[{\bf{h}}])$.  Consequently, 
$$w_{{\bf r}}({\bf h}) \in \C^\times,\quad \text{for all}\quad {\bf r} \in \mathfrak{R}(k_1,\dots, k_{m}).$$

\np
Let $W\;:=\;\diag\bigl(v_{\mathbf r}\bigr)_{\mathbf r \in \mathfrak{R}(k_1,\dots,k_{m})}$, where $v_{\mathbf r} \in \C[\mathbf h]$. For each $i\in\mathbf m$, the map $W \mapsto B_i^{-1}\,W \,A_i$ permutes the diagonal entries of $W$ 
according to $\pi(B_i)$ and rescales the entry $v_{\mathbf r}$ by $\beta^{(i)}_{\mathbf r}\in\C$, i.e., the ${\mathbf r}$-th diagonal
entry of $B_i^{-1}\,W \,A_i$ is $\beta^{(i)}_{\mathbf r}\,v_{\pi(B_i)\,\cdot\,\mathbf r}$,
where the scalar $\beta^{(i)}_{\mathbf r}$ is given as follows:

\begin{enumerate}
\item[$\bullet$] For $i \in \mathcal S_1{\symdiffsmall^c} \mathcal S_2$,
$$\beta^{(i)}_{\mathbf r} = \begin{cases}\dfrac{b_i}{a_i}, & \text{if } i\in \mathbf{m}\setminus (\mathcal{S}_1\cup \mathcal{S}_2),\;\;\text{and}\;\; r_i = k_i-1,  \\ \dfrac{a_i}{b_i}, & \text{if } i \in  \mathcal{S}_1 \cap \mathcal{S}_2,\;\;\text{and}\;\;  r_i = 0,\\1 , & \text{otherwise}.\end{cases}$$

\np
\item[$\bullet$]
For $i\in \mathcal{S}_1 \,{\symdiffsmall}\, \mathcal{S}_2$ ($k_i =2$ for all $i \in \mathcal{S}_1 \,{\scriptstyle\triangle}\, \mathcal{S}_2$),
$$\beta^{(i)}_{\mathbf r} = \begin{cases}\dfrac{1}{2b_i}, & \text{if } i\in \mathcal S_2 \setminus \mathcal S_1,\;\;\text{and}\;\;  r_i = 0,  \\-\dfrac{1}{2a_i}, & \text{if } i\in \mathcal S_2 \setminus \mathcal S_1,\;\;\text{and}\;\;  r_i = 1,  \\
2a_i, & \text{if } i\in \mathcal S_1 \setminus \mathcal S_2,\;\;\text{and}\;\; r_i = 0,  \\-2b_i, & \text{if } i\in \mathcal S_1 \setminus \mathcal S_2,\;\;\text{and}\;\;  r_i = 1.  \end{cases}$$
\end{enumerate}

\np
Similarly, for each $j\in\mathbf m$, the map $W \mapsto B_j\,W \,A_j^{-1}$ permutes the diagonal entries of $W$ according to $\pi(B_j)^{-1}$ and rescales the entries $v_{\mathbf r}$ by $\alpha^{(j)}_{\mathbf r} \in \C$, i.e., the ${\mathbf r}$-th diagonal entry of $B_j\,W \,A_j^{-1}$ is $\alpha^{(j)}_{\mathbf r}\,v_{\pi(B_j)^{-1}\,\cdot\,\mathbf r}$,
where the scalar $\alpha^{(j)}_{\mathbf r}$ is given as follows:

\np
\begin{enumerate}
\item[$\bullet$]For $j \in \mathcal S_1{\symdiffsmall^c} \mathcal S_2$,
$$\alpha^{(j)}_{\mathbf r} = \begin{cases}\dfrac{a_j}{b_j}, & \text{if } j\in \mathbf{m}\setminus (\mathcal{S}_1\cup \mathcal{S}_2),\;\;\text{and}\;\;  r_j = 0,  \\\dfrac{b_j}{a_j}, & \text{if } j \in  \mathcal{S}_1 \cap \mathcal{S}_2,\;\;\text{and}\;\; r_j = k_j-1,\\1 , & \text{otherwise}.\end{cases}$$

\np
\item[$\bullet$]For $j\in \mathcal{S}_1 \,{\symdiffsmall}\, \mathcal{S}_2$,
$$\alpha^{(j)}_{\mathbf r} = \begin{cases}-2a_j, & \text{if } j\in \mathcal S_2 \setminus \mathcal S_1,\;\;\text{and}\;\; r_j = 0,  \\2b_j, & \text{if } j\in \mathcal S_2 \setminus \mathcal S_1,\;\;\text{and}\;\;  r_j = 1,  \\
-\dfrac{1}{2b_j}, & \text{if } j\in \mathcal S_1 \setminus \mathcal S_2,\;\;\text{and}\;\;  r_j = 0,  \\\dfrac{1}{2a_j}, & \text{if } j\in \mathcal S_1 \setminus \mathcal S_2,\;\;\text{and}\;\;  r_j = 1.  \end{cases}$$
\end{enumerate}
\np
Then
$$
W_1(\mathbf h)=B_j B_i^{-1}\,W_1(\mathbf h)\,A_i A_j^{-1}
\qquad\text{for all}\quad i,j\in\mathbf m
$$
if and only if the following constraints hold:

\np 
\begin{enumerate}
\item[] For $i,j\in \mathcal S_1{\symdiffsmall^c}\mathcal S_2$

\begin{equation} \label{eq720}
\left(\frac{a_i}{b_i}\right)^{\frac{\nu\left(\mathcal S_1\cap\mathcal S_2,\, \mathcal S_1{\symdiffsmall^c}\mathcal S_2\right)_i\,\lcm(k_i,k_j)}{k_i}}
=
\left(\frac{a_j}{b_j}\right)^{\frac{\nu\left(\mathcal S_1\cap\mathcal S_2,\, \mathcal S_1{\symdiffsmall^c}\mathcal S_2\right)_j\,\lcm(k_i,k_j)}{k_j}} \ ;
\end{equation}

\item[] 

\item[] for  $i\in \mathcal S_1{\symdiffsmall^c}\mathcal S_2$ and  $j\in \mathcal S_1{\symdiffsmall}\,\mathcal S_2$
\begin{equation} \label{eq730}
(-4a_j b_j)^{\frac{\nu\left(\mathcal S_1\setminus\mathcal S_2,\; \mathcal S_1{\symdiffsmall}\mathcal S_2\right)_j\,\lcm(k_i,2)}{2}}
=
\left(\frac{a_i}{b_i}\right)^{\frac{\nu\left(\mathcal S_1\cap\mathcal S_2,\; \mathcal S_1{\symdiffsmall^c}\mathcal S_2\right)_i\,\lcm(k_i,2)}{k_i}} \ ;
\end{equation}

\item[] 

\item[] for $i,j\in \mathcal S_1{\symdiffsmall}\,\mathcal S_2$
\begin{equation} \label{eq740}
(4a_i b_i)^{\nu\left(\mathcal S_1\setminus\mathcal S_2,\; \mathcal S_1{\symdiffsmall}\mathcal S_2\right)_i}
=
(4a_j b_j)^{\nu\left(\mathcal S_1\setminus\mathcal S_2,\; \mathcal S_1{\symdiffsmall}\mathcal S_2\right)_j} \ .
\end{equation}
\end{enumerate}

\np
Since $k_i=2$ for all $i\in \mathcal S_1{\symdiffsmall}\,\mathcal S_2$, there exists $g\in G_{\mathbf k}$ with
$\Supp(g)=\mathcal S_1{\symdiffsmall}\,\mathcal S_2$. For $\mathbf a=(a_1,\dots,a_m)\in(\C^\times)^m$,
$$(g\cdot_{G_{\mathbf k}}\mathbf a)_i = \begin{cases}
    a_i &\text{if}\;\;\; i\notin \Supp(g),\\ -\dfrac{1}{4a_i} &\text{if}\;\;\; i\in \Supp(g).
\end{cases}$$

\np
A direct computation shows that for each $i\in\mathbf m$,
$$
\frac{\bigl((g\cdot_{G_{\mathbf k}}\mathbf a)_{\mathcal S_2}\bigr)_i}{(\mathbf b_{\mathcal S_2})_i}
=
\begin{cases}
-4\,a_i b_i, & i\in \Supp(g)\cap \mathcal S_1\;\; (i \notin \mathcal S_2),\\
\dfrac{1}{-4\,a_i b_i}, & i\in \Supp(g)\setminus \mathcal S_1\;\; (i \in \mathcal S_2),\\
\dfrac{a_i}{b_i}, & i\notin \Supp(g),\ i\in \mathcal S_1\;\; (i \in \mathcal S_2),\\
\dfrac{b_i}{a_i}, & i\notin \Supp(g),\ i\notin \mathcal S_1\;\; (i \notin \mathcal S_2).
\end{cases}
$$

\np
Consequently, the constraints \eqref{eq720},\eqref{eq730}, and \eqref{eq740} together are equivalent to the existence of $\lambda\in\C^\times$ such that
$$
\frac{\bigl((g\cdot_{G_{\mathbf k}}\mathbf a)_{\mathcal S_2}\bigr)_i}{(\mathbf b_{\mathcal S_2})_i}
=\lambda^{k_i}
\qquad \text{for all}\quad i\in \mathbf m.
$$

\np
Moreover, $W_1(\mathbf{h}),\;W_4(\mathbf{h}) \;\in\;
\GL_{\prod_{i=1}^{m} k_i}\bigl(\mathbb{C}[\mathbf{h}]\bigr)$ satisfying
$$ W_1({\bf{h}})A_i =B_i \Delta^{-1}_i(W_4({\bf{h}})),\quad \text{for all}\quad i\in {\bf{m}},$$
exist if and only if 
$$
W_1(\mathbf h)=B_j B_i^{-1}\,W_1(\mathbf h)\,A_i A_j^{-1}\qquad\text{for all } i,j\in\mathbf m.
$$
Then, by Proposition \ref{simplified-W(h)}, the theorem follows.
\end{proof}

\np 
\begin{corollary} \label{isomclass}
Fix ${\mathbf k} = (k_1, k_2, \ldots, k_m)\in\Z_{\geq 1}^m$. Then isomorphism classes of exponential modules of the form $E\left(\sum_{i=1}^m a_i x_i^{k_i},\,\mathcal S\right)$ are parametrized by $\bigcup_{S \subseteq \left(\mathbf m\setminus N(\mathbf k)\right)}\mathbb{P}({\mathbf k}).$
\end{corollary}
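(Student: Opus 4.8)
The plan is to deduce the corollary directly from the isomorphism criterion of Theorem~\ref{main-isom}, with no new computation. Let $\mathcal M$ denote the set of isomorphism classes of modules $E\!\left(\sum_{i=1}^m a_i x_i^{k_i},\,\mathcal S\right)$ with $\mathbf a\in(\C^\times)^m$ and $\mathcal S\subseteq\mathbf m$. By Theorem~\ref{main-isom}, the modules attached to $(\mathbf a,\mathcal S_1)$ and $(\mathbf b,\mathcal S_2)$ are isomorphic exactly when there is $g\in G_{\mathbf k}$ with $\Supp(g)=\mathcal S_1\symdiffsmall\mathcal S_2$ and $(g\cdot_{G_{\mathbf k}}\mathbf a)_{\mathcal S_2}=\mathbf b_{\mathcal S_2}$ in $\mathbb P(\mathbf k)$. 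Since $\Supp(g)\subseteq N(\mathbf k)$ for every $g\in G_{\mathbf k}$, such an isomorphism forces $\mathcal S_1\setminus N(\mathbf k)=\mathcal S_2\setminus N(\mathbf k)$; hence the set $S:=\mathcal S\setminus N(\mathbf k)\subseteq\mathbf m\setminus N(\mathbf k)$ is an isomorphism invariant. Consequently $\mathcal M=\bigsqcup_{S\subseteq\mathbf m\setminus N(\mathbf k)}\mathcal M_S$, where $\mathcal M_S$ is the set of classes with invariant $S$; the pieces are genuinely disjoint (again because $\Supp(g)\subseteq N(\mathbf k)$ can never equal $\mathcal S_1\symdiffsmall\mathcal S_2$ once these differ outside $N(\mathbf k)$), and this disjoint union is the one appearing in the statement.

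Now fix $S\subseteq\mathbf m\setminus N(\mathbf k)$; it remains to identify $\mathcal M_S$ with $\mathbb P(\mathbf k)$. First we normalize: given $(\mathbf a,\mathcal S)$ with $\mathcal S=S\sqcup T$, $T\subseteq N(\mathbf k)$, let $g\in G_{\mathbf k}$ be the unique element with $\Supp(g)=T$ and set $\mathbf b:=g\cdot_{G_{\mathbf k}}\mathbf a$. Then $\Supp(g)=\mathcal S\symdiffsmall S$ and $(g\cdot_{G_{\mathbf k}}\mathbf a)_S=\mathbf b_S$ holds tautologically, so Theorem~\ref{main-isom} gives $E\!\left(\sum a_i x_i^{k_i},\mathcal S\right)\simeq E\!\left(\sum b_i x_i^{k_i},S\right)$. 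Thus every class in $\mathcal M_S$ is represented by a pair $(\mathbf b,S)$ whose $\mathcal S$-part is exactly $S$. Applying Theorem~\ref{main-isom} again to two such normalized representatives, the requirement $\Supp(g)=S\symdiffsmall S=\varnothing$ forces $g=\mathrm{id}$, so $(\mathbf b,S)$ and $(\mathbf b',S)$ give isomorphic modules iff $\mathbf b_S=\mathbf b'_S$ in $\mathbb P(\mathbf k)$. Since $\mathbf b\mapsto\mathbf b_S$ is a bijection of $(\C^\times)^m$ onto itself, $\mathcal M_S$ is in bijection with $(\C^\times)^m$ modulo the weighted scaling $\lambda\cdot(c_i)_i=(\lambda^{k_i}c_i)_i$, which is precisely the locus $U_m\subseteq\mathbb P(\mathbf k)$ of points with all coordinates nonzero. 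Viewing $U_m$ inside $\mathbb P(\mathbf k)$ and summing over $S$ yields the corollary.

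The argument is pure bookkeeping on top of Theorem~\ref{main-isom}, so there is no genuine obstacle; the two points that need care are the explicit normalization step—where one must track how the coordinate-inverting involution $\mathbf c\mapsto\mathbf c_{\mathcal S}$ interacts with the $G_{\mathbf k}$-action and with equality in the weighted projective space—and the identification of the quotient $(\C^\times)^m/(\text{weighted scaling})$ with the open subset $U_m\subseteq\mathbb P(\mathbf k)$, which is the precise parametrizing space attached to each $S$.
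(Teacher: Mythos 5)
Your argument is correct and follows essentially the same route as the paper: use Theorem~\ref{main-isom} first to normalize $\mathcal S$ to $\mathcal S\setminus N(\mathbf k)$ via a $g\in G_{\mathbf k}$ with $\Supp(g)=\mathcal S\cap N(\mathbf k)$, then apply the theorem again with $\mathcal S_1=\mathcal S_2=S$ (forcing $g=\mathrm{id}$) to identify each piece with points of $\mathbb P(\mathbf k)$. Your added remark that the classes with invariant $S$ actually fill out only the open locus $U_m\subseteq\mathbb P(\mathbf k)$ is a correct sharpening of the (slightly loosely stated) parametrization, not a deviation in method.
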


\begin{proof}
    Given $a_1,\dots,a_m\in\C^\times$ and $\mathcal S\subseteq\mathbf m$, there exists $g\in G_{\mathbf k}$ such that $\Supp(g)=\mathcal S \,\cap\, N(\mathbf k)$. By Theorem \ref{main-isom},
     $$
    E\!\left(\sum_{i=1}^m a_i x_i^{k_i},\,\mathcal{S}\right)\;\simeq\; E\!\left(\sum_{i=1}^m b_i x_i^{k_i},\,\mathcal{S}\,\setminus N(\mathbf k) \right),
  $$
  where $\left(g\cdot_{G_{\mathbf k}}\mathbf a\right)_{\mathcal{S}\,\setminus N(\mathbf k)} =\mathbf b_{\mathcal{S}\,\setminus N(\mathbf k)}$ in  $\mathbb P(\mathbf k)$. Hence, it suffices to consider the case $\mathcal S \subseteq \left(\mathbf m\setminus N(\mathbf k)\right)$. For $\mathcal{S}_1, \mathcal{S}_2\subseteq \left(\mathbf m\setminus N(\mathbf k)\right) $, by Theorem \ref{main-isom}, 
   $$
    E\!\left(\sum_{i=1}^m a_i x_i^{k_i},\,\mathcal{S}_1\right)\;\simeq\; E\!\left(\sum_{i=1}^m c_i x_i^{k_i},\,\mathcal{S}_2\right),
  $$
  if and only if $\mathcal{S}_1=\mathcal{S}_2=: \overline{\mathcal{S}}$ and $\mathbf a_{\overline{\mathcal S}} = \mathbf b_{\overline{\mathcal S}}$ in $\mathbb P(\mathbf k)$. This completes the proof.
\end{proof}
 
\np 
 \textbf{Acknowledgments.}
All authors were partially supported by the Natural Sciences and Engineering Research Council of Canada. In addition, C.P. and D.W.  were partially supported by the Canadian Defence Academy Research Programme.

\end{document}